%%%%%%%%%%%%%%%%%%%%%%%%%%%%%%%%%%%%%%%%%%%%%%%%%%%%
% Ken Davidson, Adam Dor-On, Orr Shalit and Baruch Solel
% Technion Waterloo project
% January 25th, 2018, correction of version submitted to IMRN, after acceptance, following problems that came up in discussions between Ben and Orr
% This version for updating the arxiv. 
% THIS IS THE VERSION UPLOADED IN THE ARXV REVISION. 
%%%%%%%%%%%%%%%%%%%%%%%%%%%%%%%%%%%%%%%%%%%%%%%%%%%
\documentclass[11pt]{amsart}

\usepackage{graphicx}
\usepackage{amsmath}
\usepackage{amscd}
\usepackage{amsfonts}
\usepackage{amssymb}
\usepackage{color}
\usepackage{mathtools}
\usepackage[%pagebackref,
hypertexnames=false, colorlinks, citecolor=red,linkcolor=blue, urlcolor=red]{hyperref}

\numberwithin{equation}{section}

\setcounter{MaxMatrixCols}{30}
\setcounter{secnumdepth}{2}
\setcounter{tocdepth}{1}

\newtheorem{theorem}{Theorem}[section]
\newtheorem{lemma}[theorem]{Lemma}
\newtheorem{proposition}[theorem]{Proposition}

\newtheorem{corollary}[theorem]{Corollary}
\newtheorem{problem}[theorem]{Problem}

\theoremstyle{definition}
\newtheorem{definition}[theorem]{Definition}
\newtheorem{example}[theorem]{Example}
\newtheorem{remark}[theorem]{Remark}

%\newenvironment{proof}{\bf Proof. \rm}{$\Box$}
%      Proof environment

%

\newcommand{\be}{\begin{equation}}
\newcommand{\ee}{\end{equation}}
\newcommand{\bes}{\begin{equation*}}
\newcommand{\ees}{\end{equation*}}

\newcommand{\cD}{\mathcal{D}}

\newcommand{\cK}{\mathcal{K}}

\newcommand{\cM}{\mathcal{M}}

\newcommand{\cF}{\mathcal{F}}

\newcommand{\cA}{\mathcal{A}}
\newcommand{\cB}{\mathcal{B}}

\newcommand{\cS}{\mathcal{S}}
\newcommand{\cT}{\mathcal{T}}
\newcommand{\cU}{\mathcal{U}}

\newcommand{\cW}{\mathcal{W}}

%      Text used in equations
\newcommand{\AND}{\text{ and }}
\newcommand{\FOR}{\text{ for }}
\newcommand{\FORAL}{\text{ for all }}
\newcommand{\qand}{\quad\text{and}\quad}

\newcommand{\qfor}{\quad\text{for}\quad}
\newcommand{\qforal}{\quad\text{for all}\quad}

\newcommand{\bB}{\mathbb{B}}
\newcommand{\bC}{\mathbb{C}}
\newcommand{\bD}{\mathbb{D}}
\newcommand{\bN}{\mathbb{N}}

\newcommand{\bR}{\mathbb{R}}
\newcommand{\bT}{\mathbb{T}}

\newcommand{\fB}{\mathfrak{B}}
\newcommand{\fC}{\mathfrak{C}}
\newcommand{\fD}{\mathfrak{D}}

\newcommand{\re}{\operatorname{Re}}
\newcommand{\im}{\operatorname{Im}}
\newcommand{\Stab}{\operatorname{Stab}}
\newcommand{\Sym}{\operatorname{Sym}}

\newcommand{\diag}{\operatorname{diag}}
\newcommand{\conv}{\operatorname{conv}}

\newcommand{\id}{\operatorname{id}}

\newcommand{\spn}{\operatorname{span}}
\newcommand{\UCP}{\operatorname{UCP}}
\newcommand{\Bsad}{\cB(H)_{sa}^d}
\newcommand{\Wmin}[1]{\cW^{\textup{min}}_{#1}}
\newcommand{\Wmax}[1]{\cW^{\textup{max}}_{#1}}

\newcommand{\ep}{\varepsilon}

\newcommand{\ol}{\overline}

\newenvironment{spmatrix}{\left(\begin{smallmatrix}}{\end{smallmatrix}\right)}

%%%%%%%%%%%%%%%%%%%%%%%%%%%%%%%%%%%%%%%
\begin{document}

\title[Dilations and matrix set inclusions]{Dilations, inclusions of matrix convex sets, and completely positive maps}

\author[K.R. Davidson]{Kenneth R. Davidson}
\address{Pure Mathematics Department, University of Waterloo,
Waterloo, ON\ N2L 3G1, Canada}
\email{krdavids@uwaterloo.ca \vspace{-2ex}}

\author[A. Dor-On]{Adam Dor-On}
%\address{Pure Mathematics Department, University of Waterloo,
%Waterloo, ON\ N2L 3G1, Canada}
\email{adoron@uwaterloo.ca}

\author[O.M. Shalit]{Orr Moshe Shalit}

\address{Faculty of Mathematics\\
Technion - Israel Institute of Technology\\
Haifa\; 3200003\\
Israel}
\email{oshalit@tx.technion.ac.il \vspace{-2ex}}

\author[B. Solel]{Baruch Solel}
%\address{Faculty of Mathematics\\
%Technion - Israel Institute of Mathematics\\
%Haifa\; 3200003\\
%Israel}
\email{mabaruch@tx.technion.ac.il}

\thanks{This research was supported by The Gerald Schwartz \& Heather Reisman Foundation for Technion-UWaterloo cooperation.
The work of K.R. Davidson is partially supported by an NSERC grant.
The work of Adam Dor-On was partially supported by an Ontario Trillium Scholarship.
The work of O.M. Shalit is partially supported by ISF Grant no. 474/12, and by
EU FP7/2007-2013 Grant no. 321749.}

\subjclass[2010]{47A13, 47B32, 12Y05, 13P10}
\keywords{matrix convex set, free spectrahedra, matrix range, completely positive maps}

%%%%%%%%%%%%%%%%%%%%%%%%
\begin{abstract}
A matrix convex set is a set of the form $\cS = \cup_{n\geq 1}\cS_n$ (where each $\cS_n$ is a set of $d$-tuples of $n \times n$ matrices) that is invariant under UCP maps from $M_n$ to $M_k$ and under formation of direct sums.
We study the geometry of matrix convex sets and their relationship to completely positive maps and  dilation theory.
Key ingredients in our approach are polar duality in the sense of Effros and Winkler, matrix ranges in the sense of Arveson, and concrete constructions of scaled commuting normal dilation for tuples of self-adjoint operators, in the sense of Helton, Klep, McCullough and Schweighofer.

Given two matrix convex sets $\cS = \cup_{n \geq 1} \cS_n,$ and $\cT = \cup_{n \geq 1} \cT_n$, we find geometric conditions on $\cS$ or on $\cT$, such that $\cS_1 \subseteq \cT_1$ implies that $\cS \subseteq C\cT$ for some constant $C$.

For instance, under various symmetry conditions on $\cS$, we can show that $C$ above can be chosen to equal $d$, the number of variables. We also show that $C=d$ is sharp for a specific matrix convex set $\Wmax{}(\overline{\mathbb{B}_d})$ constructed from the unit ball $\mathbb{B}_d$. This led us to find an essentially unique self-dual matrix convex set $\cD$, the self-dual matrix ball, for which corresponding inclusion and dilation results hold with constant $C=\sqrt{d}$.

For a certain class of polytopes, we obtain a considerable sharpening of such inclusion results involving polar duals.
An illustrative example is that a sufficient condition for $\cT$ to contain the free matrix cube $\fC^{(d)} = \cup_n\{(T_1, \ldots, T_d) \in M_n^d :  \|T_i\| \leq 1\}$, is that $\{x \in \bR^d : \sum |x_j| \leq 1\} \subseteq \frac{1}{d}\cT_1$, i.e., that $\frac{1}{d}\cT_1$ contains the polar dual of the cube $[-1,1]^d = \fC^{(d)}_1$.

Our results have immediate implications to spectrahedral inclusion problems studied recently by Helton, Klep, McCullough and Schweig\-hofer.
Our constants do not depend on the ranks of the pencils determining the free spectrahedra in question, but rather on the ``number of variables'' $d$.
There are also implications to the problem of existence of (unital) completely positive maps with prescribed values on a set of operators.
\end{abstract}

\maketitle

%%%%%%%%%%%%%%%%%%%%%%%%%
\section{Introduction}

This paper was inspired by a series of papers by Helton, Klep, McCullough and others on the advantages of using matrix convex sets
when studying linear matrix inequalities (LMI).
In particular, Helton, Klep and McCullough \cite{HKM13} showed that the matricial positivity domain of an LMI contains the information needed to determine an irreducible LMI up to unitary equivalence.
We were particularly interested in a recent paper by these authors and Schweighofer \cite{HKMS15} who dilate $d$-tuples of Hermitian matrices to commuting Hermitian matrices in order to obtain bounds on inclusions of spectrahedra inside others up to a scaling.
Our work is also related to \cite{HKM14} which discusses duality.

The two central problems that attracted our attention are the following.

%%%%%%%%%%%%%%%%%%%%%%
\begin{problem}\label{prob:interpolation}
Given two $d$-tuples of operators  $A = (A_1, \ldots, A_d) \in \cB(H)^d$ and $B = (B_1, \ldots, B_d)\in \cB(K)^d$, determine whether there exists a unital completely positive (UCP) map $\phi : \cB(H) \to \cB(K)$ such that $\phi(A_i) = B_i$ for all $i=1, \ldots, d$.
\end{problem}

%%%%%%%%%%%%%%%%%%%%%%
\begin{problem}\label{prob:inclusion}
Given two matrix convex sets $\cS$ and $\cT$,
determine whether $\cS \subseteq \cT$.
In particular, given that $\cS_1 \subseteq \cT_1$, determine whether $\cS \subseteq C \cT$ for some constant $C$.
\end{problem}

These problems were treated by Helton, Klep, McCullough, Schweighofer, and by others.
Our goal is to approach these problems from an operator theoretic perspective, and to sharpen, generalize and unify existing results.
While Helton et al tend to deal with $d$-tuples of real Hermitian matrices, we have chosen to work in the context of $d$-tuples of matrices or operators on complex Hilbert spaces (it seems that with a little care our methods are applicable to the setting of symmetric matrices over the reals).
Moreover we simultaneously consider the Hermitian and nonself-adjoint contexts.

Duality plays a central role in our work  as well, but takes a somewhat different character.
We find that a more natural object to associate to a $d$-tuple of matrices of bounded operators is the {\em matrix range} introduced by Arveson \cite{Arv72} in the early days of non-commutative dilation theory (see Section \ref{subsec:matrix_range}).
Moreover we show that the matrix range is the polar dual of the matricial positivity domain of the associated LMI.
We provide a description of the minimal and maximal matrix convex sets determined by a convex set at the first level (in $\bC^d$).

Matrix ranges are ideally suited to describe the possible images of a $d$-tuple under UCP maps.
This was established by Arveson in the singly generated case, and easily extends to the multivariable situation.
We use this to obtain, in Section \ref{sec:UCP}, complete descriptions of when a $d$-tuple of operators can be mapped onto another by a UCP map, or a completely contractive positive (CCP) or completely contractive (CC) map.
The basic result is that there is a UCP map as in Problem \ref{prob:interpolation} if and only if $\cW(B) \subseteq \cW(A)$, where $\cW(A)$ and $\cW(B)$ denote the matrix ranges of $A$ and $B$, respectively (see Theorem \ref{thm:ExistUCP_matRan}).
This generalizes results of many authors regarding Problem \ref{prob:interpolation} \cite{AG15a, AG15b, CJW04, HJRW12, HKM13, HLPS12, J12, LP11}.

The results we obtain in Section \ref{sec:UCP} show that the matrix range $\cW(A)$ of a tuple of operators $A$ is a complete invariant of the operator system $S_A$ generated by $A$.
It is natural to ask to what extent a $d$-tuple of operators is determined by its matrix range, and this problem was resolved by Arveson in the finite-dimensional case \cite{ArvChoquet3}.
For tuples of compact operators, this problem is taken up in Section \ref{sec:minimality}.
Under a nonsingularity assumption, we show that a $d$-tuple $A$ of compact operators can always be compressed to a minimal tuple that has the same matrix range.
We characterize minimal tuples of nonsingular compact operators in terms of their multiplicity and C*-envelope, and we show that a nonsingular minimal tuple of compact operators is determined by its matrix range up to unitary equivalence.
We also consider $d$-tuples of operators that generate C*-algebras with no compact operators.
When combining our approach with Voiculescu's Weyl-von Neumann Theorem, we show that under suitable circumstances related to multiplicity,
the matrix range determines a $d$-tuple up to approximate unitary equivalence.

The remainder of the paper deals with Problem \ref{prob:inclusion}.
A key ingredient is the construction of commuting normal dilations, following \cite{HKMS15}.
In \cite[Theorem 1.1]{HKMS15}, they establish that the set of {\em all} symmetric $n \times n$ matrices dilate up to a scale factor to a family $\cF$ of commuting Hermitian contractions on a Hilbert space $H$, in the sense that there is a constant $c$ and an isometry $V : \bR^n \rightarrow H$ so that for every symmetric contraction $S \in M_n(\bR)$, there
is some $T \in \cF$ such that $c S =  V^* T V$.
In this result, it is crucial that $n$ is fixed.
In Section \ref{sec:dilations}, we provide a counterpart of this result that is independent of the ranks of the dilated operators.
For every $d$-tuple of contractive (self-adjoint) operators $X=(X_1, \ldots, X_d)$ on a Hilbert space $H$, we construct a commuting family of contractive (self-adjoint) operators $T = (T_1, \ldots, T_d)$ on a Hilbert space $K$ and an isometry $V : H \to K$ such that $\frac{1}{2d} X_i = V^* T_i V$ (or $\frac{1}{d} X_i = V^* T_i V$ for self-adjoints) for all $i$.
In the self-adjoint context we then provide variants of this dilation result under different symmetry conditions.
In particular, if $X$ lies in some matrix convex set $\cS$, then under some symmetry conditions we can construct a commuting normal dilation $T$ such that the spectrum $\sigma(\frac{1}{d}T)$ of $\frac{1}{d}T$ is contained in $\cS_1$.
This is used to obtain other scaled inclusion results for matrix convex sets.
In particular, these results can be applied to spectrahedral inclusion problems that were studied by Helton et al.
For example, in Corollary \ref{cor:mainrelax} we show that if $A$ and $B$ are two $d$-tuples of self-adjoint operators, and if $\cD_A^{sa}$ and $\cD_B^{sa}$ denote the free spectrahedra determined by $A$ and $B$, then under some symmetry assumptions on the set $\cD_A^{sa}(1) \subseteq \bR^d$, we have
\bes\label{eq:intro_DA}
\cD^{sa}_A(1) \subseteq \cD^{sa}_B(1) \Longrightarrow \cD^{sa}_A \subseteq d \cD^{sa}_B.
\ees

Section \ref{sec:examples} provides a rich class of convex sets to which the dilation and inclusion results of Section \ref{sec:dilations} can be applied.
We show that if $K$ is a convex set in $\bR^d$ that is invariant under the projection onto an isometric tight frame, then for matrix convex sets $\cS = \cup \cS_n \subseteq \cup (M_n)_{sa}^d$ and $\cT = \cup \cT_n \subseteq \cup (M_n)_{sa}^d$ such that $\cS_1 = K$, we have the implication
\be\label{eq:inclusion}
K \subseteq \cT_1 \Longrightarrow \cS \subseteq d \cT.
\ee
We use this result to show that when $K$ is the convex hull of a vertex-reflexive isometric tight frame, invariance of $K$ under projections onto the isometric tight frame defining $K$ is automatic, so that the implication in equation \eqref{eq:inclusion} holds.

An important example is the case where $K$ is the (hyper)-cube $[-1,1]^d$. We then obtain a variant of the matricial relaxation to the matrix cube problem treated by Helton et al (see \cite[Theorem 1.6]{HKMS15}).
The result then reads
\be\label{eq:cube}
[-1,1]^d \subseteq \cT_1 \Longrightarrow \fC^{(d)} \subseteq d \cT,
\ee
where $\fC^{(d)} = \{X \in \cup_n (M_n)_{sa}^d: \|X_i\| \leq 1 \FORAL i\}$ is the {\em free matrix cube}, where an analogous result hold for all real regular polytopes.

In Section \ref{sec:ball} we study an inclusion problem analogous to \eqref{eq:cube}, but we replace the matrix convex set $\fC^{(d)}$ with a self-dual matrix convex set $\fD$ defined by
\[
\fD = \big\{ X \in \cup (M_n)_{sa}^d : \big\| \sum_i X_i \otimes \ol{X}_i \big\| \leq 1 \big\}.
\]
We find that for all matrix convex sets $\cS \subseteq \cup (M_n)_{sa}^d$,
\[
 \cS_1 \subseteq \ol{\bB}_d \Longrightarrow   \cS  \subseteq \sqrt{d} \, \fD
\]
and
\[
\ol{\bB}_d \subseteq \cS_1 \Longrightarrow \fD  \subseteq \sqrt{d} \, \cS .
\]
Moreover, the constant $\sqrt{d}$ is the optimal constant in both implications (see Theorem \ref{thm:MBP}).
In fact, in both implications one may replace $\fD$ with the matrix ball $\fB = \{X \in \cup (M_n)_{sa}^d : \sum_i X_i^2  \leq I\}$.

The last section contains some additional observations regarding both problems.
In Theorem \ref{thm:PtoUCP} we show that if $A \in \cB(H)_{sa}^d$ and  $0 \in \operatorname{int}\cW(A)$ then there is a positive constant $\rho$ such that
whenever $B\in \cB(K)^d_{sa}$ is such that the map $S_A \to S_B$ given by
\[
I \mapsto I \quad , \quad A_i \mapsto B_i \,\, , \,\, i=1, \ldots, d ,
\]
is positive, then the map given by
\[
I \mapsto I \quad , \quad A_i \mapsto \rho B_i \,\, , \,\, i=1, \ldots, d ,
\]
is completely positive.

Another interesting observation that we make is that implications like \eqref{eq:inclusion} can be sharpened when one takes into account the scalar polar dual $K'$ of $K$ (see Section \ref{subsec:gen}).
As a special example, in Section \ref{subsec:cubediamond} we show that the implication \eqref{eq:cube} can be sharpened significantly to
\[
D_d \subseteq \cT_1 \Longrightarrow \fC^{(d)} \subseteq d \cT,
\]
where
\[
D_d = \{x \in \bR^d : \sum|x_j| \leq 1\}
\]
is the scalar polar dual of $[-1,1]^d$.
See Corollary \ref{cor:relaxationD} and Remark \ref{rem:relaxationD} for a dual result related to the matrix cube problem.

\medbreak
Here is a brief overview of the organization of the paper.
Section \ref{sec:background} provides background.
Polar duality is treated in Section \ref{sec:polar},
and the maximal and minimal matrix convex sets determined by the first level are described in Section \ref{sec:maxmin}.
Section \ref{sec:UCP} contains the results on completely positive maps of $d$-tuples, connecting between the existence of a UCP map between two $d$-tuples of operators and their matrix ranges.
The extent to which a $d$-tuple is determined by its matrix range is discussed in Section \ref{sec:minimality}.
In Section \ref{sec:dilations}, we establish our version of the dilation to commuting normal operators, and provide variants for various forms of symmetry.
This is applied to inclusions of matrix convex sets.
In Section \ref{sec:examples} we consider polytopes generated by vertex-reflexive isometric tight frames, and show that they provide a large class of convex sets to which the results of Section \ref{sec:dilations} can be applied.
Section \ref{sec:ball} deals with the construction of a self-dual matrix ball based on an inequality due to Haagerup.
The final section \ref{sec:relaxations} contains further discussion of matrix inclusion problems and the application to scaling of positive maps, along with inclusion results between polytopes arising from tight frames, and their duals.

%%%%%%%%%%%%%%%%%%%%%%%%%
\section{Matrix convex sets, free spectrahedra and matrix ranges}\label{sec:background}

In this paper, the matrix algebras $M_n$ are understood as $M_n(\bC)$.
The algebra of bounded operators on a Hilbert space $H$ is denoted by $\cB(H)$, $\cB(H)^d$ denotes $d$-tuples of operators, and $\cB(H)_{sa}^d$ denotes $d$-tuples of self-adjoint operators.
The compact operators on $H$ are denoted $\cK(H)$.
$M_n^d$ and $(M_{n})_{sa}^d$ denote $d$-tuples of matrices or self-adjoint matrices, respectively.

Let $d \in \bN$.
A {\em free set $($in $d$ free dimensions$)$} $\cS$ is a disjoint union $\cS = \cup_n \cS_n$, where $\cS_n \subseteq M^d_n$.
Containment is defined in the obvious way: we say that $\cS \subseteq \cT$ if $\cS_n \subseteq \cT_n$ for all $n$.
A free set $\cS$ is said to be {\em open/closed/convex} if $\cS_n$ is open/closed/convex for all $n$.
It is said to be {\em bounded} if there is some $C$ such that for all $n$ and all $A \in \cS_n$, it holds that $\|A_i\|\leq C$ for all $i$.
An {\em nc set} is a free set that is closed under direct sums and under simultaneous unitary conjugation.

An nc set is said to be {\em matrix convex} if it is closed under application of UCP maps, i.e.,
if $X \in \cS_n$ and $\phi \in \UCP(M_n, M_k)$, then $\phi(X) := (\phi(X_1), \ldots, \phi(X_d)) \in \cS_k$. Since every UCP map $\phi : M_n \rightarrow M_k$ has the form $\phi(X) = \sum_{i=1}^m V_i^*XV_i$ for operators $V_i : \bC^n \rightarrow \bC^k$ satisfying $\sum_{i=1}^m V_i^*V_i = I_k$ \cite[Proposition 4.7]{PauBook}, we see that the above definition of a matrix convex set coincides with the one given by Effros and Winkler \cite[Section 3]{EW97}, so that $\cS$ is closed under matrix convex combinations.

The main examples of matrix convex sets are given by {\em free spectrahedra} and {\em matrix ranges}.

%%%%%%%%%%%%%%%%%%%%%%%
\subsection{Free spectrahedra}
A {\em monic linear pencil} is a free function of the form
\[
L(x) = L_A(x) = I - \sum A_j x_j,
\]
where $A \in \cB(H)^d$.
We let $L$ act on a $d$-tuple $X=(X_1,\dots,X_d)$ in $M_n^d$ by
\[
L(X) = I \otimes I_n - \sum_{j=1}^n A_j \otimes X_j .
\]
We write
\[
\cD_A = \cD_L = \cup_n \cD_A(n) = \cup_n \cD_L(n),
\]
where
\[
\cD_L(n) = \{X = (X_j) \in M_n^d: \re L(X)\geq 0\} .
\]
The set $\cD_A$ is said to be a {\em free spectrahedron}.
Most authors use ``free spectrahedron'' for pencils with matrix coefficients --- i.e., the case where $H$ is finite dimensional --- but we allow operator coefficients.

It is of interest also to  work in the context of self-adjoint variables and coefficients, assuming that $A \in \cB(H)^d_{sa}$ and looking for self-adjoint solutions $X \in (M_n)^d_{sa}$ to $L(X) \geq 0$.
In this case we may say {\em self-adjoint spectrahedron} to clarify that we are considering this case, and we define
\[
\cD_L^{sa} = \{X = (X_j) \in (M_{n})_{sa}^d:  L(X)\geq 0\} .
\]

%%%%%%%%%%%%%%%%%%%%%%%%%%%%%%
\begin{proposition}\label{prop:DAmatconv}
For all $A \in \cB(H)^d$,
$\cD_A$ is a closed  matrix convex set in $\cup_n M_n^d$.
If $A \in \Bsad$, then $\cD_A^{sa}$ is a closed  matrix convex set in $\cup_n (M_n)^d_{sa}$.
\end{proposition}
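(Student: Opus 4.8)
The plan is to verify, one at a time, the three ingredients in the definition of a closed matrix convex set — that each level $\cD_A(n)$ is closed, that $\cD_A$ is an nc set (closed under direct sums and simultaneous unitary conjugation), and that it is closed under UCP maps $M_n\to M_k$ — and then to deduce the self-adjoint statement by restriction. For closedness, the point is that each $\cD_A(n)$ already lives inside the finite-dimensional space $M_n^d$: for every fixed $\xi\in H\otimes\bC^n$ the map $X\mapsto\langle\re L(X)\xi,\xi\rangle = 1-\re\sum_j\langle(A_j\otimes X_j)\xi,\xi\rangle$ is affine, hence continuous, in $X$, so $\cD_A(n)$ is the intersection over all such $\xi$ of closed half-spaces and is therefore closed. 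This argument is insensitive to $\dim H$.

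The technical heart is a single algebraic identity. Given operators $V_1,\dots,V_m:\bC^k\to\bC^n$, put $W_i=I_H\otimes V_i:H\otimes\bC^k\to H\otimes\bC^n$. I would compute directly, pushing the $V_i$ past the coefficients $A_j$, that
\[
L_A\!\Big(\textstyle\sum_i V_i^*X_1V_i,\ \dots,\ \sum_i V_i^*X_dV_i\Big)\;=\;\sum_i W_i^*\,L_A(X)\,W_i\;+\;\Big(I-\textstyle\sum_i W_i^*W_i\Big),
\]
which is immediate once one notes $A_j\otimes V_i^*X_jV_i = W_i^*(A_j\otimes X_j)W_i$ and $W_i^*W_i = I_H\otimes V_i^*V_i$. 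When $\sum_i V_i^*V_i=I_k$ (equivalently $\sum_i W_i^*W_i=I$) the error term vanishes, so for $\phi(T)=\sum_i V_i^*TV_i$ we get $L_A(\phi(X))=\sum_i W_i^*L_A(X)W_i$, and since $\re(W^*MW)=W^*(\re M)W$, also $\re L_A(\phi(X))=\sum_i W_i^*\,\re L_A(X)\,W_i$. This is a sum of positive operators whenever $\re L_A(X)\ge 0$, so $\phi(X)\in\cD_A(k)$ whenever $X\in\cD_A(n)$. As every UCP map $M_n\to M_k$ has exactly this form by \cite[Proposition 4.7]{PauBook}, this gives closure under UCP maps in one stroke.

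The nc axioms are then almost free: taking $m=1$ with $V_1$ unitary (so $k=n$) recovers closure under simultaneous unitary conjugation, and closure under direct sums is the even simpler observation that $A_j\otimes(X_j\oplus Y_j)$ equals $(A_j\otimes X_j)\oplus(A_j\otimes Y_j)$ after the natural identification of $H\otimes(\bC^n\oplus\bC^m)$ with $(H\otimes\bC^n)\oplus(H\otimes\bC^m)$, whence $\re L_A(X\oplus Y)=\re L_A(X)\oplus\re L_A(Y)\ge 0$. Together these give that $\cD_A$ is a closed matrix convex set. For the self-adjoint claim, when $A\in\Bsad$ and $X\in(M_n)_{sa}^d$ the pencil value $L_A(X)$ is self-adjoint, so $\re L_A(X)=L_A(X)$ and $\cD_A^{sa}(n)=\cD_A(n)\cap(M_n)_{sa}^d$; since UCP maps, unitary conjugations and direct sums all preserve self-adjointness, the three arguments above restrict verbatim.

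I do not expect any serious obstacle here. The only thing demanding care is the tensor-leg bookkeeping in the displayed identity — in particular that the UCP map $\phi$ acts on the "matrix" leg $\bC^n$ while the coefficients $A_j$ act on the fixed leg $H$, which is precisely why $W_i=I_H\otimes V_i$ intertwines $L_A$ in the stated way — and the trivial but worth-stating fact that compressing $\re L_A(X)$ by the $W_i$ is the same as compressing $L_A(X)$ and then taking the real part.
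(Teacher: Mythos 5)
Your proof is correct and follows essentially the same route as the paper's. The paper compresses the UCP-invariance step into the identity $L(\phi(X)) = (I_H\otimes\phi)(L(X))$ together with the fact that $I_H\otimes\phi$ is UCP and respects real parts; your Kraus-form computation $L_A(\phi(X))=\sum_i W_i^* L_A(X)W_i$ with $W_i=I_H\otimes V_i$ is precisely an unpacking of that same identity, and your treatment of closedness and the nc axioms (which the paper dismisses as "clear") and the self-adjoint restriction are all sound.
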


\begin{proof}
We treat the nonself-adjoint case.
Clearly $\cD_A$ is a closed nc set.
Suppose that  $X \in \cD_{A}(n)$ and $\phi \in \UCP(M_n,M_k)$.
Then $I_H \otimes \phi$ is UCP, and UCP maps respect real and imaginary parts.
Letting $L$ denote the monic linear pencil associated to $A$, we have $\re L(X) \geq 0$ implies that
\[ \re L(\phi(X)) = \re (I \otimes \phi)(L(X)) = (I \otimes \phi) \re L(X) \geq 0 ,\]
so $\phi(X) \in \cD_A(k)$.
\end{proof}

%%%%%%%%%%%%%%%%%%%%%%%%%%%%%%
\begin{example}
The {\em$(d$-dimensional$)$  matrix cube} $\fC^{(d)}$ is the self-adjoint free spectrahedron $\cD_C^{sa}$ determined by the tuple of $2d \times 2d$ matrices $C_j = \left(\begin{smallmatrix} \!E_{jj}\! & 0 \\ 0 & \!-E_{jj}\! \end{smallmatrix}\right)$, where $E_{jj}$ is the diagonal $d \times d$ matrix with $1$ at the $j$th place and $0$s elsewhere.
Then $X\in \cD^{sa}_C$ if and only if
{\allowdisplaybreaks
\begin{align*}
 0 & \le I - \sum_{j=1}^d C_j \otimes X_j %\\&
 = \sum_{j=1}^d  \begin{spmatrix}E_{jj} \otimes I & 0\\0 & E_{jj}\otimes I \end{spmatrix} -
  \begin{spmatrix}E_{jj} \otimes X_j & 0\\0 & -E_{jj}\otimes X_j \end{spmatrix} \\&
 = \begin{spmatrix}E_{jj} \otimes (I-X_j) & 0\\0 & E_{jj}\otimes (I+X_j) \end{spmatrix} .
\end{align*}
}% end of \allowdisplaybreaks
Hence $0 \le I \pm X_j$, or equivalently $-I \le X_j \le I$ for $1 \le j \le d$.
\end{example}

%%%%%%%%%%%%%%%%%%%%%%%%%%%%%%
\begin{example}
The {\em$(d$-dimensional$)$ complex matrix cube}, or {\em matrix polyball} is the matrix convex set
\[
\fC\bC^{(d)} = \{X \in \cup_n M_n^d :  \|X_i\|\leq 1 \FOR 1 \le i \le d\}.
\]
Let $A_j = E_{jj} \otimes \begin{spmatrix}0 & 2\\0 & 0 \end{spmatrix}$ for $1 \le j \le d$.
Then $X \in \cD_A$ if and only if
\begin{align*}
 0 & \le \re \Big( I - \sum_{j=1}^d E_{jj} \otimes \begin{spmatrix}0 & 2\\0 & 0 \end{spmatrix}\otimes X_j \Big) %\\&
 = \sum_{j=1}^d  E_{jj} \otimes \re \begin{spmatrix} I & -2X_j\\0 & I \end{spmatrix}  .
\end{align*}
This holds precisely when $\|X_j\|\le 1$.
\end{example}

%%%%%%%%%%%%%%%%%%%%%%%%%%%%%%
\begin{example}\label{ex:def_ball}
The {\em real matrix ball} and the {\em complex matrix ball} are the free sets defined by
\[
\fB^{(d)} = \{ X \in \cup_n (M_n)_{sa}^d : \sum_j X_j^2 \leq I\} ,
\]
and
\[
\fB\bC^{(d)} = \{ X \in \cup_n M_n^d : \sum_j X_j X_j^* \leq I\} .
\]
The free sets $\fB$ and $\fB\bC$ are a self-adjoint free spectrahedron and a free spectrahedron, respectively, determined by the monic pencils
\[
L_1(x) = \begin{pmatrix}
1 & x_1 & x_2 & \cdots & x_d \\
x_1 & 1 & 0 & \cdots & 0 \\
x_2 & 0 &  \ddots & & 0 \\
\vdots & \vdots & & \ddots & \vdots \\
x_d & 0 & & & 1
\end{pmatrix} .
\]
and
\[
L_2(x) = \begin{pmatrix}
1 & 2 x_1 & 2 x_2 & \cdots & 2 x_d \\
0 & 1 & 0 & \cdots & 0 \\
0 & 0 &  \ddots & & 0 \\
\vdots & \vdots & & \ddots & \vdots \\
0 & 0 & & & 1
\end{pmatrix} .
\]
The details are similar to the previous two examples.
\end{example}

%%%%%%%%%%%%%%%%%%%%%%%
\subsection{Matrix ranges}\label{subsec:matrix_range}
The {\em matrix range} \cite[Section 2.4]{Arv72} of a tuple $A$ in $\cB(H)^d$ is defined to be the set
\[
\cW(A) = \cup_n \cW_n(A),
\]
where
\[
\cW_n(A) = \{(\phi(A_1), \ldots, \phi(A_d)) : \phi \in \UCP(C^*(S_A), M_n)\}.
\]
Here and below we write $S_A$ for the operator system generated by $A$, and $C^*(S_A)$ for the unital C*-algebra generated by $A$.
Note that $\cW(A)$ is contained in $\cup_n (M_n)_{sa}^d$ if and only if $A \in \cB(H)^d_{sa}$, so we do not require a separate notation for working in the self-adjoint setting.

%%%%%%%%%%%%%%%%%%%%%%%%%%%%%%
\begin{proposition}
For all $A \in \cB(H)^d$,
$\cW(A)$ is a closed bounded matrix convex set in $\cup_n M_n^d$.
\end{proposition}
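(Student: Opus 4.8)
The plan is to verify the three properties in turn: matrix convexity (which subsumes being an nc set), closedness, and boundedness.

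For \emph{matrix convexity}, the key observation is that UCP maps compose. Suppose $X = (\phi(A_1),\dots,\phi(A_d)) \in \cW_n(A)$ with $\phi \in \UCP(C^*(S_A), M_n)$, and let $\psi \in \UCP(M_n, M_k)$. Then $\psi \circ \phi \in \UCP(C^*(S_A), M_k)$, and $\psi(X) = ((\psi\circ\phi)(A_1),\dots,(\psi\circ\phi)(A_d)) \in \cW_k(A)$. Closure under direct sums and unitary conjugation is the special case in which $\psi$ is a direct sum of copies followed by compression, or a $*$-homomorphism; alternatively, one checks directly that if $\phi \in \UCP(C^*(S_A), M_n)$ and $\phi' \in \UCP(C^*(S_A), M_m)$ then $\phi \oplus \phi' \in \UCP(C^*(S_A), M_{n+m})$. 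So $\cW(A)$ is matrix convex, hence an nc set.

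For \emph{boundedness}, a UCP map on a C*-algebra is contractive, so $\|\phi(A_i)\| \le \|A_i\| \le \max_j \|A_j\| =: C$ for every $\phi$; thus $\|X_i\| \le C$ for all $X \in \cW(A)$ and all $i$. For \emph{closedness}, I would fix $n$ and show $\cW_n(A)$ is closed in $M_n^d$. Suppose $X^{(\alpha)} = (\phi_\alpha(A_1),\dots,\phi_\alpha(A_d)) \to X$ with $\phi_\alpha \in \UCP(C^*(S_A), M_n)$. The unit ball of the bounded maps from $C^*(S_A)$ into $M_n$ is compact in the point-weak-$*$ topology (a Banach--Alaoglu / Arzelà–Ascoli type argument, since $M_n$ is finite dimensional), so $(\phi_\alpha)$ has a subnet converging pointwise to some completely bounded $\phi$; one then checks $\phi$ is unital (it fixes $I$ along the net) and completely positive (positivity is preserved under pointwise limits, and likewise at every matrix level), so $\phi \in \UCP(C^*(S_A), M_n)$ and $\phi(A_i) = \lim \phi_\alpha(A_i) = X_i$, giving $X \in \cW_n(A)$.

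The main technical point — and the only step that is not a one-line check — is the compactness argument underlying closedness: one must be slightly careful that a point-weak-$*$ limit of UCP maps into a fixed $M_n$ is again UCP (unitality, positivity, and complete positivity all pass to the limit because $M_n$ and each $M_n \otimes M_k$ are finite dimensional, so weak-$*$ and norm topologies on bounded sets agree and positive cones are closed). Everything else follows from the elementary facts that UCP maps are contractive and closed under composition and direct sums.
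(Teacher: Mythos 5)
Your proposal is correct and takes essentially the same route as the paper, which declares the matrix convexity and boundedness ``straightforward'' and, for closedness, simply cites the compactness of $\UCP(C^*(S_A), M_n)$ in the point-norm topology (Theorem~7.4 of Paulsen's book). Your compactness argument in the point-weak-$*$ topology is the same fact in disguise --- as you observe, since $M_n$ is finite-dimensional the point-weak-$*$ and point-norm topologies agree on bounded sets --- so you have in effect re-derived the cited compactness result rather than invoking it.
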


\begin{proof}
The proof is straightforward (for closedness, recall that the space $\UCP(C^*(S_A),M_n)$ is compact in the point-norm topology; see Theorem 7.4 in \cite{PauBook}).
\end{proof}

%%%%%%%%%%%%%%%%%%%%%%%%%%%%%%
\begin{proposition}\label{prop:rangeincluded}
Let $\cS$ be a matrix convex set and suppose that $X \in \cS$.
Then $\cW(X) \subseteq \cS$. In particular, when $X$ is a normal commuting tuple of operators, $\sigma(X) \subseteq \cS_1$.
\end{proposition}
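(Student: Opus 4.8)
The plan is to show that if $X \in \cS_n$ for a matrix convex set $\cS$, then every element of $\cW(X)$ obtained via a UCP map $\phi : C^*(S_X) \to M_k$ already lies in $\cS_k$. First I would observe that the definition of $\cW_k(X)$ ranges over UCP maps on the C*-algebra $C^*(S_X)$, but for the purpose of evaluating $\phi$ on the tuple $X$ only the restriction of $\phi$ to the operator system $S_X = \spn\{I, X_1, \dots, X_d\}$ matters. So given $\psi = (\phi(X_1), \dots, \phi(X_d)) \in \cW_k(X)$, consider the unital completely positive map $\phi|_{S_X} : S_X \to M_k$.

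Next I would invoke Arveson's extension theorem: since $M_k$ is injective, $\phi|_{S_X}$ extends to a UCP map $\Phi : \cB(H) \to M_k$ (where $X$ acts on $H$). Alternatively, one can use Stinespring to write $\phi|_{S_X}$, hence $\Phi$, in the form $\Phi(T) = W^* \pi(T) W$ for a unital $*$-representation $\pi$ and isometry $W$; but the cleanest route for matrix convexity is the Choi-type representation. Indeed, any UCP map $\Phi : M_n \to M_k$ has the form $\Phi(T) = \sum_i V_i^* T V_i$ with $\sum_i V_i^* V_i = I_k$, which is exactly the condition appearing in the paper's definition of matrix convexity right after the statement that $\cS$ is closed under matrix convex combinations. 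Here, however, $X$ may act on an infinite-dimensional $H$, so I would first compress: if $X \in \cS_n$ with $\cS_n \subseteq M_n^d$ then $n$ is finite and $H = \bC^n$, so $C^*(S_X) \subseteq M_n$ and Arveson extension gives $\Phi : M_n \to M_k$ UCP with $\Phi(X_j) = \phi(X_j)$ for all $j$. Then by the matrix-convexity hypothesis (closure under UCP maps $M_n \to M_k$), $\Phi(X) = (\Phi(X_1), \dots, \Phi(X_d)) = \psi \in \cS_k$. Since $\psi$ was an arbitrary element of $\cW_k(X)$ and $k$ arbitrary, $\cW(X) \subseteq \cS$.

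For the ``in particular'' clause: if $X = (X_1, \dots, X_d)$ is a commuting normal tuple on $H$, the C*-algebra $C^*(S_X)$ is commutative with maximal ideal space (a quotient of) the joint spectrum $\sigma(X) \subseteq \bC^d$, and for each point $\lambda = (\lambda_1, \dots, \lambda_d) \in \sigma(X)$ evaluation at $\lambda$ is a unital $*$-homomorphism, hence a UCP map into $M_1 = \bC$ sending $X_j \mapsto \lambda_j$. Thus $\lambda \in \cW_1(X) \subseteq \cS_1$, giving $\sigma(X) \subseteq \cS_1$.

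The main obstacle—more a point requiring care than a genuine difficulty—is bookkeeping the domain of the UCP maps: one must pass from a UCP map on $C^*(S_X)$ (or on $S_X$) to a UCP map defined on the full matrix algebra $M_n$ so that the matrix-convexity axiom applies verbatim. This is handled by Arveson's extension theorem together with the finiteness of $n$ (so $C^*(S_X) \subseteq M_n$); no infinite-dimensional subtlety arises because $\cS_n$ lives inside $M_n^d$. Everything else is a direct unwinding of definitions.
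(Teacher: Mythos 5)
Your proof is correct and takes essentially the same approach as the paper: the paper dismisses the first assertion as ``follows from the definitions,'' and you have correctly identified the one nontrivial step being suppressed --- namely that UCP maps on $C^*(S_X)$ must be extended (via Arveson's extension theorem, using that $H = \bC^n$ is finite-dimensional so $C^*(S_X) \subseteq M_n$) to UCP maps on all of $M_n$ before the matrix-convexity axiom can be applied verbatim. Your treatment of the ``in particular'' clause via point evaluations on $C(\sigma(X))$ is precisely the fact $\sigma(X) \subseteq \cW_1(X)$ invoked in the paper (the parenthetical ``(a quotient of)'' is unnecessary hedging --- the maximal ideal space of $C^*(S_X)$ for a commuting normal tuple is exactly $\sigma(X)$ --- but this does not affect the argument).
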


\begin{proof}
The first assertion follows from the definitions, the second from the first with the fact that $\sigma(X) \subseteq \cW_1(X)$.
\end{proof}

%%%%%%%%%%%%%%%%%%%%%%%%%%
\begin{theorem}\label{T:W(normal)}
Let $N$ be a commuting normal $d$-tuple with $\sigma(N) = \Lambda \subset \bC^d$. Then
\[
\cW_1(N) = {\textup{conv}}(\sigma(N)).
\]
and for $n \ge 2$,
\begin{align*}
\cW_n(N) &= \ol{ \big\{ \sum_{i=1}^m \lambda^{(i)}  K_i : \lambda^{(i)} \in \Lambda,\ m \in\bN,\ K_i \in M_n^+,\ \sum K_i = I_n \big\} } \\
                &= \big\{ \sum_{i=1}^{2n^4+n^2} \lambda^{(i)}  K_i : \lambda^{(i)} \in \Lambda,\ K_i \in M_n^+,\ \sum K_i = I_n \big\}
\end{align*}
\end{theorem}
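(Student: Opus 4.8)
The plan is to prove the three descriptions of $\cW_n(N)$ in turn, moving from the abstract UCP definition to the concrete convex-combination form, and then to the finite bound.

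\textbf{Step 1: The first-level statement.} For $n=1$, a UCP map $\phi:C^*(S_N)\to\bC$ is a state on the commutative C*-algebra $C^*(N)\cong C(\Lambda)$, hence given by integration against a probability measure $\mu$ on $\Lambda=\sigma(N)$. Then $(\phi(N_1),\dots,\phi(N_d))=\int_\Lambda \lambda\,d\mu(\lambda)$, which is the barycenter of $\mu$; as $\mu$ ranges over all probability measures on the compact set $\Lambda$, these barycenters range exactly over $\conv(\Lambda)=\ol{\conv(\Lambda)}$ (compactness makes the convex hull already closed). This gives $\cW_1(N)=\conv(\sigma(N))$.

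\textbf{Step 2: The closure description for $n\ge 2$.} Here I would again use that $C^*(N)\cong C(\Lambda)$, so a UCP map $\phi:C(\Lambda)\to M_n$ is an $M_n$-valued positive unital map, equivalently (by Stinespring / the commutative case) given by an $M_n^+$-valued measure $E$ on $\Lambda$ with $E(\Lambda)=I_n$, and $\phi(N)=\int_\Lambda \lambda\,dE(\lambda)$. A finite sum $\sum_{i=1}^m \lambda^{(i)}K_i$ with $\lambda^{(i)}\in\Lambda$, $K_i\in M_n^+$, $\sum K_i=I_n$ corresponds exactly to choosing $E$ to be a finitely-supported (atomic) such measure, so all such sums lie in $\cW_n(N)$; taking the closure keeps us inside $\cW_n(N)$ since that set is closed. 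For the reverse inclusion, approximate an arbitrary $M_n^+$-valued measure $E$ in the weak-* sense by atomic ones: partition $\Lambda$ into small Borel pieces $\Delta_1,\dots,\Delta_m$, pick $\lambda^{(i)}\in\Delta_i$, and set $K_i=E(\Delta_i)$; then $\sum\lambda^{(i)}K_i\to\int\lambda\,dE$ as the mesh $\to0$, so $\phi(N)$ is a limit of admissible finite sums. This establishes the first equality.

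\textbf{Step 3: Removing the closure via Carath\'eodory.} This is the step I expect to be the main obstacle, since one must control \emph{both} the number of terms and the fact that the $K_i$ are genuine positive matrices (not scalars), all while landing exactly on the target set with no closure. The idea: the set of admissible ``moment'' data is $\{(\lambda, K):\lambda\in\Lambda,\ K\in M_n^+\}$ mapped into the real vector space $(M_n)_{sa}^{d}\oplus(M_n)_{sa}$ via $(\lambda,K)\mapsto(\lambda_1 K,\dots,\lambda_d K, K)$ (using $\lambda_j$ acting on $K$ appropriately, separating real/imaginary parts when $N$ is not self-adjoint), and we want to write $(\phi(N_1),\dots,\phi(N_d),I_n)$ as a convex-cone combination — really a combination with $\sum K_i=I_n$ — of points in the image of this map. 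One shows the relevant set of attainable tuples is already closed (it is the continuous image of a compact set: $\Lambda^m\times\{(K_1,\dots,K_m)\in (M_n^+)^m:\sum K_i=I_n\}$ is compact, and letting $m$ be large enough this image already equals $\cW_n(N)$ by Step 2 plus a compactness/diagonal argument), and then applies a Carath\'eodory-type bound in the real vector space of dimension $2dn^2+n^2$ (or $dn^2$ in the self-adjoint case). A careful count of the ambient real dimension of the constraint ``$\sum K_i = I_n$ together with the $d$ complex matrix equations $\sum \lambda^{(i)}_j K_i = \phi(N_j)$'' — namely $n^2$ real constraints from the partition-of-unity condition and $2dn^2$ real constraints from the moment conditions, hence at most $2dn^2+n^2$ extreme participating terms, plus a small additive correction from the Carath\'eodory ``+1'' applied levelwise — yields the stated bound $2n^4+n^2$ after absorbing the fact that each ``coefficient'' $K_i$ itself carries $n^2$ degrees of freedom rather than one (so the effective count is multiplied by roughly $n^2$, giving $2dn^2\cdot$ something; one then uses $d\le n^2$ or argues directly that $2n^4+n^2$ suffices uniformly). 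I would present this as: reduce to a finite sum by Step 2, invoke compactness to get an \emph{exact} finite representation, then trim it down by a linear-algebra/Carath\'eodory argument counting the real dimension of the affine constraints, arriving at $2n^4+n^2$ terms.

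Throughout, the only genuinely delicate points are (a) justifying that UCP maps out of the commutative C*-algebra $C^*(N)$ are exactly the $M_n^+$-valued measures against which we integrate, which is classical, and (b) the dimension count in Step 3; the rest is bookkeeping with weak-* approximation and compactness of $\UCP(C^*(S_N),M_n)$ in the point-norm topology (already recorded in the excerpt).
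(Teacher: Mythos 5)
Steps 1 and 2 of your proposal are correct and track the paper's argument in spirit, though you route Step 2 through the language of $M_n^+$-valued measures (POVMs) and weak-$*$ approximation by atomic measures, whereas the paper dilates a UCP map on $C(\Lambda)$ to a representation and invokes the Weyl--von Neumann--Berg theorem to approximate it by a diagonal one, then truncates. Both are legitimate ways of reaching the same finite-rank approximation, and the first equality (with the closure) falls out either way.

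The genuine gap is in Step 3. Your plan is to map each term $(\lambda,K)$ into $(M_n)_{sa}^d\oplus(M_n)_{sa}$ and run a Carath\'eodory-type dimension count against the constraints $\sum K_i=I_n$ and $\sum \lambda^{(i)}K_i=\phi(N)$. That count inherently depends on $d$: the ambient real dimension is $2dn^2+n^2$ (or $dn^2+n^2$ self-adjointly), so whatever bound comes out grows linearly in $d$. You then try to recover the $d$-independent number $2n^4+n^2$ by ``using $d\le n^2$,'' but this inequality has no justification --- $d$ is an arbitrary number of variables and can far exceed $n^2$ (already $n=2$, $d=10$ breaks it). The stated bound being uniform in $d$ is precisely what makes it nontrivial, and no ambient-space Carath\'eodory argument will produce it. There is also a structural issue you gloss over: the $K_i$ are matrix, not scalar, weights, so the classical Carath\'eodory theorem does not apply term-by-term to such ``matrix convex combinations'' without further work. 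The paper's route avoids both problems by invoking Arveson's Theorem 1.4.10 in \cite{Arv69}: the \emph{extreme points} of $\cW_n(N)$ are exactly of the form $\sum_{i=1}^m\lambda^{(i)}K_i$ with $m\le n^2$, the bound coming from the linear independence of the subspaces $K_iM_nK_i$ inside $M_n$ --- a dimension count in $M_n$, hence independent of $d$. Ordinary (scalar) Carath\'eodory in the $2n^2$-dimensional real space $M_n$ then bounds the number of extreme points needed, and multiplying the two bounds gives $2n^4+n^2$. Without some substitute for Arveson's extreme-point lemma, your Step 3 cannot be completed as written.
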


\begin{proof}
For $\lambda = (\lambda_1,...,\lambda_d) \in \bC^d$ and $K\in M_n$, we write $\lambda K = K \lambda = (\lambda_1 K,..., \lambda_d K) \in M_n^d$.
When $\lambda^{(i)} \in \Lambda = \sigma(N)$, and $K_i \in M_n^+$, satisfy
\[
\sum_{i=1}^m (K_i^{\frac{1}{2}})^* K_i^{\frac{1}{2}} = \sum_{i=1}^m K_i = I_n ,
\]
we have that
\[
\sum_{i=1}^m \lambda^{(i)} K_i  = \sum_{i=1}^m (K_i^{\frac{1}{2}})^* \lambda^{(i)} K_i^{\frac{1}{2}} \in \cW_n(N)
\]
by matrix convexity.

For the reverse inclusion, any map in $\UCP(C^*(N),M_n)$ is a compression of a representation of $C(\Lambda)$, which
by the Weyl-von Neumann-Berg Theorem (see \cite[Corollary II.4.2]{DavBook}) can be approximated by the compression of a diagonal representation.
A second approximation reduces this to a compression of a finite dimensional representation,
which has the form $\pi(f) = \sum_{i=1}^m f(\lambda^{(i)}) P_i$, where $P_i$ is a partition of $I$ into orthogonal projections.
Thus a compression has the form $\Phi(f) = \sum_{i=1}^m f(\lambda^{(i)}) K_i$ for positive $K_i$ with $\sum K_i = I_n$.
Applying this to the identity function $\id(z) = (z_1,\dots,z_d)$ yields the desired description.
When $n=1$, this set is precisely ${\textup{conv}}(\sigma(N))$.

Elimination of the closure follows from the more refined analysis of \cite[Theorem 1.4.10]{Arv69}.
It is shown that the extreme points of $\cW_n(N)$ have the form $\sum_{i=1}^m \lambda^{(i)}  K_i$, where $m \le n^2$ because of the condition that the subspaces $K_i M_n K_i$ are linearly independent.
Since $M_n$ is $2n^2$-dimensional as a \textit{real} vector space, Carath\'eodory's Theorem shows that every point in the convex hull is obtained as a combination of at most $2n^2+1$ extreme points.
Thus combining the two estimates shows that at most $p=2n^4+n^2$ terms are required. It is then a standard argument that the set of such convex combinations is already closed.
\end{proof}

%%%%%%%%%%%%%%%%%%%%%%%%%%
\begin{corollary} \label{C:W(normal)}
If $N$ is a commuting normal $d$-tuple, then $\cW(N)$ is the smallest matrix convex set containing $\sigma(N)$.
\end{corollary}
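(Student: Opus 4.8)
The plan is to verify the two halves of the claim separately: first that $\cW(N)$ is itself a matrix convex set containing $\sigma(N)$, and then that it is contained in every matrix convex set with that property (note that the smallest such set exists at all, since an intersection of matrix convex sets is again matrix convex). The first half is essentially immediate. By the proposition identifying $\cW(A)$ as a closed bounded matrix convex set, $\cW(N)$ is matrix convex; and $\sigma(N) \subseteq \cW_1(N)$ because for a commuting normal tuple $C^*(S_N) = C(\sigma(N))$, so evaluation at a point $\lambda \in \sigma(N)$ is a character, hence a UCP map into $M_1 = \bC$, carrying the coordinate tuple $\id$ to $\lambda$. (This is the same observation used in the proof of Proposition \ref{prop:rangeincluded}.)

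For the second half, I would let $\cS = \cup_n \cS_n$ be an arbitrary matrix convex set with $\sigma(N) \subseteq \cS_1$ and show $\cW_n(N) \subseteq \cS_n$ for every $n$. Here I invoke the finite description from Theorem \ref{T:W(normal)}: every point of $\cW_n(N)$ can be written as $\sum_{i=1}^{p} \lambda^{(i)} K_i$ with $\lambda^{(i)} \in \sigma(N)$, $K_i \in M_n^+$, and $\sum_i K_i = I_n$, where $p = 2n^4+n^2$. Writing $K_i = (K_i^{1/2})^* K_i^{1/2}$ exhibits this point as the matrix convex combination $\sum_i (K_i^{1/2})^* \lambda^{(i)} K_i^{1/2}$ of the scalar tuples $\lambda^{(i)} \in \sigma(N) \subseteq \cS_1$; since $\cS$ is closed under matrix convex combinations, the point lies in $\cS_n$. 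Hence $\cW(N) \subseteq \cS$, which combined with the first half gives the corollary.

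The one point worth flagging, rather than a genuine obstacle, is that matrix convex sets in the sense used here are not assumed to be topologically closed, so the argument cannot be run with the \emph{closure} description of $\cW_n(N)$; it is precisely the finitely supported representation (the second equality in Theorem \ref{T:W(normal)}, which rests on the Arveson extreme-point bound together with Carath\'eodory's theorem) that lets the inclusion $\cW(N) \subseteq \cS$ go through with no closedness hypothesis on $\cS$. Apart from that bookkeeping, the corollary is a direct repackaging of Theorem \ref{T:W(normal)} and Proposition \ref{prop:rangeincluded}, so I expect the proof to be short.
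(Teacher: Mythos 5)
Your proof is correct and follows essentially the same route as the paper: one inclusion via the explicit finite-combination description in Theorem \ref{T:W(normal)}, the other via $\sigma(N) \subseteq \cW_1(N)$ together with matrix convexity of $\cW(N)$. Your remark that the closure-free representation (rather than the approximation by compressions of diagonal representations) is what makes the argument valid without a closedness hypothesis on the competing matrix convex set is a correct reading of a point the paper leaves implicit in its two-line proof.
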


\begin{proof}
The description of $\cW(N)$ shows that it is evidently contained in the matrix convex hull of $\sigma(N)$.
The converse is immediate from the equality $\cW_1(N) = {\textup{conv}}(\sigma(N))$.
\end{proof}

%%%%%%%%%%%%%%%%%%%%%%%%%%%%%%%%%%%%%%%%%%%%%%%
\section{Polar duality}\label{sec:polar}

Given an nc set $\cS \subseteq \cup_n M_n^d$, its {\em polar dual} \cite{EW97} is defined to be $\cS^\circ = \cup_n \cS_n^\circ$, where
\[
\cS_n^\circ = \big\{X \in M_n^d : \re \big( \sum_j A_j \otimes X_j \big) \leq I \textrm{ for all } A \in \cS \big\}.
\]
Likewise, if $\cS$ is an nc subset of $\cup_n (M_n)^d_{sa}$ then we define $\cS^\bullet = \cup_n \cS_n^\bullet$ where
\[
\cS_n^\bullet = \{X \in (M_n)^d_{sa} :  \sum_j A_j \otimes X_j \leq I \textrm{ for all } A \in \cS\}.
\]

%%%%%%%%%%%%%%%%%%%%%%%%%%%%%%
\begin{proposition}\label{prop:WDA}
Let $A \in \cB(H)^d$.
Then
\[
(\cW(A)\cup \{0\})^\circ = \cW(A)^\circ = \cD_A.
\]
Likewise, if $A \in \cB(H)^d_{sa}$, then
\[
(\cW(A)\cup \{0\})^\bullet = \cW(A)^\bullet = \cD^{sa}_A.
\]
\end{proposition}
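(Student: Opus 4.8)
The plan is to prove the non-self-adjoint identities first, and then observe that the self-adjoint case follows by essentially the same argument with $\re$ deleted. Since $0 \in \cW(A)$ always (take $\phi$ to be any state composed with the identity on a $1$-dimensional space, or note that $\cW_1(A)$ is a nonempty compact convex set; in any case $\cW(A)$ is a matrix convex set and one checks $0$ lies in it when $A \in \cB(H)^d$ — more carefully, $(\cW(A)\cup\{0\})^\circ = \cW(A)^\circ$ holds because adjoining $0$ to a set never changes its polar dual, as the defining inequality $\re(\sum A_j \otimes X_j) \le I$ is automatically satisfied at $A = 0$). So the first equality is a triviality and the content is $\cW(A)^\circ = \cD_A$.

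For the inclusion $\cD_A \subseteq \cW(A)^\circ$: fix $X \in \cD_A(n)$, so $\re L_A(X) = I \otimes I_n - \re(\sum_j A_j \otimes X_j) \ge 0$, i.e. $\re(\sum_j A_j \otimes X_j) \le I$. I must show that for every $B \in \cW_m(A)$ one has $\re(\sum_j B_j \otimes X_j) \le I$. Write $B_j = \phi(A_j)$ for some $\phi \in \UCP(C^*(S_A), M_m)$; by Stinespring, $\phi(\cdot) = V^* \pi(\cdot) V$ for a unital $*$-representation $\pi$ of $C^*(S_A)$ on some $\cK$ and an isometry $V : \bC^m \to \cK$. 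Then $\sum_j B_j \otimes X_j = (V \otimes I_n)^* \bigl(\sum_j \pi(A_j) \otimes X_j\bigr)(V \otimes I_n)$, and since compression by an isometry preserves the inequality ``$\re(\cdot) \le I$'', it suffices to check $\re\bigl(\sum_j \pi(A_j) \otimes X_j\bigr) \le I$. This reduces matters to the case $B = \pi(A)$, i.e. to showing $\cD_A$ is invariant under replacing $A$ by $\pi(A)$ for a unital representation $\pi$. That in turn follows because $\pi \otimes \id$ is a unital $*$-homomorphism, hence preserves positivity and real parts: $\re L_{\pi(A)}(X) = (\pi \otimes \id_n)(\re L_A(X))$ — one must be slightly careful that $X$ has operator, not necessarily representation, entries, but $\re L_A(X) \ge 0$ is an inequality of operators on $H \otimes \bC^n$ and applying $\pi \otimes \id$ is legitimate. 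The converse inclusion $\cW(A)^\circ \subseteq \cD_A$ is immediate: $A$ itself lies in... well, $A$ need not be a tuple of matrices, but the point is $\id \in \UCP(C^*(S_A), \cB(H))$, so $A \in \cW(A)$ in the generalized sense; more concretely, for $X \in \cW(A)^\circ(n)$ the defining condition applied to compressions of $A$ to finite-dimensional subspaces forces $\re(\sum_j A_j \otimes X_j) \le I$ on $H \otimes \bC^n$ by a weak limit / density argument, which is exactly $X \in \cD_A(n)$.

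The main obstacle I anticipate is bookkeeping around the generality that $A$ consists of operators rather than matrices: one must be careful that ``$A \in \cW(A)$'' is interpreted correctly (via the identity representation, or via compressions to finite-dimensional spaces and a limiting argument) so that $\cW(A)^\circ \subseteq \cD_A$ is genuinely justified, and one must confirm that applying $\pi \otimes \id_n$ to the operator inequality $\re L_A(X) \ge 0$ is valid. Neither is serious, but they are the points where a sloppy argument would break. For the self-adjoint statement, the identical Stinespring-compression argument works with ``$\re$'' removed throughout and with all operators self-adjoint, using $\cD_A^{sa} = \{X \in (M_n)_{sa}^d : \sum_j A_j \otimes X_j \le I\}$; again adjoining $0$ is harmless, giving $(\cW(A)\cup\{0\})^\bullet = \cW(A)^\bullet = \cD_A^{sa}$.
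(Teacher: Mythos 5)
Your proof is correct and follows essentially the same path as the paper's: both show $\cD_A \subseteq \cW(A)^\circ$ by pushing the defining inequality through a UCP map tensored with the identity, and both show the reverse inclusion by letting $\phi$ range over compressions to finite-dimensional subspaces of $H$ and taking a limit. The only difference is that you route the forward inclusion through Stinespring (writing $\phi = V^*\pi(\cdot)V$ and handling the $*$-homomorphism and compression separately), whereas the paper applies $\phi\otimes\id_n$ directly to $I - \sum_j A_j\otimes X_j \geq 0$, relying on the fact that a UCP map tensored with $\id_n$ is positive on $C^*(S_A)\otimes M_n$; this is a slightly more economical phrasing of the same idea, and your Stinespring detour is unnecessary but harmless.
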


\begin{proof}
We will prove the second claim. The first  follows similarly by taking real parts in the appropriate places.

First note that by definition of the dual, we have $(\cW(A)\cup \{0\})^\bullet = \cW(A)^\bullet$. Next, if $X \in \cD_A^{sa}$,\vspace{.3ex} and $\phi$ is in $\UCP(C^*(S_A), M_n)$, then applying $\phi \otimes \id$ to the inequality $\sum_j A_j \otimes X_j \leq I$, we get that $\sum_j \phi(A_j) \otimes X_j \leq I$, so that $X \in \cW(A)^\bullet$.
Conversely, if $\sum_j \phi(A_j) \otimes X_j \leq I$ for all $\phi$ in $\UCP(C^*(S_A), M_n)$, then we find that $X \in \cD_A$ by letting $\phi$ range over all compressions of $\cB(H)$ to finite dimensional subspaces.
\end{proof}

%%%%%%%%%%%%%%%%%%%%%%%%%%%
\begin{lemma}\label{lem:bipolar}
If $\cS$ is a matrix convex set in $\cup_n M_n^d$ containing $0$, then
\[
\cS^{\circ \circ} = \cS.
\]
If $\cS$ is a matrix convex set in $\cup_n (M_n)_{sa}^d$ containing $0$, then
\[
\cS^{\bullet \bullet} = \cS.
\]
\end{lemma}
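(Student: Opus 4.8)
The statement is the bipolar theorem for matrix convex sets, and it is naturally an application of the Effros--Winkler separation theorem for matrix convex sets together with the duality already recorded in Proposition \ref{prop:WDA}. I will treat the self-adjoint case $\cS^{\bullet\bullet} = \cS$; the non-self-adjoint case $\cS^{\circ\circ} = \cS$ is identical after inserting $\re(\cdot)$ in the relevant inequalities. The easy inclusion $\cS \subseteq \cS^{\bullet\bullet}$ is formal: if $X \in \cS_n$ and $Y \in \cS_m^\bullet$, then by definition of $\cS^\bullet$ applied to the element $X$ (of $\cS$) we have $\sum_j X_j \otimes Y_j \le I$, hence (by symmetry of the pairing $\sum_j A_j \otimes X_j$ under the flip) $\sum_j Y_j \otimes X_j \le I$; since this holds for every $Y \in \cS^\bullet$, we get $X \in (\cS^\bullet)^\bullet_n = \cS^{\bullet\bullet}_n$.

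\textbf{The reverse inclusion.} Here is where the separation theorem enters. Suppose $X \in \cS^{\bullet\bullet}_n$ but $X \notin \cS_n$. Since $\cS$ is a closed matrix convex set in $\cup_m (M_m)_{sa}^d$ containing $0$, the matricial Hahn--Banach / Effros--Winkler separation theorem (\cite[Theorem 5.4]{EW97}) produces a ``matrix affine'' separating functional: concretely, there is a tuple $B \in (M_n)_{sa}^d$ (one can take the separating hyperplane at level $n$, since $X$ lives at level $n$) such that $\sum_j B_j \otimes A_j \le I$ for all $A \in \cS$ — i.e. $B \in \cS_n^\bullet$ — while $\sum_j B_j \otimes X_j \not\le I$. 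But $B \in \cS^\bullet$ and $X \in \cS^{\bullet\bullet}$ together force $\sum_j B_j \otimes X_j \le I$, a contradiction. Hence $\cS^{\bullet\bullet}_n \subseteq \cS_n$ for every $n$, completing the proof. One should double-check that the form of the Effros--Winkler theorem being invoked gives a separating functional realized at the same matrix level $n$ and normalized so that the ``$\le I$'' on $\cS$ versus ``$\not\le I$'' at $X$ dichotomy is exactly the one defining $\bullet$; using $0 \in \cS$ ensures the normalization constant can be taken to be $1$ rather than an arbitrary positive scalar.

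\textbf{Expected main obstacle.} The only real subtlety is citing the separation theorem in precisely the right form. Effros--Winkler separate a point not in a closed matrix convex set from the set by a self-adjoint (resp. ``hermitian'' in the complex case) matrix-valued pencil inequality, and the passage from ``separated by a pencil'' to ``lies in $\cS^\bullet$'' requires knowing the separating functional can be scaled/shifted so that its value on all of $\cS$ is $\le I$ while it exceeds $I$ at $X$ — this is exactly where $0 \in \cS$ is used. I would state this carefully, perhaps as a one-line lemma extracted from \cite{EW97}, rather than grinding through the geometric Hahn--Banach argument from scratch. Everything else — the flip-symmetry of the pairing, the formal inclusion $\cS \subseteq \cS^{\bullet\bullet}$, and the reduction of both the $\circ$ and $\bullet$ cases to one argument — is routine.
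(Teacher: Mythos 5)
Your proposal is correct, but it takes a genuinely different route from the paper. The paper handles the first assertion $\cS^{\circ\circ}=\cS$ by a one-line citation of the Effros--Winkler bipolar theorem (\cite[Corollary 5.5]{EW97}), and then derives the self-adjoint bipolar identity $\cS^{\bullet\bullet}=\cS$ from the nonself-adjoint one by an algebraic observation: if $\cS$ is self-adjoint and $B\in\cS^\circ$, then $\re B\in\cS^\bullet$ (because $\re\sum_j B_j\otimes Y_j=\sum_j(\re B_j)\otimes Y_j$ when $Y$ is self-adjoint), which together with $X=X^*$ gives $\cS^{\bullet\bullet}\subseteq\cS^{\circ\circ}=\cS$. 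You instead propose to reprove the bipolar theorem from scratch out of the Effros--Winkler separation theorem, starting in the self-adjoint setting. Both routes are valid. Your approach is more self-contained but requires the extra bookkeeping you flag in your ``expected obstacle'' paragraph: the separation theorem as stated in \cite{EW97} is in the nonself-adjoint form (a pencil inequality of type $\re\sum_j C_j\otimes(\,\cdot\,)\le\alpha\otimes I$), so to extract what you need --- a \emph{self-adjoint} tuple $B$ with $\sum_j B_j\otimes A_j\le I$ on $\cS$ and $\sum_j B_j\otimes X_j\not\le I$ --- you must (i) normalize $\alpha$ to $I$ using $0\in\cS$ (so $\alpha>0$), and (ii) pass from $C$ to $\re C$, which is exactly the real-part trick the paper uses in its derivation. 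So the obstacle you identify is real, and its resolution is precisely the content of the paper's short argument for the $\bullet$ case; if you fill that step in carefully, your proof closes.

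Two small remarks. First, your ``flip symmetry'' justification for $\cS\subseteq\cS^{\bullet\bullet}$ is fine (the canonical flip $M_n\otimes M_m\to M_m\otimes M_n$ is a $*$-isomorphism, hence preserves $\le I$), though the paper treats this direction as immediate from the definitions. Second, the paper cites \cite[Theorem 5.1]{EW97} for the separation theorem, not Theorem 5.4; you should double-check the theorem number and, more importantly, the exact hypotheses and normalization it carries so that the deduction ``$B\in\cS^\bullet$'' is airtight. Finally, both your proposal and the paper's statement implicitly assume that $\cS$ is (topologically) closed; without that, $\cS^{\bullet\bullet}$ would only equal the closure of $\cS$. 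In all applications in the paper $\cS$ is closed (e.g., $\cW(A)$), so this is harmless, but it is worth keeping in mind if you state the separation step as a stand-alone lemma.
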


\begin{proof}
The first assertion is precisely the bipolar theorem of Effros and Winkler \cite[Corollary 5.5]{EW97}.

For the second assertion, we provide an additional argument.
By definition, $\cS \subseteq \cS^{\bullet\bullet}$.
To show the reverse inclusion, it will suffice to show that when considering $\cS$ and $\cS^{\bullet\bullet}$ as subsets of $\cup_n M_n^d$, we have that $\cS^{\bullet\bullet} \subseteq \cS^{\circ\circ}$, since the first assertion guarantees $\cS = \cS^{\circ \circ}$.

If $X \in (\cS^{\bullet\bullet})_n$, then $X \in (\cS^\bullet)^\circ \cap (M_n)_{sa}^d$.
This means
\[
\re \sum_j A_j \otimes X_j  = \sum_j A_j \otimes X_j\leq I
\]
for all $A \in \cS^\bullet_n$.
Now let $B \in \cS^\circ_n$.
We have that $\re \sum_j B_j \otimes Y_j \leq I$ for all $Y$ in $\cS_n$, and since every such $Y$ is self-adjoint,  this means that
\[ \sum_j \re B_j \otimes Y_j = \re \sum_j B_j \otimes Y_j ,\]
so $\re B \in \cS^\bullet_n$.
Therefore (using that $X$ is self-adjoint) we have
\[ \re \big(\sum_j B_j \otimes X_j \big)= \sum_j (\re B_j) \otimes X_j \leq I . \]
This shows that $X \in \cS^{\circ \circ}$ as required.
\end{proof}

%%%%%%%%%%%%%%%%%%%%%%%%%%%
\begin{proposition}\label{prop:DAW}
Let $A \in \cB(H)^d$.
If $0 \in \cW(A)$, then
\[
\cD_A^\circ =  \cW(A).
\]
Likewise, if $A \in \cB(H)^d_{sa}$ and $0 \in \cW(A)$, then
\[
(\cD^{sa}_A)^\bullet = \cW(A).
\]
\end{proposition}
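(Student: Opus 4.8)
The plan is to combine the bipolar identities of Lemma~\ref{lem:bipolar} with the duality computation of Proposition~\ref{prop:WDA}. The key observation is that $\cW(A)$ is a matrix convex set containing $0$ (by hypothesis $0 \in \cW(A)$), so the bipolar theorem applies to it. Concretely, for the nonself-adjoint case, Proposition~\ref{prop:WDA} gives $\cW(A)^\circ = \cD_A$. Taking polar duals of both sides yields $\cW(A)^{\circ\circ} = \cD_A^\circ$. But by Lemma~\ref{lem:bipolar}, since $\cW(A)$ is matrix convex and contains $0$, the left-hand side is just $\cW(A)$. Hence $\cD_A^\circ = \cW(A)$, as desired.

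First I would record that $\cW(A)$ is matrix convex (this is the proposition preceding Theorem~\ref{T:W(normal)} in the excerpt) and that, by assumption, $0 \in \cW(A)$, so the hypotheses of Lemma~\ref{lem:bipolar} are met. Next I would invoke Proposition~\ref{prop:WDA} in the form $\cW(A)^\circ = \cD_A$, apply the polar-dual operation $(\cdot)^\circ$ to both sides, and then substitute $\cW(A)^{\circ\circ} = \cW(A)$ from Lemma~\ref{lem:bipolar}. The self-adjoint case is entirely parallel: from $\cW(A)^\bullet = \cD_A^{sa}$ (second part of Proposition~\ref{prop:WDA}), apply $(\cdot)^\bullet$ and use $\cW(A)^{\bullet\bullet} = \cW(A)$ (second part of Lemma~\ref{lem:bipolar}) to conclude $(\cD_A^{sa})^\bullet = \cW(A)$.

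There is essentially no obstacle here; the statement is a formal consequence of the two preceding results, and the only thing to be careful about is that the bipolar theorem requires a matrix convex set containing $0$ — which is exactly why the hypothesis $0 \in \cW(A)$ appears. I would just make sure to state clearly that the dualization step is legitimate (taking polars reverses nothing problematic since we are simply applying a well-defined operation to an equality of sets). In short, the proof is two lines in each case:
\[
\cD_A^\circ = (\cW(A)^\circ)^\circ = \cW(A)^{\circ\circ} = \cW(A),
\]
and likewise $(\cD_A^{sa})^\bullet = (\cW(A)^\bullet)^\bullet = \cW(A)^{\bullet\bullet} = \cW(A)$, using Proposition~\ref{prop:WDA} for the first equality in each chain and Lemma~\ref{lem:bipolar} for the last.
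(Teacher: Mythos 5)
Your proof is correct and matches the paper's own argument: the paper simply notes that the claim follows from Lemma~\ref{lem:bipolar} applied to $\cS=\cW(A)$ together with Proposition~\ref{prop:WDA}, which is exactly the two-line chain $\cD_A^\circ=(\cW(A)^\circ)^\circ=\cW(A)^{\circ\circ}=\cW(A)$ (and its $\bullet$ analogue) that you wrote out.
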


\begin{proof}
In light of Proposition \ref{prop:WDA}, both assertions follow from the previous lemma applied to $\cS = \cW(A)$.
\end{proof}

%%%%%%%%%%%%%%%%%%%%%%%%%%%%%%
\begin{lemma} \label{lemma:boundedness-dual-interior}
For $A\in \cB(H)^d$ the following are equivalent.
\begin{enumerate}
\item[(1)] $0 \in \operatorname{int}(\cW(A))$, in the sense that there is some $\delta >0$ such that for all $X \in (M_n)^d$, if $||X_i||<\delta$ for all $i$ then $X\in \cW(A)$.
\item[(2)] $0\in \operatorname{int}(\cW_1(A))$.
\item[(3)] $\cD_A(1)$ is bounded.
\item[(4)] $\cD_A$ is bounded, in the sense that there is some $R > 0$ such that $\|X_i\| \leq R$ for all $X \in \cD_A$ and all $i$.
\end{enumerate}
A similar result holds for $A\in \cB(H)^d_{sa}$, when we consider $\cD^{sa}_A$ instead of $\cD_A$.
\end{lemma}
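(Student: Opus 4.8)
The plan is to prove the cycle of implications $(1) \Rightarrow (2) \Rightarrow (3) \Rightarrow (4) \Rightarrow (1)$, using polar duality (Propositions~\ref{prop:WDA} and~\ref{prop:DAW}) to trade statements about $\cW(A)$ for statements about $\cD_A$ and back. The implication $(1) \Rightarrow (2)$ is immediate, since $\cW_1(A)$ is the ``first level'' of $\cW(A)$ and a $\delta$-ball of scalars is in particular a $\delta$-ball of $1\times 1$ matrices. The implication $(2) \Rightarrow (3)$ I would do by a direct computation with the polar dual: if the open $\delta$-ball of $\bR^d$ is contained in $\cW_1(A)$, then for any $X \in \cD_A(1) = \cW_1(A)^\bullet$ (using Proposition~\ref{prop:DAW}, after noting $0 \in \cW(A)$ follows from (2)) and any unit vector $e_j$, scaling shows $\delta e_j$-type tuples pair against $X$ to give a bound $|X_j| \le 1/\delta$, so $\cD_A(1)$ is bounded. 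The implication $(4) \Rightarrow (1)$ is the ``same'' computation run backwards in the dual: if $\|X_i\| \le R$ for all $X \in \cD_A$, then taking the polar dual reverses the inclusion and the ball of radius $1/R$ (or some fixed multiple thereof) lands inside $\cW(A) = \cD_A^\circ$.

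The step I expect to be the real content is $(3) \Rightarrow (4)$: promoting boundedness of the scalar level of a free spectrahedron to boundedness of all levels. The key point is that $\cD_A$ is matrix convex (Proposition~\ref{prop:DAmatconv}), hence closed under compression by isometries; so if $X = (X_1,\dots,X_d) \in \cD_A(n)$ and $\xi \in \bC^n$ is a unit vector, the compression to $\bC\xi$ lies in $\cD_A(1)$, which gives $|\langle X_j \xi, \xi\rangle| \le R$ for the bound $R$ governing the scalar level. This controls the numerical radius of each $X_j$, hence $\|X_j\| \le 2R$ (for self-adjoint $X_j$, already $\|X_j\| = $ numerical radius $\le R$). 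Thus the bound at level $n$ is at most twice (or equal to, in the self-adjoint case) the bound at level $1$, uniformly in $n$. One should first observe that (3) already forces $0 \in \cD_A(1)$ to have nonempty interior issues handled, i.e. that $\cW_1(A)$ has $0$ in its interior — this is where (3) feeds back into (2), closing the loop, and it follows because $\cD_A(1)$ bounded means its polar dual $\cW_1(A) = \cD_A(1)^\bullet$ contains a neighborhood of $0$ by the standard scalar polar duality fact.

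Finally, for the self-adjoint case one repeats the argument verbatim with $\cD_A^{sa}$, $\cW(A) \subseteq \cup_n (M_n)_{sa}^d$, and the bullet-dual $\bullet$ in place of the circle-dual $\circ$; the numerical-radius-to-norm step is even cleaner there since a self-adjoint matrix has norm equal to its numerical radius. The only mild care needed is to make sure that in $(2) \Rightarrow (3)$ and $(4) \Rightarrow (1)$ one actually knows $0 \in \cW(A)$ so that Proposition~\ref{prop:DAW} applies; in each case the hypothesis (a ball around $0$ in $\cW_1(A)$, or boundedness of $\cD_A$ which dualizes to a ball around $0$ in $\cW_1(A)$) supplies this, so no circularity arises.
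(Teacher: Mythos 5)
Your proof is essentially correct, and it takes a genuinely different and arguably more elementary route than the paper's.

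The paper does not prove the implications in the cycle order $(1)\Rightarrow(2)\Rightarrow(3)\Rightarrow(4)\Rightarrow(1)$. Instead it proves $(1)\Rightarrow(4)$ directly (by plugging scalar multiples of identity matrices $X_i = \tfrac{\delta}{2}I$ into the polar pairing $\cD_A = \cW(A)^{\circ}$), proves $(3)\Rightarrow(2)$ via a scalar Hahn--Banach separation argument, records $(1)\Rightarrow(2)$ and $(4)\Rightarrow(3)$ as trivial, and then proves $(2)\Rightarrow(1)$ using the normal-dilation machinery: a normal tuple $N$ with $\sigma(N) = \epsilon[-1,1]^d$ is placed inside $\cW_1(A)$, and Corollary~\ref{C:W(normal)} together with the explicit description of $\cW(N)$ in Theorem~\ref{T:W(normal)} is invoked to show $0\in\operatorname{int}\cW(N)\subseteq\cW(A)$. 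Your route completely avoids Theorem~\ref{T:W(normal)} and Corollary~\ref{C:W(normal)}. Your key step, $(3)\Rightarrow(4)$ via compression to one-dimensional subspaces and the numerical-radius estimate $\|X_j\|\le 2w(X_j)\le 2R$ (with equality $\|X_j\|=w(X_j)$ in the self-adjoint case), is a clean and self-contained use of the matrix convexity of $\cD_A$ (Proposition~\ref{prop:DAmatconv}) and is, to my eye, the most direct way to pass from boundedness at level one to uniform boundedness of the free spectrahedron; the paper never isolates this observation. This lets you replace the paper's normal-dilation route to $(1)$ with a pure duality argument.

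One place where you should tighten the exposition: in $(4)\Rightarrow(1)$ you write $\cW(A)=\cD_A^{\circ}$, which via Proposition~\ref{prop:DAW} requires $0\in\cW(A)$, and you justify this by saying boundedness of $\cD_A$ ``dualizes to a ball around $0$ in $\cW_1(A)$.'' As stated this is slightly circular, because by Proposition~\ref{prop:WDA} and the scalar bipolar theorem what you get for free is a ball around $0$ in $\cD_A(1)' = \conv(\cW_1(A)\cup\{0\})$, not in $\cW_1(A)$ itself. To close the gap you still need the short Hahn--Banach observation that for a closed convex $C$, if $0\notin C$ then $\conv(C\cup\{0\})$ lies in a closed half-space with $0$ on its boundary and hence has empty interior at $0$ --- this is precisely the content of the paper's $(3)\Rightarrow(2)$ step. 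Once you insert that sentence (or simply route through $(4)\Rightarrow(3)\Rightarrow(2)$, and then use $0\in\cW_1(A)\subseteq\cW(A)$ to legitimize $\cD_A^{\circ}=\cW(A)$ before dualizing up), the argument is complete. The same caution applies, harmlessly, in $(2)\Rightarrow(3)$, where the hypothesis already gives $0\in\cW_1(A)$ directly.
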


\begin{proof}
We prove the lemma for self-adjoint tuples and self-adjoint domains, where the proof for the nonself-adjoint case is done similarly by considering real parts, the complex version of the Hahn-Banach separation theorem and the matrix polyball instead of the matrix cube for the proof of (2) implies (1).

(3) implies (2): If $0$ is not in $\operatorname{int}(\cW_1(A))$, it follows from convexity and the Hahn-Banach separation theorem that there are real numbers $a_1,\ldots,a_d$ (not all $0$) such that, for every $x=(x_1,\ldots,x_d)\in \cW_1(A)$, $\sum a_ix_i\geq 0$.
Thus, for every $t<0$, $\sum t a_i x_i \leq 0 < 1$ so that for every such $t$, $(ta_1,\ldots,ta_d)\in \cD^{sa}_A(1)$, contradicting (3).

(1) implies (4):
Let $\delta >0$ be such that, if $||X_i||<\delta$ for all $i$ then $X\in \cW(A)$.
Fix some $i\in \{1, \ldots, d\}$,
and let $X=(X_j)$ be defined by $X_j=0$ if $i\neq j$ and $X_i = \frac{1}{2}\delta I$.
Then $\pm X\in \cW(A)$.
Now for every $Y\in \cD_A^{sa}=\cW(A)^\bullet $, $\pm X_i\otimes Y_i =\pm \sum X_j \otimes Y_j\leq I$.
But $||X_i\otimes Y_i|| = \frac{1}{2}\delta\cdot ||Y_i||$, thus, $||Y_i||\leq 2/\delta$.
Therefore $\cD_A^{sa}$ is bounded.

(1) and (4) trivially imply (2) and (3), respectively.

(2) implies (1): Suppose $0\in \operatorname{int}(\cW_1(A))$, and let $\epsilon \cdot [-1,1]^d \subset \cW_1(A)$ be a cube of radius $\epsilon$ inside $\cW_1(A)$.
Then there is a normal $d$-tuple $N$ with $\sigma(N) = \epsilon \cdot [-1,1]^d$, so that by Corollary \ref{C:W(normal)}, $\cW(N) \subset \cW(A)$.
It then suffices to show that $0 \in \operatorname{int}(\cW(N))$ for a normal $d$-tuple with $\sigma(N) = [-1,1]^d$ the $d$-cube, but this follows from Theorem \ref{T:W(normal)}.
\end{proof}

%%%%%%%%%%%%%%%%%%%%%%%%%%%%%%
\begin{proposition}\label{prop:LMIrange}
A closed matrix convex set $\cS \subseteq \cup_n (M_n)^d$ has the form
\[
\cS = \cW(A)
\]
for some $A \in \cB(H)^d$ if and only if $\cS$ is bounded.
$\cS$ has the form
\[
\cS = \cD_{B}
\]
for some $B \in \cB(H)^d$ if and only if $0 \in \operatorname{int}(\cS)$.
A similar result holds in the self-adjoint case.
\end{proposition}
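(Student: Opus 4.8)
The plan is to prove each of the two equivalences in both directions, leaning heavily on the duality machinery already established. For the first equivalence: if $\cS = \cW(A)$, then boundedness is immediate from the Proposition asserting that every matrix range is bounded. Conversely, suppose $\cS$ is a closed, bounded matrix convex set. The natural move is to pass to the polar dual $\cT := \cS^\bullet$ (in the self-adjoint case; $\cS^\circ$ otherwise). We would first observe that $\cS$ bounded forces $0 \in \operatorname{int}(\cT_1)$: boundedness of $\cS$ gives $\|X_i\| \le C$ for all $X \in \cS$, so any scalar point $y$ with $|y_j|$ small enough satisfies $\sum_j A_j y_j \le I$ for every $A \in \cS$, hence $y \in \cT_1$. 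By Lemma \ref{lemma:boundedness-dual-interior} applied to $\cT$ (or directly), $\cT$ then has the boundedness/interior properties needed; in particular we can realize $\cT$ as a free spectrahedron. Concretely, since $\cT = \cup_n \cT_n$ with each $\cT_n$ a closed convex set, one picks a dense sequence — or uses the whole set — of tuples $B^{(\alpha)} \in \cT$ and forms $A := \bigoplus_\alpha B^{(\alpha)} \in \cB(H)^d_{sa}$ on $H = \bigoplus_\alpha \bC^{n_\alpha}$; then $\cD_A^{sa} = \cap_\alpha \cD_{B^{(\alpha)}}^{sa} = \cT^\bullet$ by the definition of $\cD_A$ and of the polar, essentially because $\re L_A(X) \ge 0$ unpacks to the simultaneous conditions $\sum_j B^{(\alpha)}_j \otimes X_j \le I$. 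Finally, since $\cS$ is matrix convex, closed, and contains $0$ (which we may arrange, or which follows once we know $\cS$ is a matrix range — actually $\cW(A)$ always contains $0$, so we should check $0 \in \cS$; if $\cS$ is bounded matrix convex we can translate, but cleaner: the statement implicitly wants $\cS$ with $0$ in it, or we absorb $\{0\}$), the bipolar theorem Lemma \ref{lem:bipolar} gives $\cT^\bullet = \cS^{\bullet\bullet} = \cS$, and by Proposition \ref{prop:DAW} (using $0 \in \cW(A)$ always) we get $\cD_A^{sa}{}^\bullet = \cW(A)$, hence $\cS = \cW(A)$.

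For the second equivalence, the argument is dual. If $\cS = \cD_B$ then $0 \in \operatorname{int}(\cS)$ by unpacking: for $X$ with all $\|X_i\|$ small, $\re L_B(X) = I - \re\sum B_j \otimes X_j$ is close to $I$, hence positive, so $X \in \cD_B$ — this is essentially the "$\cD_B$ contains a neighbourhood of $0$" direction, which is trivial. Conversely, suppose $\cS$ is closed matrix convex with $0 \in \operatorname{int}(\cS)$. Then the polar $\cS^\bullet$ is bounded: $0 \in \operatorname{int}(\cS)$ means some $\delta$-neighbourhood of $0$ lies in $\cS$, and testing the defining inequality of $\cS^\bullet$ against the scalar tuples $(0,\dots,\pm\tfrac{\delta}{2}I,\dots,0)$ in $\cS$ forces $\|Y_i\| \le 2/\delta$ for $Y \in \cS^\bullet$ — this is exactly the computation in the "(1) implies (4)" step of Lemma \ref{lemma:boundedness-dual-interior}. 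Now $\cS^\bullet$ is closed, bounded, matrix convex and contains $0$, so by the first equivalence (just proved) $\cS^\bullet = \cW(A)$ for some $A$; then by Proposition \ref{prop:WDA} and the bipolar theorem, $\cS = \cS^{\bullet\bullet} = \cW(A)^\bullet = \cD_A^{sa}$, with $B := A$.

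The main obstacle — really the only substantive point — is the construction in the converse of the first equivalence: realizing an arbitrary closed bounded matrix convex set as $\cD_A$ for a single operator tuple $A$, by taking a direct sum over a (necessarily infinite, possibly uncountable, but one can arrange separable via a countable dense subset in each $M_n^d$) family of tuples drawn from the dual set, and verifying that $\cD_{\bigoplus B^{(\alpha)}} = \bigcap \cD_{B^{(\alpha)}}$ equals the polar. One must be careful that a countable dense family suffices — this uses that each $\cD_{B^{(\alpha)}}(n)$ is closed and that the map $B \mapsto \cD_B(n)$ is "continuous from below", i.e. if $B^{(\alpha_k)} \to B$ then $\cap_k \cD_{B^{(\alpha_k)}} \subseteq \cD_B$, so that intersecting over a dense set recovers the intersection over everything. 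Everything else is bookkeeping with the already-proved bipolar theorem (Lemma \ref{lem:bipolar}), the duality identities $\cW(A)^\bullet = \cD_A^{sa}$ and $(\cD_A^{sa})^\bullet = \cW(A)$ (Propositions \ref{prop:WDA} and \ref{prop:DAW}), and the boundedness $\Leftrightarrow$ interior dictionary of Lemma \ref{lemma:boundedness-dual-interior}. The nonself-adjoint case follows verbatim after replacing $\bullet$ by $\circ$ and inserting $\re$ where appropriate, as the excerpt repeatedly notes.
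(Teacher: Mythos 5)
Your proof of the second equivalence, via duality from the first, is correct and is exactly what the paper does. Your forward directions (a matrix range is bounded; a free spectrahedron has $0$ in its interior) are also fine. The problem is in the converse of the first equivalence, and it is not merely a slip.

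You pass to $\cT = \cS^\bullet$ and then form $A = \bigoplus_\alpha B^{(\alpha)}$ from a dense family $B^{(\alpha)} \in \cT$. There are three issues. First, $\cT = \cS^\bullet$ need not be bounded when $\cS$ is bounded (it is bounded precisely when $0 \in \operatorname{int}(\cS)$, which is not assumed); so the direct sum need not define a bounded operator $A$, and the construction can simply fail. Second, even granting boundedness, your chain $\cD_A^{sa} = \cap_\alpha \cD_{B^{(\alpha)}}^{sa} = \cT^\bullet = \cS^{\bullet\bullet} = \cS$ establishes $\cS = \cD_A^{sa}$; applying $(\cD_A^{sa})^\bullet = \cW(A)$ then yields $\cS^\bullet = \cW(A)$, not $\cS = \cW(A)$, so the concluding ``hence'' is a non sequitur. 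Third, the whole route requires $\cS^{\bullet\bullet} = \cS$, i.e. $0 \in \cS$, but the proposition does not assume this and is true without it (e.g. $\cS_n = \{(I_n,\dots,I_n)\}$ is a closed bounded matrix convex set not containing $0$, yet it equals $\cW(A)$ for the scalar tuple $A = (1,\dots,1)$). The paper avoids all three problems by making no duality detour at all: it takes a dense sequence $\{A^{(k)}\}$ in $\cS$ itself (so the direct sum $A = \bigoplus_k A^{(k)}$ is bounded) and proves $\cW(A) = \cS$ directly. The inclusion $\cS \subseteq \cW(A)$ is immediate, and the genuine content --- which your proposal misses entirely --- is $\cW(A) \subseteq \cS$: one shows, via Voiculescu's theorem and an approximation by finite matrix convex combinations, that every $\phi(A)$ for $\phi \in \UCP(C^*(S_A), M_n)$ lies in the closed matrix convex set $\cS$. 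This is where the real work is, and it needs neither the bipolar theorem nor the assumption $0 \in \cS$.
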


\begin{proof}
That $\cW(A)$ and $\cD_B$ are closed matrix convex sets was noted above, and clearly $0 \in \operatorname{int}(\cD_B)$ and $\cW(A)$ is bounded.

For the converse direction in the first assertion, let $\{A^{(k)}\}_{k=1}^\infty$ be a dense sequence of points  in $\cS$ where each point appears infinitely many times, and consider a direct sum $A = \oplus_k A^{(k)}$ acting on $H = \oplus_k \bC^{n_k}$, where $n_k$ is such that $A^{(k)} \in \cS_{n_k}$.
Then $A$ is a bounded operator since $\cS$ is bounded.
Clearly, $\cS \subseteq \cW(A)$ because the latter is closed.
For the reverse inclusion, note that the intersection of $C^*(S_A)$ with the compacts on $H$ is $0$.
By the machinery of Voiculescu's theorem (e.g., \cite[Lemma II.5.2]{DavBook}), we have that if $\phi \in \UCP(C^*(S_A), M_n)$, then there is a sequence of isometries $V_i : \bC^n \to H$ such that
\[
\| \phi(A_j) - V_i^* A_j V_i \| \xrightarrow{i\to\infty} 0 \,\, , \,\, j=1, \ldots, d.
\]
But every $V_i$ has the form $V_i = (V_i^{(k)})_{k=1}^\infty$ such that $V_i^{(k)} : \bC^{n} \to \bC^{n_k}$ and $\sum_k V_i^{(n_k)*} V_i^{(n_k)} = I_n$, so $\lim_{k\to \infty} \|V_i^{(k)}\| = 0$.
Therefore
\[
V_i^* A_j V_i = \sum_k V_i^{(k)*} A_j^{(k)} V_i^{(k)} ,
\]
where the sequence converges in norm.
For a large enough finite set $F\subseteq \bN$, we have $\| \sum_{k\in F} V_i^{(k)*} V_i^{(k)} - I \| < 1$, and hence $K_F : =\sum_{k\in F} V_i^{(k)*} V_i^{(k)}$ must be invertible. Then,
\[
\sum_{k \in F} K_F^{-\frac{1}{2}*} V_i^{(k)*} V_i^{(k)} K_F^{-\frac{1}{2}} = I ,
\]
so that $\sum_{k \in F} K_F^{-\frac{1}{2}*} V_i^{(k)*} A_j^{(k)} V_i^{(k)} K_F^{-\frac{1}{2}}$ is a genuine matrix convex combination of points in $\cS$, converging (as $F$ grows) to $V_i^* A_j V_i$.
It then follows that $V_i^* A V_i \in \cS$ and so $\phi(A) \in \cS$.

For the converse direction in the second assertion, we first claim that $0 \in \operatorname{int}(\cS)$ implies that $\cS^\circ$ is bounded.
Indeed, let $\delta >0$ be such that
$||X_i||<\delta$ for all $i$ implies $X\in \cS$.
Let $X=(X_j) \in \cS$ be defined by $X_j=0$ if $j\neq k$ and $X_k = \frac{1}{2}\delta I$.
If $Y \in \cS^\circ$, then $\|\re Y_k\| \frac{\delta}{2}= \|\re \sum_j Y_j \otimes X_j\| \leq 1$, and also
$\|\im Y_k\| \frac{\delta}{2} = \|\re \sum Y_j \otimes i X_j\| \leq 1$.
Thus $\|Y_k\| \leq \frac{4}{\delta}$ for all $k$, as claimed.

Now since $\cS^\circ$ is bounded, we have $\cS^\circ  = \cW(B)$ for some $B \in \cB(H)^d$.
Thus $\cS = \cS^{\circ \circ} = (\cW(B))^{\circ} = \cD_{B}$.
\end{proof}

%%%%%%%%%%%%%%%%%%%%%%%%%%%%%%%%%%%%%%%%%%%%%%%
\section{Maximal and minimal matrix convex sets of a convex set}\label{sec:maxmin}

We wish to describe the smallest and largest matrix convex set $\cS \subseteq \cup_n (M_n)_{sa}^d$ with a given $\cS_1 \subseteq \bR^d$.
The discussion can be carried out also in the nonself-adjoint setting, we state and prove results in the self-adjoint setting for brevity.

A $d$-tuple $X \in M_n^d$ is a {\em compression} of $A \in \cB(H)^d$ if there is an isometry $V:\bC^n\to H$ so that $X_i = V^* A_i V$ for $1 \le i \le d$.
Conversely, $A$ is a {\em dilation} of $X$ if $X$ is a compression of $A$.
We will write $X \prec A$ when $X$ is a compression of $A$.
A tuple $N = (N_1, \ldots, N_d)$ will be said to be a {\em normal} tuple if $N_1, \ldots, N_d$ are normal commuting operators.
We denote by $\sigma(N)$ the joint spectrum of a normal tuple $N$.

Recall that if $C$ is a closed convex set in $\bR^d$, then $C$ is the intersection
of all half spaces of the form
\[H(\alpha,a) =\{ x \in \bR^d : \sum_{i=1}^d \alpha_i x_i \le a \} \]
which contain $C$.
Moreover, for $A \in \Bsad$, then $\cW_1(A) \subseteq H(\alpha,a) $ if and only if
\[ \sum_{i=1}^d \alpha_i A_i \le a I .\]

%%%%%%%%%%%%%%%%%%%%%%%%%%%%%%
\begin{definition}\label{def:min and max}
Let $C$ be a closed convex set in $\bR^d$.
Define
\[ \Wmin{n}(C) = \{ X \in (M_n)_{sa}^d : X  \prec N \textrm{ normal } \text \AND \sigma(N) \subseteq C \} \]
and
\[ \Wmax{n}(C) = \{ X \in (M_n)_{sa}^d :  \sum_{i=1}^d \alpha_i X_i \le a I_n \text{  whenever  } C \subseteq H(\alpha,a) \}. \]
\end{definition}

%%%%%%%%%%%%%%%%%%%%%%%%%%%%%%
\begin{remark}\label{rem:monotone}
Observe that $\Wmin{}(C)$ and $\Wmax{}(C)$ are matrix convex sets with $\Wmin{1}(C) = \Wmax{1}(C) = C$.
Note that if $C_1 \subseteq C_2$, then from the definitions we have
\[
\Wmin{}(C_1) \subseteq \Wmin{}(C_2) \qand \Wmax{}(C_1) \subseteq \Wmax{}(C_2) .
\]
\end{remark}

%%%%%%%%%%%%%%%%%%%%%%%%%%%%%%
\begin{proposition}\label{min and max}
If $\cS \subseteq \cup_n (M_n)_{sa}^d$ is a closed matrix convex set with $\cS_1 = C$, then
\[ \Wmin{n}(C) \subseteq \cS_n \subseteq \Wmax{n}(C) \,\, , \,\, n \geq 1.\]
\end{proposition}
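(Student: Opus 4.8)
The plan is to prove the two inclusions separately. For the right-hand inclusion $\cS_n \subseteq \Wmax{n}(C)$, I would argue directly from the definitions. Suppose $X \in \cS_n$ and let $H(\alpha,a)$ be any half-space containing $C = \cS_1$. By the remark recalled just before Definition \ref{def:min and max}, the condition $\cS_1 = C \subseteq H(\alpha,a)$ does \emph{not} by itself give the operator inequality; rather, I need to use matrix convexity of $\cS$. The key observation is that for a closed matrix convex set $\cS$, if $\cS_1 \subseteq H(\alpha,a)$ then in fact $\sum_i \alpha_i X_i \le aI_n$ for every $X \in \cS_n$. This is a consequence of the Effros--Winkler separation theory: a matrix convex set is determined by ``matrix half-spaces'' (free spectrahedra of rank one), and a rank-one LMI constraint is detected already at level $1$. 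Concretely, the functional $\phi_v(X) = \lip \sum_i \alpha_i X_i v, v\rip$ for a unit vector $v \in \bC^n$, composed with compression, reduces the inequality $\sum \alpha_i X_i \le aI_n$ to scalar inequalities of the form $\lip (\sum \alpha_i X_i) v, v\rip \le a$; and $\lip (\sum \alpha_i X_i)v,v\rip = \sum \alpha_i \lip X_i v, v\rip$ is the value at the point $(\lip X_1 v,v\rip, \dots, \lip X_d v, v\rip) \in \cS_1 = C$ obtained by compressing $X$ to the one-dimensional subspace $\bC v$ (which lies in $\cS_1$ since $\cS$ is matrix convex, hence closed under compressions $M_n \to M_1$). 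Thus $\sum \alpha_i \lip X_i v, v\rip \le a$ for all unit $v$, which is exactly $\sum \alpha_i X_i \le aI_n$. Hence $X \in \Wmax{n}(C)$.

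For the left-hand inclusion $\Wmin{n}(C) \subseteq \cS_n$, I would take $X \in \Wmin{n}(C)$, so that by definition there is a normal tuple $N$ with $\sigma(N) \subseteq C$ and an isometry $V$ with $X_i = V^* N_i V$. Now $\sigma(N) \subseteq C = \cS_1 \subseteq \cS$, so by Proposition \ref{prop:rangeincluded} (applied to the commuting normal tuple $N$, using $\sigma(N) \subseteq \cS_1$) together with Corollary \ref{C:W(normal)}, we get $\cW(N) \subseteq \cS$. Since $\cW(N)$ is matrix convex and $X = V^* N V$ is a compression of $N$, i.e.\ the image of $N$ under the UCP map $Y \mapsto V^* Y V$ from $\cB(H)$ (or $C^*(S_N)$) to $M_n$, we have $X \in \cW_n(N) \subseteq \cS_n$. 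Strictly speaking I should note that $\sigma(N) \subseteq \cS_1$ is a subset of $\bR^d \cong (M_1)_{sa}^d \subseteq \cup_n (M_n)_{sa}^d$, and that $\cW(N)$ is the smallest matrix convex set containing $\sigma(N)$ by Corollary \ref{C:W(normal)}, which gives $\cW(N) \subseteq \cS$ directly once we know $\sigma(N) \subseteq \cS$.

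The main obstacle is making the right-hand inclusion rigorous without circularity: one must be sure that a matrix convex set really is stable under compression to one-dimensional subspaces (so that the ``numerical range'' points $\lip X_i v, v\rip$ genuinely lie in $\cS_1$), and that the operator inequality $\sum \alpha_i X_i \le aI_n$ is equivalent to the family of scalar inequalities over unit vectors $v$. Both of these are standard — the first because compression to $\bC v$ is a UCP map $M_n \to M_1$ and $\cS$ is closed under UCP maps, the second being the definition of $\le$ for self-adjoint operators — so the proof is short once these are spelled out. I would also double-check the edge case where $C$ is contained in no proper half-space, or where $\cS_1 = C$ is unbounded, but the argument above is insensitive to these.
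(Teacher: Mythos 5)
Your proof is correct and takes essentially the same route as the paper. For the left inclusion the paper also invokes Corollary~\ref{C:W(normal)} to get $\cW(N)\subseteq\cS$ from $\sigma(N)\subseteq C=\cS_1$; for the right inclusion the paper simply cites ``the remarks preceding Definition~\ref{def:min and max}'' (namely $\cW_1(X)\subseteq H(\alpha,a)\iff\sum\alpha_i X_i\le aI$ combined with $\cW_1(X)\subseteq\cS_1$ from Proposition~\ref{prop:rangeincluded}), and your compression-to-$\bC v$ argument is exactly that observation spelled out.
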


\begin{proof}
The first inclusion follows from Corollary~\ref{C:W(normal)}. Indeed, let $X \in \Wmin{n}(C)$ with normal dilation $N$ such that $\sigma(N) \subseteq C$. By Corollary~\ref{C:W(normal)} we have $\cW(N) \subseteq \cS$ , so that $X\in \cW(N) \subseteq \cS$.
The second inclusion follows from the remarks preceding Definition \ref{def:min and max}.
\end{proof}

%%%%%%%%%%%%%%%%%%%%%%%%%%%%%%
\begin{corollary}\label{cor:normal}
Let $N$ be a normal tuple such that $\cW_1(N) = C$.
Then $\cW(N) = \Wmin{}(C)$.
\end{corollary}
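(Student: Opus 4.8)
The plan is to show the two inclusions $\cW(N) \subseteq \Wmin{}(C)$ and $\Wmin{}(C) \subseteq \cW(N)$ separately, using the characterization of $\cW(N)$ from Theorem~\ref{T:W(normal)} together with Corollary~\ref{C:W(normal)}. For the first inclusion, recall that by hypothesis $\cW_1(N) = \conv(\sigma(N)) = C$; in particular $\sigma(N) \subseteq C$. Since $N$ itself is a normal dilation of $N$ with joint spectrum contained in $C$, and more relevantly, every $X \in \cW_n(N)$ is a compression of a direct sum (or ampliation) of $N$, which is again normal with spectrum $\sigma(N) \subseteq C$, we get $\cW(N) \subseteq \Wmin{}(C)$ directly from the definition of $\Wmin{}$. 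Alternatively, and perhaps more cleanly, one invokes Corollary~\ref{C:W(normal)}: $\cW(N)$ is the smallest matrix convex set containing $\sigma(N)$, and $\Wmin{}(C)$ is a matrix convex set (Remark~\ref{rem:monotone}) containing $\sigma(N)$ since $\sigma(N) \subseteq C = \Wmin{1}(C)$ and any point of $\sigma(N)$ trivially compresses to itself as a normal tuple with spectrum in $C$. Hence $\cW(N) \subseteq \Wmin{}(C)$.

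For the reverse inclusion $\Wmin{}(C) \subseteq \cW(N)$, I would argue as follows. Let $X \in \Wmin{n}(C)$, so $X \prec M$ for some normal tuple $M$ with $\sigma(M) \subseteq C = \conv(\sigma(N)) = \cW_1(N)$. By the spectral theorem, $X = V^* M V$ can be written (after approximating $M$ as in the proof of Theorem~\ref{T:W(normal)}) as a matrix convex combination $\sum_i \lambda^{(i)} K_i$ with $\lambda^{(i)} \in \conv(\sigma(N))$ and $K_i \in M_n^+$, $\sum K_i = I_n$. But each $\lambda^{(i)} \in \conv(\sigma(N)) = \cW_1(N)$, and $\cW_1(N) \subseteq \cW(N)$, so by matrix convexity of $\cW(N)$ the combination $\sum_i \lambda^{(i)} K_i$ lies in $\cW_n(N)$. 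A cleaner route: $\Wmin{}(C)$ is the smallest matrix convex set with first level $C$ (this is essentially the content of Proposition~\ref{min and max} applied to $\cS = \Wmin{}(C)$, or can be read off directly from Corollary~\ref{C:W(normal)} since $\Wmin{}(C) = \Wmin{}(\conv \sigma(N))$ and every normal tuple with spectrum in $C$ has its matrix range inside any matrix convex $\cS$ with $\cS_1 \supseteq C$); since $\cW(N)$ is a matrix convex set with $\cW_1(N) = C$, Proposition~\ref{min and max} gives $\Wmin{}(C) \subseteq \cW(N)$.

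In fact, the slickest proof just cites Proposition~\ref{min and max} and Corollary~\ref{C:W(normal)} in tandem: on one hand $\cW(N)$ is a closed matrix convex set with $(\cW(N))_1 = \cW_1(N) = C$, so Proposition~\ref{min and max} yields $\Wmin{}(C) \subseteq \cW(N)$; on the other hand $\cW(N)$ is the smallest matrix convex set containing $\sigma(N)$ by Corollary~\ref{C:W(normal)}, and $\Wmin{}(C)$ is a matrix convex set containing $\sigma(N)$ (because $\sigma(N) \subseteq C$ and each point of $\sigma(N)$, viewed as a $1\times 1$ tuple, is a normal tuple with spectrum in $C$, hence lies in $\Wmin{1}(C)$, and more generally all scalar tuples in $C$ lie in $\Wmin{1}(C)$), so $\cW(N) \subseteq \Wmin{}(C)$. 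Combining, $\cW(N) = \Wmin{}(C)$.

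I do not anticipate a serious obstacle here; the statement is essentially a bookkeeping corollary of the two preceding results, and the only point requiring a moment's care is checking that $\cW_1(N) = C$ is exactly the hypothesis needed to invoke Proposition~\ref{min and max} (it is, since that proposition only uses $\cS_1 = C$), and that closedness of $\cW(N)$ holds — which was recorded right after the definition of the matrix range. One should also note the implicit use of $\conv(\sigma(N)) = \cW_1(N)$ from Theorem~\ref{T:W(normal)} to identify $C$ with $\conv(\sigma(N))$ so that $\Wmin{}(C)$ is literally built from normal tuples whose spectra sit inside the closed convex hull of $\sigma(N)$.
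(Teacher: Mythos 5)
Your final (``slickest'') argument is correct and essentially coincides with the paper's: both obtain $\Wmin{}(C) \subseteq \cW(N)$ by applying Proposition~\ref{min and max} to $\cS = \cW(N)$, and for $\cW(N) \subseteq \Wmin{}(C)$ you invoke Corollary~\ref{C:W(normal)}, whereas the paper argues directly that any $\phi \in \UCP(C^*(S_N), M_n)$ has the Stinespring form $V^*\pi(\cdot)V$, so $\phi(N) = V^*\pi(N)V$ is a compression of the normal tuple $\pi(N)$ whose joint spectrum lies in $\sigma(N) \subseteq C$ --- the two routes encode the same underlying fact. One small imprecision in your earlier sketch is worth flagging: it is not accurate that every $X \in \cW_n(N)$ is a compression of a direct sum (or ampliation) of $N$ itself, since a representation of $C^*(N) \cong C(\sigma(N))$ need not decompose as a multiple of the identity representation; the correct and sufficient statement, which your final argument sidesteps by citing Corollary~\ref{C:W(normal)}, is only that $\pi(N)$ is a commuting normal tuple with spectrum contained in $\sigma(N)$.
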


\begin{proof}
$\Wmin{}(C) \subseteq \cW(N)$ by the previous proposition.
But since every UCP map $\phi: C^*(S_N) \to M_n$ has the form $\phi(T) = V^* \pi(T) V$ for an isometry $V$ and a (unital) representation $\pi$, the reverse inclusion follows by definition.
\end{proof}

%%%%%%%%%%%%%%%%%%%%%%%%%%%%%%
\begin{corollary}\label{cor:normalcontained}
Let $N$ be a normal tuple of matrices and let $\cS$ be a matrix convex set.
Then $N \in \cS$ if and only if $\sigma(N) \subseteq \cS_1$.
\end{corollary}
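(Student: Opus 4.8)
The plan is to reduce everything to the preceding results on $\Wmin{}$. The forward direction is immediate: if $N \in \cS$, then since $\cS$ is matrix convex, Proposition \ref{prop:rangeincluded} gives $\cW(N) \subseteq \cS$, and in particular $\sigma(N) \subseteq \cW_1(N) \subseteq \cS_1$. (Alternatively, $\sigma(N) \subseteq \cS_1$ is part of the conclusion of that proposition already.) So the content is the reverse implication.

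For the reverse direction, suppose $N$ is a normal tuple of matrices with $\sigma(N) \subseteq \cS_1$. Set $C = \conv(\sigma(N))$, which is a closed convex subset of $\bR^d$ (closed since $N$ is a tuple of matrices, so $\sigma(N)$ is finite). Since $\cS_1$ is convex and contains $\sigma(N)$, we have $C \subseteq \cS_1$. By Corollary \ref{cor:normal} (or Corollary \ref{C:W(normal)}), $\cW(N) = \Wmin{}(C)$, and by Remark \ref{rem:monotone} together with the monotonicity of $\Wmin{}$ we get $\Wmin{}(C) \subseteq \Wmin{}(\cS_1)$. Finally, Proposition \ref{min and max} applied to the closed matrix convex set $\cS$ (replacing $\cS$ by its closure if necessary — but matrix convex sets here are understood with the relevant closures, or one simply notes $N$ is a matrix, so compactness arguments apply) gives $\Wmin{}(\cS_1) \subseteq \cS$. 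Chaining these inclusions, $N \in \cW_n(N) = \Wmin{n}(C) \subseteq \Wmin{n}(\cS_1) \subseteq \cS_n$, so $N \in \cS$.

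The only point requiring a little care is whether Proposition \ref{min and max} is being invoked for a closed matrix convex set, since $\cS$ is not assumed closed here; but since $N$ has finite-dimensional range, $N \in \Wmin{n}(C)$ lies in the matrix convex hull of the finite set $\sigma(N) \subseteq \cS_1 \subseteq \cS$, hence already in $\cS$ by matrix convexity alone — one does not even need closedness. Indeed, by Theorem \ref{T:W(normal)}, every element of $\Wmin{n}(C) = \cW_n(N)$ is a finite matrix convex combination $\sum_i \lambda^{(i)} K_i$ with $\lambda^{(i)} \in \sigma(N) \subseteq \cS_1$ and $K_i \in M_n^+$, $\sum K_i = I_n$, which lies in $\cS_n$ since $\cS$ is closed under matrix convex combinations. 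This is the cleanest route and avoids any closure issue entirely, so I would present the argument in that order: first the trivial direction, then invoke Theorem \ref{T:W(normal)} to write an arbitrary point of $\cW_n(N)$ as an explicit matrix convex combination of points of $\sigma(N) \subseteq \cS_1$. I do not anticipate a real obstacle; the main thing is simply to cite the right earlier result (Theorem \ref{T:W(normal)} / Corollary \ref{C:W(normal)}) rather than re-deriving the description of $\cW(N)$.
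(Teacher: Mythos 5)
Your proof is correct and takes essentially the same route as the paper: the forward direction from matrix convexity and $\sigma(N)\subseteq\cW_1(N)$, and the reverse via $N\in\cW(N)=\Wmin{}(\conv(\sigma(N)))\subseteq\Wmin{}(\cS_1)\subseteq\cS$. The extra care you take about closedness (Proposition \ref{min and max} nominally requires $\cS$ closed, but Theorem \ref{T:W(normal)} shows every point of $\cW_n(N)$ is already a finite matrix convex combination of points of $\sigma(N)$, so closedness is unnecessary) is a small genuine improvement over the paper's one-line argument, which silently elides that hypothesis.
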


\begin{proof}
If $N \in \cS$ then $\sigma(N) \subseteq \cW_1(N) \subseteq \cS_1$ by matrix convexity.
If $\sigma(N) \subseteq \cS_1$, then
$N \in \cW(N) = \Wmin{}(\cW_1(N)) \subseteq \Wmin{}(\cS_1) \subseteq \cS$.
\end{proof}

%%%%%%%%%%%%%%%%%%%%%%%%%%%%%%
\begin{corollary}\label{cor:bounded}
If $\cS \subseteq \cup_n (M_n)_{sa}^d$ is a closed matrix convex set such that $\cS_1 \subseteq [-r,r]^d$,
then $\cS$ is bounded by $r$, in the sense that $\|X_i\| \leq r$ for all $X \in \cS$ and all $i$.
\end{corollary}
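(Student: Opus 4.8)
The plan is to deduce this directly from Proposition~\ref{min and max}. Write $C = \cS_1$; since $\cS$ is closed and matrix convex, $C$ is a closed convex subset of $\bR^d$, so Proposition~\ref{min and max} gives $\cS_n \subseteq \Wmax{n}(C)$ for every $n$. Thus it suffices to bound the operators appearing in tuples in $\Wmax{n}(C)$, and everything reduces to the observation that $[-r,r]^d$ is the intersection of the $2d$ coordinate slabs $\{x : |x_j| \le r\}$.

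First I would fix an index $j$ and let $e_j$ denote the $j$th standard basis vector. The half-space $H(e_j, r) = \{x \in \bR^d : x_j \le r\}$ contains $[-r,r]^d$, and hence contains $C$ by hypothesis; likewise $H(-e_j, r) = \{x : x_j \ge -r\} \supseteq [-r,r]^d \supseteq C$. Unwinding the definition of $\Wmax{n}(C)$, the containment $C \subseteq H(e_j, r)$ forces $X_j = \sum_i (e_j)_i X_i \le r I_n$ for every $X \in \Wmax{n}(C)$, while $C \subseteq H(-e_j, r)$ forces $-X_j \le r I_n$. Hence $-r I_n \le X_j \le r I_n$, which for the self-adjoint operator $X_j$ is exactly $\|X_j\| \le r$. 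Since $\cS_n \subseteq \Wmax{n}(C)$, this holds for all $X \in \cS$ and all $j$, which is the claim.

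An equivalent and even more elementary route bypasses $\Wmax{}$ altogether: for any unit vector $\xi \in \bC^n$, the vector state $T \mapsto \langle T\xi, \xi\rangle$ is a UCP map $M_n \to M_1 = \bC$, so matrix convexity of $\cS$ places $(\langle X_1\xi,\xi\rangle, \dots, \langle X_d\xi,\xi\rangle)$ in $\cS_1 \subseteq [-r,r]^d$; therefore $|\langle X_j\xi,\xi\rangle| \le r$ for every unit $\xi$, and self-adjointness of $X_j$ gives $\|X_j\| = \sup_{\|\xi\|=1}|\langle X_j\xi,\xi\rangle| \le r$.

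I do not anticipate any genuine obstacle: the only points to keep in mind are the elementary equivalence, for a self-adjoint operator, between the norm bound $\|X_j\| \le r$ and the order bounds $-rI \le X_j \le rI$, and the decomposition of the cube into coordinate slabs. Both arguments above consist of these two observations combined with the structural results already in hand (Proposition~\ref{min and max} and the definition of matrix convexity). I would present the second, vector-state argument in the paper for brevity, though either is a complete proof.
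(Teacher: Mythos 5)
Your first argument is exactly the paper's proof: both invoke Proposition~\ref{min and max} to get $\cS_n \subseteq \Wmax{n}(\cS_1) \subseteq \Wmax{n}([-r,r]^d)$, then read off the order bounds $\pm X_j \le rI$ from the coordinate half-spaces. Your second argument — passing to vector states — is a genuinely different and more elementary route: rather than going through the definition of $\Wmax{}$, you apply matrix convexity directly with the UCP map $T \mapsto \langle T\xi,\xi\rangle$ from $M_n$ to $\bC$, land a point of $\cS_1$ inside $[-r,r]^d$, and conclude via $\|X_j\| = \sup_{\|\xi\|=1}|\langle X_j\xi,\xi\rangle|$ for self-adjoint $X_j$. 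This second approach buys you a self-contained one-paragraph proof that uses nothing but the definition of matrix convexity, at the small cost of not exhibiting the sharper intermediate containment $\cS \subseteq r\fC^{(d)}$ that the paper's $\Wmax{}$-based proof makes explicit (and which is reused later). Both are correct.
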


\begin{proof}
We have $\cS_1 \subseteq [-r,r]^d = \{x \in \bR^d : \pm x_i \leq r, \FORAL i\}$.
Therefore
\begin{align*}
 \cS_n &\subseteq \Wmax{n}([-r,r]^d) =
 \{ X \in (M_n)_{sa}^d :  \pm X_i \le rI \FORAL i \} = r \fC^{(d)} . \qedhere
\end{align*}
\end{proof}

If $C \subseteq \bR^d$, we let $C'$ denote the polar dual of $C$ in the usual sense:
\[
C' = \{x \in \bR^d : \sum_j x_j y_j \leq 1 \FOR y \in C \}.
\]
%%%%%%%%%%%%%%%%%%%%%%%%%%%%%%
\begin{theorem}\label{T:polar of min}
Let $C$ be a closed convex set  in $\bR^d$.
Then
\[ \Wmin{}(C)^\bullet = \Wmax{}(C') .\]
If $0 \in C$, then
\[ \Wmax{}(C)^\bullet = \Wmin{}(C') .\]
\end{theorem}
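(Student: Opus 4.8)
The plan is to prove the two duality identities by directly unwinding the definitions of $\Wmin{}$, $\Wmax{}$, the polar dual $\bullet$, and the scalar polar dual $'$, and then reconciling them. I will treat the first identity $\Wmin{}(C)^\bullet = \Wmax{}(C')$ first, and deduce the second from it together with the bipolar theorem (Lemma~\ref{lem:bipolar}) and the fact that for a closed convex set $C$ with $0 \in C$ we have $C'' = C$.

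For the first identity, recall that $\Wmin{}(C)$ is, by Corollary~\ref{cor:normal} (or Corollary~\ref{C:W(normal)}), equal to $\cW(N)$ for any normal tuple $N$ with $\cW_1(N) = C$; more to the point, $\Wmin{}(C)$ is the matrix convex hull of $C$ viewed as a set of commuting ($1\times 1$, hence trivially normal) tuples. Hence $X \in \Wmin{}(C)^\bullet$ means $\sum_j A_j \otimes X_j \le I$ for all $A \in \Wmin{}(C)$, and since $\Wmin{}(C)$ is generated (under matrix convex combinations and direct sums) by the scalars in $C$, this holds if and only if it holds for all scalar tuples $A = a \in C$, i.e.\ if and only if $\sum_j a_j X_j \le I_n$ for all $a \in C$. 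Now I invoke the support-function description of $C'$: a half-space $H(\alpha, c)$ with $c > 0$ (we may rescale so $c=1$) contains $C$ exactly when $\alpha \in C'$ (up to handling the $c \le 0$ cases and using that $C' = \{\alpha : \sum \alpha_j a_j \le 1 \ \forall a \in C\}$). The condition ``$\sum_j a_j X_j \le I$ for all $a \in C$'' is precisely the statement that for every eigenvector direction, the relevant linear functional is bounded by the support function of $C$ — which is exactly the defining condition of $\Wmax{n}(C')$ after the substitution $C \rightsquigarrow C'$. I expect the cleanest route is: $X \in \Wmax{n}(C')$ iff $\sum_j \alpha_j X_j \le a I$ whenever $C' \subseteq H(\alpha,a)$; and $C' \subseteq H(\alpha,a)$ with $a>0$ is equivalent (by bipolarity $C'' = \overline{\conv}(C \cup \{0\})$ and the definition of $'$) to $\tfrac{1}{a}\alpha \in C$ when $0 \in C$, or more carefully to a condition one matches against membership in $C$. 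The bookkeeping with the sign of $a$ and with whether $0 \in C$ is where care is needed.

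For the second identity, assume $0 \in C$. Apply the first identity with $C$ replaced by $C'$: since $0 \in C$ implies $C'$ is a closed convex set (containing $0$), we get $\Wmin{}(C')^\bullet = \Wmax{}(C'')$. Because $0 \in C$ and $C$ is closed and convex, $C'' = C$, so $\Wmin{}(C')^\bullet = \Wmax{}(C)$. Taking polar duals of both sides and using the bipolar theorem for the $\bullet$-dual (Lemma~\ref{lem:bipolar}, applicable because $\Wmin{}(C')$ is matrix convex and contains $0$, and $\Wmax{}(C)$ is matrix convex and contains $0$) yields $\Wmin{}(C') = \Wmax{}(C)^\bullet$, which is the claim.

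The main obstacle I anticipate is not conceptual but the careful matching of the affine/support-function data: translating ``$C \subseteq H(\alpha,a)$ for all the $(\alpha,a)$ that matter'' into ``$\alpha/a \in C'$'' correctly, including the degenerate cases $a = 0$ and $a < 0$ (which force $C$ into a half-space through or beyond the origin), and making sure the operator inequality $\sum_j a_j X_j \le I$ quantified over $a \in C$ is genuinely equivalent to the $\Wmax{}$ condition with the roles of $C$ and $C'$ swapped rather than merely implied in one direction. A secondary point to get right is the reduction, in computing $\Wmin{}(C)^\bullet$, from ``all $A \in \Wmin{}(C)$'' to ``all scalar $a \in C$'': this uses that the $\bullet$-dual only gets smaller as the test set grows, that $C \subseteq \Wmin{}(C)$ gives one containment, and that every element of $\Wmin{}(C)$ is a matrix convex combination of (direct sums of) scalars from $C$, so that the inequality for scalars propagates via $\phi \otimes \id$ exactly as in the proof of Proposition~\ref{prop:WDA}.
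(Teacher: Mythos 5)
Your proposal is correct and follows essentially the same route as the paper: for the first identity, reduce the $\bullet$-dual of $\Wmin{}(C)$ to the scalar inequalities $\sum_j a_j X_j \le I$ over $a \in C$ (the paper does this via compressions of normals and the spectral theorem, you via the matrix-convex-hull-of-scalars description of $\Wmin{}(C)$ — the same content) and then identify the result with $\Wmax{}(C')$; for the second identity, take $\bullet$-duals in the first, invoke the bipolar theorem, and substitute $C \to C'$ using $C'' = C$. The bookkeeping you flag as a concern indeed resolves cleanly: every half-space $H(\alpha,b) \supseteq C'$ has $b \ge 0$ because $0 \in C'$, and the case $b>0$ reduces to $\alpha/b \in C'' = \overline{\operatorname{conv}}(C \cup \{0\})$, which the inequality respects because $\{c : \sum c_j X_j \le I\}$ is automatically closed, convex, and contains $0$.
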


\begin{proof}
Observe that if $X$ is a compression of $N$, then $\sum N_i \otimes Y_i \le I$ implies that
$\sum X_i \otimes Y_i \le I$.
By the spectral theorem, $ֿ\sum N_i \otimes Y_i \le I$  is a consequence of the inequalities $\sum \alpha_i  Y_i \le I$
for $\alpha = (\alpha_i) \in \sigma(N)$.
Therefore
\[
 \Wmin{}(C)^\bullet = \{ Y :  \sum \alpha_i  Y_i \le I \FOR \alpha \in C \} = \Wmax{}(C') .
\]
If $0 \in C$, then taking the polar of this, we see that
\[ \Wmin{}(C) = \Wmax{}(C') ^\bullet .\]
Now replace $C$ with $C'$ to obtain the second equality.
\end{proof}

%%%%%%%%%%%%%%%%%%%%%%%%%%%%%%%%%%%%%%%%%%%%%%%
\section{Existence of UCP maps}\label{sec:UCP}

%%%%%%%%%%%%%%%%%%%%%%%%%%%%%%%%%%%%%%%%%%%%%%%
\subsection{Existence of completely positive maps and matrix ranges}

For tuples $A \in \cB(H)^d$ and $B\in \cB(K)^d$, denote by $S_A$ and $S_B$ the operator systems spanned by the elements of the tuples $A$ and $B$, respectively.
We wish to study when there is a UCP map from $S_A$ to $S_B$ mapping $A_i$ to $B_i$.
We denote by $C^*(S_A)$ the unital C*-algebra generated by $A$, and study the existence of maps from $C^*(S_A)$ to $\cB(K)$.

Recall that due to Arveson's extension theorem there is a UCP from $S_A$ to $S_B$ sending $A$ to $B$ if and only if there is a UCP map from $C^*(S_A)$ (or $\cB(H)$) to $\cB(K)$ mapping $A$ to $B$.

\cite[Theorem 2.4.2]{Arv72} generalizes easily (from $d=1$ to general $d \in \bN$) to give the following (we record the proof for posterity).

%%%%%%%%%%%%%%%%%%%%%%%%%%
\begin{theorem}\label{thm:ExistUCP_matRan}
Let $A \in \cB(H)^d$ and $B\in \cB(K)^d$ be $d$-tuples of operators.
\begin{enumerate}
\item Given $n \in \bN$, if there exists a unital $n$-positive map $\phi : S_A  \to S_B$ sending $A$ to $B$, then $\cW_n(B) \subseteq \cW_n(A)$.
\item There exists a UCP map $\phi : S_A  \to S_B$ sending $A$ to $B$ if and only if $\cW(B) \subseteq \cW(A)$.
If $B$ is a commuting tuple of normal operators, then this inclusion is equivalent to $\sigma(B) \subseteq \cW_1(A)$, which is also equivalent to $\cW_1(B) \subseteq \cW_1(A)$.
\item There exists a unital completely isometric map $\phi : S_A  \to S_B$ sending $A$ to $B$ if and only if $\cW(B) = \cW(A)$.
\end{enumerate}
\end{theorem}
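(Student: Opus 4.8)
The plan is to prove the three parts in the order stated, exploiting the characterization of matrix ranges via UCP maps together with Arveson's extension theorem, which was recalled just before the statement.

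\medbreak
\textbf{Part (1).} First I would observe that an $n$-positive unital map $\phi : S_A \to S_B$ need not extend to a UCP map, but one can still push it through to $M_n$. Given any $\psi \in \UCP(C^*(S_B), M_n)$, the composition $\psi \circ \phi$ is a unital $n$-positive map from $S_A$ into $M_n$. Since the target is $M_n$, unital $n$-positivity into $M_n$ is enough to extend (via Arveson's extension theorem applied at the relevant matrix level, or more simply: a unital $n$-positive map $S_A \to M_n$ composed with the identity on $M_n$ is completely positive as a map into $M_n$ because $n$-positivity into $M_n$ already implies complete positivity into $M_n$) to a map in $\UCP(C^*(S_A), M_n)$ sending $A_i$ to $\psi(B_i)$. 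Hence $(\psi(B_1),\dots,\psi(B_d)) \in \cW_n(A)$. Letting $\psi$ range over $\UCP(C^*(S_B), M_n)$ shows $\cW_n(B) \subseteq \cW_n(A)$. The one point that needs care is the standard fact that an $n$-positive unital map into $M_n$ is automatically completely positive (equivalently $n$-positivity suffices for $M_n$-valued maps); I would cite the appropriate statement from \cite{PauBook}.

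\medbreak
\textbf{Part (2).} The forward direction is immediate from part (1) applied for every $n$ (a UCP map is $n$-positive for all $n$). For the converse, assume $\cW(B) \subseteq \cW(A)$. The key step is to realize $B$, up to approximate unitary equivalence, as a ``point'' of $\cW(A)$ at the level of representations: choose a faithful representation, or rather use that $\cW(A) \supseteq \cW(B)$ at every level together with the fact that compressions of $A$ to finite-dimensional subspaces approximate arbitrary UCP images. Concretely, I would take an increasing sequence of finite rank projections on $K$ converging strongly to $I_K$, compress $B$ to get finite-dimensional tuples $B^{(k)} = P_k B|_{\operatorname{ran}P_k} \in \cW_{n_k}(B) \subseteq \cW_{n_k}(A)$, so each $B^{(k)}$ is a UCP image of $A$; then assemble a UCP map $C^*(S_A) \to \cB(K)$ by a direct sum / ultraweak limit argument (a point-ultraweak limit of UCP maps is UCP, using compactness of the UCP cone). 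This yields $\Phi \in \UCP(C^*(S_A), \cB(K))$ with $\Phi(A_i) = B_i$; restricting to $S_A$ and composing with Arveson extension gives the desired map. For the normal case, $\cW(B) \subseteq \cW(A) \iff \cW_1(B) \subseteq \cW_1(A)$ follows from Corollary \ref{C:W(normal)}: $\cW(B)$ is the smallest matrix convex set containing $\sigma(B) = \cW_1(B)$ (as $\cW_1(B) = \operatorname{conv}\sigma(B)$ and a matrix convex set containing $\operatorname{conv}\sigma(B)$ at level $1$ contains it), so $\cW_1(B)\subseteq \cW_1(A) \subseteq \cW(A)$ forces $\cW(B) \subseteq \cW(A)$; the reverse implications are trivial.

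\medbreak
\textbf{Part (3).} This follows from (2) by applying it in both directions. If there is a unital completely isometric $\phi : S_A \to S_B$ with $\phi(A_i) = B_i$, then $\phi$ is in particular UCP (complete isometry implies complete positivity for unital maps between operator systems, via the fact that a unital complete isometry is a complete order isomorphism onto its range — here I would cite the relevant result, e.g. from \cite{PauBook}), so $\cW(B) \subseteq \cW(A)$; and $\phi^{-1} : S_B \to S_A$ is also unital completely isometric sending $B_i$ to $A_i$, giving $\cW(A) \subseteq \cW(B)$. Conversely if $\cW(A) = \cW(B)$, part (2) produces UCP maps $\phi : S_A \to S_B$ and $\psi : S_B \to S_A$ with $\phi(A_i) = B_i$, $\psi(B_i) = A_i$; then $\psi \circ \phi$ fixes $A$ and is UCP, and $\phi \circ \psi$ fixes $B$, and a standard argument shows that a unital completely positive map which, together with a CP inverse on generators, fixes the generating operator system must be a complete order isomorphism, hence $\phi$ is unital completely isometric. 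I expect the routine-but-delicate point here is the last implication — that mutually inverse UCP maps on generators force complete isometry — which one handles by noting $\psi\circ\phi$ and $\phi\circ\psi$ are UCP and act as the identity on the operator systems $S_A$, $S_B$ respectively, hence are the identity (a UCP idempotent onto a spanning set of its domain is the identity).

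\medbreak
I expect the main obstacle to be the converse direction in Part (2): manufacturing an actual UCP map $C^*(S_A) \to \cB(K)$ out of the level-by-level set inclusions $\cW_n(B)\subseteq\cW_n(A)$. The clean route is the compression-and-limit construction above, and the technical heart is ensuring the limiting map is genuinely UCP and genuinely sends $A_i$ to $B_i$ — this is where one invokes compactness of UCP cones in the point-weak-$*$ topology and the fact that the finite compressions of $B$ converge appropriately. Everything else (Parts (1) and (3)) is essentially formal once Part (2) and the standard positivity facts about $M_n$-valued maps are in hand.
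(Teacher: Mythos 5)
Your proposal is correct and follows essentially the same route as the paper's proof: Part (1) via composing with $\UCP(C^*(S_B),M_n)$ maps and using that unital $n$-positive maps into $M_n$ are automatically completely positive; Part (2) by compressing $B$ along an increasing net of finite-rank projections, obtaining UCP maps $S_A\to P_\alpha S_B P_\alpha$ from the level-$n$ inclusion, and passing to a point-WOT limit; and Part (3) by producing UCP maps in both directions from (2) and noting they must compose to the identity on the generating operator systems. The only cosmetic remarks are that for non-separable $K$ one should speak of a net of projections rather than a sequence, and that the closing ``UCP idempotent'' remark in Part~(3) is unnecessary — once $\psi\circ\phi$ is UCP and fixes $I$ and each $A_i$, $*$-preservation and linearity already force it to be the identity on $S_A=\spn\{I,A_i,A_i^*\}$.
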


\begin{proof}
If there is a unital $n$-positive map $\theta$ sending $A$ to $B$, then by composition we see that $\cW_n(B) \subseteq \cW_n(A)$ (recall that a map into $M_n$ is completely positive if and only if it is $n$-positive \cite[Theorem 6.1]{PauBook}).
Thus the existence of such a UCP map implies $\cW(B) \subseteq \cW(A)$.

Conversely, suppose that $\cW(B) \subseteq \cW(A)$.
Let $\{P_{\alpha}\}$ be an increasing net of finite dimensional projections converging SOT to the identity on $K$.
We will show that the net of maps $\phi_{\alpha} : S_A \rightarrow P_{\alpha} S_B P_{\alpha}$  defined by $A_i \mapsto P_{\alpha} B_i P_{\alpha}$ is a net of well defined UCP maps.
Since, for every $i$, $\lim_{\alpha} \phi_{\alpha}(A_i)$ is a bounded net converging in the weak operator topology to $B_i$, we will obtain a UCP map $\phi : S_A \to S_B$ as a point-WOT limit mapping $A$ to $B$.

Fix $\alpha$, so that in this case $P_{\alpha} S_B P_{\alpha}$ can be identified as an operator subsystem of $M_{n_{\alpha}}$ for some finite $n_{\alpha}$.
Clearly the map $T \mapsto P_{\alpha} T P_{\alpha}$ from $\cB(K)$ to $M_{n_{\alpha}}$ is UCP, so that
\[ (P_{\alpha}B_1 P_{\alpha},..., P_{\alpha}B_d P_{\alpha}) \in \cW_{n_{\alpha}}(B) \subseteq \cW_{n_{\alpha}}(A) . \]
Hence, there exists a UCP $\psi_{\alpha} : C^*(S_A) \rightarrow P_{\alpha} S_B P_{\alpha}$ such that $\psi_{\alpha}(A_i) = P_{\alpha} B_i P_{\alpha}$.
It follows that $\phi_{\alpha}:= \psi_{\alpha} |_{\cS_{A}}$ is a well defined UCP map, as required.

Now suppose that $B$ is a tuple of commuting normal operators.
If $\sigma(B) \subseteq \cW_1(A)$, the first part of Theorem \ref{T:W(normal)} implies that $\cW_1(B) \subseteq \cW_1(A)$.
If $\cW_1(B) \subseteq \cW_1(A)$, then by Proposition \ref{min and max} and Corollary \ref{cor:normal}, we deduce $\cW(B) \subseteq \cW(A)$, and the second part of the theorem is proven.

Finally, a unital and completely isometric isomorphism between operator systems is the same as a UCP map with UCP inverse, thus the final assertion follows from the second.
\end{proof}

We now look at an operator theoretic equivalent condition for matrix range containment.
We denote by $H^{(\infty)} = H \oplus H \oplus \cdots$ the infinite ampliation of a Hilbert space $H$, and for $A \in \cB(H)$ we put $A^{(\infty)} = A \oplus A \oplus \cdots$ in $\cB(H^{(\infty)})$.

%%%%%%%%%%%%%%%%%%%%%%%%%%%%%%
\begin{theorem}\label{thm:ExistUCP_approxComp}
Let $A \in \cB(H)^d$ and $B\in \cB(K)^d$ be $d$-tuples of operators on separable Hilbert spaces. Then $\cW(B) \subseteq \cW(A)$ if and only if there are isometries $V_n : K \rightarrow H^{(\infty)}$ such that
\[
 \lim_{n\to\infty} \|B_i - V_n^* A_i^{(\infty)} V_n \| = 0 \qfor 1 \le i \le d.
\]
Moreover the isometries may be chosen so that $B_i - V_n^* A_i^{(\infty)} V_n$ are compact operators.
\end{theorem}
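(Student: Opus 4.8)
The plan is to prove the equivalence in two directions, with the ``only if'' direction being the substantive one. The easy direction: if there are isometries $V_n : K \to H^{(\infty)}$ with $\|B_i - V_n^* A_i^{(\infty)} V_n\| \to 0$, then since each map $T \mapsto V_n^* T V_n$ is UCP and $\cW(A^{(\infty)}) = \cW(A)$ (the matrix range is unchanged under infinite ampliation, as $C^*(S_{A^{(\infty)}})$ has the same UCP maps into $M_k$ up to the image), every point of $\cW(B)$ is a norm-limit of points of $\cW(A)$; since $\cW(A)$ is closed (it is a closed matrix convex set, as noted after the definition of matrix range), we get $\cW(B) \subseteq \cW(A)$.

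For the ``if'' direction, suppose $\cW(B) \subseteq \cW(A)$. By Theorem~\ref{thm:ExistUCP_matRan}(2) there is a UCP map $\phi : C^*(S_A) \to \cB(K)$ with $\phi(A_i) = B_i$ for all $i$. The idea is to invoke the Stinespring dilation together with Voiculescu's theorem. Write $\phi(T) = W^* \pi(T) W$ for a $*$-representation $\pi$ of $C^*(S_A)$ on a Hilbert space $L$ and an isometry $W : K \to L$; since $K$ is separable we may arrange $\pi$ to act on a separable space and, after adding a degenerate summand, assume $\pi$ contains infinitely many copies of any subrepresentation, or more simply compare $\pi$ with the ampliation $\rho := \pi_A^{(\infty)}$ of the identity representation of $C^*(S_A)$ on $H$. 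The key point is that $\pi$ is approximately unitarily equivalent to $\pi \oplus \rho$, hence to a representation ``containing'' $\rho = \pi_A^{(\infty)}$ with the tuple $A^{(\infty)}$; then Voiculescu's theorem (in the form quoted as \cite[Lemma II.5.2]{DavBook} and used already in the proof of Proposition~\ref{prop:LMIrange}) provides unitaries intertwining $\pi$ and $\rho$ modulo the compacts, more precisely a sequence of isometries $U_n$ with $\|\pi(T) - U_n^* A_j^{(\infty)} U_n\|$ small (for $T = A_j$) and the difference compact. Composing $U_n$ with the Stinespring isometry $W$ gives $V_n := U_n W : K \to H^{(\infty)}$, and then $B_i - V_n^* A_i^{(\infty)} V_n = W^*(\pi(A_i) - U_n^* A_i^{(\infty)} U_n) W$ tends to $0$ in norm and is compact.

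I expect the main obstacle to be the correct bookkeeping ensuring that $\pi$ (or an approximate-unitary-equivalent copy of it) genuinely contains $A^{(\infty)}$ rather than merely $A$, so that one can carry out the Voiculescu comparison against the \emph{ampliated} tuple $A^{(\infty)}$ — this is what produces the ampliation $H^{(\infty)}$ in the statement and is needed because $\pi$ from Stinespring need not itself be a subrepresentation of anything built from $A$ alone. Concretely one replaces $\pi$ by $\pi \oplus \pi_A^{(\infty)}$ (which is approximately unitarily equivalent to $\pi$ by Voiculescu's theorem, since $\pi_A^{(\infty)}$ adds no new compact ideal and the same quotient behaviour holds after noting $C^*(S_A)$ has the same image), absorbs the extra summand, and then applies the noncommutative Weyl–von Neumann argument verbatim as in the proof of Proposition~\ref{prop:LMIrange}. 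The compactness of $B_i - V_n^* A_i^{(\infty)} V_n$ falls out because Voiculescu's theorem yields the intertwining isometries with compact ``defect,'' and compressing by the fixed isometry $W$ preserves compactness.
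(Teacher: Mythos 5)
Your overall route matches the paper's exactly: produce a UCP map $\phi$ with $\phi(A_i)=B_i$ from Theorem~\ref{thm:ExistUCP_matRan}, extend by Arveson's theorem, dilate by Stinespring to a representation $\pi$ on $L$ with isometry $W:K\to L$, then invoke Voiculescu's theorem to compare $\pi$ with the ampliated identity representation on $H^{(\infty)}$, and finally compose. The easy direction and the final computation $B_i - V_n^*A_i^{(\infty)}V_n = W^*\bigl(\pi(A_i) - U_n^*A_i^{(\infty)}U_n\bigr)W$ are also correct.

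However, the justification of the Voiculescu step is wrong as stated, and the error is substantive rather than cosmetic. You assert that ``$\pi$ is approximately unitarily equivalent to $\pi\oplus\rho$'' where $\rho = \id^{(\infty)}$. Voiculescu's absorption theorem gives $\sigma\sim_{\cK}\sigma\oplus\tau$ only when $\sigma(C^*(S_A))\cap\cK=\{0\}$, and there is no reason for the Stinespring representation $\pi$ to satisfy this --- $\pi$ could for instance be finite-dimensional or have compacts in its range (this is precisely why the ampliation to $H^{(\infty)}$ appears in the statement). What is automatic is the opposite absorption: $\id^{(\infty)}\sim_{\cK}\id^{(\infty)}\oplus\pi$, because infinite ampliation eliminates compact operators from the range. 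This is exactly what the paper uses, via unitaries $U_n:H^{(\infty)}\to H^{(\infty)}\oplus L$ that one then compresses to the $L$-summand. Note also that even if your claimed equivalence $\pi\sim_{\cK}\pi\oplus\rho$ held, the unitaries would go $L\to L\oplus H^{(\infty)}$, and restricting to the $H^{(\infty)}$-summand produces co-isometries, not the isometries $U_n:L\to H^{(\infty)}$ you need; so the logic does not go through even granting the false premise. The isometries you actually want do exist, but they come from $\rho$ absorbing $\pi$ (equivalently, applying \cite[Lemma II.5.2]{DavBook} to the UCP map $\pi$ against the representation $\id^{(\infty)}$, whose range meets $\cK(H^{(\infty)})$ trivially), and the argument should be rephrased accordingly.
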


\begin{proof}
Suppose that $\cW(B) \subseteq \cW(A)$.
Then by Theorem \ref{thm:ExistUCP_matRan} we have a UCP map $\phi : S_A \rightarrow S_B$ sending $A_i$ to $B_i$.
By Arveson's extension theorem, there is a UCP extension $\widetilde{\phi} : C^*(S_A) \rightarrow \cB(K)$.
By Stinespring's dilation theorem, there is a separable Hilbert space $L$, an isometry $V: K \rightarrow L$ and a *-representation $\pi : C^*(S_A) \rightarrow B(L)$ such that $\widetilde{\phi}(T) = V^* \pi(T) V$.
By Voiculescu's Weyl-von Neumann theorem (e.g., \cite[Theorem II.5.3]{DavBook}), $\id^{(\infty)} \sim_\cK \id^{(\infty)} \oplus \pi$
(and if $C^*(S_A) \cap \cK(H) = \{0\}$, we can instead say that $\id \sim_\cK \id \oplus \pi$), where $\id$ is the identity representation of $C^*(S_A)$.
This means that there is a sequence of unitaries $U_n: H^{(\infty)} \to H^{(\infty)} \oplus L$ so that
\[
 \lim_{n\to\infty} \|( T^{(\infty)} \oplus \pi(T) ) - U_n T^{(\infty)} U_n^*\| = 0 \qforal T \in C^*(S_A),
\]
and moreover the differences in the limit expression are all compact operators.
Let $J$ be the natural injection of $L$ into $H^{(\infty)} \oplus L$. Then $V_n = U_n^* JV$ is a sequence of isometries of $K$ into $H^{(\infty)}$ such that
\begin{align*}
 \lim_{n\to\infty} B_i &- V_n^* A_i^{(\infty)} V_n \\
 &= \lim_{n\to\infty} V^*J^* \big( A_i^{(\infty)} \oplus \pi(A_i) \big)JV - V^*J^*U_n A_i^{(\infty)} U_n^*JV \\
 &= \lim_{n\to\infty} V^*J^* \big( A_i^{(\infty)} \oplus \pi(A_i)  - U_n A_i^{(\infty)} U_n^* \big) JV = 0;
\end{align*}
and the differences are all compact.
The converse is straightforward.
\end{proof}

The comment in the proof yields the following version when $C^*(S_A)$ contains no non-zero compact operators
(see \cite[Lemma II.5.2]{DavBook}).

\begin{corollary} \label{L:essential}
Let $A$ and $B\in \cB(H)^d$ be $d$-tuples of operators on a Hilbert space $H$. If $C^*(S_A) \cap \cK(H) = \{0\}$, then $\cW(B) \subseteq \cW(A)$ if and only if there is a sequence of isometries $V_n: K \rightarrow H$ such that
\[
 \lim_{n\to\infty} \| B_i - V_n^* A_i V_n \| = 0 \qfor 1 \le i \le d.
\]
Moreover the isometries may be chosen so that $B_i - V_n^* A_i V_n$ are compact operators.
\end{corollary}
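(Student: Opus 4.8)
The plan is to deduce this directly from Theorem \ref{thm:ExistUCP_approxComp} by collapsing the ampliation $H^{(\infty)}$ back down to $H$. The key observation is that when $C^*(S_A) \cap \cK(H) = \{0\}$, Voiculescu's theorem gives the stronger absorption statement $\id \sim_\cK \id \oplus \pi$ rather than merely $\id^{(\infty)} \sim_\cK \id^{(\infty)} \oplus \pi$ --- this is exactly the content of \cite[Lemma II.5.2]{DavBook}, and the parenthetical remark already inserted in the proof of Theorem \ref{thm:ExistUCP_approxComp} anticipates it. So I would rerun the argument of that proof verbatim, but replace every occurrence of $\id^{(\infty)}$ by $\id$: starting from the Stinespring representation $\widetilde\phi(T) = V^*\pi(T)V$ with $V : K \to L$, invoke $\id \sim_\cK \id \oplus \pi$ to get unitaries $U_n : H \to H \oplus L$ with $\lim_n \|(T \oplus \pi(T)) - U_n T U_n^*\| = 0$ and compact differences, then set $V_n = U_n^* J V$ with $J : L \to H \oplus L$ the inclusion. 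The same telescoping computation shows $\lim_n \|B_i - V_n^* A_i V_n\| = 0$ with compact differences.

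For the converse direction, if such isometries $V_n : K \to H$ exist, then the maps $T \mapsto V_n^* T V_n$ are UCP from $C^*(S_A)$ to $\cB(K)$, and any point-weak-$*$ cluster point $\phi$ of the net $\{V_n^* \,\cdot\, V_n\}$ is a UCP map on $C^*(S_A)$ (the UCP maps into $\cB(K)$ form a point-weak-$*$-compact set); the norm convergence $B_i = \lim_n V_n^* A_i V_n$ forces $\phi(A_i) = B_i$. Restricting $\phi$ to $S_A$ and applying Theorem \ref{thm:ExistUCP_matRan}(2) gives $\cW(B) \subseteq \cW(A)$. Alternatively, and more cheaply, one simply notes that isometries $V_n : K \to H$ can be regarded as isometries into $H^{(\infty)}$ (compose with the inclusion of the first summand), and $A_i V_n = A_i^{(\infty)}|_{\text{first summand}}$ composed appropriately, so the converse is literally a special case of the converse in Theorem \ref{thm:ExistUCP_approxComp}; this is the "straightforward" remark there.

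I do not expect a genuine obstacle here --- the corollary is essentially a bookkeeping variant of the theorem. The only point requiring a moment's care is verifying that the hypothesis $C^*(S_A) \cap \cK(H) = \{0\}$ is precisely what upgrades Voiculescu absorption from the infinite-ampliation form to the non-ampliated form; this is standard (it is the statement that the identity representation is approximately unitarily equivalent to $\id \oplus \pi$ for any $\pi$ that is "singular" relative to it, equivalently that $\id$ has no finite-dimensional subrepresentations absorbed into the compacts), and it is already cited in the excerpt as \cite[Lemma II.5.2]{DavBook}. Everything else is the same telescoping estimate used in Theorem \ref{thm:ExistUCP_approxComp}.
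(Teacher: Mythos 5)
Your proposal is correct and follows the paper's intended argument essentially verbatim: the paper explicitly notes that the parenthetical remark in the proof of Theorem~\ref{thm:ExistUCP_approxComp} (that $C^*(S_A) \cap \cK(H) = \{0\}$ upgrades Voiculescu absorption to $\id \sim_\cK \id \oplus \pi$) yields the corollary, which is exactly the substitution you carry out. Your treatment of the converse direction, including the observation that it reduces to the converse of the theorem, also matches the paper's "straightforward" remark.
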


Motivated by a similar analysis of single operators in \cite{Dav88}, we make the following definitions.

%%%%%%%%%%%%%%%%%%%%%%%%%%%%%%
\begin{definition}
Define the distance of a $d$-tuple $X = (X_1,\dots,X_d)$ in $M_n^d$ (or $\cB(H)^d$) from a subset $\cW_n$ (in the same space) by
\[
 d_n(X,\cW_n) = \inf_{W\in\cW_n} \max_{1 \le i \le d} \| X_i - W_i \| .
\]
Then define a measure of containment of a matrix convex set $\cB$ in another matrix convex set $\cW$ by
\[
 \delta_\cW( \cB) = \sup \{ d_n(B,\cW_n) : B \in \cB_n,\ n \ge 1 \} .
\]
Also define a distance between two bounded matrix convex sets by
\[
 \delta(\cA, \cB) = \max \{ \delta_\cA(\cB), \delta_\cB(\cA) \} .
\]
We may define a semi-metric on $\cB(H)^d$ by
\[
 \rho(A,B) = \delta(\cW(A), \cW(B))  \qfor A, B \in \cB(H)^d.
\]
\end{definition}

It is easy to see that $\rho$ is symmetric and satisfies the triangle inequality. However distinct tuples can be at distance 0.
This semi-metric is blind to multiplicity; that is, $\rho(A, A^{(\infty)}) = 0$.
The following proposition which describes when this occurs is immediate from the definition and Theorem~\ref{thm:ExistUCP_matRan}.

%%%%%%%%%%%%%%%%%%%%%%%%%%%%%%
{\samepage
\begin{proposition} \label{P:semimetric}
For $A, B \in \cB(H)^d$, the following are equivalent:
\begin{enumerate}
\item $\rho(A,B) = 0$.
\item $\cW(A) = \cW(B)$.
\item There is a completely isometric UCP map of $S_A$ onto $S_B$ sending $A$ to $B$.
\end{enumerate}
\end{proposition}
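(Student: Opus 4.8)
The plan is to verify the equivalence of the three conditions by showing $(1)\Leftrightarrow(2)$ and $(2)\Leftrightarrow(3)$, each of which follows almost directly from the definitions together with Theorem~\ref{thm:ExistUCP_matRan}. First I would unwind the definition of $\rho$: by definition $\rho(A,B) = \delta(\cW(A),\cW(B)) = \max\{\delta_{\cW(A)}(\cW(B)),\, \delta_{\cW(B)}(\cW(A))\}$, and each $\delta_{\cW(A)}(\cW(B))$ is a supremum of nonnegative quantities $d_n(B',\cW(A)_n)$. So $\rho(A,B)=0$ exactly when $d_n(B',\cW(A)_n)=0$ for every $B'\in\cW(B)_n$ and every $n$, and symmetrically with the roles of $A$ and $B$ swapped.

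The key step is then to observe that $d_n(B',\cW(A)_n)=0$ for all $B'\in\cW(B)_n$ and all $n$ is equivalent to $\cW(B)\subseteq\cW(A)$. The nontrivial direction here uses that $\cW(A)$ is a \emph{closed} matrix convex set (as recorded in the proposition immediately following the definition of the matrix range): if $d_n(B',\cW(A)_n)=0$ then $B'$ lies in the closure of $\cW(A)_n$, which equals $\cW(A)_n$ itself. Conversely, containment trivially gives distance zero. Applying this in both directions, $\rho(A,B)=0$ is equivalent to $\cW(B)\subseteq\cW(A)$ \emph{and} $\cW(A)\subseteq\cW(B)$, i.e.\ to $\cW(A)=\cW(B)$. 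This establishes $(1)\Leftrightarrow(2)$.

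For $(2)\Leftrightarrow(3)$, I would simply cite part~(3) of Theorem~\ref{thm:ExistUCP_matRan}, which states precisely that there exists a unital completely isometric map $\phi:S_A\to S_B$ sending $A$ to $B$ if and only if $\cW(B)=\cW(A)$; note a unital complete isometry of operator systems is the same as a completely isometric UCP map with UCP inverse, matching the phrasing in item~(3) of the proposition. This closes the loop.

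I do not expect a genuine obstacle: the only point requiring a moment's care is the use of closedness of matrix ranges to pass from ``distance zero to $\cW(A)_n$'' to ``membership in $\cW(A)_n$'', and one should make sure the infimum defining $d_n$ is indeed attained in the limit sense one needs — but since each $\cW(A)_n$ is closed and bounded in $M_n^d$, a sequence $W^{(k)}\in\cW(A)_n$ with $\max_i\|B'_i-W^{(k)}_i\|\to 0$ has the limit $B'\in\cW(A)_n$, so this is routine. The whole argument is a short bookkeeping exercise assembled from results already in hand.
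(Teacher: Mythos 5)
Your proposal is correct and takes exactly the route the paper has in mind: the paper states the proposition is ``immediate from the definition and Theorem~\ref{thm:ExistUCP_matRan},'' and your write-up just fleshes out that $(1)\Leftrightarrow(2)$ is the definition of $\rho$ together with closedness of matrix ranges, while $(2)\Leftrightarrow(3)$ is Theorem~\ref{thm:ExistUCP_matRan}(3). No gap; this is a faithful expansion of the paper's intended one-line argument.
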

} %end of \samepage

%%%%%%%%%%%%%%%%%%%%%%%%%%%%%%

We next proceed to prove an approximate version of Theorem \ref{thm:ExistUCP_matRan}.

\begin{lemma} \label{L:BD}
Let $A \in \cB(H)^d$. Select a countable dense subset
\[ \{ A^{(k)} = (A^{(k)}_{1},\dots,A^{(k)}_{d}) : k \ge 1 \} \quad\text{of}\quad \textstyle\bigcup_{n\ge1} \cW_n(A) \]
and define $\tilde A = \bigoplus_{k\ge1} A^{(k)}$.
Then $\rho(A, \tilde A) = 0$.
Moreover if $\tilde A'$ is defined by another dense subset  $\{ A'^{(k)} : k \ge 1 \}$, then $\tilde A' \sim_\cK \tilde A$.
\end{lemma}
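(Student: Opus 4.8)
The plan is to establish the two assertions separately, with the bulk of the work going into the first. For $\rho(A,\tilde A)=0$, by Proposition~\ref{P:semimetric} it suffices to show $\cW(A)=\cW(\tilde A)$. The inclusion $\cW(\tilde A)\subseteq \cW(A)$ is easy: each summand $A^{(k)}$ lies in $\cW_{n_k}(A)$, hence is the image of $A$ under some UCP map $C^*(S_A)\to M_{n_k}$; assembling these as a direct sum and restricting gives that any finite compression of $\tilde A$ to a matrix is in $\cW(A)$, and since $\cW(A)$ is a closed matrix convex set it absorbs all of $\cW(\tilde A)$ (alternatively, invoke Proposition~\ref{prop:rangeincluded} together with the fact that $\tilde A$ is, levelwise, a limit of matrix convex combinations of points of the matrix convex set $\cW(A)$). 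For the reverse inclusion $\cW(A)\subseteq\cW(\tilde A)$: I would mimic the argument in Proposition~\ref{prop:LMIrange}. Namely, $\cW_n(A)$ is spanned in the closure by the chosen dense points, and each chosen point $A^{(k)}$ (appearing, if one wishes, infinitely often in the list) is visibly a compression of $\tilde A$; since $\cW(\tilde A)$ is a closed matrix convex set, it contains the matrix convex hull of $\{A^{(k)}\}$, hence its closure, which is all of $\cW_n(A)$ at every level $n$. Thus $\cW(A)\subseteq\cW(\tilde A)$, giving equality and $\rho(A,\tilde A)=0$.

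For the "moreover" statement, $\tilde A'\sim_\cK\tilde A$: the two tuples generate, via the universal property, two unital $*$-representations $\pi$ and $\pi'$ of $C^*(S_A)$ — more precisely, each $A^{(k)}$ gives a UCP map into a matrix algebra, and by Stinespring each such UCP map dilates, but the cleaner route is to observe that $\tilde A$ and $\tilde A'$ have the same matrix range, hence by Proposition~\ref{P:semimetric} there is a unital complete order isomorphism $S_{\tilde A}\to S_{\tilde A'}$, which extends (Arveson) to a unital $*$-isomorphism (or at least a $*$-representation in each direction) between the generated C*-algebras. Then I would invoke Voiculescu's Weyl--von Neumann theorem in the form used in the proof of Theorem~\ref{thm:ExistUCP_approxComp}: two unital $*$-representations of a separable C*-algebra that agree modulo the compacts — in particular two representations with the same kernel and the same multiplicities — are approximately unitarily equivalent with compact differences, i.e. $\tilde A'\sim_\cK\tilde A$. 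The point is that both $\tilde A$ and $\tilde A'$ are infinite direct sums of finite-dimensional blocks drawn densely from the same set $\bigcup_n\cW_n(A)$, so the associated representations have identical kernels and each has "infinite multiplicity" in the relevant sense, which is exactly the hypothesis for $\sim_\cK$.

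I expect the main obstacle to be pinning down precisely which incarnation of Voiculescu's theorem delivers $\tilde A'\sim_\cK\tilde A$ cleanly, and in particular making sure the two direct sums genuinely have the same kernel and comparable multiplicity structure; one must be careful that $C^*(S_{\tilde A})$ may contain nonzero compacts (the finite blocks do), so one cannot simply quote the "$C^*(S_A)\cap\cK=\{0\}$" version from Corollary~\ref{L:essential}, and instead needs the general statement $\mathrm{id}^{(\infty)}\sim_\cK\mathrm{id}^{(\infty)}\oplus\pi$ type comparison, carefully tracking that the abstract identification $S_{\tilde A}\cong S_{\tilde A'}$ is implemented by a genuine $*$-homomorphism after restricting attention to the C*-algebras generated. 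A secondary, more routine point is checking the density/absorption bookkeeping in the first part — that the closed matrix convex set $\cW(\tilde A)$ really does recover every level $\cW_n(A)$ from the countable dense set — but this follows the template of Proposition~\ref{prop:LMIrange} and should be straightforward.
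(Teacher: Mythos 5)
Your treatment of the first assertion, $\rho(A,\tilde A)=0$, is essentially the paper's: one shows $\cW(\tilde A)=\cW(A)$ by observing that each $A^{(k)}$ is simultaneously a compression of $\tilde A$ and a point of $\cW(A)$, and then uses density and closedness; the paper simply refers back to the argument in Proposition~\ref{prop:LMIrange}. That part is fine.

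The second assertion, $\tilde A'\sim_\cK\tilde A$, is where your proposed route breaks down, and you have in fact identified the weak link yourself without resolving it. The step ``a unital complete order isomorphism $S_{\tilde A}\to S_{\tilde A'}$ extends (Arveson) to a unital $*$-isomorphism between the generated C*-algebras'' is false in general. What the universal property gives is a $*$-isomorphism between the \emph{C*-envelopes} $C^*_e(S_{\tilde A})$ and $C^*_e(S_{\tilde A'})$, and these are quotients of $C^*(S_{\tilde A})$ and $C^*(S_{\tilde A'})$ that may well be proper; there is no induced $*$-homomorphism at the level of the generated C*-algebras unless you first prove that the Shilov ideal vanishes, which is not automatic for these direct sums of dense blocks. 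Without a genuine $*$-isomorphism of the concretely represented C*-algebras you cannot invoke Voiculescu's theorem at all, because there is no single separable C*-algebra of which $\tilde A$ and $\tilde A'$ are two representations. And even if you patched that, Voiculescu's criterion for $\sim_\cK$ is equality of the rank functions $a\mapsto\operatorname{rank}\pi(a)$, not merely equality of kernels; two direct sums built from dense sequences a priori have very different finite-rank behaviour depending on how often a given block recurs. Making your approach rigorous would require proving both that the generated C*-algebras contain no nonzero compacts (or otherwise controlling the finite part) and that the operator-system isomorphism promotes to a $*$-isomorphism, and neither is established.

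The paper's proof of the ``moreover'' is much more elementary and sidesteps all of this. After reducing to the case that $A$ is not a tuple of scalars (so each $\cW_n(A)$ is an infinite convex set with no isolated points), it observes that both $\{A^{(k)}\}\cap\cW_n(A)$ and $\{A'^{(k)}\}\cap\cW_n(A)$ are countable dense subsets of the same compact set, and therefore, by a back-and-forth matching, one can find for each $\ep>0$ a block-size--preserving permutation $\pi$ of $\bN$ with
\[
\|A^{(k)} - A'^{(\pi(k))}\| < \ep \quad\text{for all } k, \qquad \lim_{k\to\infty}\|A^{(k)} - A'^{(\pi(k))}\| = 0 .
\]
The permutation unitary $U_\pi$ then satisfies $\|\tilde A - U_\pi\tilde A' U_\pi^*\| < \ep$ with $\tilde A - U_\pi\tilde A' U_\pi^*$ compact, which is exactly $\tilde A'\sim_\cK\tilde A$. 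You should replace the Voiculescu argument with this direct combinatorial one, or else fill the two gaps above before appealing to Voiculescu.
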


\begin{proof}
That $\cW(\tilde A) = \cW(A)$ was established in the proof of Proposition \ref{prop:LMIrange}, so  $\rho(A, \tilde A) = 0$ (actually, in the proof of Proposition \ref{prop:LMIrange} we had an infinite multiplicity version of $\tilde{A}$, but $\rho$ is blind to multiplicity).
For the second statement, we may assume that $A$ is not a $d$-tuple of scalars, as that case is trivial.
Thus $\cW_n(A)$ is a convex set containing more than one point, and hence there are countably many of the $A^{(k)}$s and $A'^{(k)}$s in each $\cW_n(A)$.
As both are dense, given $\ep>0$, it is a routine combinatorial exercise to find a permutation $\pi$ of $\bN$ such that
\[
 \| A^{(k)} - A'^{(\pi(k))} \| < \ep \ \FORAL k \ge 1 \qand \lim_{k\to\infty} \| A^{(k)} - A'^{(\pi(k))} \| = 0.
\]
It follows that there is a unitary operator $U_\pi$ implementing this permutation so that
\[
  \| \tilde A - U_\pi \tilde A' U_\pi^* \| < \ep \qand \tilde A - U_\pi \tilde A' U_\pi^*\in \cK(H) .
\]
Thus $\tilde A' \sim_\cK \tilde A$.
\end{proof}

%%%%%%%%%%%%%%%%%%%%%%%%%%%%%%
\begin{theorem} \label{T:approx UCP}
Let $A \in \cB(H)^d$ and $B \in \cB(K)^d$ such that
\[ \delta_{\cW(A)}(\cW(B)) = r .\]
Then there is a UCP map $\psi$ of $S_A$ into $\cB(K)$ such that
\[
 \| \psi(A_i) - B_i \| \le r \qfor 1 \le i \le d .
\]
\end{theorem}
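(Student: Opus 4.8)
The statement asserts that if every $B \in \cW_n(B)$ lies within distance $r$ (in the $d$-tuple norm $\max_i \|\cdot\|$) of $\cW_n(A)$, then there is a single UCP map $\psi : S_A \to \cB(K)$ with $\|\psi(A_i) - B_i\| \le r$ for all $i$. The plan is to combine the exact UCP-existence criterion of Theorem~\ref{thm:ExistUCP_matRan} with the direct-sum reservoir construction used in Proposition~\ref{prop:LMIrange} and Lemma~\ref{L:BD}, the point being that we do not expect to find a map sending $A_i$ exactly to anything; instead we build a ``thickened'' tuple whose matrix range contains $\cW(B)$ and is uniformly close to $\cW(A)$, then apply the exact theorem.

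First I would replace $B$ by a maximally populated model. Following Lemma~\ref{L:BD}, pick a countable dense subset $\{B^{(k)}\}$ of $\bigcup_n \cW_n(B)$ and set $\tilde B = \bigoplus_k B^{(k)}$, so that $\cW(\tilde B) = \cW(B)$ and it suffices to produce a UCP map of $S_A$ into $\cB(\tilde K)$ with $\|\psi(A_i) - \tilde B_i\| \le r$; composing with a compression back down recovers a map with $\psi(A_i)$ close to the original $B_i$, since the original $B$ sits inside $\cW(\tilde B)$. Next, for each block $B^{(k)} \in \cW_{n_k}(B)$, the hypothesis gives a tuple $A^{(k)} \in \cW_{n_k}(A)$ with $\max_i \|B^{(k)}_i - A^{(k)}_i\| \le r$ (strictly speaking $\le r + \ep_k$ with $\ep_k \to 0$, which will be harmless). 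Because $A^{(k)} \in \cW_{n_k}(A)$, there is a UCP map $C^*(S_A) \to M_{n_k}$ carrying $A$ to $A^{(k)}$; assembling these over all $k$ gives a single UCP map $\Phi : C^*(S_A) \to \bigoplus_k M_{n_k} \subseteq \cB(\tilde K)$ with $\Phi(A_i) = \bigoplus_k A^{(k)}_i =: \tilde A_i$. Then $\psi := \Phi|_{S_A}$ is UCP and, block by block, $\|\psi(A_i) - \tilde B_i\| = \sup_k \|A^{(k)}_i - B^{(k)}_i\| \le r$ (after absorbing the $\ep_k$, or by first replacing $r$ by $r+\ep$ and letting $\ep \downarrow 0$ using a weak-$*$ limit of the resulting maps, exactly as in the compactness argument in the proof of Theorem~\ref{thm:ExistUCP_matRan}). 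Restricting/compressing to the subspace carrying the particular copy of $B$ then yields the claimed $\psi$ into $\cB(K)$.

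The one point requiring care — and the place I expect the argument to be most delicate — is the passage from the per-block approximants with errors $r + \ep_k$ to a clean bound of exactly $r$, together with making sure the assembled map is genuinely UCP into $\cB(K)$ rather than merely into a direct sum of matrix algebras. For the first, I would fix $\ep > 0$, run the construction to get $\psi_\ep$ with $\|\psi_\ep(A_i) - B_i\| \le r + \ep$, and then take a point-weak-$*$ cluster point of $\{\psi_\ep\}_{\ep > 0}$ in the (compact) set of UCP maps $S_A \to \cB(K)$; lower semicontinuity of the norm under weak-$*$ limits gives $\|\psi(A_i) - B_i\| \le r$. For the second, the identification of $P_\alpha S_B P_\alpha$ with a subsystem of a matrix algebra and the direct-sum/compression bookkeeping are precisely the manipulations already carried out in the proofs of Theorem~\ref{thm:ExistUCP_matRan} and Proposition~\ref{prop:LMIrange}, so no new ingredient is needed beyond organizing them. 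Everything else is a routine combination of Arveson's extension theorem and matrix-convexity of $\cW(A)$.
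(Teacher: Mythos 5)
Your overall blueprint is the same as the paper's: build a block-diagonal model $\tilde B=\bigoplus_k B^{(k)}$ whose summands are dense in $\bigcup_n\cW_n(B)$, choose $A^{(k)}\in\cW_{n_k}(A)$ with $\max_i\|A^{(k)}_i-B^{(k)}_i\|\le r$, assemble the corresponding UCP maps $C^*(S_A)\to M_{n_k}$ into a single UCP map $\Phi$ sending $A$ to $\tilde A=\bigoplus_k A^{(k)}$, and then pass from $\tilde B$ back to $B$ via a contractive map. Two remarks on the details, of which the second is a genuine (if repairable) gap.

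First, the $\ep_k$ and the subsequent weak-$*$ limit are unnecessary: each $\cW_{n_k}(A)$ is closed and bounded in the finite-dimensional space $M_{n_k}^d$, hence compact, so the infimum defining $d_{n_k}(B^{(k)},\cW_{n_k}(A))$ is attained. One can take $A^{(k)}$ with $\max_i\|A^{(k)}_i - B^{(k)}_i\|\le r$ on the nose, exactly as the paper does.

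Second, the final step ``restricting/compressing to the subspace carrying the particular copy of $B$'' does not make literal sense: when $K$ is infinite-dimensional, $B$ is not a direct summand of $\tilde B$ (all summands $B^{(k)}$ are finite matrices), so there is no subspace of $\tilde K$ onto which one can compress to recover $B$. What is actually needed — and what the paper supplies — is a genuine UCP map $\eta\colon\cB(\tilde K)\to\cB(K)$ with $\eta(\tilde B_i)=B_i$. Such an $\eta$ exists because $\cW(\tilde B)=\cW(B)$ (Lemma \ref{L:BD}) gives a unital complete isometry $\psi_2\colon S_{\tilde B}\to S_B$ by Theorem \ref{thm:ExistUCP_matRan}(3), and Arveson's extension theorem promotes $\psi_2$ to a UCP map $\tilde\psi_2\colon\cB(\tilde K)\to\cB(K)$. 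Once you have $\eta=\tilde\psi_2$, the estimate falls out of contractivity of UCP maps: $\|\eta(\Phi(A_i))-B_i\|=\|\eta\bigl(\Phi(A_i)-\tilde B_i\bigr)\|\le\|\Phi(A_i)-\tilde B_i\|\le r$. Your closing sentence does invoke Arveson's extension theorem, so you clearly sensed where the missing ingredient lives; but as written the plan presents this as mere ``compression bookkeeping,'' which it is not. With that single substitution — replace the compression by the Arveson extension of the completely isometric map $S_{\tilde B}\to S_B$ — your argument becomes a faithful reconstruction of the paper's proof.
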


\begin{proof}
Following Lemma~\ref{L:BD}, let $\tilde B = \bigoplus_{k\ge1} B^{(k)}$ be a block diagonal operator in $B(\tilde K)$ with $n\times n$ summands dense in $\cW_n(B)$ for each $n\ge 1$.
For each $k$, select $A^{(k)} \in \cW_{n_k}(A)$ so that $\|B_i^{(k)} - A_i^{(k)} \| \le r$ for all $i$.
Let $\tilde A = \bigoplus_{k\ge1} A^{(k)}$ in $B(\tilde K)$.
Then by Proposition~\ref{P:semimetric}, $\rho(A\oplus \tilde A, A) = 0$.
The map $\phi:B(H \oplus \tilde K) \to B(\tilde K)$ given by compression to $\tilde K$ is a UCP map that takes $A\oplus \tilde A$ to $\tilde A$.
Let $\psi_1$ be the completely isometric map of $S_A$ onto $S_{A\oplus\tilde A}$ and let $\psi_2$ be the completely isometric map of $S_{\tilde B}$ onto $S_B$ that take generators to generators.
Then letting $\tilde{\psi}_2$ be the extension of $\psi_2$ to $B(\tilde{K})$, $\psi = \tilde{\psi}_2 \phi \psi_1$ is the desired UCP map satisfying
\[
 \| \psi(A_i) - B_i \| \le r  \qfor 1 \le i \le d .\qedhere
\]
\end{proof}

%%%%%%%%%%%%%%%%%%%%%%%%%%%%%%%%%%%%%%%%%%%%%%%%%%%%%
\subsection{Reduction of the CC and CCP problems to the UCP problem}

It is natural to ask a similar question with the change that we seek a CC or a CCP map instead of a UCP map.
The following constructions allow us to reduce both problems to that of determining the existence of a UCP map.
For $A \in \cB(H)^d$ and $B \in \cB(K)^d$, we denote $\cA = C^*(S_A)$ and $\cB = \cB(K)$.

%%%%%%%%%%%%%%%%%%%%%%%%%%
\begin{proposition}\label{prop:reduction1}
Let
\[
\hat{\cA} = \left\{ \begin{pmatrix} \lambda & a \\ a^* & \lambda \end{pmatrix} : \lambda \in \bC, a \in \cA\right\}
\]
and let $\hat{\cB}$ be defined similarly.
There exists a CC map $\phi: \cA \to \cB$ sending $A$ to $B$ if and only if there exists a UCP map $\hat{\phi} : \hat{\cA} \to \hat{\cB}$ sending $\hat{A}_i:=\left(\begin{smallmatrix}0 & A_i \\A_i^* & 0 \end{smallmatrix}\right)$ to $\hat{B}_i:=\left(\begin{smallmatrix}0 & B_i \\B_i^* & 0 \end{smallmatrix}\right)$.
\end{proposition}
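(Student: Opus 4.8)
The plan is to exploit the standard correspondence between completely contractive maps on a $C^*$-algebra $\cA$ and completely positive maps on the operator system $\hat\cA$ obtained by the "off-diagonal trick". First I would set up the bijection at the level of linear maps: given $\phi:\cA\to\cB$, define $\hat\phi:\hat\cA\to\hat\cB$ by
\[
\hat\phi\!\left(\begin{pmatrix}\lambda & a\\ b^* & \mu\end{pmatrix}\right) = \begin{pmatrix}\lambda & \phi(a)\\ \phi(b)^* & \mu\end{pmatrix},
\]
where I should first check that $\hat\cA$ as written (with equal diagonal entries) is closed under the relevant operations, or rather observe that $\hat\cA$ is an operator \emph{system} inside $M_2(\cA)$ and that $\hat\phi$ is a well-defined unital linear map onto $\hat\cB$. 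Note that $\hat\phi(\hat A_i) = \hat B_i$ precisely because $\phi(A_i)=B_i$, so the content is entirely in matching "$\phi$ completely contractive" with "$\hat\phi$ UCP".

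The key step is the two implications. For the forward direction, I would recall that for any linear map $\phi$ between $C^*$-algebras (or operator spaces), $\phi$ is completely contractive if and only if the map $\hat\phi$ on $2\times 2$ matrices of the form $\left(\begin{smallmatrix}\lambda I & a\\ b^*& \mu I\end{smallmatrix}\right)$ is completely positive; this is exactly \cite[Lemma 8.1 / Exercise 8.2]{PauBook} (the operator space / operator system duality via $2\times 2$ matrices). Concretely, completely positive on $\hat\cA$ amounts to: for all $n$ and all $[x_{jk}]\in M_n(\hat\cA)$ positive, $[\hat\phi(x_{jk})]\ge 0$; applying this to matrices built from an arbitrary $a = [a_{jk}]\in M_n(\cA)$ via the block form $\left(\begin{smallmatrix}\|a\| I & a\\ a^* & \|a\| I\end{smallmatrix}\right)\ge 0$ gives $\left(\begin{smallmatrix}\|a\| I & \phi_n(a)\\ \phi_n(a)^* & \|a\| I\end{smallmatrix}\right)\ge 0$, i.e. $\|\phi_n(a)\|\le\|a\|$, so $\phi$ is completely contractive; conversely the same $2\times 2$ positivity criterion, applied in both directions, yields complete positivity of $\hat\phi$ from complete contractivity of $\phi$. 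I would phrase this cleanly using the fact that a self-adjoint element $\left(\begin{smallmatrix}s & t\\ t^* & s\end{smallmatrix}\right)$ of $M_2$ over a $C^*$-algebra is positive iff $s\ge 0$ and $t^*t\le s^2$ (or the ampliated version), which is the only computation needed.

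The main obstacle — really the only subtlety — is bookkeeping: one must be careful that $\hat\cA$ is being treated as a unital operator system (so that "UCP" makes sense and Arveson-type reasoning applies) rather than a $C^*$-algebra, and that the complete positivity of $\hat\phi$ is tested on \emph{all} of $M_n(\hat\cA)$, not merely on elements of the special off-diagonal form. The standard resolution, which I would invoke, is that a positive element of $M_n(\hat\cA) \subseteq M_n(M_2(\cA)) = M_{2n}(\cA)$ can, after a permutation of coordinates, be written as a $2\times 2$ block matrix with diagonal blocks in $M_n$ (scalar, from the $\lambda$-entries) and off-diagonal block in $M_n(\cA)$, reducing everything to the $2\times2$-over-$C^*$-algebra positivity criterion cited above. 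With that reduction in hand the proof is a routine verification in both directions, and I would present it in two short paragraphs, one per implication, citing \cite[Chapter 8]{PauBook} for the underlying operator-space duality.
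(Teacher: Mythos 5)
Your proposal is correct and takes essentially the same approach as the paper: the paper's proof of this proposition is the single line ``This is a familiar reduction, see \cite[Lemma 8.1]{PauBook},'' and your sketch is precisely a careful unpacking of that lemma (including the observation that $\hat\cA$ should be read as the full operator system of $2\times 2$ matrices with scalar diagonals, as in Paulsen).
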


\begin{proof}
This is a familiar reduction, see \cite[Lemma 8.1]{PauBook}.
\end{proof}

%%%%%%%%%%%%%%%%%%%%%%%%%%%%%%%
\begin{proposition}\label{prop:reduction2}
Let
\[
\tilde{\cA} = \left\{ \begin{pmatrix} a & 0 \\ 0 & \lambda \end{pmatrix} : a \in \cA, \lambda \in \bC \right\}
\]
and let $\tilde{\cB}$ be defined similarly.
There exists a CCP map $\phi: \cA \to \cB$ sending $A$ to $B$ if and only if there exists a UCP map $\tilde{\phi} : \tilde{\cA} \to \tilde{\cB}$ sending $\left(\begin{smallmatrix}A_i & 0 \\0 & 0 \end{smallmatrix}\right)$ to $\left(\begin{smallmatrix} B_i & 0  \\0 & 0 \end{smallmatrix}\right)$.
\end{proposition}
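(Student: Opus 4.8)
The plan is to mirror the familiar ``$2\times 2$ unitalization'' trick used in Proposition \ref{prop:reduction1}, but now using the diagonal (rather than off-diagonal) corner to record the scalar $\lambda$. First I would observe that $\tilde{\cA}$ and $\tilde{\cB}$ are genuine (unital) operator systems sitting inside $M_2(\cA)$ and $M_2(\cB)$ respectively, with unit $\left(\begin{smallmatrix} 1 & 0 \\ 0 & 1\end{smallmatrix}\right)$; note that an element of $\tilde{\cA}$ is self-adjoint exactly when $a = a^*$ and $\lambda \in \bR$, so the order structure on $\tilde{\cA}$ is the obvious one inherited from $\cA \oplus \bC$.

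For the forward direction, suppose $\phi : \cA \to \cB$ is CCP and sends $A$ to $B$. Define $\tilde\phi : \tilde{\cA} \to \tilde{\cB}$ by $\tilde\phi\left(\begin{smallmatrix} a & 0 \\ 0 & \lambda\end{smallmatrix}\right) = \left(\begin{smallmatrix} \phi(a) & 0 \\ 0 & \lambda\end{smallmatrix}\right)$. This is clearly unital and sends $\left(\begin{smallmatrix}A_i & 0 \\ 0 & 0\end{smallmatrix}\right)$ to $\left(\begin{smallmatrix}B_i & 0 \\ 0 & 0\end{smallmatrix}\right)$; I would check complete positivity by identifying $M_n(\tilde{\cA})$ with $\{ (x, D) : x \in M_n(\cA),\ D \in M_n \text{ diagonal part}\}$ appropriately and using that a positive element of $M_n(M_2(\cA))$ of this block form has positive $(1,1)$ corner and positive (scalar) $(2,2)$ corner, so $\tilde\phi^{(n)}$ applied to it is positive because $\phi^{(n)}$ is positive and contractivity of $\phi$ controls the cross terms — this last point is exactly where the CCP (as opposed to merely CP) hypothesis on $\phi$ is needed, and it is the only place in the argument requiring care.

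For the converse, suppose $\tilde\phi : \tilde{\cA} \to \tilde{\cB}$ is UCP and sends $\left(\begin{smallmatrix}A_i & 0 \\ 0 & 0\end{smallmatrix}\right)$ to $\left(\begin{smallmatrix}B_i & 0 \\ 0 & 0\end{smallmatrix}\right)$. Since $\tilde\phi$ is unital and positive, it must respect the decomposition coming from the central projection $\left(\begin{smallmatrix}1 & 0 \\ 0 & 0\end{smallmatrix}\right)$ of $\tilde{\cA}$: a short argument with this projection (using that UCP maps send the range of a projection compatibly, as in Choi's theorem on multiplicative domains applied to $\left(\begin{smallmatrix}1&0\\0&0\end{smallmatrix}\right)$, which is fixed) shows $\tilde\phi\left(\begin{smallmatrix} a & 0 \\ 0 & \lambda\end{smallmatrix}\right) = \left(\begin{smallmatrix} \phi(a) & 0 \\ 0 & \lambda\end{smallmatrix}\right)$ for a well-defined linear map $\phi : \cA \to \cB$. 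Then $\phi$ is unital? No — it need not be; rather $\phi$ inherits complete positivity from $\tilde\phi$ restricted to the corner, and contractivity follows because $\|\phi^{(n)}(x)\| \le \|\tilde\phi^{(n)}\left(\begin{smallmatrix} x & 0 \\ 0 & \|x\|\end{smallmatrix}\right)\| \le \left\|\left(\begin{smallmatrix} x & 0 \\ 0 & \|x\|\end{smallmatrix}\right)\right\| = \|x\|$, using positivity of the $2\times 2$ matrix $\left(\begin{smallmatrix}\|x\| & x \\ x^* & \|x\|\end{smallmatrix}\right)$-type estimate. Hence $\phi$ is CCP and sends $A$ to $B$, completing the proof. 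The main obstacle, as noted, is the bookkeeping in the forward direction verifying that CCP of $\phi$ exactly compensates for the off-diagonal slack in positive elements of $M_n(\tilde{\cA})$; everything else is routine and parallels \cite[Lemma 8.1]{PauBook}.
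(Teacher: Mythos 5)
Your forward direction contains a genuine error that invalidates the argument. You define $\tilde\phi\left(\begin{smallmatrix} a & 0 \\ 0 & \lambda\end{smallmatrix}\right) = \left(\begin{smallmatrix} \phi(a) & 0 \\ 0 & \lambda\end{smallmatrix}\right)$ and assert it is ``clearly unital,'' but it is not: $\tilde\phi\left(\begin{smallmatrix} 1_A & 0 \\ 0 & 1\end{smallmatrix}\right) = \left(\begin{smallmatrix} \phi(1_A) & 0 \\ 0 & 1\end{smallmatrix}\right)$, and since $\phi$ is merely CCP (not unital), $\phi(1_A) \ne 1_B$ in general. Indeed, you yourself note in the converse direction that the extracted $\phi$ ``need not be'' unital --- the same observation defeats your forward construction. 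The whole point of the proposition is to trade a subunital condition for a unital one, and your formula does not compensate for the defect $1_B - \phi(1_A)$. The paper's construction dumps this defect into the $(1,1)$-corner weighted by $\lambda$:
\[
\tilde{\phi} \begin{pmatrix}a & 0 \\0 & \lambda \end{pmatrix} =
\begin{pmatrix} \phi(a) + \lambda(1_B - \phi(1_A))  &  0 \\0 &  \lambda \end{pmatrix} ,
\]
which is unital by design and is CP precisely when $\phi$ is CCP.

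The converse direction also has a gap. You claim that the central projection $p = \left(\begin{smallmatrix} 1_A & 0 \\ 0 & 0\end{smallmatrix}\right)$ ``is fixed'' by $\tilde\phi$ and then invoke the multiplicative-domain machinery to conclude that $\tilde\phi$ decomposes block-diagonally. There is no reason for $p$ to be fixed: the hypotheses only say that $\tilde\phi$ is unital and sends $\left(\begin{smallmatrix}A_i & 0 \\0 & 0 \end{smallmatrix}\right)$ to $\left(\begin{smallmatrix}B_i & 0 \\ 0 & 0\end{smallmatrix}\right)$, which does not pin down $\tilde\phi(p)$. (Take $\cA=\cB=\bC$, $d=1$, $A=B=0$; then $\tilde\phi(x,y)=\bigl(\tfrac{x+y}{2},\tfrac{x+y}{2}\bigr)$ is UCP, sends $(0,0)$ to $(0,0)$, but does not fix $p=(1,0)$ and is not block-decomposing.) The correct extraction is simpler and doesn't require any decomposition: set $\phi(a)$ to be the $(1,1)$-corner of $\tilde\phi\left(\begin{smallmatrix} a & 0 \\ 0 & 0\end{smallmatrix}\right)$. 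This $\phi$ is CP (a composition of CP maps), satisfies $\phi(A_i)=B_i$ by hypothesis, and is contractive because $\left(\begin{smallmatrix} 1_A & 0 \\ 0 & 0\end{smallmatrix}\right) \le \left(\begin{smallmatrix} 1_A & 0 \\ 0 & 1\end{smallmatrix}\right)$ forces $\phi(1_A) \le 1_B$, whence $\|\phi\|_{cb}=\|\phi(1_A)\|\le 1$.
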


\begin{proof}
Let $1_A$ denote the unit of $\cA$.
Given a map $\phi : \cA \to \cB$ we define $\tilde{\phi} : \tilde{\cA} \to \tilde{\cB}$ by
\[
\tilde{\phi} \Big( \begin{pmatrix}A_i & 0 \\0 & \lambda \end{pmatrix}\Big) =
\begin{pmatrix} \phi(A_i) + \lambda(1_B - \phi(1_A))  & \ 0 \\0 & \ \lambda \end{pmatrix} .
\]
Then $\tilde{\phi}$ is unital, and is CP if and only if $\phi$ is CCP.
Moreover, any UCP map $\tilde{\cA} \to \tilde{\cB}$ that maps $\left(\begin{smallmatrix}A_i & 0 \\0 & 0 \end{smallmatrix}\right)$ to $\left(\begin{smallmatrix} B_i & 0  \\0 & 0 \end{smallmatrix}\right)$ must be of the form $\tilde{\phi}$ for a CCP map $\phi : \cA \to \cB$.
\end{proof}

Let $\cM$ be a bounded matrix convex set. By Proposition \ref{prop:LMIrange} and Proposition \ref{prop:WDA}, we know that $\cM^{\circ \circ} = (\cM \cup \{0\})^{\circ \circ}$, and by the Effros-Winkler separation theorem \cite[Theorem 5.1]{EW97}, we see that $\cM^{\circ \circ}$ is the smallest matrix convex set that contains $\cM$ and $\{0\}$.

%%%%%%%%%%%%%%%%%%%%%%%%%%%%%%%
\begin{corollary} \label{C:CCP map}
The following are equivalent:
\begin{enumerate}
\item there is a CCP map $\phi: S_A \to S_B$ taking $A_i$ to $B_i$.
\item $\cW(B) \subseteq \cW(A)^{\circ \circ}$
\item $\cD_A \subseteq \cD_B$.
\end{enumerate}
\end{corollary}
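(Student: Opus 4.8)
The plan is to chain together the equivalences we have already established. For the equivalence of (1) and (2): by Proposition~\ref{prop:reduction2}, a CCP map $\phi : S_A \to \cB(K)$ sending $A$ to $B$ exists if and only if there is a UCP map $\tilde\phi : \tilde{\cA} \to \tilde{\cB}$ sending $\left(\begin{smallmatrix}A_i & 0 \\ 0 & 0\end{smallmatrix}\right)$ to $\left(\begin{smallmatrix}B_i & 0 \\ 0 & 0\end{smallmatrix}\right)$. Writing $\hat A_i = \left(\begin{smallmatrix}A_i & 0 \\ 0 & 0\end{smallmatrix}\right)$ and $\hat B_i = \left(\begin{smallmatrix}B_i & 0 \\ 0 & 0\end{smallmatrix}\right)$, Theorem~\ref{thm:ExistUCP_matRan}(2) says this UCP map exists iff $\cW(\hat B) \subseteq \cW(\hat A)$. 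So the task reduces to identifying $\cW(\hat A)$ in terms of $\cW(A)$.

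The key computation is that $\cW(\hat A) = \cW(A)^{\circ\circ}$, i.e., $\cW(\hat A)$ is the smallest matrix convex set containing $\cW(A) \cup \{0\}$ (using the remark just before the corollary, which identifies $\cM^{\circ\circ}$ with the smallest matrix convex set containing $\cM$ and $0$). One inclusion is easy: the compression of $\hat A$ to the first coordinate recovers $A$, so $\cW(A) \subseteq \cW(\hat A)$; and $0 = \phi(\hat A)$ for the compression $\phi$ to the second coordinate (the unit of $\tilde{\cA}$ being $\left(\begin{smallmatrix}1_A & 0 \\ 0 & 1\end{smallmatrix}\right)$, which compresses to the scalar unit on the second summand), so $0 \in \cW(\hat A)$; by matrix convexity $\cW(A)^{\circ\circ} \subseteq \cW(\hat A)$. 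For the reverse inclusion, any UCP map $\psi$ on $C^*(S_{\hat A})$ restricted to the corner generated by $\hat A$ decomposes according to the block structure: since $\hat A_i$ lives in the $(1,1)$-corner, one shows $\psi(\hat A_i) = \psi_0(A_i)$ where $\psi_0$ is completely positive but only \emph{contractive} (not unital) — its defect $1_B - \psi_0(1_A)$ is nonnegative — so $\psi(\hat A)$ is a matrix convex combination of a point of $\cW(A)$ and the point $0$, landing in $\cW(A)^{\circ\circ}$. The same applies to $\hat B$, giving $\cW(\hat B) = \cW(B)^{\circ\circ}$, and since $\cW(B) \subseteq \cW(B)^{\circ\circ}$ always, $\cW(\hat B) \subseteq \cW(\hat A)$ is equivalent to $\cW(B)^{\circ\circ} \subseteq \cW(A)^{\circ\circ}$, which (as $\cW(A)^{\circ\circ}$ is matrix convex and contains $0$) is equivalent to $\cW(B) \subseteq \cW(A)^{\circ\circ}$. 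This is (2).

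Finally, the equivalence of (2) and (3): by Proposition~\ref{prop:WDA}, $\cD_A = \cW(A)^\circ = (\cW(A)\cup\{0\})^\circ = (\cW(A)^{\circ\circ})^\circ$ by idempotency of the bipolar (Lemma~\ref{lem:bipolar} applied to the matrix convex set $\cW(A)^{\circ\circ}$, which contains $0$), and similarly $\cD_B = (\cW(B)^{\circ\circ})^\circ$. Since polar duality is inclusion-reversing and $\cM \mapsto \cM^{\circ\circ}$ is the closure operation into matrix convex sets containing $0$, we get $\cW(B)^{\circ\circ} \subseteq \cW(A)^{\circ\circ} \iff \cD_A = (\cW(A)^{\circ\circ})^\circ \subseteq (\cW(B)^{\circ\circ})^\circ = \cD_B$, using the bipolar theorem once more to undo the dual. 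Combining, (1) $\iff$ (2) $\iff$ (3).

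I expect the main obstacle to be the careful bookkeeping in the step $\cW(\hat A) = \cW(A)^{\circ\circ}$ — specifically, verifying that a UCP map on $C^*(S_{\hat A})$, when evaluated on the $\hat A_i$, genuinely splits into a (non-unital) CP map on $C^*(S_A)$ together with the zero map in a way that exhibits the image as a matrix convex combination with $0$. This requires knowing that $C^*(S_{\hat A})$ contains the block-diagonal matrix units so that one can separate the corners, and a small argument (in the spirit of Proposition~\ref{prop:reduction2}) that the $(1,1)$-entry of the image is a completely positive contractive image of $A$. Everything else is a formal manipulation of polars and the results already in hand.
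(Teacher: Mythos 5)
Your proposal is correct and takes the same route as the paper, which simply cites Proposition~\ref{prop:reduction2}, Theorem~\ref{thm:ExistUCP_matRan}(2), Proposition~\ref{prop:WDA} and Proposition~\ref{prop:DAW} without spelling out the bookkeeping; you correctly identify the unstated lemma $\cW(\hat A)=\cW(A)^{\circ\circ}$ as the substantive step. Your direct argument for this works, though the decomposition $\psi(\hat A_i)=Q^{1/2}\psi_2(A_i)Q^{1/2}$ with $\psi_2$ UCP requires $Q=\psi_1(1_A)$ invertible, so a small limiting argument (or perturbing $\psi_1$ by a state) is needed to cover the degenerate case; a cleaner alternative avoiding this entirely is to note that $\cD_{\hat A}=\cD_A$ directly from the pencil (the extra $0$ summand contributes nothing), hence $\cW(\hat A)=\cW(\hat A)^{\circ\circ}=\cD_{\hat A}^{\,\circ}=\cD_A^{\,\circ}=\cW(A)^{\circ\circ}$, using Lemma~\ref{lem:bipolar} (applicable since $0\in\cW(\hat A)$) and Proposition~\ref{prop:WDA}. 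Your handling of $(2)\Leftrightarrow(3)$ via the tri-polar identity $\cS^{\circ\circ\circ}=\cS^\circ$ is actually tighter than the paper's own citation of Proposition~\ref{prop:DAW}, since that proposition formally requires $0\in\cW(A)$, an assumption not made in the corollary.
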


\begin{proof}
The equivalence of (1) and (2) follows from Proposition~\ref{prop:reduction2} and Theorem~\ref{thm:ExistUCP_matRan} (2).
The equivalence of (2) and (3) follows from Proposition~\ref{prop:WDA} and Proposition~\ref{prop:DAW}.
\end{proof}

%%%%%%%%%%%%%%%%%%%%%%%%%%%%%%%
\begin{remark}
If $0 \in \cW(A)$, then we have that $\cW(A)^{\circ \circ} = \cW(A)$. Hence, if there is a CCP map $\phi$ such that $\phi(A)=B$, then there is a UCP map with the same property.
This can be deduced from the results above, but it has a simple explanation: since $0 \in \cW(A)$, there is a UCP map $\psi$  of $S_A$ into $\bC$ so that $\psi(A)=0$. Thus given $\phi$ as above with $\phi(I) = K$, just define
\[ \Phi(X) = \phi(X) + (I-K)^{1/2} (\psi(X) \otimes I) (I-K)^{1/2} .\]
\end{remark}

%%%%%%%%%%%%%%%%%%%%%%%%%%%%%%%%%%%%%%%%%%%%%%%%%%%%%%%%%%
\subsection{The result for commuting normal operators}

%%%%%%%%%%%%%%%%%%%%%%%%%%
\begin{proposition}
Let $A$ and $B$ be $d$-tuples of commuting normal operators.
Then there exists a UCP map $C^*(S_A) \to \cB(K)$ sending $A$ to $B$ if and only if $\sigma(B) \subseteq {\textup{conv}}(\sigma(A))$.
There exists such a CCP map if and only if $\sigma(B) \subseteq {\textup{conv}}(\sigma(A) \cup \{0\})$.
If the operators are self-adjoint then there exists such a CC map if and only if $\sigma(B)\subseteq {\textup{conv}}(\sigma(A) \cup -\sigma(A))$.
\end{proposition}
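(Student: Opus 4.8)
The plan is to deduce all three statements from the general machinery already established, principally Theorem~\ref{thm:ExistUCP_matRan} together with the description of matrix ranges of normal tuples in Theorem~\ref{T:W(normal)} and Corollary~\ref{C:W(normal)}. For the UCP claim, recall from Theorem~\ref{T:W(normal)} that for a commuting normal tuple $A$ we have $\cW_1(A) = {\textup{conv}}(\sigma(A))$, and that $\cW_1(B) = {\textup{conv}}(\sigma(B))$. By Theorem~\ref{thm:ExistUCP_matRan}(2), a UCP map sending $A$ to $B$ exists iff $\cW(B) \subseteq \cW(A)$, and since $B$ is a commuting normal tuple, that theorem already tells us this is equivalent to $\sigma(B) \subseteq \cW_1(A) = {\textup{conv}}(\sigma(A))$. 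So the first assertion is essentially immediate once the right earlier statements are cited.

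For the CCP claim, I would invoke Proposition~\ref{prop:reduction2} (or equivalently Corollary~\ref{C:CCP map}): a CCP map sending $A$ to $B$ exists iff $\cW(B) \subseteq \cW(A)^{\circ\circ}$. By the remark following Proposition~\ref{prop:reduction2}, $\cW(A)^{\circ\circ}$ is the smallest matrix convex set containing $\cW(A)$ and $\{0\}$; since $A$ is commuting normal, $\cW(A) = \Wmin{}({\textup{conv}}(\sigma(A)))$ by Corollary~\ref{cor:normal}, and adding the point $0$ and taking the smallest matrix convex set yields $\Wmin{}({\textup{conv}}(\sigma(A) \cup \{0\}))$ — here one uses that ${\textup{conv}}(\sigma(A) \cup \{0\}) = {\textup{conv}}({\textup{conv}}(\sigma(A)) \cup \{0\})$ and monotonicity of $\Wmin{}$ from Remark~\ref{rem:monotone}. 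Then, because $B$ is commuting normal, $\cW(B) \subseteq \Wmin{}(C)$ iff $\sigma(B) \subseteq C$ (this is Corollary~\ref{cor:normalcontained} applied with $\cS = \Wmin{}(C)$, whose first level is $C$). Taking $C = {\textup{conv}}(\sigma(A) \cup \{0\})$ gives the stated condition $\sigma(B) \subseteq {\textup{conv}}(\sigma(A) \cup \{0\})$.

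For the CC claim in the self-adjoint case, I would similarly use Proposition~\ref{prop:reduction1}: a CC map sending $A$ to $B$ exists iff there is a UCP map sending the Hermitian tuple $\hat A = (\hat A_i)$ with $\hat A_i = \left(\begin{smallmatrix}0 & A_i \\ A_i^* & 0\end{smallmatrix}\right)$ to $\hat B$. When $A$ is self-adjoint and commuting normal, each $\hat A_i = \left(\begin{smallmatrix}0 & A_i \\ A_i & 0\end{smallmatrix}\right)$, and these commute (since the $A_i$ do and are self-adjoint); one computes via the $2\times 2$ flip unitary that the joint spectrum of $\hat A$ is $\sigma(A) \cup (-\sigma(A))$ — concretely, conjugating by $\mathrm{diag}(1,-1)$ on one summand and $\frac{1}{\sqrt 2}\left(\begin{smallmatrix}1 & 1\\1 & -1\end{smallmatrix}\right)$ diagonalizes each $\hat A_i$ simultaneously with eigenvalue tuples $\pm(\lambda_1,\dots,\lambda_d)$ for $\lambda \in \sigma(A)$. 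Hence $\hat A$ is again commuting normal with $\sigma(\hat A) = \sigma(A) \cup (-\sigma(A))$, and likewise for $\hat B$, so by the already-proven UCP case the map exists iff $\sigma(\hat B) \subseteq {\textup{conv}}(\sigma(\hat A))$, i.e. $\sigma(B) \cup (-\sigma(B)) \subseteq {\textup{conv}}(\sigma(A) \cup (-\sigma(A)))$, which is equivalent to $\sigma(B) \subseteq {\textup{conv}}(\sigma(A) \cup (-\sigma(A)))$ since the right-hand side is symmetric.

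The main obstacle I anticipate is not any single hard estimate but rather bookkeeping around the CCP case: one must argue carefully that $\cW(A)^{\circ\circ} = \Wmin{}({\textup{conv}}(\sigma(A)\cup\{0\}))$, i.e. that forming the smallest matrix convex set containing $\Wmin{}(C)$ and $0$ does not produce anything beyond $\Wmin{}({\textup{conv}}(C \cup \{0\}))$. This follows because $\Wmin{}$ of a convex set is itself matrix convex with first level that set, so $\Wmin{}({\textup{conv}}(C\cup\{0\}))$ is a matrix convex set containing both $\Wmin{}(C)$ (by monotonicity) and $0$, while any matrix convex set containing $\Wmin{}(C)$ and $0$ contains $C \cup \{0\}$ at level one, hence contains ${\textup{conv}}(C \cup \{0\})$ at level one, hence contains $\Wmin{}$ of that by Proposition~\ref{min and max}. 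Everything else is a direct citation of the results above.
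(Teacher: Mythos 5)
Your proof is correct, and its logical skeleton is close to the paper's, but you diverge on the CCP case. The paper handles CCP and CC by the same uniform device: apply the reductions (Propositions~\ref{prop:reduction2} and~\ref{prop:reduction1}), note that the resulting tuples $\widetilde A = \bigl(\begin{smallmatrix}A_i & 0\\0 & 0\end{smallmatrix}\bigr)$ and $\widehat A = \bigl(\begin{smallmatrix}0 & A_i\\A_i & 0\end{smallmatrix}\bigr)$ are again commuting normal tuples with joint spectra $\sigma(A)\cup\{0\}$ and $\sigma(A)\cup(-\sigma(A))$ respectively, and then invoke the already-established UCP case. You do exactly this for CC (including essentially the same conjugation by the flip unitary), but for CCP you instead route through Corollary~\ref{C:CCP map}, bipolar duality, and the identification $\cW(A)^{\circ\circ}=\Wmin{}(\mathrm{conv}(\sigma(A)\cup\{0\}))$. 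This works, but as you yourself note it requires extra bookkeeping that the reduction approach avoids entirely; the paper's route is more uniform and doesn't need any polar-dual machinery. Two small reference slips worth fixing: the characterization of $\cW(A)^{\circ\circ}$ as the smallest matrix convex set containing $\cW(A)$ and $\{0\}$ is the remark preceding Corollary~\ref{C:CCP map}, not one following Proposition~\ref{prop:reduction2}; and in the final step of your CCP argument you cite Corollary~\ref{cor:normalcontained}, which is stated for normal tuples of \emph{matrices}, whereas your $B$ may act on an infinite-dimensional space --- what you actually want is Corollary~\ref{cor:normal} together with the (iff) monotonicity of $\Wmin{}$, or alternatively Proposition~\ref{prop:LMIrange} plus Theorem~\ref{thm:ExistUCP_matRan}(2). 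Neither slip affects the substance.
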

\begin{proof}
This follows from Theorem \ref{thm:ExistUCP_matRan} and the reductions in the previous subsection, together with the observations
\[
\sigma\left(\left(\begin{smallmatrix}0 & X\\X & 0 \end{smallmatrix} \right)\right) = \sigma(X) \cup -\sigma(X)
\]
(for self-adjoint $X$) and
\[
\sigma\left(\begin{smallmatrix} X & 0 \\0 & 0 \end{smallmatrix} \right)  = \sigma(X) \cup \{0\}.
\]
The latter is obvious, and the former follows because $\left(\begin{smallmatrix}0 & X\\X & 0 \end{smallmatrix} \right) = X \otimes \left(\begin{smallmatrix}0 & 1\\1 & 0 \end{smallmatrix} \right)$ is unitary equivalent to $X \otimes \left(\begin{smallmatrix}1 & 0\\0 & -1 \end{smallmatrix} \right) = \left(\begin{smallmatrix}X & 0\\0 & -X \end{smallmatrix} \right)$.
\end{proof}

It is interesting to compare this result with \cite[Theorem 2.1]{LP11}.
By specializing to the finite dimensional, self-adjoint case, we recover their condition for the existence of a UCP or CCP map.
Recall that in that theorem, if $A$ and $B$ are $d$-tuples of commuting self-adjoint matrices, then the existence of a CP map sending $A$ to $B$ is shown to be equivalent to the existence of a matrix with nonnegative entries $D$ such that
\be\label{eq:D}
[b_{ij}] = [a_{ij}] D,
\ee
where we $a_{i1}, \ldots, a_{in}$ denote the elements on the diagonal of $A_i$ when represented with respect to a basis which simultaneously diagonalizes $A_1, \ldots, A_d$ (and $b_{ij}$ is defined likewise).
Moreover, there is  UCP map sending $A$ to $B$ if and only if such a $D$ exists which is column stochastic.
This result immediately follows from the proposition above; for example \eqref{eq:D} with $D$ column stochastic is clearly equivalent to $\sigma(B) \subseteq {\textup{conv}}(\sigma(A))$.

We also obtain an interesting characterization for the existence of a CC map sending $A$ to $B$: it is the same as \eqref{eq:D} only now the matrix $D = [d_{ij}]$ should be such that the matrix of absolute values $[|d_{ij}|]$ is column stochastic.

%%%%%%%%%%%%%%%%%%%%%%%%%%%%%%%%%%%%%%%%%%%%%%%
\subsection{Containment of free spectrahedra  and the existence of completely positive maps}

We now obtain conditions for the existence of a UCP map sending a tuple $A$ to a tuple $B$ in terms of the free spectrahedra $\cD_A$ and $\cD_B$.
The following is a generalization of \cite[Theorem 3.5]{HKM13}, to the operator, not-necessarily self-adjoint case.
An adaptation of the proof appearing there is possible, but we prefer a different approach. The following should be compared with Corollary \ref{C:CCP map} and Theorem \ref{thm:ExistUCP_matRan}.

%%%%%%%%%%%%%%%%%%%%%%
\begin{theorem}\label{thm:maps_inclusion}
Let $A=(A_1, \ldots, A_d)$ and $B=(B_1, \ldots, B_d)$ be two $d$-tuples of operators.
Assume that $\cD_A$ is bounded.
Then
\begin{enumerate}
\item For a given $n \in \bN$, if there exists a unital $n$-positive map $\phi : S_A \to S_B$, mapping $A_i$ to $B_i$ for all $i$, then $\cD_A(n) \subseteq \cD_{B}(n)$.
\item There exists a UCP map $\phi : S_A \to S_B$, mapping $A_i$ to $B_i$ for all $i$, if and only if $\cD_A \subseteq \cD_{B}$.
\item There exists a unital completely isometric map $\phi : S_A \to S_B$, mapping $A_i$ to $B_i$ for all $i$, if and only if  $\cD_A = \cD_{B}$.
\end{enumerate}
A similar result holds in the self-adjoint case.
\end{theorem}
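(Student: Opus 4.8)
The plan is to reduce this to the matrix-range results already established, specifically Theorem~\ref{thm:ExistUCP_matRan}, using the duality dictionary between free spectrahedra and matrix ranges provided by Propositions~\ref{prop:WDA}, \ref{prop:DAW}, and Lemma~\ref{lem:bipolar}. The key point is that boundedness of $\cD_A$ lets us invoke Lemma~\ref{lemma:boundedness-dual-interior} to conclude $0 \in \operatorname{int}(\cW(A))$, so in particular $0 \in \cW(A)$ and the bipolar/duality statements apply cleanly. Note, however, that we have no a priori control on $\cW(B)$, so the argument must be arranged so as not to require $0 \in \cW(B)$; this is exactly why the conclusion involves $\cD_A \subseteq \cD_B$ rather than $\cW(B)\subseteq\cW(A)$, and where Corollary~\ref{C:CCP map} (the CCP characterization, which replaces $\cW(B)\subseteq\cW(A)$ by $\cW(B)\subseteq\cW(A)^{\circ\circ}$) enters.

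First I would prove (1). Suppose $\phi:S_A\to S_B$ is unital and $n$-positive with $\phi(A_i)=B_i$. Then $I\otimes\phi$ (tensoring over $M_n$) is $n$-positive, hence for $X\in\cD_A(n)$ we have $\re L_B(X) = \re(I\otimes\phi)(L_A(X))\geq 0$ by applying $\phi\otimes\id_{M_n}$ to $\re L_A(X)\geq 0$ — here one must be slightly careful: $n$-positivity of $\phi$ is precisely what is needed to apply it to the $n\times n$ operator matrix $\re L_A(X)$ and preserve positivity, since $\re L_A(X)$ lives in $M_n(S_A)$ (after identifying $\cB(H)\otimes M_n$ appropriately, or rather in $S_A \otimes M_n$). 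In the self-adjoint case one drops the $\re$. This gives $\cD_A(n)\subseteq\cD_B(n)$.

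Next (2): the forward direction is immediate from (1) applied at every level $n$ (a UCP map is $n$-positive for all $n$). For the converse, assume $\cD_A\subseteq\cD_B$. Taking polar duals and using that polar duality reverses inclusions, $\cD_B^\circ\subseteq\cD_A^\circ$. By Proposition~\ref{prop:WDA}, $\cD_A^\circ = \cW(A)^{\circ\circ}$ and $\cD_B^\circ = \cW(B)^{\circ\circ} = (\cW(B)\cup\{0\})^{\circ\circ}$, so $\cW(B)^{\circ\circ}\subseteq\cW(A)^{\circ\circ}$, hence $\cW(B)\subseteq\cW(A)^{\circ\circ}$. By Corollary~\ref{C:CCP map} this is equivalent to the existence of a CCP map $S_A\to S_B$ sending $A_i\mapsto B_i$. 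To upgrade CCP to UCP, invoke the Remark following Corollary~\ref{C:CCP map}: since $\cD_A$ is bounded, Lemma~\ref{lemma:boundedness-dual-interior} gives $0\in\operatorname{int}(\cW(A))$, so $\cW(A)^{\circ\circ}=\cW(A)$ and there is a UCP map $\psi:S_A\to\bC$ with $\psi(A)=0$; the formula $\Phi(X)=\phi(X)+(I-K)^{1/2}(\psi(X)\otimes I)(I-K)^{1/2}$, where $\phi(I)=K$, produces the desired UCP map. Alternatively, and more directly: with $0\in\cW(A)$ we have $\cW(A)^{\circ\circ}=\cW(A)$, so $\cW(B)\subseteq\cW(A)$ and Theorem~\ref{thm:ExistUCP_matRan}(2) applies verbatim.

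Finally (3): a unital complete isometry is the same as a UCP map with UCP inverse. If $\cD_A=\cD_B$ then in particular $\cD_B$ is bounded too, so applying (2) in both directions yields UCP maps $S_A\to S_B$ and $S_B\to S_A$ that are mutually inverse on generators, hence a unital complete isometry. Conversely a unital complete isometry taking $A$ to $B$ is UCP, so by (2) $\cD_A\subseteq\cD_B$, and its inverse gives the reverse inclusion. The self-adjoint case is identical, replacing $\circ$ by $\bullet$, $\cD$ by $\cD^{sa}$, dropping all occurrences of $\re$, and using the self-adjoint halves of Propositions~\ref{prop:WDA}, \ref{prop:DAW} and Corollary~\ref{C:CCP map}. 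I expect the main (minor) obstacle to be bookkeeping the tensor/ampliation conventions in step (1) so that ``$n$-positive'' is exactly the hypothesis used, and making sure the boundedness hypothesis is invoked at the one place it is genuinely needed — to force $0\in\cW(A)$ and thereby collapse $\cW(A)^{\circ\circ}$ to $\cW(A)$ — since without it the statement (2) would fail.
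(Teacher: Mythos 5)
Your proof is correct and follows the same route as the paper: use boundedness of $\cD_A$ and Lemma~\ref{lemma:boundedness-dual-interior} to obtain $0\in\cW(A)$, convert $\cD_A\subseteq\cD_B$ into $\cW(B)\subseteq\cW(A)$ via Propositions~\ref{prop:WDA}/\ref{prop:DAW}, and then invoke Theorem~\ref{thm:ExistUCP_matRan}; your ``alternative, more direct'' argument for (2) is exactly what the paper does. Your direct tensor argument for (1) and your observation in (3) that $\cD_A=\cD_B$ forces $\cD_B$ to be bounded (hence $0\in\cW(B)$) supply details that the paper leaves implicit, but the underlying approach is identical.
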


\begin{proof}
By Lemma \ref{lemma:boundedness-dual-interior} $\cD_A$ is bounded if and only if $0\in \operatorname{int}(\cW(A))$, so that $0\in \cW(A)$.
Hence, by Proposition \ref{prop:DAW} we know that $\cD_A \subseteq \cD_B$ if and only if $\cW(B) \subseteq \cW(A)$.
The rest is then a consequence of Theorem \ref{thm:ExistUCP_matRan}.
\end{proof}

%%%%%%%%%%%%%%%%%%%%%%
\begin{remark}
Examining the above proof, we see how the condition on boundedness of $\cD_A$ is used: it implies that $0 \in \operatorname{int} \cW(A)$ (cf. \cite[Theorem 3.5]{HKM13}).
In fact, all we need for the proof to work is that $0 \in \cW(A)$, and it would be interesting to understand how to formulate this in terms of $\cD_A$.
It certainly may happen that $0 \in \cW(A)$ but not in the interior, for example if $A = 0$.
A version of Theorem \ref{thm:maps_inclusion} with no extra assumptions on $\cD_A$ appears in \cite[Section 2]{Zalar}.
\end{remark}

\begin{example}[$0 \in \cW(A)$ is a necessary condition]
Suppose that $A = (I,I,\ldots,I)$.
Then $\cW(A)$ contains only tuples of identity matrices, and not $0$.
Now $\cD^{sa}_A = \{X : \sum X_i \leq I\}$.
If we define $B$ to be the $d$-tuple $\oplus_{n=d}^\infty((1-1/n), \ldots, (1 - d/n))$, then $\cD^{sa}_B = \cD^{sa}_A$, but there is no UCP map (actually, no linear map) sending $A$ to $B$.
The same example shows that also in the nonself-adjoint case the condition $0 \in \cW(A)$ is necessary for the inclusion $\cD_A \subseteq \cD_B$ to imply the existence of a UCP map sending $A$ to $B$.
\end{example}

From Theorem \ref{thm:maps_inclusion} it follows that if $\cD_A$ is bounded and there is a completely isometric unital map sending $A$ to $B$, then $\partial \cD_A = \partial \cD_B$.
We can deduce the following converse condition for the existence of a completely isometric map.
This has been also observed in \cite{HKM13}, with the reasoning reversed.

%%%%%%%%%%%%%%%%%%%%%%
\begin{corollary}
Suppose that $\cD_A$ is bounded and that $\partial \cD_A \subseteq \partial \cD_B$.
Then there exists a unital completely isometric map $\theta : S_A \to S_B$ sending $A$ to $B$.
\end{corollary}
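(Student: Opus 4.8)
The plan is to leverage the preceding results, particularly Theorem~\ref{thm:maps_inclusion}, which reduces the problem to showing that $\cD_A = \cD_B$. So the crux is the purely geometric (matrix convex set) statement: if $\cD_A$ is bounded and $\partial \cD_A \subseteq \partial \cD_B$, then $\cD_A = \cD_B$. Once this is established, part (3) of Theorem~\ref{thm:maps_inclusion} immediately produces the desired unital completely isometric map $\theta: S_A \to S_B$ sending $A$ to $B$.

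First I would observe that $\cD_A$ is a closed convex set at each level $n$, and since $\cD_A$ is bounded, Lemma~\ref{lemma:boundedness-dual-interior} gives $0 \in \operatorname{int}(\cW(A))$, hence by Proposition~\ref{prop:WDA} and Proposition~\ref{prop:DAW} we have $0 \in \operatorname{int}(\cD_A)$ as well (indeed $\cD_A = \cW(A)^\circ$ and one checks $0$ is interior; alternatively boundedness of $\cW(A)$ forces $0 \in \operatorname{int}\cD_A$). Next I would show $\cD_A \subseteq \cD_B$. Take any $X \in \cD_A(n)$. If $X \in \partial\cD_A(n)$, then by hypothesis $X \in \partial\cD_B(n) \subseteq \cD_B(n)$ (here I use that $\cD_B$ is closed, so $\partial\cD_B \subseteq \cD_B$). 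If instead $X \in \operatorname{int}\cD_A(n)$, then since $0 \in \operatorname{int}\cD_A$ and $\cD_A(n)$ is convex and bounded, the ray from $0$ through $X$ exits $\cD_A(n)$ at some boundary point $X_0 = tX$ with $t > 1$; write $X = s X_0$ with $0 < s < 1$. Since $X_0 \in \partial\cD_A(n) \subseteq \partial\cD_B(n) \subseteq \cD_B(n)$, and $0 \in \cD_B(n)$ (as $0 \in \cD_A \subseteq$ — wait, we do not yet know $\cD_A \subseteq \cD_B$; but $0 \in \cD_B$ follows because $\cD_B$ is a free spectrahedron so $0 \in \cD_B$ automatically, as $L_B(0) = I > 0$), convexity of $\cD_B(n)$ gives $X = s X_0 + (1-s)\cdot 0 \in \cD_B(n)$. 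Hence $\cD_A \subseteq \cD_B$.

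For the reverse inclusion $\cD_B \subseteq \cD_A$, I would argue by contradiction. Suppose there is a point $Y \in \cD_B(n) \setminus \cD_A(n)$ for some $n$. Since $0 \in \operatorname{int}\cD_A(n)$ and $\cD_A(n)$ is closed convex, the segment from $0$ to $Y$ crosses $\partial\cD_A(n)$ at a unique point $Y_0 = rY$ with $0 < r < 1$ (it must cross, since $0$ is interior to $\cD_A(n)$ and $Y \notin \cD_A(n)$). Now $Y_0 \in \partial\cD_A(n) \subseteq \partial\cD_B(n)$. But $Y \in \cD_B(n)$ and $0 \in \operatorname{int}\cD_B(n)$ (since $\cD_A \subseteq \cD_B$ and $0 \in \operatorname{int}\cD_A$, so $0 \in \operatorname{int}\cD_B$), and $Y_0 = rY + (1-r)\cdot 0$ lies strictly between two points of $\cD_B(n)$, one of which ($0$) is interior; a standard fact about convex sets then forces $Y_0 \in \operatorname{int}\cD_B(n)$, contradicting $Y_0 \in \partial\cD_B(n)$. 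Therefore $\cD_B \subseteq \cD_A$, and combined with the previous paragraph, $\cD_A = \cD_B$.

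The main obstacle — really the only subtle point — is getting the interior/boundary geometry right at each matrix level and making sure $0$ is genuinely in the interior of both $\cD_A$ and $\cD_B$ so that the radial arguments work; this relies on the boundedness hypothesis through Lemma~\ref{lemma:boundedness-dual-interior}. The rest is the elementary convex-geometry fact that for a closed convex set $C$ with $0 \in \operatorname{int} C$, every boundary point is hit by a unique radial segment and points strictly between an interior point and any point of $C$ are interior. Having shown $\cD_A = \cD_B$ (and noting $\cD_A$ is bounded by hypothesis), Theorem~\ref{thm:maps_inclusion}(3) directly yields the unital completely isometric map $\theta: S_A \to S_B$ with $\theta(A_i) = B_i$, completing the proof. (The self-adjoint case is identical, using the self-adjoint versions of all cited results.)
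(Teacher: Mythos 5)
Your proof is correct and follows the same strategy as the paper: show $\cD_A = \cD_B$ and then invoke Theorem~\ref{thm:maps_inclusion}(3). The only difference is that the paper disposes of the convex-geometric step ($\partial\cD_A\subseteq\partial\cD_B$ plus a common interior point implies $\cD_A=\cD_B$) by citing \cite[Lemma~3.10]{HKM13}, whereas you prove it directly via the radial/strict-convex-combination argument; your detour through Lemma~\ref{lemma:boundedness-dual-interior} to see that $0\in\operatorname{int}\cD_A$ is unnecessary, since $L_A(0)=I>0$ makes $0$ an interior point of every free spectrahedron automatically, but this does not affect correctness.
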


\begin{proof}
From $\partial \cD_A \subseteq \partial \cD_B$ together with the fact that both $\cD_A$ and $\cD_B$ are convex sets with a joint point in the interior (see \cite[Lemma 3.10]{HKM13}), we deduce that $\cD_A = \cD_B$.
The conclusion now follows from Theorem \ref{thm:maps_inclusion}.
\end{proof}

\pagebreak[3]
%%%%%%%%%%%%%%%%%%%%%%%%%%%%%%%%%%%%%%%%%%%%%%%%%%%%%
\section{Minimality and rigidity}\label{sec:minimality}

\subsection{Minimality}

\begin{definition}
A tuple $A = (A_1,...,A_d) \in \cB(H)^d$ is said to be {\em minimal} if there is no nontrivial reducing subspace $H_0 \subset H$ such that $\widetilde{A} = (A_1 |_{H_0},..., A_d |_{H_0})$ satisfies $\cW(A) = \cW(\widetilde{A})$.
\end{definition}
In other words, $A$ is minimal if $S_A$ is not unitaly completely isometrically isomorphic to $S_{\widetilde{A}}$ for any direct summand $\widetilde{A}$ of $A$.

%%%%%%%%%%%%%%%%%%%%%%%%%%%%%%%%%%%%%%%%%
\begin{proposition} \label{prp:matRanMin_approxComp}
If $A = (A_1,...,A_d) \in \cB(H)^d$, then $A$ is minimal if and only if there are no two nontrivial orthogonal reducing subspaces $K_1, K_2 \subseteq H$ such that $\cW(A\big|_{K_1}) \subseteq \cW(A\big|_{K_2})$.
\end{proposition}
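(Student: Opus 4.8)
The plan is to prove the contrapositive in both directions, so that minimality fails if and only if a proper reducing summand has matrix range contained in the matrix range of its complement (or, more precisely, there exist two nontrivial orthogonal reducing subspaces with that containment). The easy direction is: if $H = K_1 \oplus K_2 \oplus (\text{rest})$ with $\cW(A|_{K_1}) \subseteq \cW(A|_{K_2})$, then I would show that $\widetilde{A} = A|_{K_1^\perp}$ already has $\cW(\widetilde{A}) = \cW(A)$, hence $A$ is not minimal. The point is that matrix range is monotone under direct summands and additive in the sense that $\cW(A) = \cW\big(A|_{K_1} \oplus A|_{K_1^\perp}\big)$ equals the smallest matrix convex set containing $\cW(A|_{K_1}) \cup \cW(A|_{K_1^\perp})$; and since $\cW(A|_{K_1}) \subseteq \cW(A|_{K_2}) \subseteq \cW(A|_{K_1^\perp})$, dropping the $K_1$ summand does not shrink the matrix range. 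I would justify the ``direct sum = matrix convex hull of the pieces'' fact by a short argument using compressions (to get $\supseteq$) and the description of UCP maps on a direct sum $C^*$-algebra as sums of UCP maps composed with the coordinate projections (to get $\subseteq$); this is essentially the same computation appearing already in the proof of Proposition \ref{prop:LMIrange}.

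For the harder direction, suppose $A$ is not minimal, so there is a nontrivial reducing subspace $H_0$ with $\cW(A|_{H_0}) = \cW(A)$. Write $H = H_0 \oplus H_1$ with $H_1 = H_0^\perp$ nontrivial, and set $A^{(0)} = A|_{H_0}$, $A^{(1)} = A|_{H_1}$. Since $\cW(A^{(1)}) \subseteq \cW(A) = \cW(A^{(0)})$, the pair $K_1 = H_1$, $K_2 = H_0$ would do the job immediately — \emph{provided} $H_0$ can be taken so that $H_0$ itself decomposes into a reducing piece that still carries the whole matrix range. That is the subtlety: the definition of non-minimality only gives one proper reducing summand $H_0$ with $\cW(A|_{H_0}) = \cW(A)$, but the Proposition asks for \emph{two} orthogonal nontrivial reducing subspaces. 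So I would not be done yet: I need $H_0$ to contain a proper reducing subspace which is orthogonal to a nontrivial reducing subspace. If $H_0$ is itself reducible, say $H_0 = L \oplus L'$ with both reducing and $L'$ nontrivial, then since $\cW(A|_{H_0})$ is the matrix convex hull of $\cW(A|_L)$ and $\cW(A|_{L'})$, one of the inclusions $\cW(A|_{L}) \subseteq \cW(A|_{L'})$ or $\cW(A|_{L'}) \subseteq \cW(A|_{L})$ need not hold in general — so I would instead argue as follows: among all reducing subspaces $M$ with $\cW(A|_M) = \cW(A)$ there is still room, because $H$ itself is such an $M$, and $H_0 \subsetneq H$. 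Thus I can pick two such $M$ that are not comparable, or more cleanly: take $K_2 = H_0$ and $K_1 = H_1 = H_0^\perp$; these are orthogonal, nontrivial, reducing, and $\cW(A|_{K_1}) \subseteq \cW(A) = \cW(A|_{K_2})$. This already gives the conclusion. So in fact the harder direction is also short, and the only thing to be careful about is that $H_1 \neq \{0\}$ (true since $H_0$ is a nontrivial — i.e. proper and nonzero — reducing subspace) and $H_0 \neq \{0\}$ (same reason).

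The main obstacle I anticipate is pinning down precisely the ``matrix range of a direct sum is the matrix convex hull of the matrix ranges of the summands'' lemma, including the closure issues (matrix ranges are closed, and the matrix convex hull of a union of closed bounded matrix convex sets is closed — this needs the Carathéodory-type bound as in Theorem \ref{T:W(normal)}, or a direct compactness argument). Once that lemma is in hand, both directions of the Proposition reduce to monotonicity of $\cW(\cdot)$ under reducing summands and the trivial observations about nontriviality of $H_0$ and $H_0^\perp$. I would therefore structure the write-up as: (i) state and prove the direct-sum lemma; (ii) deduce the ``if'' direction by dropping the $K_1$ summand; (iii) deduce the ``only if'' direction by taking $K_1 = H_0^\perp$, $K_2 = H_0$.
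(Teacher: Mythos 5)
Your proof is correct and ends up in the same place as the paper's. For the ``only if'' direction (non-minimality implies existence of $K_1,K_2$) you arrive, after some needless worrying, at exactly the paper's choice $K_1 = H_0^\perp$, $K_2 = H_0$; the digression about whether $H_0$ should be decomposed further is a red herring, as you yourself note. For the ``if'' direction the two arguments differ in the key lemma. You propose to prove that $\cW(X\oplus Y)$ is the smallest closed matrix convex set containing $\cW(X)\cup\cW(Y)$, and then observe that $\cW(A|_{K_1})\subseteq\cW(A|_{K_2})\subseteq\cW(A|_{K_1^\perp})$ makes the $K_1$ summand redundant. This lemma is true, but a careful proof runs into a mild technicality when the compression $\phi(P_{K_i})$ is not invertible (one needs a limiting argument or a more careful Stinespring decomposition). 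The paper avoids the lemma entirely: from $\cW(A|_{K_1})\subseteq\cW(A|_{K_2})$ it invokes Theorem \ref{thm:ExistUCP_matRan} to get a UCP map $\psi$ sending $A|_{K_2}$ to $A|_{K_1}$, and then writes down the visibly UCP map $\phi(T)=\psi(P_{K_2}TP_{K_2})\oplus T$ from $S_{A|_{K_1^\perp}}$ onto $S_A$ sending generators to generators, which yields $\cW(A)\subseteq\cW(A|_{K_1^\perp})$ directly. Your route buys a reusable general statement about matrix ranges of direct sums; the paper's route buys a shorter argument with no edge cases. Both are valid, and the essential observation --- $\cW(A|_{K_1})\subseteq\cW(A|_{K_2})\subseteq\cW(A|_{K_1^\perp})$ forces $\cW(A|_{K_1^\perp})=\cW(A)$ --- is shared.
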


\begin{proof}
Suppose that there are two nontrivial orthogonal reducing subspaces $K_1,K_2$ in $H$.
Let
$A^{(i)} = (A_1|_{K_i},..., A_d |_{K_i})$  ($i=1,2$) and suppose that $\cW(A^{(1)}) \subseteq \cW(A^{(2)})$.
Set $H_0 = K_1^{\perp}$, and denote $\widetilde{A} = (A_1 |_{H_0},..., A_d|_{H_0})$.
It always holds that $\cW(\widetilde{A}) \subseteq \cW(A)$, we show the converse.
The assumption $\cW(A^{(1)}) \subseteq \cW(A^{(2)})$ is equivalent to the existence of a UCP map $\psi$ mapping $A^{(2)}$ to $A^{(1)}$.
Define a map $\phi : S_{\widetilde{A}} \rightarrow S_{A}$, by setting $\phi(\widetilde{A}_j) = A_j$.
Since $A = A^{(1)} \oplus \widetilde{A} = \psi(P_{K_2} \widetilde{A} P_{K_2}) \oplus \widetilde{A}$, $\phi$ is a UCP map.
Thus $\cW(A) \subseteq \cW(\widetilde{A})$.
Hence $\cW(A) = \cW(\widetilde{A})$, so $A$ is not minimal.

For the converse, suppose that $A$ is not minimal.
Then there exists a nontrivial reducing subspace $H_0 \subset H$ such that $\widetilde{A} = (A_1|_{H_0},...,A_d|_{H_0})$ satisfies $\cW(A) = \cW(\widetilde{A})$, therefore there is a unital completely isometric  map $\phi$ mapping $\widetilde{A}$ to $A$.
By compressing to the subspace $H_0^\perp$, we get that $P_{H_0}^{\perp} A P_{H_0}^{\perp}$ is orthogonal to $\widetilde{A}$ and is also the image of $\widetilde{A}$ under a UCP map.
\end{proof}

In the case $A$ is a $d$-tuple of compact operators, $C^*(A)$ is a C*-subalgebra of compact operators, and every minimal reducing subspace $H_{\lambda}$ of $H$ gives rise to an irreducible representation $\pi_{\lambda} : C^*(A) \rightarrow \cB(H)$ by restriction $\pi_{\lambda}(T) = T|_{H_{\lambda}}$.
So for each unitary equivalence class of irreducible representations $\zeta$, we can pick $\pi_{\zeta} : C^*(A) \rightarrow B(H_{\zeta})$, an irreducible sub-representation of the identity representation, which must be among $\{ \pi_{\lambda}\}$, as $C^*(A)$ is a subalgebra of compact operators.
Hence, the direct sum $\oplus \pi_{\zeta} : C^*(A) \rightarrow B(\oplus H_{\zeta})$ is a faithful representation of $C^*(A)$, and if we denote $H_0 := \oplus H_{\zeta} \subseteq H$, then $\widetilde{A} := A|_{H_0}$ certainly satisfies $\cW(A) = \cW(\widetilde{A})$.

We will consider below the operator system $S_A$ generated by a compact tuple $A \in \cK(H)^d$, and the unital C*-algebra $C^*(S_A)$ which it generates.
When $\dim H < \infty$, the irreducible representations of $C^*(S_A)$ are precisely those of $C^*(A)$ --- sub-representations of the identity representation.
When $\dim H = \infty$, we will treat a representation of $C^*(A)$ interchangeably with its (unique) unitization, which is a representation of $C^*(S_A)$. In this case, there is always a singular representation $\pi_0 : C^*(S_A) \to \bC$ determined by $\pi_0(I) = 1$ and $\pi_0 \big|_A = 0$.
The singular representation $\pi_0$ may or may not be equivalent to a sub-representation of the identity representation.

The C*-envelope of $S_A$ is equal to the image of $C^*(S_A)$ under the sum of all boundary representations \cite[Theorem 7.1]{ArvChoquet1} (See also \cite[Theorem 3.4]{DavKen15}).
Thus, when $H$ is infinite dimensional,
\[
C^*_e(S_A) \cong \sigma(C^*(S_A)) \oplus \bigoplus_{\zeta} \pi_\zeta(C^*(S_A)),
\]
where the second sum consists of all boundary representations that are sub-representations of the identity, and $\sigma$ is either $\pi_0$, in the case where $\pi_0$ is a boundary representation, or else it is the nil representation.
Some of the summands might be redundant, for example when $\pi_0$ is a sub-representation of the identity.
We will require a condition that will ensure that the second summand is already an isomorphic copy of the C*-envelope.
For this, let us write $\partial_{A}$ for the irreducible sub-representations of the identity which are also boundary representations for $S_A$ in $C^*(S_A)$.

%%%%%%%%%%%%%%%%%%%%%%
\begin{definition}\label{def:nonsingular}
A tuple $A = (A_1,...,A_d) \in \cK(H)^d$ is said to be {\em nonsingular} if either $\dim H < \infty$, or if $\dim H = \infty$ and for every $n$ and every matrix $(s_{ij}) \in M_n(S_A)$,
\[
\|\pi_0(s_{ij})\| \leq \sup \left\{\|\pi(s_{ij})\| : \pi \in \partial_A \right\}.
\]
\end{definition}

%%%%%%%%%%%%%%%%%%%%%%
\begin{remark}\label{rem:nonsingular}
The significance of the above definition is that for a nonsingular compact tuple $A$, the C*-envelope of $S_A$ is the image of $C^*(S_A)$ under a sum of sub-representations of the identity, hence it is obtained from $C^*(S_A)$ simply by compressing to an appropriate subspace.
\end{remark}

%%%%%%%%%%%%%%%%%%%%%%
\begin{example}\label{ex:d1nonsingular}
Let $A$ be a nonzero compact selfadjoint operator on an infinite dimensional space.
The C*-algebra $C^*(S_A)$ is isomorphic to the continuous functions on $\sigma(A)$, $S_A$ is the space of all affine functions on $\sigma(A)$.
The boundary representations therefore correspond to evaluation on the largest and smallest points $m,M \in \sigma(A)$ in the spectrum, and thus the C*-envelope $C^*_e(S_A)$ is equal to the continuous functions on $\{m,M\}$.
It follows that $A$ fails to be nonsingular precisely when either all its eigenvalues are (strictly) positive, or all its eigenvalues are (strictly) negative.
\end{example}

%%%%%%%%%%%%%%%%%%%%%%
\begin{proposition}[Sufficient conditions for nonsingularity]\label{prop:sufficient_nonsing}
The following are sufficient conditions for a tuple $A \in \cK(H)^d$ to be nonsingular:
\begin{enumerate}
\item\label{it:finite_dimension_nonsing} $\dim H < \infty$, or
\item $A$ contains the tuple $0 = (0,\ldots, 0)$ as a direct summand, or
\item\label{it:notisolext} $0$ is not an isolated extreme point of $\cW_1(A)$ (in particular, this holds whenever $0$ is not an extreme point).
\end{enumerate}
\end{proposition}
\begin{proof}
Only the third condition requires proof, and it suffices to treat the case where $\dim H = \infty$ and hence $0 = \pi_0(A) \in \cW_1(A)$.
By \cite[Theorem 2.4]{DavKen15}, every pure state $\rho$ of $S_A$ dilates to a boundary representation $\pi$ of $S_A$.
If $\rho$ does not annihilate $A$, then $\pi$ does not annihilate the compacts.
Hence it suffices to prove that under condition (\ref{it:notisolext}),
\be\label{eq:majbystates}
\|\pi_0(s_{ij})\| \leq \sup \left\{\|\rho(s_{ij})\| : \rho \textrm{ is a pure state of } S_A \textrm{ and } \rho(A) \neq 0 \right\}
\ee
for every $n$ and every matrix $(s_{ij}) \in M_n(S_A)$.
Observe that every nonzero extreme point of $\cW_1(A)$ is equal to $\rho(A)$ for some  pure state $\rho$ of $S_A$; moreover, $\rho$ is completely determined by the tuple $\rho(A) \in \bR^d$.
Now, if $0 = \pi_0(A)$ is not an extreme point of $\cW_1(A)$, then it is a convex combination of extreme points by Carath\'eodory's Theorem, so $\pi_0\big|_{S_A}$ is a convex combination of  pure states $\rho : S_A \to \bC$ such that $\rho(A) \neq 0$, so \eqref{eq:majbystates} holds.
If $0 = \pi_0(A)$ is an extreme point of $\cW_1(A)$ but not an isolated one, then $\pi_0\big|_{S_A}$ can be approximated by pure states of $S_A$ that do not annihilate $A$, so again we see that \eqref{eq:majbystates} holds.
\end{proof}

We do not prove that a nonsingular tuple $A \in \cK(H)^d$ must satisfy at least one of the conditions (\ref{it:finite_dimension_nonsing})-(\ref{it:notisolext}) listed in Proposition \ref{prop:sufficient_nonsing}.
In a previous version of the paper, the following three results (Proposition \ref{prop:MatRan-Charac}, Corollary \ref{cor:minimal_sbspce} and Theorem \ref{thm:minimal_with_same_range}) were stated without the assumption that the tuple $A$ is nonsingular.
We are grateful to Benjamin Passer who pointed out this mistake, and provided examples showing that all three results fail without this assumption.

%%%%%%%%%%%%%%%%%%%%%%
\begin{proposition} \label{prop:MatRan-Charac}
Let $A =(A_1,...,A_d) \in \cK(H)^d$ be a $d$-tuple of compact operators. If conditions $(1)$ and $(2)$ hold, then $A$ is minimal.
\begin{enumerate}
\item
The identity representation of $C^*(A)$ is multiplicity free.
\item
The Shilov ideal of $S_A$ inside $C^*(S_A)$ is trivial.
\end{enumerate}
If $A$ is minimal and nonsingular, then conditions $(1)$ and $(2)$ hold.
\end{proposition}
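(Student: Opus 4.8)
The plan is to characterize minimality of a tuple $A$ of compact operators in terms of two structural invariants: the multiplicity of the identity representation of $C^*(A)$, and the Shilov ideal of the operator system $S_A$. The strategy is to use Proposition~\ref{prp:matRanMin_approxComp}, which says $A$ is minimal iff there are no two nontrivial orthogonal reducing subspaces $K_1, K_2 \subset H$ with $\cW(A|_{K_1}) \subseteq \cW(A|_{K_2})$. Since $C^*(A)$ consists of compact operators, it is (up to the kernel) a $c_0$-direct sum of matrix algebras, so the identity representation decomposes as a direct sum of irreducible representations $\pi_\lambda$ acting on minimal reducing subspaces $H_\lambda$. Reducing subspaces of $H$ correspond to ``subsums'' of these irreducibles (grouped by unitary equivalence class), and $\cW(A|_{K})$ depends only on which equivalence classes of irreducibles appear in $K$, by Theorem~\ref{thm:ExistUCP_matRan}(3) (a UCP completely isometric map identifies $S_{A|_K}$ with the corresponding operator system).

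First I would establish the ``only if'' direction. Suppose $A$ is minimal. If the identity representation had a repeated irreducible summand, say $\pi$ appearing on orthogonal subspaces $K_1 \cong K_2$, then $\cW(A|_{K_1}) = \cW(A|_{K_2})$, contradicting Proposition~\ref{prp:matRanMin_approxComp}; this gives condition (1). For condition (2): if the Shilov ideal $\cJ$ of $S_A$ in $C^*(S_A)$ is nontrivial, then $C^*(S_A)/\cJ$ is the C*-envelope, and the quotient map is completely isometric on $S_A$. Since $C^*(A)$ is a direct sum of matrix algebras (modulo its kernel, which acts trivially), $\cJ$ corresponds to omitting some of the irreducible summands; letting $K_2$ be the span of the irreducibles \emph{not} in $\cJ$ and $K_1$ the span of those in $\cJ$ (both nontrivial), the completely isometric quotient map shows $\cW(A|_{K_2}) = \cW(A)$, hence $\cW(A|_{K_1}) \subseteq \cW(A) = \cW(A|_{K_2})$, again contradicting Proposition~\ref{prp:matRanMin_approxComp}.

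For the ``if'' direction, assume (1) and (2) hold, and suppose toward a contradiction that there are nontrivial orthogonal reducing $K_1, K_2$ with $\cW(A|_{K_1}) \subseteq \cW(A|_{K_2})$. By (1), each irreducible equivalence class appears with multiplicity one overall, so $K_1$ and $K_2$ involve disjoint sets of irreducible classes, say indexed by $S_1$ and $S_2$. The inclusion $\cW(A|_{K_1}) \subseteq \cW(A|_{K_2})$ gives, via Theorem~\ref{thm:ExistUCP_matRan}(2), a UCP map from $S_{A|_{K_2}}$ onto $S_{A|_{K_1}}$; extending to $C^*(A|_{K_2})$ and composing with the compression $S_A \to S_{A|_{K_2}}$, one checks that $S_A \cong S_{A|_{K_2^\perp}}$ completely isometrically (as in the proof of Proposition~\ref{prp:matRanMin_approxComp}), so $K_2^\perp \supseteq K_1$ carries a completely isometric copy of $S_A$ while omitting the irreducibles in $S_2$. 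But a completely isometric representation of $S_A$ must be a boundary representation / must not kill any summand needed for the C*-envelope; concretely, omitting the irreducibles in $S_2$ corresponds to quotienting $C^*(S_A)$ by a nonzero ideal on which the quotient remains completely isometric on $S_A$, which forces the Shilov ideal to be nonzero — contradicting (2).

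The main obstacle I expect is the second direction, specifically pinning down the correspondence between (a) reducing subspaces that support a completely isometric copy of $S_A$, (b) ideals of $C^*(S_A)$ that are completely isometric-trivial on $S_A$, and (c) the Shilov ideal. One must argue that for compact tuples the Shilov ideal is exactly the intersection of kernels of those irreducible representations $\pi_\lambda$ that are \emph{not} boundary representations, and that minimality (no redundancy) is equivalent to every $\pi_\lambda$ being a boundary representation with multiplicity one. This requires invoking the theory of boundary representations and the C*-envelope for operator systems generated by compacts — essentially the structure of $C^*(S_A)$ as a subalgebra of $\cK(H)$ — and carefully tracking how direct-sum decompositions interact with matrix ranges via Theorem~\ref{thm:ExistUCP_matRan}.
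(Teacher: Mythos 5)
Your proof is correct and follows essentially the same route as the paper: both directions reduce to Proposition~\ref{prp:matRanMin_approxComp}, using the decomposition of $C^*(A)$ as a direct sum of $\cK(H_\zeta)$ and the fact that any nonzero ideal on which the quotient map remains completely isometric on $S_A$ is contained in the Shilov ideal. One notational slip in the ``if'' direction: following the proof of Proposition~\ref{prp:matRanMin_approxComp}, from $\cW(A|_{K_1}) \subseteq \cW(A|_{K_2})$ the subspace carrying a completely isometric copy of $S_A$ is $K_1^\perp$ (one drops the dominated summand $K_1$), not $K_2^\perp$, so the ideal forced into the Shilov ideal is $\oplus_{\zeta\in S_1}\cK(H_\zeta)$; the argument is otherwise unaffected, and the worry in your final paragraph about needing a full boundary-representation characterization of the Shilov ideal is unnecessary, since all one needs is the maximality property.
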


\begin{proof}
Suppose that $A$ is not minimal, so there is a nontrivial reducing subspace $H_1$ such that $\cW(A) = \cW(\widetilde{A})$, where $\widetilde{A} = A |_{H_1}$.
We will show that if the identity representation of $C^*(A)$ is multiplicity free, then the Shilov ideal of $S_A$ in $C^*(S_A)$ is not trivial.
The multiplicity free assumption means that $H = \oplus_{\zeta \in \Lambda} H_\zeta$ is a direct sum of $H_{\zeta}$ for some subset $\Lambda$ of pairwise inequivalent irreducible representations of $C^*(A)$.
The proper reducing subspace $H_1 = \oplus_{\zeta \in \Lambda_1} H_\zeta$ is a direct sum of $H_{\zeta}$ for some proper subset $\Lambda_1 \subset \Lambda$.
The projection $P:= P_{H_1}$ lies in the centre of $C^*(A)''$.
This means that $J = P^{\perp} B(H) P^{\perp} \cap C^*(A)$ is a two sided ideal inside $C^*(A)$ which is proper since it contains $\oplus_{\zeta \in \Lambda\setminus\Lambda_1} \cK(H_\zeta)$. 
Note that since $A$ consists of compact operators, $P^\perp$ might not be in $C^*(A)$, 
but we can still write $J = C^*(A)P^\perp$ since $P^\perp$ is in the centre of $C^*(A)''$. 

For $S =(S_{ij}) \in M_n(S_A)$, and $T =(T_{ij})(I_n \otimes P^{\perp}) \in M_n(J)$, we have that
$$
\|S + T \| = \|S(I_n \otimes P) \oplus (S+T)(I_n \otimes P^{\perp}) \| \geq \| S(I_n \otimes P) \| = \|S\|.
$$
The last equality holds by Theorem \ref{thm:ExistUCP_matRan}, as the map sending $A_i$ to $A_i |_{H_1}$, which we identify with $A_i P$, is completely isometric.
Hence, the map induced $S_A \rightarrow C^*(S_A) / J$ is completely isometric, so that $J$ is contained in the Shilov ideal of $S_A$.
Therefore the Shilov ideal of $S_A$ in $C^*(S_A)$ is not trivial.

Now suppose that $A$ is minimal and nonsingular.
We first show that the identity representation of $C^*(A)$ is multiplicity free.
For otherwise we can find two orthogonal reducing subspaces $H_{\lambda_1}$ and $H_{\lambda_2}$ such that the restrictions $\pi_{\lambda_1}$ and $\pi_{\lambda_2}$ are unitarily equivalent.
However, as $A$ is minimal, by Proposition \ref{prp:matRanMin_approxComp}, we see that this is impossible.
Hence, we must have that $H= \oplus H_{\zeta}$ where $\{\pi_{\zeta}\} = \{\pi_{\lambda}\}$ are mutually inequivalent irreducible *-representations, and in fact $C^*(A) = \oplus_{\zeta} \cK(H_{\zeta})$ inside $\cB(H)$.

Next, we show that $C^*(S_A)$ is the C*-envelope of $S_A$.
Above we obtained that $C^*(A) = \oplus_{\zeta \in \Sigma} \cK(H_{\zeta})$, where $\Sigma$ is the set of all irreducible sub-representations of the identity representation.
By Remark \ref{rem:nonsingular}, nonsingularity of $A$ implies that
$C_e^*(S_A) = \mathbb{C} I_H +  \oplus_{\zeta \in \Lambda} \cK(H_{\zeta})$, for some subset $\Lambda \subseteq \Sigma$.
If $C_e^*(S_A) \neq C^*(S_A)$, then $\Lambda \neq \Sigma$, so $H_1 : = \oplus_{\zeta \in \Lambda} H_{\zeta}$ is a nontrivial reducing subspace for $A$, and the compression to $H_1$ is completely isometric on $S_A$.
Letting $\widetilde{A} = A \big|_{H_1}$, we find that $\cW(A) = \cW(\widetilde{A})$ in contradiction to minimality.
\end{proof}

%%%%%%%%%%%%%%%%%%%%%%
\begin{corollary}\label{cor:minimal_sbspce}
Let $A =(A_1,...,A_d) \in \cK(H)^d$ be a nonsingular $d$-tuple of compact operators.
Then there is a reducing subspace $H_0$ such that $\widetilde{A} = (A_1 |_{H_0},..., A_d|_{H_0})$ is minimal, and $\cW(A) = \cW(\widetilde{A})$.
\end{corollary}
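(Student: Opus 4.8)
\emph{Step 1 (removing multiplicity).} The plan is to deduce the corollary from the characterization of minimality in Proposition~\ref{prop:MatRan-Charac}, by arranging its two conditions in turn while preserving the matrix range. First I would apply the reduction carried out in the paragraph preceding Proposition~\ref{prop:MatRan-Charac}: write $C^*(A)=\bigoplus_\zeta\cK(H_\zeta)$ as a $c_0$-direct sum over the unitary equivalence classes of irreducible subrepresentations of the identity representation, and pass to the reducing subspace $H_1:=\bigoplus_\zeta H_\zeta$ consisting of one copy of each class. Then $A_1:=A|_{H_1}$ satisfies $\cW(A_1)=\cW(A)$, one has $C^*(A_1)=\bigoplus_\zeta\cK(H_\zeta)$, and the identity representation of $C^*(A_1)$ is multiplicity free. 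So it remains to shrink $A_1$ inside $H_1$ to make the Shilov ideal trivial without changing the matrix range.

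\emph{Step 2 (killing the Shilov ideal).} Let $J$ be the Shilov (boundary) ideal of $S_{A_1}$ inside $C^*(S_{A_1})$. Since the C*-envelope $C^*(S_{A_1})/J$ is nonzero, $J$ is a proper closed two-sided ideal and $I\notin J$, whence $J\subseteq C^*(A_1)$ (it lies in the kernel of the canonical character $C^*(S_{A_1})\to\bC$, or $C^*(S_{A_1})=C^*(A_1)$ already when $H_1$ is finite dimensional). The closed two-sided ideals of the $c_0$-direct sum $\bigoplus_\zeta\cK(H_\zeta)$ are precisely the subsums, so $J=\bigoplus_{\zeta\in\Lambda_0}\cK(H_\zeta)$ for some index set $\Lambda_0$ (possibly empty, in which case $A_1$ is already minimal). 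Set $H_0:=\bigoplus_{\zeta\notin\Lambda_0}H_\zeta\subseteq H_1$, a reducing subspace for $A$, and $\widetilde A:=A|_{H_0}$. The restriction $*$-homomorphism $\rho:C^*(S_{A_1})\to C^*(S_{\widetilde A})$, $T\mapsto T|_{H_0}$, annihilates $J$; the crucial point is that $\ker\rho=J$, so $\rho$ descends to a unital $*$-isomorphism $C^*(S_{A_1})/J\xrightarrow{\ \sim\ }C^*(S_{\widetilde A})$ carrying the canonical image of $S_{A_1}$ onto $S_{\widetilde A}$. Because $J$ is a boundary ideal, $\rho$ is completely isometric on $S_{A_1}$, hence $S_{A_1}\cong S_{\widetilde A}$ completely isometrically and Theorem~\ref{thm:ExistUCP_matRan}(3) yields $\cW(\widetilde A)=\cW(A_1)=\cW(A)$.

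\emph{Step 3 (minimality of $\widetilde A$).} Finally I would verify the two conditions of Proposition~\ref{prop:MatRan-Charac} for $\widetilde A$. Condition (1) holds because $C^*(\widetilde A)=\bigoplus_{\zeta\notin\Lambda_0}\cK(H_\zeta)$ still has multiplicity-free identity representation. For condition (2), the isomorphism in Step~2 identifies $C^*(S_{\widetilde A})$ with the C*-envelope $C^*(S_{A_1})/J$ of $S_{A_1}$ in such a way that $S_{\widetilde A}$ becomes the canonical copy of $S_{A_1}$; since an operator system has trivial Shilov ideal inside its own C*-envelope (the C*-envelope being the minimal C*-cover), the Shilov ideal of $S_{\widetilde A}$ in $C^*(S_{\widetilde A})$ is $\{0\}$. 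Hence $\widetilde A$ is minimal, which finishes the argument.

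The step I expect to be the main obstacle is the identification $\ker\rho=J$ in Step~2 --- equivalently, that the passage from $C^*(S_A)$ to its C*-envelope is implemented by a compression onto a reducing subspace rather than by a map that ``uses the unit''. A soft argument only gives $J\subseteq\ker\rho$; pinning down equality is where one must exploit the fine structure of a tuple of compact operators, namely the relationship between the Choquet/Shilov boundary of $S_A$ and the irreducible subrepresentations of $C^*(A)=\bigoplus_\zeta\cK(H_\zeta)$.
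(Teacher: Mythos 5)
Your outline mirrors the paper's own argument step for step: first reduce to multiplicity-free form, then quotient out the Shilov ideal, and verify the two conditions of Proposition~\ref{prop:MatRan-Charac}. You were right to be uneasy about the identification $\ker\rho=J$ in Step~2: the paper makes the same unjustified leap (asserting, in effect, that the quotient of $C^*(S_{A_1})$ by its Shilov ideal $J$ may be identified with the compression to $H_0$), and it can genuinely fail. Concretely, if $H_0$ is finite dimensional while $J=\bigoplus_{\zeta\in\Lambda_0}\cK(H_\zeta)$ is non-unital, then $P_{H_0^\perp}=I-P_{H_0}$ lies in $C^*(S_{A_1})$ (because $P_{H_0}$ is then a finite-rank projection) and vanishes on $H_0$, yet is not compact and so does not lie in $J$; thus $\ker\rho\supsetneq J$, the compression to $H_0$ is strictly lossier than the C*-envelope quotient, and $\cW(\widetilde A)\neq\cW(A_1)$.

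In fact the Corollary appears to be false as stated. Take $d=1$ and the compact self-adjoint operator $A=\diag(1,\tfrac12,\tfrac13,\dots)$ on $\ell^2(\bN)$, so that $\sigma(A)=\{0\}\cup\{1/n:n\ge1\}$ and $\cW(A)=\Wmin{}([0,1])$ by Corollary~\ref{cor:normal}. Any reducing subspace $H_0$ with $\cW(A|_{H_0})=\cW(A)$ must contain the eigenvector for $1$ together with infinitely many others (so that $\sigma(A|_{H_0})$ still accumulates at $0$), and one can then always delete the largest eigenvalue strictly between $0$ and $1$ without changing $\conv\sigma(A|_{H_0})=[0,1]$; hence $A|_{H_0}$ is never minimal. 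Equivalently, via Proposition~\ref{prop:MatRan-Charac}: the Shilov ideal of $S_{A|_{H_0}}$ in $C(\sigma(A|_{H_0}))$ is the nonzero ideal of functions vanishing on $\{0,1\}$, so condition (2) never holds. The obstruction is precisely the one you feared: the extreme point $0$ of $\cW_1(A)$ lies in the approximate point spectrum but not the point spectrum, so the C*-envelope $C(\{0,1\})\cong\bC^2$ of $S_A$ cannot be reached by compression to a reducing subspace of $H$. (The paper's own example following Theorem~\ref{thm:minimal_with_same_range} exhibits this identical phenomenon for a non-compact diagonal unitary; compactness alone does not eliminate it.)
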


\begin{proof}
Let $H_0 := \oplus H_{\zeta \in \Lambda} \subseteq H$ be a direct sum of a maximal set of inequivalent subrepresentations of the identity representation of $C^*(A)$ (note that the trivial representation $0$ may be one of them), as in the discussion preceding Definition \ref{def:nonsingular}.
Write $\widetilde{A} = A|_{H_0}$.
By construction we know that $\cW(A) = \cW(\widetilde{A})$, and $C^*(\widetilde{A})$ as well as every compression of $C^*(\widetilde{A})$ to an invariant subspace is multiplicity free.
Note that $\widetilde{A}$ is also nonsingular, thus if the Shilov ideal of $S_{\widetilde{A}}$ in $C^*(S_{\widetilde{A}})$ is trivial, then we are done.

By the previous paragraph, it remains to show that every nonsingular and multiplicity free tuple $A \in \cK(S)^d$ can be reduced to a minimal direct summand.
The space $H$ decomposes as $H := \oplus_{\zeta \in \Sigma} H_{\zeta}$, corresponding to inequivalent sub-representations of the identity.
Since $A$ is nonsingular,
\[
C^*_e(S_A) = \bigoplus_{\zeta \in \Lambda} \pi_\zeta(C^*(S_A)),
\]
where $\Lambda \subseteq \Sigma$.
Define $H_0 = \oplus_{\zeta \in \Lambda} H_\zeta$.
Then the compression $\widetilde{A} = A\big|_{H_0}$ is again nonsingular, multiplicity free, and now $S_{\widetilde{A}}$ has trivial Shilov ideal in $C^*(S_{\widetilde{A}})$.
By Proposition \ref{prop:MatRan-Charac}, $\widetilde{A}$ is minimal.
\end{proof}

The following should be compared with \cite[Theorem 2.4.3]{Arv72}.

%%%%%%%%%%%%%%%%%%%%%%%%%%%%%%%%%%%%%%%%
\begin{theorem}\label{thm:minimal_with_same_range}
Let $A\in\cK(H_1)^d$ and $B\in\cK(H_2)^d$ be two minimal nonsingular $d$-tuples of compact operators.
Then $\cW(A) = \cW(B)$ if and only if $A$ and $B$ are unitarily equivalent.
\end{theorem}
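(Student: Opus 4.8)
The ``if'' direction is routine: a unitary $U$ with $B_i = U^* A_i U$ induces the unital $*$-isomorphism $x \mapsto U^* x U$ of $C^*(S_A)$ onto $C^*(S_B)$ carrying $A_i$ to $B_i$, and pre-composing any element of $\UCP(C^*(S_B),M_n)$ with it shows $\cW_n(A) = \cW_n(B)$ for all $n$ (one may also just quote Theorem~\ref{thm:ExistUCP_matRan}(3)). So assume $\cW(A) = \cW(B)$; the plan is to manufacture the implementing unitary in three moves.

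First I would upgrade the equality of matrix ranges to a $*$-isomorphism of the generated C*-algebras. By Theorem~\ref{thm:ExistUCP_matRan}(3) there is a unital complete order isomorphism $\phi \colon S_A \to S_B$ with $\phi(A_i) = B_i$. Since $A$ and $B$ are minimal, Proposition~\ref{prop:MatRan-Charac}(2) tells us that $C^*(S_A)$ is the C*-envelope of $S_A$ and $C^*(S_B)$ is the C*-envelope of $S_B$. Applying the universal property of the C*-envelope to $\phi$ and to $\phi^{-1}$ produces unital $*$-epimorphisms $C^*(S_B) \to C^*(S_A)$ and $C^*(S_A) \to C^*(S_B)$ extending them; since $S_A$ generates $C^*(S_A)$ and $S_B$ generates $C^*(S_B)$, these two maps are mutually inverse. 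Thus $\phi$ extends to a unital $*$-isomorphism $\Phi \colon C^*(S_A) \to C^*(S_B)$ with $\Phi(A_i) = B_i$ and $\Phi(I) = I$.

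The heart of the argument is to show that $\Phi$ carries the (not necessarily unital) C*-algebra $C^*(A)$ generated by $A_1,\dots,A_d$ onto $C^*(B)$. When $H$ is finite dimensional this is immediate from the classification of finite-dimensional C*-algebras, so assume $H$ is infinite dimensional; then $C^*(S_A)$, hence $C^*(S_B)$, is infinite dimensional, and therefore the ambient space of $B$ is infinite dimensional as well. Minimality together with Proposition~\ref{prop:MatRan-Charac}(1) forces the identity representation of $C^*(A)$ to be non-degenerate and multiplicity free, so that $C^*(A) = \bigoplus_\zeta \cK(H_\zeta)$ with $H = \bigoplus_\zeta H_\zeta$ and each irreducible $*$-representation of $C^*(A)$ occurring exactly once; in particular $I \notin C^*(A)$, the quotient $C^*(S_A)/C^*(A)$ is one-dimensional, and $C^*(A)$ is an essential ideal of $C^*(S_A)$. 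I would then observe that $C^*(A)$ is the \emph{unique} closed essential ideal of $C^*(S_A)$ of codimension one: an essential ideal must meet — hence contain — each minimal ideal $\cK(H_\zeta)$, so it contains $C^*(A)$, and codimension one forces equality. Since ``essential'' and ``codimension one'' are invariants of $*$-isomorphism, $\Phi(C^*(A))$ is the corresponding ideal of $C^*(S_B)$, namely $C^*(B)$. Finally, both the identity representation of $C^*(A)$ on $H$ and the representation $c \mapsto \Phi(c)$ of $C^*(A)$ on $H$ are faithful, non-degenerate, and multiplicity free on $\bigoplus_\zeta \cK(H_\zeta)$, hence each is unitarily equivalent to the canonical such representation; so there is a unitary $U$ on $H$ with $\Phi(c) = U c U^*$ for all $c \in C^*(A)$. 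Because $\Phi(I) = I = U I U^*$ and $C^*(S_A) = C^*(A) + \bC I$, it follows that $\Phi(x) = U x U^*$ for all $x \in C^*(S_A)$; in particular $B_i = \Phi(A_i) = U A_i U^*$, so $A$ and $B$ are unitarily equivalent.

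The step I expect to need the most care is the middle one: pinning down the ideal of compacts inside $C^*(S_A)$ in an intrinsic way and checking that the degenerate cases (tuples of scalars, small ambient dimension) cause no trouble. A clean alternative to the ``unique essential ideal of codimension one'' description is to identify $C^*(A)$ with the ideal of abstractly compact elements of $C^*(S_A)$, which is automatically preserved by any $*$-isomorphism.
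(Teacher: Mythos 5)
Your proof is correct and follows essentially the same route as the paper: extend the complete isometry $\phi$ to a $*$-isomorphism $\Phi$ of the C*-envelopes (which by minimality are $C^*(S_A)$ and $C^*(S_B)$), pass to $C^*(A) \to C^*(B)$, then recognize that $c \mapsto \Phi(c)$ is a faithful, non-degenerate, multiplicity-free representation of $\oplus_\zeta \cK(H_\zeta)$ and is therefore unitarily equivalent to the identity representation. One small remark: the ``unique essential ideal of codimension one'' detour in your middle step is not needed, since $\Phi(A_i) = B_i$ and a $*$-isomorphism maps the C*-subalgebra generated by $\{A_i\}$ onto the C*-subalgebra generated by $\{B_i\}$, giving $\Phi(C^*(A)) = C^*(B)$ directly; this is what the paper (implicitly) uses.
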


\begin{proof}
There is only one direction to prove, so assume that $\cW(A) = \cW(B)$.
Then there is a unital completely isometric isomorphism $\phi$ from $S_A$ to $S_B$.
This map extends to a $*$-isomorphism $\pi$ between the respective C*-envelopes, which by Proposition \ref{prop:MatRan-Charac} are $C^*(S_A)$ and $C^*(S_B)$.
We therefore have a $*$-isomorphism $\pi : C^*(A) \to C^*(B)$.
By the representation theory of C*-algebras of compact operators, $\pi = \oplus_i \pi_i$ is (up to unitary equivalence) the direct sum of irreducible sub-representations of the identity representation.
Every sub-representation of $\id_{C^*(A)}$ appears at most once, since $C^*(B)$ is multiplicity free. If the trivial representation $0$ appears as a subrepresentation of either $C^*(A)$ or $C^*(B)$ but not of the other, this would contradict minimality of one of them. Hence, we either have that $0$ is a subrepresentation of both $C^*(A)$ and $C^*(B)$, or neither of them. Since the kernel of $\pi$ is trivial, any subrepresentation of $\id_{C^*(A)}$ other than $0$ appears at least once in the decomposition of $\pi$. Thus, in any case, it is clear that $A$ and $B$ are unitarily equivalent.
\end{proof}

%%%%%%%%%%%%%%%%%%%%%%%%%%
\begin{example}
In general, a non-compact $d$-tuple $A$ does not always have a minimal subspace as in Corollary \ref{cor:minimal_sbspce}, even if $0$ is an interior point of $\mathcal{W}_1(A)$.
Let $(\lambda_i )_{i\in \bN}$ be a dense subset of \emph{distinct} numbers on the circle $\bT$.
Define the diagonal unitary operator $T$ on $\ell^2(\bN)$ by $T(e_i) = \lambda_i e_i$.
Then $T$ is certainly normal, but has no \emph{minimal} reducing subspace $L \subset \ell^2(\bN)$ for which $\cW(T) = \cW(T|_{L})$.

Indeed, if $L$ is such a reducing subspace for $T$, then the projection $P_L$ onto it belongs to the von-Neumann algebra $W^*(T)$ generated by $T$, since $W^*(T) = \ell^{\infty}(\bN)$ is maximal abelian, and is hence equal to its own commutant inside $\cB(\ell^2(\bN))$.
Thus, $P_L$ commutes with $P_i$, where $P_i$ is the projection onto $\spn\{e_i \}$ for each $i\in \bN$.
Hence, for a fixed $i\in \bN$, we either have $P_L(e_i) = e_i$ or $P_L(e_i) = 0$.
Hence, we establish that $L = \spn\{ e_i | i \in \Lambda \}$ for some subset $\Lambda \subseteq \bN$.

Since $\cW(T) = \cW(T|_{L})$, we must have that $\sigma(T) = \sigma(T|_{L})$, so that $(\lambda_i)_{i \in \Lambda}$ must still be dense in $\bT$.
But this is impossible because then we certainly still have that $T|_L$ has a reducing subspace $L' \subset L$ such that $\cW(T|_L) = \cW(T|_{L'})$. So $T|_L$ cannot be minimal.

This example  also has the property that there are representations of $C^*(T)$ which are not unitarily equivalent, but are approximately unitarily equivalent, such as $M_z$ in $\cB(L^2(\bT))$.
As $M_z \sim_\cK T$, we have $\cW(M_z) = \cW(T)$. It also does not have a minimal subspace. Nor is any restriction of $T$ to a reducing subspace unitarily equivalent to any restriction of $M_z$ to any reducing subspace.
\end{example}

This example shows the limits of possibility, but also shine a light on a reasonable resolution.

%%%%%%%%%%%%%%%%%%%%%%%%%%
\begin{theorem}
Let $A$ and $B$ be $d$-tuples of operators on a separable Hilbert space such that
\begin{enumerate}
\item $C^*(S_A) = C^*_e(S_A)$ and $C^*(S_B) = C^*_e(S_B)$, and
\item $C^*(A) \cap \cK(H) = \{0\} = C^*(B) \cap \cK(H)$.
\end{enumerate}
Then $A \sim_\cK B$ if and only if $\cW(A) = \cW(B)$.
\end{theorem}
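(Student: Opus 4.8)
The plan is to establish the two implications separately; the forward one is soft, and the backward one rests on Voiculescu's theorem. For the forward implication, suppose $A\sim_\cK B$, so that there are unitaries $U_n$ on $H$ with $\|U_nB_iU_n^*-A_i\|\to 0$ (and with compact differences, although only the norm convergence is needed here). Each conjugation $X\mapsto U_nXU_n^*$ is a unital complete isometry, hence its point-norm limit is a unital completely isometric map of $S_B$ onto $S_A$ carrying $B$ to $A$; by Theorem~\ref{thm:ExistUCP_matRan}(3) this forces $\cW(A)=\cW(B)$.

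For the backward implication, assume $\cW(A)=\cW(B)$. By Theorem~\ref{thm:ExistUCP_matRan}(3) there is a unital complete isometry $\phi\colon S_A\to S_B$ with $\phi(A_i)=B_i$, and since $C^*(A)=C^*_e(A)$ and $C^*(B)=C^*_e(B)$, the universal property of the C*-envelope extends $\phi$ to a $*$-isomorphism $\pi\colon C^*(A)\to C^*(B)$ with $\pi(A_i)=B_i$. One then views the identity representation $\iota$ of $C^*(A)$ and the representation $\pi$ of $C^*(A)$ (obtained through $C^*(B)\subseteq\cB(H)$) as two unital, faithful representations of the separable unital C*-algebra $C^*(A)$ on the separable Hilbert space $H$; by hypothesis (2) neither sends a nonzero element to a compact operator. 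Voiculescu's absorption theorem (in the form already used in the proof of Theorem~\ref{thm:ExistUCP_approxComp}; see \cite[Theorem~II.5.3]{DavBook}) then yields
\[
\iota\sim_\cK\iota\oplus\pi\cong\pi\oplus\iota\sim_\cK\pi ,
\]
where the first and last $\sim_\cK$ use, respectively, that the range of $\iota$ and the range of $\pi$ omit $\cK(H)$. Since $\sim_\cK$ is transitive (compose the approximating unitaries, add the compact error terms), $\iota\sim_\cK\pi$: there are unitaries $U_n$ with $U_naU_n^*-\pi(a)\in\cK(H)$ and $\|U_naU_n^*-\pi(a)\|\to 0$ for every $a\in C^*(A)$. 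Specializing to $a=A_i$ and using $\pi(A_i)=B_i$ gives $\|U_nA_iU_n^*-B_i\|\to 0$ with compact differences, i.e.\ $A\sim_\cK B$.

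The step I expect to demand the most care is the Voiculescu bookkeeping: one must verify that the absorption $\iota\sim_\cK\iota\oplus\pi$ is available precisely because the range of $\iota$ misses the compacts (this is where hypothesis (2) for $A$ enters), that the symmetric absorption on the $\pi$ side holds for the same reason with $B$, and that these two statements can be chained through the trivial unitary equivalence $\iota\oplus\pi\cong\pi\oplus\iota$ (note that hypothesis (2) already forces $H$ to be infinite dimensional, so $H\oplus H\cong H$ and all the relations make sense). Hypothesis (1), by contrast, is used only to upgrade the complete isometry $\phi$ to a genuine $*$-isomorphism of the generated C*-algebras, so that Voiculescu's theorem can be applied at the C*-algebra level rather than merely at the level of operator systems.
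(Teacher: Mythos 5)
Your proof is correct and follows essentially the same route as the paper: the paper dispenses with the forward implication as trivial, and for the converse it likewise passes through Theorem~\ref{thm:ExistUCP_matRan}(3), the universal property of the C*-envelope, and Voiculescu's theorem — the only difference being that the paper cites the ready-made corollary \cite[Theorem II.5.8]{DavBook} on $*$-isomorphisms of separable C*-algebras avoiding the compacts being approximately inner, whereas you rederive that corollary from the absorption form $\iota\sim_\cK\iota\oplus\pi$ by chaining through $\iota\oplus\pi\cong\pi\oplus\iota$.
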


\begin{proof}
One direction is trivial, so assume that $\cW(A) = \cW(B)$.
By Theorem~\ref{thm:ExistUCP_matRan}(3), there is a completely isometric map $\phi$ of $S_A$ onto $S_B$ such that $\phi(A) = B$.
Hence by the universal property of the C*-envelope, there is a $*$-isomorphism $\tilde\phi$ of $C^*_e(S_A)$ onto $C^*_e(S_B)$ extending $\phi$.
By (1), this yields a $*$-isomorphism $\hat\phi$ of $C^*(A)$ onto $C^*(B)$.
Finally by (2) and Voiculescu's Theorem (see \cite[Theorem II.5.8]{DavBook}), $\hat\phi$ is implemented by an approximate unitary equivalence.
Thus $A \sim_\cK B$.
\end{proof}

%%%%%%%%%%%%%%%%%%%%%%%%%%%%%%%%%%%%%%%%
\subsection{Consequences for free spectrahedra}

%%%%%%%%%%%%%%%%%%%%%%
\begin{definition}
Let $A \in \cB(H)^d$ and let $L_A$ be the associated monic linear pencil.
We say that $L_A$ is {\em minimal} if there is no nontrivial reducing subspace $H_0 \subset H$ such that
$\cD_{L_{\widetilde{A}}} = \cD_{L_A}$, where $\widetilde{A} = A |_{H_0}$.
\end{definition}

When $0 \in \cW(A)$, Propositions \ref{prop:WDA} and \ref{prop:DAW} imply that $A$ is minimal if and only if $L_A$ is minimal.
We therefore obtain characterizations for when a monic linear pencil $L_A$ with $A \in \cK(H)^d$ nonsingular and $0 \in \cW(A)$ is minimal, and we see that if $L_A$ is such a minimal linear pencil then $A$ is determined up to unitary equivalence by $\cD_L$.

These results are a slight generalization of Theorem 3.12 and Proposition 3.17 of \cite{HKM13}, that treated the case where $A$ is a tuple of $n \times n$ matrices over the reals.
In \cite{HKM13} the condition was that $\cD_{L_A}$ is bounded, which by Lemma \ref{lemma:boundedness-dual-interior} is slightly stronger than $0 \in \cW(A)$.
The case where $\cD_{L_A}$ is not assumed bounded (but $A$ is still assumed to be a tuple of $n \times n$ matrices) was treated recently in \cite{Zalar}.

%%%%%%%%%%%%%%%%%%%%%%%%%%%%%%%%%%%%%%%%%%%%%%%
\section{Dilations and matricial relaxation of inclusion problems}\label{sec:dilations}

%%%%%%%%%%%%%%%%%%%%%%
\subsection{Obtaining a dilation on a finite dimensional space}

We show that once we have a dilation of a $d$-tuple of matrices to a commuting normal $d$-tuple, then we can choose our dilation to be on a finite dimensional space.

%%%%%%%%%%%%%%%%%%%%%%%%%%%%%%%%%
\begin{theorem} \label{theorem:fin-dim-dil}
Let $X = (X_1,\ldots, X_d) \in M_n^d$ for which there exists a commuting $d$-tuple $T=(T_1,\ldots, T_d)$ of normal operators on a Hilbert space $H$ and an isometry $V: \bC^n \rightarrow H$ such that $X_i = V^*T_iV$.
Then there is an integer $m \leq 2n^3(d+1)+1$, a $d$-tuple $Y = (Y_1,\ldots, Y_d)$ of  commuting normal operators on $\bC^m$ satisfying $\sigma(Y) \subseteq \sigma(T)$, and an isometry $W: \bC^n \rightarrow \bC^m$ such that $X_i = W^*Y_iW$ for all $i=1, \ldots, d$.
\end{theorem}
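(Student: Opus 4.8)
The plan is to replace the given, possibly infinite‑dimensional, commuting normal dilation $T$ by one supported on finitely many points of $\sigma(T)$, then to thin that support down to a bound depending only on $n$ and $d$ by Carath\'eodory's theorem, and finally to reassemble an explicit finite‑dimensional dilation out of the pieces.

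First I would observe that since $V$ is an isometry, the compression $S\mapsto V^*SV$ from $C^*(S_T)$ to $M_n$ is a UCP map, so $X=(V^*T_1V,\dots,V^*T_dV)\in\cW_n(T)$. Because $T$ is a commuting normal tuple with $\sigma(T)=\Lambda$, Theorem~\ref{T:W(normal)} then yields a \emph{finite} representation
\[
X_\ell=\sum_{j=1}^{N}\lambda^{(j)}_\ell K_j\qquad(1\le\ell\le d),\qquad \lambda^{(j)}\in\Lambda,\ \ K_j\in M_n^+,\ \ \sum_{j=1}^N K_j=I_n
\]
(for $n=1$ this is just classical Carath\'eodory applied to $\operatorname{conv}(\Lambda)=\cW_1(T)$). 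Next I would view the datum $(I_n;X_1,\dots,X_d)$ as a single vector in the real vector space $(M_n)_{sa}\oplus M_n^{\oplus d}$, whose real dimension is $D:=n^2(2d+1)$, and each summand above as the vector $(K_j;\lambda^{(j)}_1K_j,\dots,\lambda^{(j)}_dK_j)$ lying in the set $\cC:=\{(K;\lambda_1K,\dots,\lambda_dK):\lambda\in\Lambda,\ K\in M_n^+\}$. Since $(I_n;X)$ lies in the conical hull of $\cC$, Carath\'eodory's theorem in its conic form lets me rewrite it as a nonnegative combination of at most $D$ elements of $\cC$; absorbing the coefficients into the $K_j$'s and discarding any that vanish, I may assume $N\le D=n^2(2d+1)$, still with $\lambda^{(j)}\in\Lambda$, $K_j\ge 0$ and $\sum_j K_j=I_n$.

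To finish, for each $j$ I would set $r_j=\operatorname{rank}K_j\le n$, factor $K_j=C_j^*C_j$ with $C_j:\bC^n\to\bC^{r_j}$ of rank $r_j$, put $m:=\sum_{j=1}^N r_j$, let $W:\bC^n\to\bC^m=\bigoplus_{j=1}^N\bC^{r_j}$ be $Wx=(C_1x,\dots,C_Nx)$, and let $Y_\ell:=\bigoplus_{j=1}^N\lambda^{(j)}_\ell I_{r_j}$ on $\bC^m$. Then $W^*W=\sum_j C_j^*C_j=\sum_j K_j=I_n$, so $W$ is an isometry; $Y=(Y_1,\dots,Y_d)$ is a commuting normal tuple with $\sigma(Y)=\{\lambda^{(j)}:1\le j\le N\}\subseteq\Lambda=\sigma(T)$; and $W^*Y_\ell W=\sum_j\lambda^{(j)}_\ell C_j^*C_j=\sum_j\lambda^{(j)}_\ell K_j=X_\ell$ for every $\ell$. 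Finally $m=\sum_j r_j\le nN\le n^3(2d+1)\le 2n^3(d+1)+1$, which is the claimed bound.

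The one point requiring care — and essentially the only nonroutine step — will be the bookkeeping in the middle paragraph: choosing the right ambient real vector space so that a single application of Carath\'eodory simultaneously controls the number of spectral atoms in terms of both $n$ and $d$ (rather than the cruder $2n^4+n^2$ coming straight out of Theorem~\ref{T:W(normal)}). The passage to the finitely supported representation and the reassembly of $(Y,W)$ from the $K_j$'s are both straightforward.
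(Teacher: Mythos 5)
Your argument is correct and reaches the desired conclusion, but by a somewhat different and more self-contained route than the paper. The paper first appeals to a cubature formula for operator-valued measures (citing \cite[Theorem 4.7]{Coh15}) to replace the compressed spectral measure $V^*E_TV$ by a finitely supported POVM with $M = 2n^2(d+1)+1$ atoms in $\sigma(T)$, and then invokes Naimark's dilation theorem to dilate that POVM to a projection-valued measure on a space of dimension at most $nM$. You instead obtain the finitely supported representation $X_\ell=\sum_j\lambda^{(j)}_\ell K_j$ directly from the description of $\cW_n(N)$ in Theorem~\ref{T:W(normal)}, trim the number of atoms by a single conic Carath\'eodory argument in the real vector space $(M_n)_{sa}\oplus M_n^{\oplus d}$ of dimension $n^2(2d+1)$ (the key choice being to append the coordinate $I_n=\sum_j K_j$ so that the normalization is preserved automatically), and then build the isometry and the commuting normal tuple explicitly from the factorizations $K_j=C_j^*C_j$, rather than quoting Naimark. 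The final construction is exactly what Naimark's theorem would produce, and your choice of ambient space makes the $n^2(2d+1)$ count transparent; you even obtain the marginally sharper bound $m\le n^3(2d+1)$, which sits inside the stated $2n^3(d+1)+1$. The paper's version is more modular (a cubature step for the measure, then Naimark for the dilation), while yours is self-contained and avoids citing the cubature result — a reasonable trade, and both are valid proofs of the theorem.
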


The proof is based on some ideas from \cite{Coh15,LS14,MS13}.

\begin{proof}
Suppose that $T = (T_1,...,T_d)$ and $V : \bC^n \rightarrow H$ are as in the statement of the theorem. Let $E_T$ be the joint spectral measure for $T$.
We may then write $T_i = \int_{\sigma(T)} z_idE_T(z)$, where $\sigma(T)$ is the joint spectrum of $T$, identified as a subset of $\bC^d$.
For all $i=1, \ldots, d$,
$$
X_i = V^* \left(\int_{\sigma(T)} z_idE_T(z) \right) V = \int_{\sigma(T)}z_i d (V^*E_TV)(z)
$$
and $V^*E_TV$ is a positive operator valued measure on $\sigma(T) \subseteq \bC^d$ with values in $M_n(\bC)$.
Now, the space $\spn\{z_1, \ldots, z_d\}$ of linear functions on $\sigma(T)$ is finite dimensional, and one therefore expects to have a cubature formula, that is, a finite sequence of points $w^{(1)},\ldots, w^{(M)} \in \sigma(T)$ and positive-definite matrices $A_1, \ldots, A_M$ in $M_n(\bC)$ such that $\sum_{j=1}^MA_j = I_n$ and
\be\label{eq:cub}
\int_{\sigma(T)}f(z) d (V^*E_TV)(z) = \sum_{j=1}^M f(w^{(j)}) A_j ,
\ee
for every $f \in \spn\{z_1, \ldots, z_d\}$.
Indeed, by \cite[Theorem 4.7]{Coh15} and the dimension estimates in the proof for it, when applied to the collection of functions $\{z \mapsto z_i\}_{i=1}^d$, we have $M = 2n^2(d+1) +1$ points $w^{(1)},\ldots, w^{(M)} \in \sigma(T)$ and positive-definite matrices $A_1, \ldots, A_M$ in $M_n(\bC)$ such that $\sum_{j=1}^MA_j = I_n$ so that \eqref{eq:cub} holds.
In particular,
$$
\int_{\sigma(T)}z_id(V^*E_TV)(z) = \sum_{j=1}^M w_i^{(j)} A_j
$$
for $i=1, \ldots, d$.

The sequence $A_1, \ldots, A_M$ can be considered as a positive operator valued measure on the set $\{w^{(1)}, \ldots, w^{(M)}\}$.
By Naimark's dilation theorem, this measure dilates to a spectral measure $E$ on the set $\{w^{(1)}, \ldots, w^{(M)}\}$ with values in $M_{m}(\bC)$ where $m \leq nM$ (the bound on the dimension $m$ on which the spectral measure $E$ acts follows from the proof of Naimark's theorem via Stinespring's theorem --- see Chapter 4 of \cite{PauBook}).
That is, there exist $M$ pairwise orthogonal projections $E_1,...,E_M$ on $\bC^m$ such that $\sum E_j = I_m$, and an isometry $W: \bC^n \rightarrow \bC^m$ such that $A_j = W^* E_j W$ for $j=1, \ldots, d$.

We now construct the dilation $Y$ by defining $Y_i = \sum_{j=1}^M w_i^{(j)}E_j$.
Thus, $Y = (Y_1,...,Y_d)$ is a commuting normal $d$-tuple and by construction $\sigma(Y) = \{w^{(1)},\ldots, w^{(M)}\} \subseteq \sigma(T)$.
Moreover,
$$
W^* Y_i W = \sum_{j=1}^Mw^{(j)}_i A_j = \int_{\sigma(T)}z_id(V^*E_TV)(z) = X_i ,
$$
for $i=1, \ldots, d$.
Thus, $Y$ is a commuting normal $1$-dilation for $X$ on a space of dimension at most $nM = 2n^3(d+1)+1$ with $\sigma(Y) \subseteq \sigma(T)$.
\end{proof}

%%%%%%%%%%%%%%%%%%%%%%%%%%
\begin{remark}
One of the main results of \cite{HKMS15}, Theorem 1.1, is that there is a constant $\vartheta(n)$ such that every $d$-tuple of symmetric $n \times n$ contractive matrices $X_1, \ldots, X_d$, there is a $d$-tuple $T_1, \ldots, T_d$ of commuting self-adjoint contractions on a Hilbert space $H$ and an isometry $V: \bR^d \to H$ such that
\be\label{eq:HKMS}
\vartheta(n) X_i = V^* T_i V \,\, , \,\, i=1, \ldots, d.
\ee
A significant amount of effort in \cite{HKMS15} was dedicated to the determination of the optimal value of $\vartheta(n)$.
In fact \cite[Theorem 1.1]{HKMS15} is stronger, in that the dilation actually works for {\em all} $n \times n$ symmetric matrices, simultaneously.
It is therefore not surprising that the dilation Hilbert space $H$ in that theorem must be infinite dimensional.
It is natural to ask whether if one begins with a fixed $d$-tuple of real symmetric matrices, can one obtain \eqref{eq:HKMS} with the commuting tuple of contractions $T$ acting on a {\em finite} dimensional space $H$.
The method of Theorem \ref{theorem:fin-dim-dil} shows that this can be done, with the constant unchanged, and with control on the dimension of $H$.
\end{remark}

As a corollary to the above, we obtain a characterization of scaled dilation in terms of matrix convex set inclusion (cf. Proposition 2.1 and Theorem 8.4 of \cite{HKMS15}).

%%%%%%%%%%%%%%%%%%%%%%
\begin{theorem} \label{thm:scaled-inclusion-min}
Let $\cS \subseteq \cup_n M_n^d$ be a closed matrix convex set, and $c>0$. The following are equivalent
\begin{enumerate}
\item
For all $X\in \cS$ there exists $T=(T_1,...,T_d) \in \cS$ commuting normal $d$-tuple such that $c T$ dilates $X$,
\item
$\cS \subseteq c  \Wmin{}(\cS_1)$,
\item
For any closed matrix convex set $\cT \subseteq \cup_n M_n^d$ we have
$$
\cS_1 \subseteq \cT_1 \implies \cS \subseteq c  \cT.
$$
\end{enumerate}
\end{theorem}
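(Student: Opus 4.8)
The plan is to prove the cycle $(2)\Rightarrow(1)\Rightarrow(3)\Rightarrow(2)$. The one genuinely new ingredient is Theorem~\ref{theorem:fin-dim-dil}, which lets us replace an infinite‑dimensional commuting normal dilation by a finite‑dimensional one whose joint spectrum has not grown; everything else amounts to moving between the two descriptions of $\Wmin{}(\cS_1)$ furnished by Definition~\ref{def:min and max} and by Corollary~\ref{cor:normalcontained}.

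For $(2)\Rightarrow(1)$: given $X\in\cS_n$, hypothesis (2) says $\tfrac1c X\in\Wmin{n}(\cS_1)$, so by definition there is a commuting normal tuple $N$ on some Hilbert space with $\sigma(N)\subseteq\cS_1$ and $\tfrac1c X\prec N$. Theorem~\ref{theorem:fin-dim-dil} upgrades this to a commuting normal tuple $Y$ of matrices with $\sigma(Y)\subseteq\sigma(N)\subseteq\cS_1$ and $\tfrac1c X\prec Y$. By Corollary~\ref{cor:normalcontained}, the inclusion $\sigma(Y)\subseteq\cS_1$ forces $Y\in\cS$; then $cY\in\cS$ is a commuting normal tuple dilating $X$, which is (1). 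For $(1)\Rightarrow(3)$: let $\cT$ be closed and matrix convex with $\cS_1\subseteq\cT_1$, and let $X\in\cS_n$. By (1) there is a commuting normal $T\in\cS$ with $X=V^*(cT)V$ for an isometry $V$. Since $T\in\cS$, Corollary~\ref{cor:normalcontained} gives $\sigma(T)\subseteq\cS_1\subseteq\cT_1$, and applying the same corollary in the other direction yields $T\in\cT$. Because $c\cT$ is again matrix convex and $X$ is a compression (hence a UCP image) of $cT\in c\cT$, we conclude $X\in c\cT$; thus $\cS\subseteq c\cT$.

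For $(3)\Rightarrow(2)$: I would feed $\cT:=\Wmin{}(\cS_1)$ into (3). By Remark~\ref{rem:monotone} this is matrix convex with $\cT_1=\cS_1$, so provided it is closed, hypothesis (3) applies and gives $\cS\subseteq c\Wmin{}(\cS_1)$, which is exactly (2). To check closedness of $\Wmin{}(\cS_1)$: the set $\cS_1$ is closed and convex, being the first level of a closed matrix convex set, and $\Wmin{}(\cdot)$ commutes with translation by scalar points, so one may assume $0\in\cS_1$; then by Theorem~\ref{T:polar of min} and the bipolar theorem (Lemma~\ref{lem:bipolar}) one has $\Wmin{}(\cS_1)=\Wmin{}(\cS_1)^{\bullet\bullet}=\Wmax{}(\cS_1')^{\bullet}$, which is a polar dual and hence an intersection of closed conditions of the form $\sum_j A_j\otimes X_j\le I$. (When $\cS$ is bounded this is even simpler: $\cS_1$ is then compact and $\Wmin{}(\cS_1)=\cW(N)$ for a commuting normal tuple $N$ with $\sigma(N)=\cS_1$, which is closed since matrix ranges are closed — Corollary~\ref{cor:normal}.)

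The essential difficulty is already packaged in Theorem~\ref{theorem:fin-dim-dil}: without it, (2) would only supply a dilation into \emph{some} matrix convex set with first level $\cS_1$ rather than into $\cS$ itself, which is what (1) demands. The only remaining point that needs a little care is the closedness of $\Wmin{}(\cS_1)$, since that is what makes it a legitimate test object for (3); once these two facts are in hand the rest of the proof is bookkeeping with Corollary~\ref{cor:normalcontained} and the monotonicity of $\Wmin{}$.
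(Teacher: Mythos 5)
Your proof is correct and rests on essentially the same ingredients as the paper's: Theorem~\ref{theorem:fin-dim-dil} to drop from an operator dilation to a matrix dilation with no spectral growth, Corollary~\ref{cor:normalcontained} to convert $\sigma(T)\subseteq\cS_1$ into $T\in\cS$, and the choice $\cT=\Wmin{}(\cS_1)$ to extract $(2)$ from $(3)$. The organizational difference is that you run the cycle $(2)\Rightarrow(1)\Rightarrow(3)\Rightarrow(2)$, folding into $(1)\Rightarrow(3)$ the content that the paper splits across $(1)\Rightarrow(2)$ and $(2)\Rightarrow(3)$; nothing is gained or lost, it is a matter of taste.

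Two small remarks. First, a slip in $(2)\Rightarrow(1)$: you want the conclusion $Y\in\cS$ (which is what Corollary~\ref{cor:normalcontained} gives you), together with the observation that $cY$ dilates $X$; you wrote ``$cY\in\cS$'', which is not what is claimed nor what $(1)$ requires. Second, you rightly notice that to substitute $\cT:=\Wmin{}(\cS_1)$ into $(3)$ one must know this set is closed — a point the paper silently passes over — but your justification, namely $\Wmin{}(\cS_1)=\Wmin{}(\cS_1)^{\bullet\bullet}$ via Lemma~\ref{lem:bipolar}, is circular: that identity already presupposes closedness (indeed without some closedness hypothesis the bipolar identity cannot hold literally, since $\cS^{\bullet\bullet}$ is always closed). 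The clean route uses only Theorem~\ref{T:polar of min}: after translating so that $0\in\cS_1$, apply its second half with $C=\cS_1'$ (which always contains $0$) and use the classical bipolar $\cS_1''=\cS_1$ to get
\[
\Wmax{}(\cS_1')^{\bullet}=\Wmin{}(\cS_1''\,)=\Wmin{}(\cS_1),
\]
and a polar dual is automatically closed. Your bounded-case alternative via Corollary~\ref{cor:normal} is also fine and suffices for all the applications in the paper.
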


\begin{proof}
(2) implies (3) since whenever $\cS_1 \subseteq \cT_1$, we have that $\Wmin{}(\cS_1) \subseteq \Wmin{}(\cT_1) \subseteq \cT$, so that $\cS \subseteq c  \Wmin{}(\cS_1) \subseteq c  \cT$.
Conversely, (3) implies (2) since we can take $\cT = \Wmin{}(\cS_1)$ to obtain that $\cS \subset c \Wmin{}(\cS_1)$.

(1) implies (2) because whenever $X\in \cS$ is such that $c  T$ dilates $X$ for $T \in \cS$ a normal $d$-tuple, by Theorem \ref{T:W(normal)} we have that $\sigma(T)\subseteq \cW_1(T) \subseteq \cS_1$; so $T \in \Wmin{}(\cS_1)$ and thus $X \in c \Wmin{}(\cS_1)$ by matrix convexity.

Finally, we show that (2) implies (1). Indeed, suppose that $X\in \cS$, so that by the inclusion (2) there is a normal commuting $d$-tuple $N$ on some Hilbert space $H$ with $\sigma(N) \subset \cS_1$ so that $c  N$ dilates $X$.
By Theorem \ref{theorem:fin-dim-dil}, we can choose $H \cong \bC^m$ to be finite dimensional.
By Corollary \ref{cor:normalcontained} $N \in \cS$.
\end{proof}

%%%%%%%%%%%%%%%%%%%%%%%%%%%%%%%%%%%%%%%%%%%%%%%
\subsection{A constructive normal dilation for a tuple of contractions}

In this subsection we find a concrete dilations for $d$-tuples of contractions.

For an improvement of the following theorem to $C = 2d$ in the nonself-adjoint case, see Corollary \ref{cor:contractions_dilation}.

%%%%%%%%%%%%%%%%%%%
\begin{theorem}\label{thm:1dilation}
Fix $d \in \bN$.
Then there is a constant $C$ such that
for every $d$-tuple of contractions $A_1, \ldots, A_d$ on a Hilbert space $H$ there exists a Hilbert space $K$ of dimension at most $2^{2d-1} \cdot \dim H$, an isometry $V : H \to K$, and $d$ commuting normal operators $T_1, \ldots, T_d$ satisfying $\|T_i\|\leq C$  for $i=1,\ldots, d$ such that
\[
A_i = V^* T_i V \quad , \quad i=1, \ldots, d.
\]
One may choose the constant $C = 2\sqrt{2}d$.
If the $A_j$s are self-adjoint, then one may choose $C=d$ and $T$ can be taken to a tuple of self-adjoints acting on a Hilbert space of dimension $2^{d-1}\cdot \dim H$.
\end{theorem}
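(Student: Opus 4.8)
The plan is to produce the dilation by an explicit, almost formulaic construction: handle the self-adjoint case by "parking" the $A_i$ on mutually orthogonal summands of a modest ampliation of $H$, and then reduce the general case to it by splitting into real and imaginary parts.

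\emph{Self-adjoint case ($C=d$).} I would take $K=H\otimes\bC^{d}$, so $\dim K=d\cdot\dim H$, write $E_{11},\dots,E_{dd}\in M_d$ for the diagonal rank-one projections, and set
\[
 T_i \ :=\ d\,\big(A_i\otimes E_{ii}\big)\quad(i=1,\dots,d),\qquad
 Vh\ :=\ h\otimes\big(d^{-1/2}(e_1+\dots+e_d)\big).
\]
Then $V$ is an isometry, each $T_i$ is self-adjoint with $\|T_i\|=d\|A_i\|\le d$, and since $E_{ii}E_{jj}=0$ for $i\ne j$ we get $T_iT_j=d^2(A_iA_j)\otimes(E_{ii}E_{jj})=0=T_jT_i$, so $(T_1,\dots,T_d)$ is a commuting family of self-adjoint (hence normal) operators. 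Finally $E_{ii}\big(d^{-1/2}\sum_k e_k\big)=d^{-1/2}e_i$, from which one reads off $V^*T_iV=A_i$. Since $d\le 2^{d-1}$, one may enlarge $K$ by a summand on which every $T_i$ acts as $0$ to meet the stated dimension $2^{d-1}\dim H$ exactly.

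\emph{General case ($C=2\sqrt2\,d$).} I would reduce to the self-adjoint case by splitting into real and imaginary parts. Writing $A_j=B_j+iC_j$ with $B_j=\tfrac12(A_j+A_j^*)$ and $C_j=\tfrac1{2i}(A_j-A_j^*)$, and using $\|\re X\|,\|\im X\|\le\|X\|$, the tuple $(B_1,C_1,\dots,B_d,C_d)$ consists of $2d$ self-adjoint contractions. Applying the self-adjoint construction to it produces $K=H\otimes\bC^{2d}$, an isometry $V$, and a commuting family of self-adjoint operators $P_1,Q_1,\dots,P_d,Q_d$ with $\|P_j\|,\|Q_j\|\le 2d$, $V^*P_jV=B_j$, $V^*Q_jV=C_j$. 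I would then set $T_j:=P_j+iQ_j$. As all the $P$'s and $Q$'s mutually commute, each $T_j$ is normal and the $T_j$ commute; moreover $V^*T_jV=B_j+iC_j=A_j$. Since $T_j$ is normal, $\|T_j\|$ equals its spectral radius; the joint spectrum of the commuting pair $(P_j,Q_j)$ lies in $[-2d,2d]^2$, so $\sigma(T_j)\subseteq\{\lambda+i\mu:|\lambda|,|\mu|\le 2d\}$ and $\|T_j\|\le 2\sqrt2\,d$. The dimension is $2d\cdot\dim H\le 2^{2d-1}\dim H$ (pad with a zero summand if exact equality is wanted).

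\emph{Where I expect the work to be.} For the statement exactly as written there is no real obstacle beyond this bookkeeping: the point is that paying a factor $d$ (resp.\ $2\sqrt2\,d$) in the norm is precisely what buys enough room to place the $A_i$ on mutually orthogonal summands, which makes commutativity automatic and the compression identity a one-line check; the only thing needing care is verifying that the norm of $T_j=P_j+iQ_j$ is controlled by $\sqrt2$ times the common bound on $\|P_j\|,\|Q_j\|$, which uses normality via the spectral theorem. The genuinely delicate points lie elsewhere and are not needed here: sharpening the constant to $2d$ in the non-self-adjoint case (Corollary \ref{cor:contractions_dilation}), and, more importantly for the applications later in the section, arranging in addition that $\sigma(\tfrac1d T)$ be contained in a prescribed convex base set $\cS_1$, which forces a more structured (iterated or symmetry-adapted) dilation.
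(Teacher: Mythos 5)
Your proof is correct, and the reduction to the self-adjoint case (real and imaginary parts, then normality and the joint spectrum of the commuting pair $(P_j,Q_j)$) is exactly the paper's, giving the same constant $2\sqrt2\,d$. The genuinely different step is the self-adjoint dilation. You place each $A_i$ on its own coordinate summand of $H\otimes\bC^d$ via $T_i = d\,A_i\otimes E_{ii}$ and $Vh=h\otimes d^{-1/2}\sum_k e_k$, which makes commutativity ($T_iT_j=0$ for $i\ne j$), the norm bound, and the compression identity one-line checks, on a $d$-dimensional auxiliary space. The paper instead works on $H\otimes\bC^{2^{d-1}}$ with $T_i=\sum_j A_j\otimes W_iW_j$ for tensor-flip unitaries $W_i$, and the same facts require a short computation. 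Both are instances of the symmetry dilation of Theorem \ref{general_dilation}: yours takes $\lambda^{(m)}=d\,e_me_m^*$ (the dilation of \cite[Proposition 14.1]{HKMS15}, which the paper reuses in Theorem \ref{thm:dilationD2}), while the paper's corresponds to projections onto vectors of $\pm1$'s (see Remark \ref{rem:getprev}). Your version is simpler, more economical, and entirely adequate for this theorem. What the flip construction buys is that each $T_j$ decomposes as a direct sum of signed combinations $\sum_i\epsilon_iA_i$ with $\epsilon\in\{\pm1\}^d$; that structure is exactly what drives Theorem \ref{thm:dilationD} (the cube/diamond inclusion) and would be lost in your block-diagonal picture, where $T_j$ on the $i$-th block is just $dA_j\delta_{ij}$. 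One bonus you did not record: in your construction $P_jQ_j=0$, so $\sigma(P_j,Q_j)$ lies on the coordinate axes rather than the full square $[-2d,2d]^2$, giving $\|T_j\|\le 2d$ and recovering Corollary \ref{cor:contractions_dilation} for free.
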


\begin{proof}
Suppose for the moment that the theorem is true for self-adjoint contractions, and let $A_1,...,A_d$ be a $d$-tuple of contractions.
Then by taking real and imaginary parts, we get that $\re(A_1), \im(A_1),..., \re(A_d), \im(A_d)$ is a $2d$-tuple of self-adjoint contractions.
Then there exist commuting self-adjoint $2d$-tuple $S_1,...,S_{2d}$ with $\|S_i \| \leq 2d$, acting on a space of dimension $2^{2d-1}$, that dilate $\re(A_1), \im(A_1),..., \re(A_d), \im(A_d)$.
Then for all $1\leq i \leq d$ we have that $T_i:= S_{2i-1} + i S_{2i}$ dilates $A_i$ and is an operator of norm at most $2\sqrt{2}d$.
For the norm estimate, by the spectral mapping theorem, every element in $\sigma(T_i)$ is of the form $\lambda_1 + i \lambda_2$ for $(\lambda_1,\lambda_2 ) \in \sigma(S_{2i-1},S_{2i})$ for all $1 \leq k \leq 2d$.
Hence, $|\lambda_1 + i \lambda_2| \leq 2\sqrt{2}d$, and since $T_i$ is normal, $\|T_i\| = \sup_{\lambda\in \sigma(T_i)} |\lambda| \leq 2\sqrt{2}d$.

Hence, to finish the proof, we assume $A_1, \ldots, A_d$ are all self-adjoint and find a dilation with constant $C = d$ acting on a space of dimension $2^{d-1}\cdot \dim H$.
Note that linear combinations of commuting normals give rise to commuting normals.

Let
\[ K = H \otimes \bC^{2^{d-1}} = H \otimes \bC^2 \otimes \cdots \otimes \bC^2 . \]
We identify $H$ with $H \otimes \bC e_1 \otimes \cdots \otimes \bC e_1$, where $\{e_1, e_2\}$ is the canonical basis for $\bC^2$.
Let $F = \left(\begin{smallmatrix}0 & 1 \\ 1 & 0 \end{smallmatrix} \right)$ be the flip operator on $\bC^2$.
Then define $W_i \in B(\bC^{2^{d-1}})$ by setting $W_1 = I = I_{2^{d-1}}$, and
\[
W_i = I_2 \otimes \cdots \otimes I_2 \otimes  F \otimes I_2 \otimes  \cdots \otimes I_2,
\]
where $F$ appears in the $i\!-\!1^{\text{st}}$ place, if $i>1$.
Now we let $U_j = I_H \otimes W_j$ and define
\[
T_1 = \sum_{j=1}^d (A_j \otimes I) U_j = \sum_{j=1}^d A_j \otimes W_j.
\]
Finally, for $i=2, \ldots, d$, we define $T_i = T_1 U_i$.
Note that every $T_i$ is a sum of $d$ self-adjoint contractions, so it is self-adjoint and $\|T_i\| \leq d$.
Moreover, since $U_i$ commutes with $U_j$ and with $A_j \otimes I$ for all $j$, it follows that $U_i$ commutes with $T_1$ for all $i$, so
\begin{align*}
T_i T_k &= T_1 U_i T_1 U_k
= T_1 U_k T_1 U_i  = T_k T_i.
\end{align*}
Now if $V$ denotes the isometry from $H$ to $K$ given by
\[ Vh = h \otimes e_1 \otimes \cdots \otimes e_1 ,\]
then
\[
V^* T_i V h  = V^* \big( \sum_j (A_j \otimes I) U_j U_i \big) Vh = A_i h,
\]
because only the $i$th summand will not flip one of the $e_1$s to an $e_2$.
\end{proof}

%%%%%%%%%%%%%%%%%%%%%%%%%
\begin{corollary}
For all $d$,
\[
\Wmax{}([-1,1]^d) \subseteq d \Wmin{}([-1,1]^d) .
\]
\end{corollary}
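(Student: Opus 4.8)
The plan is to apply Theorem~\ref{thm:scaled-inclusion-min} with $c = d$ to a suitably chosen matrix convex set. The natural candidate is $\cS = \Wmax{}([-1,1]^d)$, which by Remark~\ref{rem:monotone} is a closed matrix convex set with $\cS_1 = [-1,1]^d$. According to Corollary~\ref{cor:bounded} (or directly from Corollary~\ref{cor:bounded}'s proof), $\cS_n = \Wmax{n}([-1,1]^d) = \{X \in (M_n)_{sa}^d : -I \le X_i \le I \text{ for all } i\} = \fC^{(d)}_n$, so in fact $\cS = \fC^{(d)}$ is precisely the free matrix cube. Thus the statement to be proved is exactly $\fC^{(d)} \subseteq d\,\Wmin{}([-1,1]^d)$, i.e.\ condition (2) of Theorem~\ref{thm:scaled-inclusion-min} for the set $\cS = \fC^{(d)}$ with constant $c = d$.

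By the equivalence (1)$\Leftrightarrow$(2) in Theorem~\ref{thm:scaled-inclusion-min}, it suffices to verify condition (1): for every $X \in \fC^{(d)}$ there is a commuting normal tuple $T \in \fC^{(d)}$ such that $dT$ dilates $X$. This is exactly what the self-adjoint part of Theorem~\ref{thm:1dilation} provides. Indeed, given $X = (X_1,\dots,X_d)$ a $d$-tuple of self-adjoint contractions on $H$, that theorem produces commuting self-adjoint operators $T_1,\dots,T_d$ with $\|T_i\| \le d$ and an isometry $V$ with $X_i = V^* T_i V$. Setting $T = \frac1d(T_1,\dots,T_d)$, we get a commuting normal tuple with $\|T_i\|\le 1$, hence $T \in \fC^{(d)}$ (using that $\sigma(T)\subseteq [-1,1]^d$ exactly when each $\|T_i\|\le 1$ for self-adjoints, which is the computation in the matrix cube Example), and $X_i = V^*(dT_i)V$, so $dT$ dilates $X$. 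This verifies (1), and therefore (2) gives $\fC^{(d)} = \Wmax{}([-1,1]^d) \subseteq d\,\Wmin{}([-1,1]^d)$.

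There is essentially no obstacle here: the statement is a direct packaging of Theorem~\ref{thm:1dilation} (self-adjoint case, constant $C=d$) through the dictionary of Theorem~\ref{thm:scaled-inclusion-min}. The only point requiring a line of care is the identification $\Wmax{}([-1,1]^d) = \fC^{(d)}$, which follows from the description in the proof of Corollary~\ref{cor:bounded} together with the matrix-cube Example, and the observation that a commuting tuple of self-adjoint contractions has joint spectrum inside $[-1,1]^d$, so that the dilating tuple $T$ indeed lies in $\Wmin{}([-1,1]^d)$ by its defining property (compression of a normal tuple with spectrum in $[-1,1]^d$).
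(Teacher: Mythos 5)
Your proposal is correct and rests on the same key ingredient as the paper's own proof, namely Theorem~\ref{thm:1dilation}; the only difference is that you route the conclusion through the abstract equivalence (1)$\Leftrightarrow$(2) in Theorem~\ref{thm:scaled-inclusion-min}, whereas the paper simply unfolds that implication directly (taking $A \prec T$, noting $A \in \cW(T)$, and using $\cW(\tfrac1d T) \subseteq \Wmin{}([-1,1]^d)$ since $\tfrac1d T$ is normal with spectrum in $[-1,1]^d$). Both are sound; yours is a slightly more systematic packaging of the same argument.
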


\begin{proof}
Select $A \in \Wmax{}([-1,1]^d)$.
In particular, $A_i=A_i^*$ and $\|A_i\|\le 1$.
The dilation of Theorem~\ref{thm:1dilation} yields commuting self-adjoints $T=(T_1,\dots,T_d)$ with $\|T_i\|\le d$ so that $A \prec T$, whence $A \in \cW(T)$.
Since $\cW(\frac1d T) \subseteq \Wmin{}([-1,1]^d)$, the result follows.
\end{proof}

%%%%%%%%%%%%%%%%%%%%%%%%%
\subsection{Constructive normal dilations given symmetry}
The main point of this section is to obtain the inclusion $\Wmax{}(K)$ in $d \Wmin{}(K)$ for as many compact convex sets $K$ in $\bR^d$ as we can, while providing concrete corresponding dilation theorems.

Our dilation methods unify the dilation theorem \ref{thm:1dilation}, the dilation constructed in \cite[Proposition 14.1]{HKMS15} and provide new examples for which such dilation results can be obtained.

Let $\cS \subseteq \cup_n (M_n)^d_{sa}$ be a matrix convex set. For a real $d \times d$ matrix $\gamma = [\gamma_{ij}]$, we define the set
\[
(\gamma \cS)_n = \{ (\gamma X^t)^t : X \in \cS_n \},
\]
where $X^t$ denotes the transpose of the row $X=(X_1,...,X_d)$ so that for a $d$-tuple $X = (X_1,...,X_d) \in \cS_n$, we have $(\gamma X^t)^t = (\sum_{j}\gamma_{ij}X_j)$. Clearly $\gamma \cS = \cup_n (\gamma \cS)_n \subset \cup_n (M_n)^d_{sa}$ is also a matrix convex set.

\begin{definition}
Let $\lambda :=\{\lambda^{(m)}: 1\leq m \leq k \}$ be a $k$-tuple of rank one real $d\times d$ matrices such that $I_d \in \conv\{\lambda^{(1)},\ldots,\lambda^{(k)}\}$.
We say that a matrix convex set $\cS\subseteq \cup_n (M_n)^d_{sa}$ is $\frac{1}{C}\lambda$-symmetric if $C$ is a constant so that
\[ \lambda^{(m)}\cS \subseteq C \cS \qforal 1\leq m \leq k .\]
\end{definition}

In order to prove Theorem \ref{relaxation and symmetry} we will need the following dilation result which is a generalization of Theorem \ref{thm:1dilation} (See Remark \ref{rem:getprev}).

%%%%%%%%%%%%%%%%%%%%%%%%%
\begin{theorem} \label{general_dilation}
Let $\cS\subseteq \cup_n (M_n)^d_{sa}$ and $\cT\subseteq \cup_n (M_n)^d_{sa}$ be matrix convex sets.
Assume that there is  a $k$-tuple of real $d\times d$ rank one matrices $\lambda$
such that $I_d \in \conv \{\lambda^{(1)},\ldots,\lambda^{(k)}\}$ and such that $\lambda^{(m)}\cS\subseteq \cT$ for all $1\leq m  \leq k$.
Then for every $X \in \cS$ there is a $d$-tuple $T=(T_1,\ldots,T_d)$ of self-adjoint matrices such that
\begin{enumerate}
\item[(1)] $\{T_1,\ldots,T_d\}$ is a commuting family of operators,
\item[(2)] $T \in \cT$,
\item[(3)] $X \prec T$ $($i.e., $T$ is a dilation for $X)$.
\end{enumerate}
\end{theorem}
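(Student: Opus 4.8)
The plan is to mimic the tensor-product construction from the proof of Theorem~\ref{thm:1dilation}, but replace the flip operators $F$ by a family of operators adapted to the convex combination $I_d \in \conv\{\lambda^{(1)},\dots,\lambda^{(k)}\}$. Write $I_d = \sum_{m=1}^k t_m \lambda^{(m)}$ with $t_m \ge 0$, $\sum_m t_m = 1$, and each $\lambda^{(m)}$ of rank one, say $\lambda^{(m)} = u^{(m)} (v^{(m)})^t$ for vectors $u^{(m)}, v^{(m)} \in \bR^d$ (so that the $(i,j)$ entry is $u_i^{(m)} v_j^{(m)}$). Given $X = (X_1,\dots,X_d) \in \cS_n$, the tuple $\lambda^{(m)} X^t$ has $i$th entry $\sum_j \gamma^{(m)}_{ij} X_j = u_i^{(m)} \big(\sum_j v_j^{(m)} X_j\big)$; that is, $\lambda^{(m)} X^t = u^{(m)} \otimes Z^{(m)}$ where $Z^{(m)} := \sum_j v_j^{(m)} X_j$ is a single self-adjoint matrix, and by hypothesis $u^{(m)} Z^{(m)} \in \cT_n$.

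The key step is to build on $K = H \otimes \ell^2(\{1,\dots,k\})$ (or a suitable finite tensor power, to get commutativity as in Theorem~\ref{thm:1dilation}) a commuting normal tuple $T = (T_1,\dots,T_d)$ with $V^* T_i V = X_i$, where $V : H \to K$ is the natural isometry onto $H \otimes \delta_{m_0}$ for a distinguished index. The idea is that $T_i$ should ``select coordinate $i$'': on the summand indexed by $m$ it should act essentially as $u_i^{(m)} Z^{(m)}$, and the selection among the $m$'s is governed by the weights $t_m$ via a partition-of-unity / Naimark-type device, while a flip-type mechanism on auxiliary $\bC^2$ factors ensures that compressing to $\delta_{m_0}$ picks out exactly the $i$th coordinate and kills the rest. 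Concretely, one can take a partition of unity $\{P_m\}$ (diagonal projections, after tensoring with enough $\bC^2$'s so they become genuine commuting projections rather than a POVM) with $\sum_m P_m$ weighted so that $V^*(\,\cdot\,)V$ reproduces $\sum_m t_m \lambda^{(m)} = I_d$, and set $T_i = \sum_m P_m \,(u_i^{(m)} Z^{(m)} \otimes I_{\text{aux}})\, W_i$ where the $W_i$ are commuting flip operators on the auxiliary factors designed so that $V^* T_i V = \sum_m t_m u_i^{(m)} Z^{(m)} = \sum_m t_m (\lambda^{(m)} X^t)_i = X_i$. Since the $T_i$ are built from a fixed commuting normal family of projections/flips and the fixed self-adjoint operators $Z^{(m)}$ (with the $Z^{(m)}$ appearing only through scalar multiples in each $T_i$), one checks they commute and are self-adjoint, giving (1) and (3).

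For (2), i.e.\ $T \in \cT$, the point is that $T = (T_1,\dots,T_d)$ should be a matrix convex combination (mixing over $m$ via the weights $t_m$, and over the flip coordinates) of tuples unitarily equivalent to $u^{(m)} Z^{(m)} \in \cT$, so matrix convexity of $\cT$ closes the argument; this is where the rank-one hypothesis is essential, since it is what makes $\lambda^{(m)} X^t$ equal to a scalar tuple $u^{(m)}$ times a single operator $Z^{(m)}$, hence an honest element of $\cT$ obtained by a UCP map from $X$. The main obstacle I expect is bookkeeping: arranging the projections $P_m$ and the flips $W_i$ on a tensor power so that (a) everything genuinely commutes, (b) the compression $V^*T_iV$ produces precisely $X_i$ with the correct weights $t_m$ and no cross terms, and (c) the resulting tuple is visibly a matrix convex combination of the $u^{(m)}Z^{(m)}$'s. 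Once the combinatorial/tensor structure is set up correctly — likely by induction on $k$, or by a single clever choice of auxiliary space generalizing the $\bC^2 \otimes \cdots \otimes \bC^2$ of Theorem~\ref{thm:1dilation} — the verifications of (1), (2), (3) should be routine.
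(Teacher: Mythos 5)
Your structural insight is right, and is exactly what drives the paper's proof: since each $\lambda^{(m)}$ is rank one, the tuple $\lambda^{(m)}X^t$ collapses to $(u_i^{(m)}Z^{(m)})_{i=1}^d$ for a single self-adjoint $Z^{(m)} = \sum_j v^{(m)}_j X_j$, so on the $m$-th block the $T_i$ are scalar multiples of a common operator and therefore commute. This is precisely where the rank-one hypothesis enters. The paper writes this as $T_i = \sum_j X_j \otimes S_{ij}$, $S_{ij} = \diag(\lambda^{(1)}_{ij},\ldots,\lambda^{(k)}_{ij})$, on $K = H\otimes\bC^k$, and the commutativity check $T_iT_n - T_nT_i = \sum_{j,m} X_jX_m\otimes(S_{ij}S_{nm}-S_{nj}S_{im})$ vanishes entry-by-entry because $\lambda^{(p)}_{ij}\lambda^{(p)}_{nm}-\lambda^{(p)}_{nj}\lambda^{(p)}_{im}=0$ for a rank-one $\lambda^{(p)}$.

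The genuine gap in your sketch is the compression. You want to compress onto $H\otimes\delta_{m_0}$ for a distinguished basis vector, which on its own just recovers $u_i^{(m_0)}Z^{(m_0)}$, not $X_i$; you then propose to repair this with a Naimark-style partition of unity plus commuting flips $W_i$, but this machinery is neither worked out nor necessary, and your formula $T_i = \sum_m P_m\,(u_i^{(m)}Z^{(m)}\otimes I)\,W_i$ does not obviously give $V^*T_iV = \sum_m t_m u_i^{(m)}Z^{(m)}$ (the $W_i$-dependence would have to wash out of $\langle P_m W_i\delta_{m_0},\delta_{m_0}\rangle$ for every $i$, which is unmotivated, and the flips also obscure both the self-adjointness and the commutativity of the $T_i$). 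The point you are missing is that there is no reason to compress onto a basis vector: the paper compresses onto $H\otimes v$ with $v = \sum_p\sqrt{\beta_p}\,e_p$, where $I_d = \sum_p\beta_p\lambda^{(p)}$. Then $\langle S_{ij}v,v\rangle = \sum_p\beta_p\lambda^{(p)}_{ij} = \delta_{ij}$, so $V^*T_iV = X_i$ immediately, with no flips and no Naimark dilation. (Your flip mechanism is really the content of Theorem~\ref{thm:1dilation}, which Remark~\ref{rem:getprev} exhibits as the special case of this construction where the $\lambda^{(p)}$ arise from jointly diagonalizing products of flip operators; you are trying to generalize the special case rather than use the simpler general form.) Item (2) then falls out cleanly: $T_i = \bigoplus_p\big(\sum_j\lambda^{(p)}_{ij}X_j\big)$ is literally a direct sum of the tuples $\lambda^{(p)}X^t\in\cT$, so $T\in\cT$ by matrix convexity — whereas with your extra $W_i$ factor it is no longer visibly a direct sum of elements of $\cT$.
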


\begin{proof}
Consider $X$ as a tuple of operators on a Hilbert space $H$.
Write $K=H\otimes \mathbb{C}^k$ and define $d^2$ diagonal, self-adjoint matrices $S_{i,j}$, $1\leq i,j \leq d$, by
\be\label{S}
S_{i,j}=\diag (\lambda^{(1)}_{i,j}, \ldots ,\lambda^{(k)}_{i,j}).
\ee
For every $1\leq i \leq d$, let
\be\label{T}
T_i=\sum_{j=1}^d X_j \otimes S_{i,j} \in \cB(K) .\ee

We shall now verify (1)-(3).
For (1), we fix $i,n$ and compute
\[ T_iT_n-T_nT_i=\sum_{j,m} X_jX_m \otimes (S_{i,j}S_{n,m}-S_{n,j}S_{i,m}) .\]
But the $(p,p)$ coordinate of the (diagonal) matrix
$S_{i,j}S_{n,m}-S_{n,j}S_{i,m}$ is $\lambda^{(p)}_{i,j}\lambda^{(p)}_{n,m}-\lambda^{(p)}_{n,j}\lambda^{(p)}_{i,m}$.
Since $\lambda^{(p)}$ has rank one, the last expression is $0$ and, thus, $T_iT_n=T_nT_i$, proving (1).

To prove (3), recall that $I_d \in \conv\{\lambda^{(1)},\ldots,\lambda^{(k)}\}$.
Thus there are nonnegative real numbers $\beta_1,\ldots,\beta_k$ whose sum is $1$ and $I_d=\sum_{p=1}^k \beta_p \lambda^{(p)}$.
Set $v=\sum_{p=1}^k \sqrt{\beta_p}e_p$ where $\{e_p\}$ is the standard basis of $\mathbb{C}^k$.
Then, $||v||=1$ and for $1\leq i,j \leq d$, $\langle S_{i,j}v,v\rangle=\sum_{p=1}^k \beta_p \lambda^{(p)}_{i,j}=\delta_{i,j}$.
Define an isometry $V:H\rightarrow K=H\otimes \mathbb{C}^k$ by $Vh=h\otimes v$.
Then since $V^*(X \otimes S_{ij})V = \delta_{ij}X$, we obtain
\[
V^* T_iV  =  \sum_j V^*(X_j \otimes S_{ij})V = X_i .
\]

To prove (2), rewrite $T_i=\sum_{j=1}^d X_j \otimes S_{i,j}$
as a direct sum (over $p$) of operators of the form $Y_i^{(p)} = \sum_{j=1}^d \lambda_{i,j}^{(p)} X_j $.
By matrix convexity, it suffices to show that $Y^{(p)}:=(Y_1^{(p)},\ldots,Y_d^{(p)}) \in \cT$.
But $Y^{(p)}$ is obtained from $X$ by left multiplication by $\lambda^{(p)}$.
The assumption $\lambda^{(m)}\cS\subseteq \cT$ (and the fact that $X\in \cS$), implies that $T\in \cT$ and (2) follows.
\end{proof}

We record the following corollaries of the above proof.
%%%%%%%%%%%%%%%%%%%%%%%%%%%%%%%%%%%

\begin{corollary} \label{cor:lambda-over-C-dilation}
Let $\cS\subseteq \cup_n (M_n)^d_{sa}$ be a matrix convex set.
Assume that $\cS$ is $\frac{1}{C}\lambda$-symmetric for some $C>0$ and $\lambda = \{ \lambda^{(m)}\ : 1 \leq m \leq k\}$ as above, then every $X \in \cS$ can be dilated to a commuting $d$-tuple $T$ such that $\frac{1}{C}T \in \cS$
\end{corollary}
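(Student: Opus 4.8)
The plan is to deduce this immediately from Theorem \ref{general_dilation} by taking the target matrix convex set to be $\cT = C\cS$. First I would unwind the definition: saying that $\cS$ is $\frac1C\lambda$-symmetric means exactly that $\lambda^{(m)}\cS \subseteq C\cS$ for every $1 \le m \le k$, and by the standing assumptions on $\lambda$ the rank one matrices $\lambda^{(1)},\dots,\lambda^{(k)}$ satisfy $I_d \in \conv\{\lambda^{(1)},\dots,\lambda^{(k)}\}$. Since $C\cS = (CI_d)\cS$ is again a matrix convex set in $\cup_n (M_n)^d_{sa}$ (this is the remark, made just before the definition of $\frac1C\lambda$-symmetry, that $\gamma\cS$ is matrix convex for any real $d\times d$ matrix $\gamma$), the pair $(\cS, C\cS)$ satisfies all the hypotheses of Theorem \ref{general_dilation}.

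Then I would simply invoke Theorem \ref{general_dilation} with $\cT = C\cS$: for every $X \in \cS$ it produces a $d$-tuple $T = (T_1,\dots,T_d)$ of self-adjoint matrices that forms a commuting family, that lies in $C\cS$, and that dilates $X$, i.e. $X \prec T$. Dividing the membership $T \in C\cS$ by the positive scalar $C$ gives $\frac1C T \in \cS$, which is precisely the asserted conclusion.

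There is essentially no obstacle here; the entire content sits in Theorem \ref{general_dilation}, and this corollary is just the specialization $\cT = C\cS$. The only point worth stating explicitly — and it is routine — is that $C\cS$ is genuinely a matrix convex set, so that Theorem \ref{general_dilation} applies verbatim.
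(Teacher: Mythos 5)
Your argument is exactly the paper's: take $\cT = C\cS$ in Theorem \ref{general_dilation}, note that $\frac1C\lambda$-symmetry is precisely the hypothesis $\lambda^{(m)}\cS\subseteq\cT$, and then rescale the resulting dilation $T\in C\cS$ to get $\frac1C T\in\cS$. Correct and essentially identical to the published proof.
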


\begin{proof}
If we take in Theorem \ref{general_dilation} the matrix convex set $\cT := C \cdot \cS$, then $\frac{1}{C}\lambda$-symmetry guarantees that $\lambda^{(m)}\cS\subseteq \cT$ for all $1\leq m  \leq k$.
\end{proof}

\begin{corollary}\label{cor:dilations_operators}
Let $X \in B(H)^d_{sa}$, and let $\lambda^{(1)}, \ldots, \lambda^{(k)}$ be a $k$-tuple of real $d\times d$ rank one matrices
such that $I_d \in \conv \{\lambda^{(1)},\ldots,\lambda^{(k)}\}$.
Then $X$ can be dilated to a commuting tuple of self-adjoint operators $T = (T_1, \ldots, T_d)$ such that
\[
\sigma(T) \subseteq \bigcup_{p=1}^k \lambda^{(p)}\cW_1(X) .
\]
\end{corollary}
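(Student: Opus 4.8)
The plan is to reuse the construction in the proof of Theorem \ref{general_dilation} almost verbatim, taking $\cS = \cW(X)$, since this is the smallest matrix convex set containing $X$, and then to read off the spectral inclusion from the direct-sum decomposition of the dilation rather than from a matrix convex set $\cT$ chosen in advance. First I would set, as in \eqref{S} and \eqref{T}, the diagonal self-adjoint matrices $S_{i,j} = \diag(\lambda^{(1)}_{i,j},\ldots,\lambda^{(k)}_{i,j})$ in $M_k$ and define $T_i = \sum_{j=1}^d X_j \otimes S_{i,j}$ acting on $K = H \otimes \bC^k$. The computation in the proof of Theorem \ref{general_dilation} shows verbatim that each $T_i$ is self-adjoint and that the family $\{T_1,\ldots,T_d\}$ commutes, using only that each $\lambda^{(p)}$ has rank one; and the vector $v = \sum_p \sqrt{\beta_p} e_p$, where $I_d = \sum_p \beta_p \lambda^{(p)}$ with $\beta_p \ge 0$, $\sum \beta_p = 1$, gives an isometry $V : H \to K$, $Vh = h \otimes v$, with $V^* T_i V = X_i$ for all $i$. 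So (1) and (3) are immediate.

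The only new point is the spectral estimate. As in the proof of part (2) of Theorem \ref{general_dilation}, conjugating by the permutation that groups the $\bC^k$-coordinates, I would identify $T = (T_1,\ldots,T_d)$ with the orthogonal direct sum over $p = 1,\ldots,k$ of the commuting self-adjoint $d$-tuples $Y^{(p)} = (Y^{(p)}_1,\ldots,Y^{(p)}_d)$, where $Y^{(p)}_i = \sum_{j=1}^d \lambda^{(p)}_{i,j} X_j$; that is, $Y^{(p)}$ is obtained from $X$ by left multiplication by $\lambda^{(p)}$. For a direct sum of commuting normal tuples the joint spectrum is the union of the joint spectra, so $\sigma(T) = \bigcup_{p=1}^k \sigma(Y^{(p)})$. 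It remains to bound $\sigma(Y^{(p)})$. Since $Y^{(p)}$ is a fixed linear image of $X$, any UCP map $\phi$ on $C^*(S_X)$ sends $X$ to a point of $\cW(X)$ and sends $Y^{(p)}$ to the corresponding linear image; taking $\phi$ to be an evaluation at a joint eigenvector of the commuting tuple $Y^{(p)}$ (equivalently, using that $\sigma(Y^{(p)}) \subseteq \cW_1(Y^{(p)})$ by Proposition \ref{prop:rangeincluded} applied to the commuting tuple $Y^{(p)}$, since $\cW_1(Y^{(p)})$ is closed), we get $\sigma(Y^{(p)}) \subseteq \cW_1(Y^{(p)})$. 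Finally $\cW_1(Y^{(p)}) = \lambda^{(p)} \cW_1(X)$: a scalar point $\mu = \phi(Y^{(p)})$ of $\cW_1(Y^{(p)})$ equals $\lambda^{(p)}\big(\phi(X)^t\big)^t$ with $\phi(X) \in \cW_1(X)$, and conversely. Combining, $\sigma(T) \subseteq \bigcup_{p=1}^k \lambda^{(p)} \cW_1(X)$, which is the assertion.

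The step I expect to be slightly delicate, though not genuinely hard, is making the "direct sum over $p$" rigorous at the level of joint spectra when $H$ is infinite dimensional: $T_i = \sum_j X_j \otimes S_{i,j}$ is not literally block diagonal in the tensor ordering $H \otimes \bC^k$, but becomes so after the flip $H \otimes \bC^k \cong \bigoplus_{p=1}^k H$, and one must check that the $S_{i,j}$ being simultaneously diagonal is exactly what makes all $T_i$ simultaneously block diagonal with $p$-th block $Y^{(p)}_i$. Once that identification is in place, the equality $\sigma(T) = \bigcup_p \sigma(Y^{(p)})$ is standard for commuting normal families, and everything else is a one-line consequence of the results already established in Sections \ref{sec:background} and \ref{sec:dilations}.
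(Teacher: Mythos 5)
Your proof is correct and takes essentially the same route as the paper's: reuse the dilation from Theorem \ref{general_dilation}, decompose $T$ as the direct sum of the rank-one images $Y^{(p)} = \lambda^{(p)} X$, and bound $\sigma(Y^{(p)})$ via $\sigma(Y^{(p)}) \subseteq \cW_1(Y^{(p)}) \subseteq \lambda^{(p)}\cW_1(X)$, the last inclusion coming from states pushing forward linearly. The paper only needs and states the one inclusion $\cW_1(Y^{(p)}) \subseteq \lambda^{(p)}\cW_1(X)$; your additional claim of equality is true but superfluous, and your explicit verification that each $T_i$ becomes genuinely block diagonal after the flip $H\otimes\bC^k \cong \bigoplus_p H$ is a welcome elaboration of what the paper leaves implicit.
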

\begin{proof}
Given $X \in B(H)^d_{sa}$, construct the dilation $T = (T_1, \ldots, T_d)$ as in the above proof.
Then $T_i$ is the direct sum of operators $Y^{(p)}_i \in B(H)_{sa}$ of the form $Y^{(p)}_i = \sum_{j=1}^d \lambda_{i,j}^{(p)} X_j$.
We will show that $\sigma(Y^{(p)}) \subseteq \lambda^{(p)} \cW_1(X)$ for all $p$, and, since $\sigma(N) \subseteq \cW_1(N)$ for every normal tuple $N$, it suffices to show that $\cW_1(Y^{(p)}) \subseteq \lambda^{(p)} \cW_1(X)$.

If $\phi$ is a state on $B(H)$, then $\phi(Y^{(p)}_i) = \sum_{j=1}^d \lambda_{i,j}^{(p)} \phi(X_j)$.
This shows that
\[
\left(\phi(Y^{(p)}_1) , \ldots, \phi(Y^{(p)}_d)\right)^t = \lambda^{(p)} \left(\phi(X_1) , \ldots, \phi(X_d)\right)^t.
\]
Therefore $\cW_1(Y^{(p)}) \subseteq \lambda^{(p)} \cW_1(X)$, as required.
\end{proof}

%%%%%%%%%%%%%%%%%%%%%%%%%%%%%%%%%%%
\begin{corollary}\label{cor:contractions_dilation}
For every $d$-tuple of contractions $A_1, \ldots, A_d$ on a Hilbert space $H$ there exists a Hilbert space $K$, an isometry $V : H \to K$, and $d$ commuting normal operators $T_1, \ldots, T_d$ satisfying $\|T_i\|\leq 2d$  for $i=1,\ldots, d$ such that
\[
A_i = V^* T_i V \quad , \quad i=1, \ldots, d.
\]
Consequently, for all $d$,
\[
\Wmax{}(\ol{\bD}^d) \subseteq 2d \Wmin{}(\ol{\bD}^d) .
\]
\end{corollary}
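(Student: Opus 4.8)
The plan is to run the self-adjoint dilation machinery of Corollary~\ref{cor:dilations_operators} on the tuple of real and imaginary parts of the $A_i$, but with a judicious choice of the rank-one matrices so that the resulting spectral estimate produces the constant $2d$ rather than the $2\sqrt2\,d$ obtained in Theorem~\ref{thm:1dilation} by the naive splitting. First I would form the $2d$-tuple of self-adjoint operators $B=(\re A_1,\im A_1,\dots,\re A_d,\im A_d)$ on $H$, and show that the first level of its matrix range lies in the ``product of discs''
\[
P:=\{\,w\in\bR^{2d}: w_{2j-1}^2+w_{2j}^2\le1,\ 1\le j\le d\,\}.
\]
Indeed, for any state $\phi$ on $C^*(S_B)$ and any $j$, the Cauchy--Schwarz inequality $\phi(C)^2\le\phi(C^2)$ for self-adjoint $C$, together with the identity $(\re A_j)^2+(\im A_j)^2=\tfrac12(A_j^*A_j+A_jA_j^*)\le I$, yields $\phi(\re A_j)^2+\phi(\im A_j)^2\le 1$; hence $\cW_1(B)\subseteq P$.

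Next, for each sign vector $\epsilon\in\{\pm1\}^{2d}$ I set $\lambda^{(\epsilon)}=\epsilon\epsilon^{t}$, a rank-one $2d\times2d$ matrix, and note that averaging over all $2^{2d}$ sign vectors gives $2^{-2d}\sum_\epsilon\epsilon\epsilon^{t}=I_{2d}$, so $I_{2d}\in\conv\{\lambda^{(\epsilon)}\}$. Applying Corollary~\ref{cor:dilations_operators} to the $2d$-tuple $B$ with this family produces an isometry $V:H\to K$ and a commuting family of self-adjoint operators $S=(S_1,\dots,S_{2d})$ with $V^*S_kV=B_k$ and
\[
\sigma(S)\subseteq\bigcup_{\epsilon}\lambda^{(\epsilon)}\,\cW_1(B)\subseteq\bigcup_{\epsilon}\lambda^{(\epsilon)}P .
\]
I then put $T_i=S_{2i-1}+iS_{2i}$ for $1\le i\le d$. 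Since $S_{2i-1},S_{2i}$ are commuting self-adjoints, each $T_i$ is normal, the $T_i$ commute, and $V^*T_iV=B_{2i-1}+iB_{2i}=A_i$, so $T$ is a commuting normal dilation of $A$.

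The crux is the norm bound. By the spectral mapping theorem for the commuting normal tuple $S$, $\|T_i\|=\max\{|s_{2i-1}+is_{2i}|:s\in\sigma(S)\}$. Every point of $\lambda^{(\epsilon)}P$ has the form $s=c\,\epsilon$ with $c=\sum_l\epsilon_l w_l$ and $w\in P$; then $|c|\le\sum_l|w_l|=\sum_{j=1}^d(|w_{2j-1}|+|w_{2j}|)\le\sum_{j=1}^d\sqrt2\sqrt{w_{2j-1}^2+w_{2j}^2}\le\sqrt2\,d$, so
\[
|s_{2i-1}+is_{2i}|=|c|\cdot|\epsilon_{2i-1}+i\epsilon_{2i}|=\sqrt2\,|c|\le 2d .
\]
Hence $\|T_i\|\le2d$. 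This is the only delicate step: one must avoid the lossy bound $|s_{2i-1}+is_{2i}|\le\sqrt{\|S_{2i-1}\|^2+\|S_{2i}\|^2}$ (which only gives $2\sqrt2\,d$) and instead exploit that, for the sign-vector rank-one matrices, each joint spectral value of $S$ is a scalar multiple of a fixed $\pm1$-vector, while $\cW_1(B)\subseteq P$ controls that scalar coordinate-pair by coordinate-pair; the two resulting factors of $\sqrt2$ multiply to exactly $2$.

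For the consequence I would argue as in the proof of the preceding corollary: if $A\in\Wmax{}(\ol{\bD}^{d})$ then (being a tuple of contractions, or more generally satisfying $|\phi(A_i)|\le1$ for every state $\phi$, which is all the estimate above used) the dilation just constructed yields a commuting normal $T$ with $\|T_i\|\le2d$ and $A\prec T$. Then $\tfrac1{2d}A$ is a compression of the commuting normal tuple $\tfrac1{2d}T$, which satisfies $\|\tfrac1{2d}T_i\|\le1$, i.e.\ $\sigma(\tfrac1{2d}T)\subseteq\ol{\bD}^{d}$. By Corollary~\ref{C:W(normal)} and the monotonicity of $\Wmin{}$ (Remark~\ref{rem:monotone}), $\tfrac1{2d}A\in\cW(\tfrac1{2d}T)=\Wmin{}(\sigma(\tfrac1{2d}T))\subseteq\Wmin{}(\ol{\bD}^{d})$, so $A\in 2d\,\Wmin{}(\ol{\bD}^{d})$.
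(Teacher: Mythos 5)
Your proof is correct, and it arrives at the same bound $2d$ via the same reduction (splitting into real and imaginary parts, controlling $\cW_1$ of the $2d$-tuple inside the product of discs $\ol{\bD}^d\subseteq\bR^{2d}$, invoking Corollary~\ref{cor:dilations_operators}, and then recombining). The difference is in the choice of rank-one matrices fed to Corollary~\ref{cor:dilations_operators}: the paper uses the $2d$ scaled coordinate projections $\lambda^{(m)}=2d\,e_me_m^*$, whereas you use the $2^{2d}$ sign-vector rank-ones $\lambda^{(\epsilon)}=\epsilon\epsilon^t$ (i.e.\ essentially the dilation from Theorem~\ref{thm:1dilation}, but with the sharper spectral information that Corollary~\ref{cor:dilations_operators} supplies). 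The paper's choice makes the bound trivial, since $\lambda^{(m)}\cW_1(X)\subseteq 2d\,\ol{\bD}^d$ directly, and the dilation space is $H\otimes\bC^{2d}$; yours requires the finer ``two factors of $\sqrt2$'' estimate (the max of $|w_{2j-1}|+|w_{2j}|$ over the disc is $\sqrt2$, and $|\epsilon_{2i-1}+i\epsilon_{2i}|=\sqrt2$), and the dilation space is the much larger $H\otimes\bC^{2^{2d}}$. Your observation that the argument really only needs $|\phi(A_i)|\le1$ for states $\phi$ --- i.e.\ $A\in\Wmax{}(\ol{\bD}^d)$, not just contractions --- is right and is implicitly what makes the second inclusion follow; the paper's proof uses exactly that hypothesis as well, though the statement is phrased in terms of contractions.
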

\begin{proof}
Put
\[
X= (\re(A_1),\im(A_1),  \ldots, \re(A_d), \im(A_d)) .
\]
Then
\[\cW_1(X) \subseteq \{(\alpha_1,\alpha_2,\ldots,\alpha_{2d} ): |\alpha_{2j-1}+i\alpha_{2j}|\leq 1, 1\leq j \leq d\} = \ol{\mathbb{D}}^d \subseteq \mathbb{R}^{2d}
\]
where $\mathbb{D}$ is the unit disc in $\mathbb{R}^2$.

For every $1\leq m \leq 2d$, write $e_m$ for the $m$-th element of the standard basis of $\mathbb{R}^{2d}$.
Then $e_m e_m^*$ is the projection onto $\mathbb{R}e_m$.
For every such $m$ write $\lambda^{(m)}=2d e_m e_m^*$. Then  $\lambda$ is a $2d$-tuple of real $2d\times 2d$ rank one matrices
such that $I_{2d} \in \conv \{\lambda^{(1)},\ldots,\lambda^{(2d)}\}$.
It is easy to check that $\lambda^{(m)}\cW_1(X) \subseteq 2d\ol{\mathbb{D}}^d$ for every $1\leq m \leq 2d$.
By the previous corollary, $X$ can be dilated to a $2d$-tuple of commuting self-adjoint operators $Y=(Y_1,\ldots, Y_{2d} )$ with $\sigma(Y)\subseteq 2d\ol{\mathbb{D}}^d$.

We now define $T_j:=Y_{2j-1}+iY_{2j}$ for all $1\leq j \leq d$, and it remains to show that $||T_j||\leq 2d$.
For this, note that, since $\sigma(Y)\subseteq 2d\ol{\mathbb{D}}^d $, we can write $Y_m$ as a diagonal matrix $\diag(\alpha_k^{(m)})$ such that, for every $k$, $(\alpha^{(1)}_k,\ldots,\alpha^{(2d)}_k)\in 2d \ol{\mathbb{D}}^d$.
But, then, for every $1\leq j \leq d$, $(\alpha^{(2j-1)}_k,\alpha^{(2j)}_k)\in  2d \ol{\mathbb{D}}$.
Since $T_j=\diag(\alpha^{(2j-2)}_k+i\alpha^{(2j)}_k)$,  $||T_j||\leq 2d$.
\end{proof}

As a Corollary to Theorems \ref{thm:scaled-inclusion-min} and \ref{general_dilation} and Corollary \ref{cor:lambda-over-C-dilation} we get

%%%%%%%%%%%%%%%%%%%%%%%%%
\begin{theorem}\label{relaxation and symmetry}
Let $\cS\subseteq \cup_n (M_n)^d_{sa}$ and $\cT\subseteq \cup_n (M_n)^d_{sa}$ be  matrix convex sets.
Assume that there is a $k$-tuple of real $d\times d$ rank one matrices $\lambda$
such that $I_d \in \conv \{\lambda^{(1)},\ldots,\lambda^{(k)}\}$ and such that $\lambda^{(m)}\cS\subseteq \cT$ for all $m$ .
Then
\[
\cS \subseteq  \Wmin{}(\cT_1) .
\]
In particular, if there is a constant $C$ and a $k$-tuple of real $d\times d$ rank one matrices $\lambda$
as above, such that $\cS$ is $\frac{1}{C}\lambda$-symmetric, then
\[
\cS \subseteq C \Wmin{}(\cS_1) .
\]
\end{theorem}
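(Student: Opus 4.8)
\emph{Proof strategy.} The plan is to obtain this as a direct consequence of the dilation result of Theorem~\ref{general_dilation}, together with the description of $\Wmin{}$ in Definition~\ref{def:min and max} and the fact (Proposition~\ref{prop:rangeincluded}, or Theorem~\ref{T:W(normal)}) that a commuting normal tuple lying in a matrix convex set has its joint spectrum in the first level of that set. The hypotheses of the theorem are precisely those of Theorem~\ref{general_dilation}, so there is nothing new to construct; the argument is pure bookkeeping.

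First I would fix $X \in \cS$ and apply Theorem~\ref{general_dilation} to the pair $\cS,\cT$: this yields a commuting $d$-tuple $T = (T_1,\ldots,T_d)$ of self-adjoint operators with $T \in \cT$ and $X \prec T$. Since the $T_i$ are self-adjoint and commute, $T$ is a normal tuple; as $T \in \cT$, Proposition~\ref{prop:rangeincluded} gives $\sigma(T) \subseteq \cW_1(T) \subseteq \cT_1$. Thus $X$ is a compression of a normal tuple whose joint spectrum lies in $\cT_1$, which is exactly the defining condition for $X \in \Wmin{}(\cT_1)$. Since $X \in \cS$ was arbitrary, $\cS \subseteq \Wmin{}(\cT_1)$.

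For the ``in particular'' assertion I would specialize to $\cT = C\,\cS$. The $\frac{1}{C}\lambda$-symmetry hypothesis says exactly that $\lambda^{(m)}\cS \subseteq C\,\cS = \cT$ for every $m$, so the first part applies and gives $\cS \subseteq \Wmin{}((C\,\cS)_1) = \Wmin{}(C\,\cS_1)$. It then remains only to invoke the elementary scaling identity $\Wmin{}(cK) = c\,\Wmin{}(K)$ for $c>0$ and $K \subseteq \bR^d$ convex, which is immediate from Definition~\ref{def:min and max}: $X \prec N$ with $\sigma(N)\subseteq cK$ holds if and only if $c^{-1}X \prec c^{-1}N$ with $\sigma(c^{-1}N)\subseteq K$. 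Hence $\cS \subseteq C\,\Wmin{}(\cS_1)$. Alternatively one can quote Corollary~\ref{cor:lambda-over-C-dilation} in place of re-running the argument, and then conclude as in the previous paragraph.

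The main point to watch is essentially bookkeeping rather than a genuine obstacle: all the real work has been front-loaded into Theorem~\ref{general_dilation}, whose proof builds the commuting dilation $T_i = \sum_j X_j \otimes S_{i,j}$ from the diagonal matrices $S_{i,j} = \diag(\lambda^{(1)}_{i,j},\ldots,\lambda^{(k)}_{i,j})$ attached to the rank-one matrices $\lambda^{(m)}$. The only subtleties are (i) translating ``$T \in \cT$, $T$ normal, $X \prec T$'' into ``$X \in \Wmin{}(\cT_1)$'' via the definition, and (ii) the positive homogeneity of $\Wmin{}$ used in the last step; one should also note that $\cS$ is not assumed closed, so Theorem~\ref{thm:scaled-inclusion-min} is not applied directly but rather through the self-contained dilation-plus-definition argument above.
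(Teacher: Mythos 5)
Your proof is correct and follows essentially the route the paper intends, which presents this theorem as an immediate corollary of Theorem~\ref{general_dilation} (for the first inclusion) and Corollary~\ref{cor:lambda-over-C-dilation} (for the ``in particular'' part); you simply spell out the bookkeeping of translating ``$X\prec T$ with $T\in\cT$ normal'' into ``$X\in\Wmin{}(\cT_1)$'' via $\sigma(T)\subseteq\cW_1(T)\subseteq\cT_1$. Your observation that one should not route through Theorem~\ref{thm:scaled-inclusion-min} because it carries a closedness hypothesis is a valid refinement, though not a fundamental deviation from the paper's approach.
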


%%%%%%%%%%%%%%%%%%%%%%%%%

%%%%%%%%%%%%%%%%%%%%%%%%%
\begin{remark}\label{rem:getHKMS}
\cite[Proposition 14.1]{HKMS15} is the very special case of the second part of the above theorem, where $C=d$ and $\frac{1}{d}\lambda^{(m)}$ is the projection onto the $m$th coordinate.
Theorem \ref{relaxation and symmetry} leads to the following obvious extension: if there is an orthonormal  basis $\{e_1, \ldots, e_m\}$ such that a convex set $K \subseteq \bR^d$ is invariant under projections onto these basis vectors, then one can define $\lambda^{(m)}  = d e_m e_m^*$ for $m=1, \ldots, d$, and we see that the assumptions of the second part of Theorem \ref{relaxation and symmetry} hold with $C=d$.
\end{remark}

%%%%%%%%%%%%%%%%%%%%%%%%%
\begin{remark}\label{rem:getprev}
We now see how Theorem \ref{thm:1dilation} also fits as a special case into the framework of this subsection.

The dilation constructed in Theorem \ref{thm:1dilation} for a tuple $X$ is given by $T_i = \sum_j X_j \otimes W_i W_j$, where $W_i$ is a certain flip operator.
This is a special case of the dilation constructed in Theorem \ref{general_dilation}, with $\cT = d \cdot \fC^{(d)}$ and $S_{ij} = W_i W_j$.
Now, we have $d^2$ operators $S_{ij}$, and if we jointly diagonalize them and denote $S_{ij} = \diag (\lambda_{ij}^{(1)}, \ldots, \lambda_{ij}^{(2^{d-1})})$, then we obtain $k = 2^{d-1}$ real matrices $\lambda^{(m)}$, $1\leq m \leq k$.
Working out the joint eigenvectors of the matrices $W_i W_j$, we find that $\lambda^{(m)}$ is a rank one operator, and that $\frac{1}{d} \lambda^{(m)}$ is the projection onto a one dimensional space in the direction of a vector consisting of $\pm 1$'s.
Moreover, the identity $I_d$ is in $\conv\{\lambda^{(1)}, \ldots, \lambda^{(k)} \}$.
\end{remark}

Let us say that an nc set $\cS$ is {\em symmetric} if $(X_1, \ldots, X_d) \in \cS$ implies $(\epsilon_1 X_{1}, \ldots, \epsilon_d X_{d}) \in \cS$ for every $\epsilon_1, \ldots, \epsilon_d \in \{-1,1\}$.
If $\cS$ is symmetric, it is said to be {\em fully symmetric} if for such $X$ also $( X_{\sigma(1)}, \ldots,  X_{\sigma(d)}) \in \cD$ for every permutation $\sigma$.
Every fully symmetric matrix convex set is invariant under the projections $e_i e_i^*$, where $\{e_1,...,e_d\}$ is the standard orthonormal basis. Hence, by defining $\lambda^{(m)} = d e_i e_i^*$, we can apply the second part of Theorem \ref{relaxation and symmetry} and Corollary \ref{cor:lambda-over-C-dilation} with constant $C=d$.
On the other hand, the fact that not all sets invariant under such projections are fully symmetric (see Example \ref{ex:symmetry} below) shows that Theorem \ref{relaxation and symmetry} and Corollary \ref{cor:lambda-over-C-dilation} have a wider applicability.

%%%%%%%%%%%%%%%%%%%%%%%%%
\begin{example}\label{ex:symmetry}
Let $d=k=2$ and
\[
\lambda^{(1)}=\left(\begin{array}{cc} 1 & 1 \\ 1 & 1 \end{array}\right)
\qand
\lambda^{(2)}=\left(\begin{array}{cc} 1 & -1 \\ -1 & 1 \end{array}\right) .
\]
If $\mathcal{D}^{sa}_A$ is fully symmetric, i.e., it satisfies $C_i\mathcal{D}^{sa}_A\subseteq \mathcal{D}^{sa}_A$ for
\[
C_1=\left(\begin{array}{cc} 0 & 1 \\ 1 & 0 \end{array}\right), \quad
C_2=\left(\begin{array}{cc} -1 & 0 \\ 0 & 1 \end{array}\right) \qand
C_3=\left(\begin{array}{cc} 1 & 0 \\ 0 & -1 \end{array}\right),
\]
then it is also $\frac{1}{2}\lambda$-symmetric because $\mathcal{D}^{sa}_A$ is convex and
\[
\frac{1}{2}\lambda^{(1)}=\frac{1}{2}(I + C_1) \qand \frac{1}{2}\lambda^{(2)}=\frac{1}{2}(I + C_1C_2C_3) .
\]

The converse is false as we now show.
To do this, look at
\[\cS = \{X=(X_1, X_2): -I\leq X_1-X_2 \leq I,  \;\; -I\leq X_1+X_2 \leq 2I \} .
\]
It is easy to verify that the matrix convex set $\cS$ is $\frac{1}{2}\lambda$-symmetric,
but not fully symmetric (e.g. $(I,I) \in \cS$ but $-(I,I) \notin \cS$).
\end{example}

%%%%%%%%%%%%%%%%%%%%%%%%%
\begin{example}\label{examplesketch}
Fix $d=2$. In $\mathbb{R}^2$ draw two straight lines that pass through the origin and are not parallel.
Call them $L_1, L_2$.
On each line draw two points (different from the origin), say $T_1,T_3$ on $L_1$ and $T_2,T_4$ on $L_2$ such that the origin lies in the intervals $[T_1,T_3]$ and $[T_2,T_4]$. Write
\[ P_{m}=\conv\{T_i:1\leq i \leq 4\} . \]
Now, through $T_1$ and $T_3$ (on $L_1$) draw straight lines parallel to $L_2$. Similarly, through $T_2$ and $T_4$ draw lines parallel to $L_1$. These $4$ lines form a parallelogram, call it $P_{M}$.
Clearly, $P_{m}\subseteq P_{M}$. Both $P_{m}$ and $P_{M}$ are given by four linear inequalities and, in a natural way, define free spectrahedra $\mathcal{D}^{sa}_{A_m}$ and $\mathcal{D}^{sa}_{A_M}$.

Write $q_i$ for the projection of $\mathbb{R}^2$ onto $L_i$ (parallel to the other line) and set $\lambda^{(i)}=2q_i$, $i=1,2$.
Note that both are real rank one matrices and $q_1+q_2=I_2$.
If $E$ is any set between $P_M$ and $P_m$, then $E$ is $\frac{1}{2}\lambda$ invariant, because each $q_i$ maps $P_M$ into $P_m$.
In fact, we will explain in the next subsection how to apply Theorem \ref{relaxation and symmetry} and Corollary \ref{cor:lambda-over-C-dilation} to any matrix convex set sandwiched between $\mathcal{D}^{sa}_{A_m}$ and $\mathcal{D}^{sa}_{A_M}$.
\end{example}

%%%%%%%%%%%%%%%%%%%%%%%%%%%%%%%%%%%%%%%%%%%%%%%
\subsection{Relaxation theorems and connections to maximal spectrahedra}\label{subsec:relaxationthms}

We now strengthen Theorem \ref{relaxation and symmetry} by weakening the symmetry requirement for $\cS$ to be a requirement for the first level $\cS_1$ only.

%%%%%%%%%%%%%%%%%%%%%%%%%
\begin{theorem}\label{thm:mainrelax}
Let $\cS$ be a matrix convex set in $\cup_n (M_n)^d_{sa}$.
Assume that there is a constant $C$ and that there are $k$ rank one real $d\times d$ matrices $\lambda :=\{\lambda^{(m)}: 1\leq m \leq k \}$ such that $I_d \in \conv \{\lambda^{(1)},\ldots,\lambda^{(k)}\}$ and such that $\cS_1$ is $\frac{1}{C}\lambda$-symmetric in the sense that $\lambda^{(m)} \cS_1 \subseteq C \cS_1$ for all $1\leq m \leq k$.
Then for every other matrix convex set $\cT$, we have
\[
\cS_1 \subseteq \cT_1 \Longrightarrow \cS \subseteq C \cT .
\]
\end{theorem}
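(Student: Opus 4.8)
The plan is to reduce everything to a single application of the symmetry‑adapted dilation of Corollary~\ref{cor:dilations_operators}, exploiting the fact that that dilation controls the \emph{joint spectrum} of $T$ in terms of $\cW_1(X)$ alone. In spirit this is the same as showing $\cS\subseteq C\,\Wmin{}(\cS_1)$ and then using monotonicity of $\Wmin{}$ (Remark~\ref{rem:monotone}) together with $\Wmin{}(\cT_1)\subseteq\cT$; but it is just as quick to argue directly, so that is what I would write down.

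Fix $X\in\cS_n$, viewed as a tuple of self‑adjoint matrices on $H=\bC^n$. Since $\cS$ is matrix convex and $X\in\cS$, Proposition~\ref{prop:rangeincluded} gives $\cW_1(X)\subseteq\cS_1$. The matrices $\lambda^{(1)},\dots,\lambda^{(k)}$ are real, $d\times d$, rank one, and $I_d\in\conv\{\lambda^{(1)},\dots,\lambda^{(k)}\}$, so Corollary~\ref{cor:dilations_operators} applies: there is a commuting tuple $T=(T_1,\dots,T_d)$ of self‑adjoint operators dilating $X$, and by the construction in the proof of Theorem~\ref{general_dilation} it lives on $H\otimes\bC^k=\bC^{nk}$, so $T$ is in fact a tuple of matrices, with
\[
\sigma(T)\ \subseteq\ \bigcup_{m=1}^{k}\lambda^{(m)}\,\cW_1(X).
\]
This containment is the crucial point: the spectrum of the dilation is pinned down by the first level $\cW_1(X)$, which is why symmetry of $\cS_1$ alone will suffice.

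Now feed in the hypothesis. Each $\lambda^{(m)}$ is a linear map on $\bR^d$, hence $\lambda^{(m)}\cW_1(X)\subseteq\lambda^{(m)}\cS_1\subseteq C\cS_1$ by $\tfrac1C\lambda$‑symmetry of $\cS_1$; therefore $\sigma(T)\subseteq C\cS_1$, i.e.\ $\sigma\big(\tfrac1C T\big)\subseteq\cS_1\subseteq\cT_1$. Thus $\tfrac1C T$ is a commuting normal tuple of matrices whose joint spectrum lies in $\cT_1$, so $\tfrac1C T\in\cT$ by Corollary~\ref{cor:normalcontained}. Finally $X\prec T$ means $X_i=V^*T_iV$ for an isometry $V:\bC^n\to\bC^{nk}$, hence $\tfrac1C X_i=V^*\big(\tfrac1C T_i\big)V$; since $A\mapsto V^*AV$ is a UCP map $M_{nk}\to M_n$, matrix convexity of $\cT$ gives $\tfrac1C X\in\cT$, i.e.\ $X\in C\cT$. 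As $X\in\cS$ was arbitrary, $\cS\subseteq C\cT$.

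I do not expect a genuine obstacle; the content is in correctly invoking the earlier machinery. The one place to be careful is ensuring $T$ (equivalently $\tfrac1C T$) is a \emph{matrix} tuple so that Corollary~\ref{cor:normalcontained} — stated for normal tuples of matrices — applies; this is automatic from the tensor‑product construction in Theorem~\ref{general_dilation} when $X\in\cS_n$. Should one wish to sidestep even that, one can observe directly that $\tfrac1C X\prec\tfrac1C T$ with $\tfrac1C T$ commuting normal and $\sigma(\tfrac1C T)\subseteq\cS_1$ places $\tfrac1C X\in\Wmin{n}(\cS_1)$ by Definition~\ref{def:min and max}, and then conclude via $\Wmin{}(\cS_1)\subseteq\Wmin{}(\cT_1)\subseteq\cT$.
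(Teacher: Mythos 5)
Your proof is correct, and it takes a slightly different (and more self-contained) route than the paper's. The paper argues at the level of free convex sets: it observes that $\Wmax{}(\cS_1)$ inherits the $\frac1C\lambda$-symmetry of $\cS_1$ (since both are cut out by the same scalar linear inequalities), applies the ``in particular'' clause of Theorem~\ref{relaxation and symmetry} to get $\Wmax{}(\cS_1)\subseteq C\,\Wmin{}(\cS_1)$, and then chains $\cS\subseteq\Wmax{}(\cS_1)\subseteq C\,\Wmin{}(\cS_1)\subseteq C\,\Wmin{}(\cT_1)\subseteq C\cT$. You instead bypass $\Wmax{}$ entirely by invoking the spectrum control in Corollary~\ref{cor:dilations_operators} elementwise: for each $X\in\cS_n$, the dilation $T$ satisfies $\sigma(T)\subseteq\bigcup_m\lambda^{(m)}\cW_1(X)\subseteq C\cS_1\subseteq C\cT_1$, so $\tfrac1C T\in\cT$ by Corollary~\ref{cor:normalcontained} and $\tfrac1C X\in\cT$ by compressing. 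Both proofs ultimately rest on the same dilation construction from Theorem~\ref{general_dilation}; the paper packages it through the maximal matrix convex set, which makes the one-line ``$\Wmax{}(\cS_1)$ inherits the symmetry'' observation do the work, while your version trades that abstraction for a direct pointwise dilation argument. Your care about finite dimensionality of the dilation (so that Corollary~\ref{cor:normalcontained}, stated for matrix tuples, applies) and the fallback via $\Wmin{}$ in your last paragraph are both correct.
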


\begin{proof}
If $\cS_1$ is $\frac{1}{C}\lambda$-symmetric, then so is $\Wmax{} (\cS_1)$, since it is defined by the same linear inequalities (see Section \ref{sec:maxmin}).
By Theorem \ref{relaxation and symmetry},
\[
\Wmax{}(\cS_1) \subseteq C \cT .
\]
By maximality, $\cS \subseteq \Wmax{}(\cS_1) \subseteq C \cT$.
\end{proof}

We now obtain a spectrahedral inclusion theorem in the spirit of \cite{HKMS15}.
It is interesting to compare the following corollary with \cite[Proposition 8.1]{HKMS15}, which has a similar bound, but one which depends on the ranks of the matrices, not on their number.
The sharpness of the constant $d$ in the following Corollary will be obtained in Example \ref{ex:dsharp}.

%%%%%%%%%%%%%%%%%%%%%%%%%
\begin{corollary}\label{cor:mainrelax}
Let $A$ be a $d$-tuple of self-adjoint operators, and assume that $\cD^{sa}_A(1)$ is invariant under projection onto some orthonormal basis $($see Remark $\ref{rem:getHKMS}\,)$.
Then for every $d$-tuple of self-adjoint operators $B$, we have
\[
\cD^{sa}_A(1) \subseteq \cD^{sa}_B(1) \Longrightarrow \cD^{sa}_A \subseteq d \cD^{sa}_B.
\]
Moreover, the constant $d$ is sharp, in the sense that for every $d$ there is a $d$-tuple $A$ such that $\cD^{sa}_A(1)$ is fully-symmetric, but $\cD^{sa}_A \nsubseteq C \Wmin{}(\cD^{sa}_A(1))$ for every $C < d$.
\end{corollary}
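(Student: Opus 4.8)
The first implication is essentially immediate from the machinery already in place: if $\cD^{sa}_A(1)$ is invariant under the coordinate projections $e_ie_i^*$ onto some orthonormal basis, then setting $\lambda^{(m)} = d\,e_me_m^*$ we have $I_d \in \conv\{\lambda^{(1)},\dots,\lambda^{(d)}\}$ (since $\frac1d\sum_m \lambda^{(m)} = I_d$) and $\lambda^{(m)}\cD^{sa}_A(1) = d\,(e_me_m^*)\cD^{sa}_A(1) \subseteq d\,\cD^{sa}_A(1)$, i.e.\ $\cD^{sa}_A(1)$ is $\frac1d\lambda$-symmetric. Theorem \ref{thm:mainrelax} with $C=d$ then gives $\cD^{sa}_A(1)\subseteq \cD^{sa}_B(1)\Rightarrow \cD^{sa}_A\subseteq d\,\cD^{sa}_B$. (One should note in passing that $\cD^{sa}_A(1)$ being invariant under the $e_ie_i^*$ is automatic when $A$ is a tuple of matrices with $\cD^{sa}_A(1)$ fully symmetric in the sense defined before Example \ref{ex:symmetry}; in particular the sharpness example will be fully symmetric.)

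The substance of the corollary is the sharpness assertion. The plan is to exhibit, for each $d$, a single $d$-tuple $A$ with $\cD^{sa}_A(1)$ fully symmetric for which $\cD^{sa}_A \not\subseteq C\,\Wmin{}(\cD^{sa}_A(1))$ whenever $C<d$. The natural candidate is to take $\cD^{sa}_A(1) = [-1,1]^d$, so that $\cD^{sa}_A = \Wmax{}([-1,1]^d) = \fC^{(d)}$ (this is the content of the cube example and Corollary \ref{cor:bounded}), realized by the explicit $2d\times 2d$ pencil $A=(C_j)$ of the matrix cube example. The claim then reduces to: $\fC^{(d)} \not\subseteq C\,\Wmin{}([-1,1]^d)$ for $C<d$, equivalently, there is some $X\in\fC^{(d)}$, i.e.\ a tuple of self-adjoint contractions on some $\bC^n$, that is \emph{not} a compression of $C\cdot N$ for any commuting normal $N$ with $\sigma(N)\subseteq [-1,1]^d$. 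By Theorem \ref{thm:scaled-inclusion-min}, it is equivalent to show $\Wmax{}([-1,1]^d)\not\subseteq C\,\Wmin{}([-1,1]^d)$ for $C<d$.

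To produce the witness, I would take the universal/extremal tuple realizing the worst case: the self-adjoint unitaries $U_1,\dots,U_d$ generating (a representation of) the Clifford relations $U_iU_j + U_jU_i = 2\delta_{ij}I$ — equivalently, anticommuting symmetries — on a space of dimension $2^{\lfloor d/2\rfloor}$. These satisfy $\|U_i\|=1$ so $U=(U_1,\dots,U_d)\in\fC^{(d)} = \Wmax{}([-1,1]^d)$. The key computation is a lower bound: if $c\,U_i = V^*T_iV$ for a commuting normal tuple $T$ with $\sigma(T)\subseteq[-1,1]^d$, then using $\sum_i U_i^2 = dI$ and the fact that for commuting normals with spectrum in $[-1,1]^d$ one has $\sum_i T_i^2 \le dI$, one compresses: $d\,c^2 I = \sum_i c^2 U_i^2 = \sum_i (V^*T_iV)^2 \le \sum_i V^*T_i^2 V = V^*\big(\sum_i T_i^2\big)V \le dV^*V = dI$, which only forces $c\le 1$ — not strong enough. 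The right invariant is instead the operator-norm/numerical estimate: for the Clifford tuple, $\big\|\sum_i \alpha_i U_i\big\| = \big(\sum_i \alpha_i^2\big)^{1/2}$ for real $\alpha$, whereas for $T=cN$ with $\sigma(N)\subseteq[-1,1]^d$ one has $\big\|\sum_i\alpha_i T_i\big\| = c\max_{\lambda\in\sigma(N)}|\sum_i\alpha_i\lambda_i| \le c\sum_i|\alpha_i|$, and compression can only decrease norms. Choosing $\alpha_i = 1/\sqrt d$ gives $\sqrt d = \|\sum_i\alpha_iU_i\| \le \|\sum_i \alpha_i T_i\|\le c\sum_i|\alpha_i| = c\sqrt d$, i.e.\ again $c\le 1$.

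So the norm bound alone is also insufficient, and the real obstacle is to find the sharp invariant separating $\Wmax{}$ from $\Wmin{}$ of the cube by exactly the factor $d$. The correct approach, which I would carry out in detail, is to dualize: by Proposition \ref{min and max} / Theorem \ref{T:polar of min}, $\Wmax{}([-1,1]^d)^\bullet = \Wmin{}(D_d)$ where $D_d = ([-1,1]^d)' = \{x:\sum|x_j|\le1\}$ is the cross-polytope, and $\Wmin{}([-1,1]^d)^\bullet = \Wmax{}(D_d)$. Thus $\fC^{(d)} = \Wmax{}([-1,1]^d) \subseteq C\,\Wmin{}([-1,1]^d)$ would, upon taking polars, force $\tfrac1C\Wmax{}(D_d) = (C\,\Wmin{}([-1,1]^d))^\bullet \subseteq \Wmax{}([-1,1]^d)^\bullet = \Wmin{}(D_d)$, i.e.\ $\Wmax{}(D_d)\subseteq C\,\Wmin{}(D_d)$. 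So it suffices to show $\Wmax{}(D_d)\not\subseteq C\,\Wmin{}(D_d)$ for $C<d$; and here one evaluates both sides on a concrete tuple built from the Clifford generators normalized into $D_d$, namely $Z_i = \tfrac1d U_i$. Then $Z=(Z_1,\dots,Z_d)$: one checks $Z\in\Wmax{}(D_d)$ by verifying $\sum_i \alpha_i Z_i \le aI$ whenever $D_d\subseteq H(\alpha,a)$, i.e.\ whenever $\max_i|\alpha_i|\le a$; indeed $\|\sum_i\alpha_i Z_i\| = \tfrac1d(\sum\alpha_i^2)^{1/2}\le \tfrac1d\cdot\sqrt d\max_i|\alpha_i|\le a$ as long as — hmm, this gives $\le \tfrac{a}{\sqrt d}$, which is even better, so $Z\in\Wmax{}(D_d)$ comfortably. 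For the lower bound on membership in $\Wmin{}(D_d)$: if $Z = V^*NV$ with $N$ commuting normal, $\sigma(N)\subseteq C\cdot D_d$, then $\sum_i Z_i^2 = \sum_i(V^*N_iV)^2 \le V^*(\sum_i N_i^2)V$, and for $\lambda\in C D_d$ one has $\sum_i\lambda_i^2 \le (\sum_i|\lambda_i|)^2 \le C^2$, so $\sum_i N_i^2\le C^2 I$ and hence $\|\sum_i Z_i^2\| \le C^2$. But $\sum_i Z_i^2 = \tfrac1{d^2}\sum_i U_i^2 = \tfrac1{d^2}\cdot d\,I = \tfrac1d I$, forcing $\tfrac1d \le C^2$... which gives $C \ge 1/\sqrt d$, the wrong direction.

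I should stop forcing the wrong invariant. The genuinely correct route — and the one I would actually write — is the duality argument combined with the \emph{polar/numerical-radius} estimate in the other direction: one shows directly that the normalized Clifford tuple $W = \tfrac1{\sqrt d}(U_1,\dots,U_d)$ lies in $\Wmax{}(\ol\bB_d)$ (since $\ol\bB_d\subseteq H(\alpha,a)$ iff $|\alpha|_2\le a$, and $\|\sum\alpha_i W_i\| = \tfrac1{\sqrt d}|\alpha|_2\le a$ fails by a factor $\sqrt d$ — so actually $\sqrt d\,W\in\Wmax{}(\ol\bB_d)$), and then uses the Clifford relation to show $\sqrt d\,W = (U_1,\dots,U_d)$ cannot be compressed from $c\cdot N$, $N$ commuting normal with $\sigma(N)\subseteq\ol\bB_d$, unless $c\ge\sqrt d$ — via $\big\|\sum_i(\sqrt d\,W_i)\otimes \ol{(\sqrt d W_i)}\big\| = \big\|\sum_i U_i\otimes\ol U_i\big\| = d$ while any compression of $cN$ satisfies $\|\sum(cN_i)\otimes\ol{(cN_i)}\| = c^2\|\sum N_i\otimes\ol N_i\| \le c^2\max_{\lambda,\mu\in\sigma(N)}|\sum\lambda_i\ol\mu_i|\le c^2$ by Cauchy–Schwarz. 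This $\sum X_i\otimes\ol X_i$ quantity is exactly the self-dual-ball invariant $\fD$ of Section \ref{sec:ball}, so the clean statement is: compose with the scaling $[-1,1]^d\subseteq \sqrt d\cdot\ol\bB_d$ and $\ol\bB_d\subseteq\sqrt d\cdot D_d$-type inclusions, or more directly, note $D_d\subseteq\ol\bB_d$ and build the witness inside $\Wmax{}(D_d)$ using the anticommuting symmetries scaled so that they probe the cross-polytope optimally. I expect the main obstacle is precisely pinning down which scalar functional of the tuple is multiplicative-of-order-$2$ under tensoring-with-conjugate and thus detects the factor $d$ (namely $\|\sum X_i\otimes\ol X_i\|$), and then verifying both (i) that the Clifford tuple normalized into $\cD^{sa}_A(1)$ has this quantity equal to its extreme value and (ii) that this quantity is $\le C^2$ on $C\cdot\Wmin{}(\cD^{sa}_A(1))$, which is a short Cauchy–Schwarz computation on $\sigma(N)$ together with Theorem \ref{T:W(normal)}. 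Once that is set up, letting $C<d$ yields a contradiction, establishing sharpness.
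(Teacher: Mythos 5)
Your handling of the first implication is correct and mirrors the paper's proof: the orthonormal-basis invariance makes $\cD^{sa}_A(1)$ a $\frac1d\lambda$-symmetric set with $\lambda^{(m)}=d\,e_me_m^*$, so Theorem~\ref{thm:mainrelax} applies with $C=d$.

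The sharpness argument, however, has a genuine gap, and your final computation exposes it. You test $U = (U_1,\ldots,U_d)$ (the Clifford/anticommuting tuple, which is the same tuple as $B$ from Lemma~\ref{lem:example}) for membership in $C\,\Wmin{}(\cdot)$ by means of the invariant $\|\sum_i X_i\otimes\ol{X_i}\|$. Since \emph{both} tensor factors carry the scaling, this inequality reads $d=\|\sum_i U_i\otimes\ol{U_i}\|\le C^2$, and as you note this only forces $C\ge\sqrt d$, not $C\ge d$. The paper's Example~\ref{ex:dsharp} avoids this loss of a square root by choosing a different test set than $\fD$ or $\Wmin{}(\ol\bB)$. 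Concretely: take $A$ with $\cD^{sa}_A=\Wmax{}(\ol\bB_d)$ (not the cube, which you would need to scale anyway), and let $B$ be the tuple of Lemma~\ref{lem:example}. Then $\ol\bB_d\subseteq\cD^{sa}_B(1)$ and $B\in\cD^{sa}_A$, so by Theorem~\ref{thm:scaled-inclusion-min} the inclusion $\cD^{sa}_A\subseteq C\,\Wmin{}(\ol\bB_d)$ would give $\cD^{sa}_A\subseteq C\,\cD^{sa}_B$ and hence $B\in C\,\cD^{sa}_B$. But $B\in C\,\cD^{sa}_B$ means $\sum_i B_i\otimes B_i\le C\,I$ --- only \emph{one} factor is rescaled, because $B$ sits fixed as the coefficient of the pencil $L_B$ --- and the eigenvalue $d$ forces $C\ge d$. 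You correctly identified the right witness tuple and the right quantity $\sum_i B_i\otimes B_i$, but deployed it against the wrong container: membership in $C\,\fD$ scales quadratically in $C$ while membership in $C\,\cD^{sa}_B$ scales linearly. Your proposal is also aware that the cube choice $\cD^{sa}_A(1)=[-1,1]^d$ is troublesome, and indeed it is: the vertices of the cube have norm $\sqrt d$, so $[-1,1]^d\not\subseteq\cD^{sa}_B(1)$ and the test set $\cD^{sa}_B$ is no longer admissible. The ball $\ol\bB_d$ makes everything click.
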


%%%%%%%%%%%%%%%%%%%%%%%%%
\begin{corollary}\label{cor:mainrelax2}
Suppose that $\cS$ is a matrix convex set as in Theorem \ref{thm:mainrelax}.
Then
\[
\Wmin{}(\cS_1) \subseteq \cS \subseteq \Wmax{}(\cS_1) \subseteq C \Wmin{}(\cS_1).
\]
\end{corollary}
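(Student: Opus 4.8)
The plan is to assemble the three inclusions from results already in hand; the statement is a compilation, so the work is bookkeeping rather than a new idea, and the only thing to be careful about is which matrix convex set each cited result gets applied to.

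First I would dispose of the two outer inclusions $\Wmin{}(\cS_1) \subseteq \cS \subseteq \Wmax{}(\cS_1)$. These are Proposition \ref{min and max}. Since Corollary \ref{cor:mainrelax2} inherits from Theorem \ref{thm:mainrelax} only the hypothesis that $\cS$ is matrix convex (not closed), I would either note that the proof of Proposition \ref{min and max} does not use closedness, or reprove the inclusions directly: for $\Wmin{}(\cS_1) \subseteq \cS$, if $X \prec N$ for a commuting normal tuple $N$ with $\sigma(N) \subseteq \cS_1 \subseteq \cS$, then Corollary \ref{C:W(normal)} gives that $\cW(N)$ is the smallest matrix convex set containing $\sigma(N)$, hence $\cW(N) \subseteq \cS$ and $X \in \cW(N) \subseteq \cS$; for $\cS \subseteq \Wmax{}(\cS_1)$, if $X \in \cS_n$ then $\cW_1(X) \subseteq \cS_1$ by Proposition \ref{prop:rangeincluded}, so for every half-space $H(\alpha,a) \supseteq \cS_1$ the observation preceding Definition \ref{def:min and max} gives $\sum_i \alpha_i X_i \le a I_n$, i.e.\ $X \in \Wmax{n}(\cS_1)$.

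For the last inclusion $\Wmax{}(\cS_1) \subseteq C \Wmin{}(\cS_1)$, the key point is that $\Wmax{}(\cS_1)$ is itself a matrix convex set of the kind governed by Theorem \ref{thm:mainrelax}: by Remark \ref{rem:monotone} its first level is $\cS_1$, which is $\frac{1}{C}\lambda$-symmetric by hypothesis. So I would apply Theorem \ref{thm:mainrelax} with $\Wmax{}(\cS_1)$ in the role of $\cS$ and with $\cT := \Wmin{}(\cS_1)$; since $(\Wmax{}(\cS_1))_1 = \cS_1 = (\Wmin{}(\cS_1))_1$, the hypothesis $(\Wmax{}(\cS_1))_1 \subseteq \cT_1$ is automatic, and the conclusion is precisely $\Wmax{}(\cS_1) \subseteq C \Wmin{}(\cS_1)$. (Alternatively one can run the proof of Theorem \ref{thm:mainrelax} directly: $\Wmax{}(\cS_1)$ is $\frac{1}{C}\lambda$-symmetric because it is cut out by the same linear inequalities as $\cS_1$, and then Theorem \ref{relaxation and symmetry} applies to it.) Chaining the three inclusions finishes the proof.

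The closest thing to an obstacle is the temptation to apply Theorem \ref{thm:mainrelax} to $\cS$ itself, which would only give $\cS \subseteq C \Wmin{}(\cS_1)$ --- weaker than the asserted $\Wmax{}(\cS_1) \subseteq C \Wmin{}(\cS_1)$; one must feed in the larger set $\Wmax{}(\cS_1)$, which is legitimate exactly because the hypothesis of Theorem \ref{thm:mainrelax} constrains only the first level.
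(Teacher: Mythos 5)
Your proof is correct and takes essentially the same route as the paper: the two outer inclusions are Proposition \ref{min and max}, and the last inclusion is obtained by applying Theorem \ref{thm:mainrelax} (equivalently, reading its proof, which already establishes $\Wmax{}(\cS_1) \subseteq C\cT$) with $\Wmax{}(\cS_1)$ in the role of $\cS$ and $\cT = \Wmin{}(\cS_1)$. You also correctly flag, and correctly resolve, the only place where one could go astray, namely the temptation to apply Theorem \ref{thm:mainrelax} to $\cS$ itself, which would yield only the weaker $\cS \subseteq C\Wmin{}(\cS_1)$.
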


%%%%%%%%%%%%%%%%%%%%%%%%%
\begin{example}[The regular simplex]\label{ex:simplex}
Here is an example of a convex set in $\bR^d$ for which the assumptions of Corollary \ref{cor:mainrelax} do not hold, but to which we may still apply Theorem \ref{thm:mainrelax} and Corollary \ref{cor:mainrelax2}.
The {\em 3-simplex} $\Delta^3$ in $\bR^3$ is the convex hull of $v_1 = (1,1,1)$, $v_2 = (1,-1,-1)$, $v_3 = (-1,1,-1)$ and $v_4 = (-1,-1,1)$.
It is not invariant under projection onto any orthonormal basis.

Put $\lambda^{(m)} = v_m v_m^*$, $m=1, \ldots, 4$.
Then $\Delta^3$ is $\frac{1}{3} \lambda$ invariant, since $\frac{1}{3}\lambda^{(m)}$ is the orthogonal projection onto $v_m$, and one can actually {\em see} that $\Delta^3$ is invariant under that.
One computes directly $\sum_{m=1}^4  \lambda^{(m)} = 4I$, thus $I \in \conv\{\lambda^{(m)} : 1\leq m \leq 4\}$ so Theorem \ref{thm:mainrelax} and Corollary \ref{cor:mainrelax2} are applicable with $C = 3 = d$.
\end{example}

%%%%%%%%%%%%%%%%%%%%%%%%%
\begin{example}[The right angled simplex]\label{ex:rasimplex}
The point of this example is to show that $\frac{1}{C}\lambda$ symmetric sets need not be symmetrically situated about the origin.
Consider the convex set $K = \conv\{0,e_1, e_2, e_3\} \subset \bR^3$, where $e_1, e_2, e_3$ denote the standard basis.
Putting $\lambda^{(i)} = 3 e_i e_i^*$, we find that $K$ is $\frac{1}{3} \lambda$ symmetric and $I =  \sum_{i=1}^3 \frac{1}{3} \lambda^{(i)}$.
Thus Corollary \ref{cor:mainrelax2} applies, so
\[\Wmax{}(K) \subseteq 3 \Wmin{}(K). \]
\end{example}

%%%%%%%%%%%%%%%%%%%%%%%%%
\begin{remark}
The last part of Corollary \ref{cor:mainrelax} says that $C = d$ is the smallest constant that works for all fully symmetric matrix convex sets.
However, we do not know whether a particular $d$-dimensional convex set $K$ exists such that $\Wmax{}(K) \subseteq C\Wmin{}(K)$ for some $C<d$.
In particular, we do not know whether $d$ is the optimal constant for $K = [-1,1]^d$.
On the other hand, it is easy to construct non-symmetric examples where the best constant that we can find is bigger than $d$.
If $K$ is convex set with $0$ in the interior, let $c$ be a positive number such that $cK \subseteq \tilde{K}$, where $\tilde{K} \subseteq K$ is a convex set which is $\frac{1}{d} \lambda$-invariant for $\lambda$ as in Theorem \ref{thm:mainrelax}.
Then
\[
\Wmax{}(cK) \subseteq \Wmax{}(\tilde{K}) \subseteq d\Wmin{}(\tilde{K}) \subseteq d \Wmin{}(K),
\]
Thus $\Wmax{}(K) \subseteq \frac{d}{c} \Wmin{}(K)$.
This conclusion could also have been obtained by noting that if $\tilde{K}$ is $\frac{1}{d}\lambda$ invariant, then $K$ is $\frac{c}{d}\lambda$ invariant.

Although we do not have anything to say regarding the optimality of the constant $\frac{d}{c}$, the fact that it blows up as $c \to 0$ is consistent with Example \ref{ex:non-scalable} below.
\end{remark}

%%%%%%%%%%%%%%%%%%%%%%%%%%%%%%%%%%%%%%%%%%%%%%%
\subsection{A non-scalable example}

%%%%%%%%%%%%%%%%%%%%%%%%%
\begin{example} [A non-scalable inclusion]\label{ex:non-scalable}

Let $T = \left(\begin{smallmatrix} 1 & 2 \\ 0 & 1 \end{smallmatrix}\right)$.
Then
\[
\cW_1(T) = \ol{\bD}_1(1) = \{ \ z \in \bC \ | \ |z-1| \leq 1 \ \},
\]
is a disc containing $0$ on the boundary. Let $N = M_{1+z}$ be the multiplication operator on $L^2(\bT)$, so that $\sigma(N) = 1 + \bT$.
We see that $\cW_1(T) = \cW_1(N)$.
By Corollary \ref{cor:normal} we have $\cW(N) = \Wmin{}(\ol{\bD}_1(1))$.
Thus $\cW(T) \supseteq \cW(N)$.
However, there is no $C$ such that $\cW(T) \subseteq C\cW(N)$.
To see this, we show that there is no UCP map $\phi : S_N \rightarrow S_T$ sending $N$ to $cT$ for any $c > 0$, and invoke Theorem \ref{thm:ExistUCP_matRan}.

Indeed, if there were such a map, let $U = M_z = N-1$, then $\phi(U) = cT-I$. But $|| cT - I || > 1$ so this is impossible.
Indeed, observe that
\[
\| cT - I \| = \left\| \begin{pmatrix} c-1 & 2c \\ 0 & c-1 \end{pmatrix} \right\| = \left\| \begin{pmatrix} 2c & 1-c \\ 1-c & 0 \end{pmatrix} \right\| .
\]
But this is equal to the largest root of $t^2 - 2ct - (1-c)^2 = 0$.
Substituting $u=t-1$ to get $u^2 + 2(1-c)u - c^2 = 0$, this equation must now have a positive root, so that $\|cT - I \| > 1$.

To get an example involving free spectrahedra, we apply the polar dual and check that
\[
\cD_T(1) = (\cW(T)(1))' = (\cW(N)(1))' = \cD_N(1) ,
\]
while there is no constant $C$ such that $\cD_N \subseteq C \cD_T$.
\end{example}

%%%%%%%%%%%%%%%%%%%%%%%%%%%%%%%%%%%%%%%%%%%%%%%
\subsection{Optimality of the constant $C = d$}
The following lemma is likely well-known, but we do not have a convenient reference.

%%%%%%%%%%%%%%%%%%%%%%%%%
\begin{lemma} \label{lem:example}
For every $d$, there exist $d$ self-adjoint $2^{d-1} \times 2^{d-1}$ matrices $B_1, \ldots, B_d$ such that for all $v \in \bR^d$, $\|v\|=1$,
\[
\sum v_i B_ i \leq I,
\]
and such that $d$ is an eigenvalue of $\sum_{i=1}^d B_i \otimes B_i$.
Hence if $\sum_{i=1}^d B_i \otimes B_i \leq \rho I$, then $\rho \geq d$.
\end{lemma}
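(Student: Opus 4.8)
The natural candidates are (scaled) anticommuting self-adjoint unitaries — i.e. a representation of a Clifford algebra relation. Recall that there exist $d$ self-adjoint unitaries $E_1,\dots,E_d$ on $\bC^{2^{\lceil d/2\rceil}}$ (or on $\bC^{2^{d-1}}$, using a redundant real form) satisfying $E_iE_j = -E_jE_i$ for $i\ne j$ and $E_i^2 = I$. For such operators, $\big(\sum v_i E_i\big)^2 = \sum_{i,j} v_iv_j E_iE_j = \sum_i v_i^2 I = \|v\|^2 I$, so $\sum v_i E_i$ is a self-adjoint unitary times $\|v\|$; in particular $\|\sum v_i E_i\| = \|v\| = 1$, hence $\sum v_i E_i \le I$. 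This already gives the first assertion once I set $B_i = E_i$; the content is to check the eigenvalue claim for $\sum_i B_i \otimes B_i = \sum_i E_i \otimes E_i$.

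**Key steps, in order.** First I would recall/construct the Clifford generators explicitly — the standard tensor-product (Brauer–Weyl / Jordan–Wigner) construction: with Pauli-type matrices $\sigma_x, \sigma_z$ on $\bC^2$, set $E_i = \sigma_z^{\otimes(i-1)} \otimes \sigma_x \otimes I^{\otimes(d-i)}$ (and one extra $\sigma_z^{\otimes m}$ factor if $d$ is even), arranging the dimension to be $2^{d-1}$ as claimed by allowing the non-minimal real Clifford representation. Second, verify the relations $E_i = E_i^*$, $E_i^2 = I$, $E_iE_j = -E_jE_i$ ($i\ne j$) — a short computation with $\sigma_x\sigma_z = -\sigma_z\sigma_x$. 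Third, deduce $\sum v_i B_i \le I$ for unit $v$ as above. Fourth, compute the eigenvalue: consider the operator $C = \sum_i E_i \otimes E_i$ on $\bC^{2^{d-1}}\otimes\bC^{2^{d-1}}$. The cleanest route is to find an explicit eigenvector. A good candidate is a maximally entangled vector $\Omega = \sum_k e_k \otimes e_k$ (suitably normalized) for a basis adapted to the construction, using the identity $(E_i \otimes I)\Omega = (I \otimes E_i^t)\Omega$; then $C\Omega = \sum_i (I\otimes E_i^t E_i)\Omega = \sum_i (I \otimes (E_i^t E_i))\Omega$. Since each $E_i$ is a self-adjoint unitary, $E_i^t E_i = \overline{E_i}\,E_i^{-1}\cdot(\text{stuff})$ — I would instead choose the $E_i$ to be real symmetric matrices (which the above tensor construction delivers, as $\sigma_x,\sigma_z$ are real), so $E_i^t = E_i$ and $E_i^t E_i = E_i^2 = I$, giving $C\Omega = dI\cdot\Omega = d\,\Omega$. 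Hence $d \in \sigma(C)$. Fifth, conclude: if $\sum_i B_i \otimes B_i \le \rho I$ then $\rho \ge \max\sigma(C) \ge d$.

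**Main obstacle.** The genuine subtlety is the bookkeeping that makes all three desiderata hold simultaneously: (a) exactly $d$ generators, (b) on a space of dimension exactly $2^{d-1}$, and (c) with the $B_i$ \emph{real symmetric} so that the transpose trick computing the eigenvalue goes through cleanly. The minimal complex Clifford representation has dimension $2^{\lfloor d/2\rfloor}$, not $2^{d-1}$, so I must use a (reducible, or real) version and track that the relations and the reality are preserved; alternatively I can work with $d-1$ anticommuting real symmetric unitaries on $\bC^{2^{\lceil (d-1)/2\rceil}}$ together with the identity-free last generator — but the tensor/Jordan–Wigner construction above is the most transparent way to get real symmetric matrices of size $2^{d-1}$ directly, and I would present that. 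Once the matrices are in hand, everything else is a two-line computation. A secondary (purely cosmetic) point is the normalization of $\Omega$, which does not affect the eigenvalue.
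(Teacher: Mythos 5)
Your plan is correct in substance and it lands on the same underlying objects as the paper — the $B_i$ in both proofs are (real, symmetric) Clifford generators, and the eigenvector in both proofs is the maximally entangled vector $\Omega = \sum_k e_k\otimes e_k$ — but you present the construction directly, while the paper proceeds by a short induction on $d$: starting from $B_1 = [1]$, it sets $B_i' = \sigma_x \otimes B_i$ for $i\le d$ and $B_{d+1}' = \sigma_z\otimes I$, checks positivity via a $2\times 2$ block-matrix lemma rather than via the anticommutation identity $(\sum v_iE_i)^2 = \|v\|^2 I$, and builds the eigenvector inductively as $x\mapsto (x,0,0,x)^t$ (which is exactly how $\Omega$ unfolds across the tensor factors). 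The paper's induction buys self-containment — no appeal to Clifford/Jordan–Wigner structure or the transpose identity $(A\otimes I)\Omega = (I\otimes A^t)\Omega$ is needed — and keeps the dimension bookkeeping trivially correct at each step. Your route is more conceptual and explains \emph{why} the eigenvalue is $d$: each $E_i\otimes E_i$ fixes $\Omega$ once the $E_i$ are real symmetric unitaries.

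One point you flag yourself does need fixing, because as written your explicit formula $E_i = \sigma_z^{\otimes(i-1)}\otimes\sigma_x\otimes I^{\otimes(d-i)}$ has $d$ tensor factors and so lives on $\bC^{2^d}$, not $\bC^{2^{d-1}}$, and the ``extra $\sigma_z^{\otimes m}$ factor if $d$ is even'' does not repair this. The correct explicit family of size $2^{d-1}$ is the one on $(\bC^2)^{\otimes(d-1)}$ given by
\[
E_i = \sigma_z^{\otimes(i-1)}\otimes \sigma_x \otimes I^{\otimes(d-1-i)} \quad (1\le i\le d-1),\qquad E_d = \sigma_z^{\otimes(d-1)}.
\]
These are $d$ real symmetric anticommuting unitaries, so $(\sum v_iE_i)^2 = \|v\|^2 I$ gives the positivity claim, and since $E_i^t = E_i$ and $E_i^2 = I$ the transpose trick yields $\bigl(\sum_i E_i\otimes E_i\bigr)\Omega = d\,\Omega$ as you say. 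With that correction, your argument is complete and everything else in the proposal goes through.
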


\begin{proof}
The proof is by induction.
For $d = 1$ we take $B_1 = [1]$.
Suppose that $d \geq 1$, and let $B_1, \ldots, B_d$ be self-adjoint $2^{d-1} \times 2^{d-1}$ matrices as in the statement of the lemma.
We will construct  self-adjoint $2^d\times 2^{d}$ matrices $B'_1, \ldots, B'_{d+1}$ as required.

Let
\[
E_1 = \begin{pmatrix} 0 & 1 \\ 1 & 0\end{pmatrix} \,\, , \,\, E_2 =  \begin{pmatrix} 1 & 0 \\ 0 & -1\end{pmatrix}.
\]
Define
\[
B'_i = E_1 \otimes B_i \,\, , \,\, \textrm{ for } i=1, \ldots, d,
\]
and
\[
B'_{d+1} = E_2 \otimes I_{2^{d-1}} .
\]
The matrices $B'_1, \ldots, B'_{d+1}$ are self-adjoint.
For a unit vector $(v_1,\dots,v_{d+1})$, we compute
\[
I - \sum_{i=1}^{d+1} v_i B'_i =
\begin{pmatrix}
(1 - v_{d+1}) I & -\sum_{i=1}^d v_i B_i \\
-\sum_{i=1}^d v_i B_i & (1 + v_{d+1}) I  ץ
\end{pmatrix}
\]
By \cite[Lemma 3.1]{PauBook}, this matrix is positive semidefinite if and only if
\[
\big( \sum_{i=1}^d v_i B_i \big)^2 \le (1-v_{d+1}^2) I .
\]
By the inductive hypothesis,
\[
\big( \sum_{i=1}^d v_i B_i \big)^2 \leq \big( \sum_{i=1}^d v_i^2 \big)I \leq (1 - v_{d+1}^2)I .
\]
Therefore $\sum_{i=1}^{d+1} v_i B'_i \le I$.

It remains to show that $d+1$ is an eigenvalue of $\sum_{i=1}^{d+1} B'_i \otimes B'_i$.
We write $T = \sum_{i=1}^d B_i \otimes B_i$ and examine the operator
\[
\sum_{i=1}^{d+1} B'_i \otimes B'_i \simeq
\begin{pmatrix}
I  & 0  & 0  &  T\\
0 & - I & T & 0 \\
0 & T & -I & 0  \\
T & 0 & 0  & I
\end{pmatrix}.
\]
Now, if $x$ is an eigenvector of $T$ corresponding to $d$, then $(x, 0, 0, x)^t$ is an eigenvector of $\sum_{i=1}^{d+1} B'_i \otimes B'_i$ corresponding to the eigenvalue $d+1$.
\end{proof}

%%%%%%%%%%%%%%%%%%%%%%
\begin{example}\label{ex:dsharp}
We construct tuples of operators $A$ and $B$ of self-adjoint matrices, such that $\cD^{sa}_A$ is symmetric, and such that the implication in Corollary \ref{cor:mainrelax} holds with a constant $d$ but with no smaller constant.
In this example the tuple $A$ consists of operators on an infinite dimensional space, but it is not hard to see that this implies sharpness in the finite dimensional case as well.

Let $\{v^{(n)}\}$ be a dense sequence of points on the unit sphere of $\bR^d$.
Let $A$ be the $d$-tuple of diagonal operators such that $n$th element on the diagonal of $A_j$ is the $j$th coordinate $v^{(n)}_j$ of $v^{(n)}$.
Then
\[
\cD^{sa}_A = \{X  \in  M_n(\bC)^d_{sa} : \sum X_j v_j \leq I \FORAL v \in \bR^d,\ \|v\| = 1 \} .
\]
Observe that $\cD^{sa}_A = \Wmax{}(\ol{\bB}_d)$, and in particular $\cD^{sa}_A(1)$ is the unit ball of $\bR^d$ (which is fully-symmetric and invariant under projections onto any orthonormal basis).

Let $B$ be as in Lemma \ref{lem:example}.
For every unit vector $v \in \bR^d$, $\sum v_i B_i \leq I$.
Thus $\cD^{sa}_A(1) \subseteq \cD^{sa}_B(1)$; and moreover, $B \in \cD^{sa}_A$.
On the other hand, $B \notin C \cD^{sa}_B $ for any $C < d$, since $ \sum_i B_i \otimes B_i$ has an eigenvalue equal to $d$.
Thus $\cD^{sa}_A \nsubseteq C \cD^{sa}_B$ for any $C < d$.
\end{example}

%%%%%%%%%%%%%%%%%%%%%%%%%%%%%%%%%%%%%%%%%%%%%%%
\section{Examples of \texorpdfstring{$\frac{1}{C}\lambda$} \ -symmetric polytopes arising from frames}\label{sec:examples}

In general, given a convex set $K \subseteq \bR^d$, and an orthonormal basis $\cB = \{e_1,...,e_d\}$,
it is easy to check if $K$ is invariant under the projections $e_ie_i^*$ for $1 \le i \le d$.
But it is harder to actually find or disprove the existence of an orthonormal basis that leaves $K$ invariant.
Instead of this, we turn to tight frames, which are often used to \emph{define} many convex polytopes $K \subset \bR^d$.
We will show that the convex polytopes generated by a large class of tight frames satisfy the symmetry conditions of Theorems \ref{cor:lambda-over-C-dilation} and \ref{relaxation and symmetry}
This context will encompass all the symmetric situations that we have dealt with so far.

A set of unit vectors $\Phi:= \{v_1,...,v_N\} \subseteq \bR^d$ (without repetition) is called a {\em tight frame} if there is a constant $\sigma>0$ such that for all $x\in \bR^d$ we have
$$
\sum_i |\langle x,v_i \rangle|^2 = \sigma \| x \|^2 ;
$$
this condition is equivalent to $\sum_i v_i v_i^* = \sigma I$.

When all the vectors $v_i$ are of the same length $\ell$, we call $\Phi$ an \emph{isometric} tight frame, and it turns out that in this case we have $\sigma = \ell^2 \cdot \frac{N}{d}$ (See \cite[Lemma 2.1]{RW02}).

Every tight frame $\Phi$ gives rise to a finite subgroup of isometric symmetries given by
$$
\Sym(\Phi) := \{U\in \cU(\bR^d) | U\Phi = \Phi \},
$$
where $\cU(\bR^d)$ denotes the unitary group on $\bR^d$.
We can then turn this construction around, and define isometric tight frames from finite subgroups of $\cU(\bR^d)$.
This will provide us with an abundance of examples.
By \cite[Theorem 6.3]{VW05}, for a finite irreducible subgroup $G$ and a non-zero vector $\phi \in \bR^d$, the set $\Phi = \{g\phi \}_{g\in G}$ is an isometric tight frame, and $K = \conv \Phi$ is invariant under $G$.

%%%%%%%%%%%%%%%%%%%%%%
\begin{proposition} \label{P:ITF-inv}
Let $K \subseteq \bR^d$ be a closed convex set, and $\{v_1,...,v_N\}$ be an isometric tight frame in $\bR^d$
with vectors of length $\ell$ such that $\frac{1}{\ell^2} v_iv_i^*(K) \subseteq K$.
Then $\Wmax{}(K) \subseteq d \Wmin{}(K)$.
\end{proposition}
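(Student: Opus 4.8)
The plan is to deduce this from Corollary~\ref{cor:mainrelax2} (equivalently Theorem~\ref{thm:mainrelax}) by manufacturing, out of the isometric tight frame, a family of rank one matrices that certifies $\Wmax{}(K)$ as a $\tfrac1d\lambda$-symmetric matrix convex set.

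First I would set $\lambda^{(i)} = \frac{d}{\ell^2}\,v_iv_i^*$ for $1\le i\le N$. Each $\lambda^{(i)}$ is a real $d\times d$ matrix of rank one, and $\frac1d\lambda^{(i)} = \frac1{\ell^2}v_iv_i^*$ is precisely the orthogonal projection of $\bR^d$ onto the line $\bR v_i$. Since $\{v_1,\dots,v_N\}$ is an isometric tight frame with vectors of length $\ell$, we have $\sum_i v_iv_i^* = \sigma I_d$ with $\sigma = \ell^2 N/d$, so that
\[
\frac1N\sum_{i=1}^N\lambda^{(i)} \;=\; \frac{d}{\ell^2 N}\sum_{i=1}^N v_iv_i^* \;=\; I_d ,
\]
and hence $I_d\in\conv\{\lambda^{(1)},\dots,\lambda^{(N)}\}$. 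Moreover the hypothesis $\frac1{\ell^2}v_iv_i^*(K)\subseteq K$ is exactly the statement $\lambda^{(i)}K\subseteq dK$ for all $i$, i.e.\ $K$ is $\tfrac1d\lambda$-symmetric.

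Next I would pass to the matrix convex set $\cS:=\Wmax{}(K)$. By Remark~\ref{rem:monotone} we have $\cS_1=K$, which we have just seen is $\tfrac1d\lambda$-symmetric, with $I_d\in\conv\{\lambda^{(i)}\}$; thus $\cS$ satisfies the hypotheses of Theorem~\ref{thm:mainrelax} with $C=d$, and Corollary~\ref{cor:mainrelax2} then yields
\[
\Wmax{}(K)=\cS\subseteq\Wmax{}(\cS_1)\subseteq d\,\Wmin{}(\cS_1)=d\,\Wmin{}(K),
\]
which is the assertion.

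The only point that is not pure substitution is already contained in the cited results: that $\tfrac1d\lambda$-symmetry of the scalar set $K$ forces the same for $\Wmax{}(K)$. If one prefers to see it directly, recall that $\Wmax{n}(K)$ consists of the tuples $X$ with $\sum_j\alpha_j X_j\le aI_n$ whenever $K\subseteq H(\alpha,a)$; since $\langle\alpha,\lambda^{(i)}x\rangle=\langle(\lambda^{(i)})^{t}\alpha,x\rangle$, a half-space containing $dK$ pulls back along $\lambda^{(i)}$ to one containing $K$, and the corresponding operator inequality for $X$ transports to the required inequality for $(\lambda^{(i)}X^t)^t$, giving $\lambda^{(i)}\Wmax{}(K)\subseteq d\,\Wmax{}(K)$. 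I expect this small duality bookkeeping between the $\lambda^{(i)}$-action on tuples and on the half-space data to be the only (mild) obstacle; the tight-frame identity and the appeal to Corollary~\ref{cor:mainrelax2} are immediate.
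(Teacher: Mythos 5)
Your proof is correct and follows essentially the same route as the paper's: both set $\lambda^{(i)}$ to be the suitably scaled rank-one projections $\tfrac{d}{\ell^2}v_iv_i^*$, verify $I_d\in\conv\{\lambda^{(i)}\}$ from the tight-frame identity $\sum_i v_iv_i^*=\sigma I$ with $\sigma=\ell^2N/d$, observe that the hypothesis is exactly $\tfrac1d\lambda$-symmetry of $K$, and then invoke Theorem~\ref{thm:mainrelax} (you use the equivalent Corollary~\ref{cor:mainrelax2}). The only cosmetic difference is that the paper normalizes $\ell=1$ at the outset, and your closing paragraph re-proves the fact that $\tfrac1d\lambda$-symmetry of $K$ passes to $\Wmax{}(K)$, which is already contained in the proof of Theorem~\ref{thm:mainrelax}.
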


\begin{proof}
Up to normalization, we may assume $\ell=1$. By prescribing $\lambda^{(i)} = d \cdot v_iv_i^*$ and using $\sigma = \frac{N}{d}$, we have that
$$
\sum_i \lambda^{(i)} = \sum_i d v_i v_i^* = d \sigma \cdot I = N \cdot I
$$
so that
\[
I = \frac{1}{N} \sum_{i=1}^N \lambda^{(i)} \in \conv \{\lambda^{(i)}\} \qand
\frac{1}{d}\lambda^{(i)}(K) = v_i v_i^* (K) \subset K .
\]
Therefore $K$ is $\frac{1}{d}\lambda$-symmetric.
By Theorem \ref{thm:mainrelax}, we see that
\[
\Wmax{}(K) \subseteq d \Wmin{}(K) .\qedhere
\]
\end{proof}

Our goal in the remainder of this subsection is to find classes of tight frames for which the conditions of Proposition \ref{P:ITF-inv} hold with $K = \conv \Phi$.
We begin with the following simple condition.

%%%%%%%%%%%%%%%%%%%%%%
\begin{corollary} \label{C:SITF}
Let $\Phi = \{v_1,...,v_N\} \subset \bR^d$ be an isometric tight frame with vectors of length $\ell$ and $K = \conv \Phi$.
If $- \Phi = \Phi$, then
\[ \Wmax{}(K) \subseteq d \Wmin{}(K) .\]
\end{corollary}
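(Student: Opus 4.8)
The plan is to deduce this directly from Proposition \ref{P:ITF-inv}: its conclusion is exactly $\Wmax{}(K) \subseteq d \Wmin{}(K)$, so all that needs to be verified is its hypothesis, namely that $\frac{1}{\ell^2} v_i v_i^*(K) \subseteq K$ for every $i$. Since $K = \conv\Phi$ and $\frac{1}{\ell^2} v_i v_i^*$ is a linear map, this reduces to checking that the image of each frame vector lies in $K$, i.e. that $\frac{1}{\ell^2} v_i v_i^*(v_j) \in K$ for all $i,j$.

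The key computation is $\frac{1}{\ell^2} v_i v_i^*(v_j) = \frac{\langle v_j, v_i\rangle}{\ell^2}\, v_i = t_{ij} v_i$, where $|t_{ij}| = \frac{|\langle v_j, v_i\rangle|}{\ell^2} \le 1$ by the Cauchy--Schwarz inequality, using $\|v_i\| = \|v_j\| = \ell$. So the remaining point is that $t v_i \in K$ whenever $|t| \le 1$: because $-\Phi = \Phi$, both $v_i$ and $-v_i$ lie in $\Phi \subseteq K$, and convexity of $K$ gives that the segment $[-v_i, v_i] = \{t v_i : |t| \le 1\}$ is contained in $K$. Hence $\frac{1}{\ell^2} v_i v_i^*(v_j) \in K$ for all $i,j$, so $\frac{1}{\ell^2} v_i v_i^*(K) \subseteq K$, and Proposition \ref{P:ITF-inv} yields the claim.

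There is essentially no obstacle here: the only two ingredients are the Cauchy--Schwarz bound on the frame Gram coefficients and the reflection symmetry $-\Phi = \Phi$, with all the substantive work already carried out in Proposition \ref{P:ITF-inv} (which in turn rests on Theorem \ref{thm:mainrelax}). If anything needs care, it is only the routine observation that linearity lets one test the containment $\frac{1}{\ell^2} v_i v_i^*(K) \subseteq K$ on the generators $v_j$ of $K$ rather than on all of $K$.
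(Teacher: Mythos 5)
Your proof is correct and follows essentially the same route as the paper's: both reduce to verifying the hypothesis of Proposition \ref{P:ITF-inv} on the generators $v_j$, compute $\frac{1}{\ell^2}v_iv_i^*(v_j) = \frac{\langle v_j,v_i\rangle}{\ell^2}v_i$, and use $-\Phi=\Phi$ together with convexity to place this on the segment $[-v_i,v_i]\subseteq K$. The only cosmetic difference is that the paper normalizes $\ell=1$ and splits explicitly into the cases $\langle v_j,v_i\rangle\geq 0$ and $\langle v_j,v_i\rangle\leq 0$, whereas you invoke Cauchy--Schwarz once and use the full segment; the underlying argument is identical.
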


\begin{proof}
Up to normalization, we may assume $\ell=1$. We need only verify that $v_iv_i^*(v_j) \in K$ for every $1 \leq i,j \leq N$. But then
\[
v_iv_i^*(v_j) = \langle v_j, v_i \rangle v_i
\]
Now if $\big\langle v_j, v_i \big\rangle \geq 0$ then $v_iv_i^*(v_j)$ is simply a rescaling of $v_i$ by a constant $0 \leq c \leq 1$ and is hence in $\conv \Phi$.
If $\langle v_j, v_i \rangle \leq 0$, then $v_iv_i^*(v_j)$ is a rescaling of $-v_i$ by a constant $0 \leq c \leq 1$ and is hence in $\conv \Phi$.
In either case we have that $v_iv_i^*(v_j) \in K$ so that by Proposition \ref{P:ITF-inv} we have
\[ \Wmax{}(K) \subseteq d \Wmin{}(K) . \qedhere \]
\end{proof}

The assumption $\Phi = - \Phi$ is rather restrictive (consider Example \ref{ex:simplex}).
For the purpose of exhibiting a class of isometric tight frames for which the invariance condition $\frac{1}{\ell^2} v_iv_i^*(K) \subseteq K$ in Proposition \ref{P:ITF-inv} is automatic, we bring forth the following definition.
For a tight frame $\Phi$, denote by
\[ \Stab(v) = \{ U \in \Sym(\Phi) : Uv=v \} ,\]
the stabilizer subgroup of $\Sym(\Phi)$ of symmetries that fix the vector $v\in \mathbb{R}^d$.

%%%%%%%%%%%%%%%%%%%%%%
\begin{definition}
Let $\Phi = \{v_1,...,v_N\}$ be a tight frame.
We say that $\Phi$ is \emph{vertex reflexive} if $\Stab(v_i)$ fixes a subspace of dimension exactly one, namely $\spn\{v_i\}$, for every $1 \leq i \leq N$.
\end{definition}

If $\Phi = \{v_1,..., v_N\}$ is an isometric tight frame, then no $v_i$ can be a convex combination of $\Phi \setminus \{v_i\}$.
This means that the vectors $\{v_1,...,v_N\}$ comprise the vertices of a $d$-dimensional polytope.

Every element of $\Sym(\Phi)$ must leave the barycenter $\frac{1}{N}\sum_{i=1}^N v_i$ invariant, thus when $\Phi$ is a vertex reflexive isometric tight frame we must have that $\frac{1}{N}\sum_{i=1}^N v_i = 0$.
Hence, $0$ must be an interior point of $K = \conv(\Phi)$, since the barycenter of every $d$-dimensional convex set in $\bR^d$ is in the interior.

We say that a face $F$ of a polytope $K$ is {\em $m$-dimensional}, if $m$ is the minimal dimension of an affine subspace containing $F$. For an isometric tight frame $\Phi$, we must have that every element of $\Sym(\Phi)$ maps $m$-dimensional faces to $m$-dimensional faces.

For a computational method for constructing many examples of vertex reflexive isometric tight frames (satisfying the additional requirement that $\Sym(\Phi)$ is irreducible and transitive), see \cite{BW13}.

%%%%%%%%%%%%%%%%%%%%%%
\begin{proposition}\label{prop:VRITF}
Let $\Phi = \{v_1,...,v_N\}$ be a vertex reflexive isometric tight frame of vectors of length $\ell$, and let $K:= \conv(\Phi)$ be the $d$-dimensional convex polytope generated by $\Phi$.
Then $\frac{1}{\ell^2}v_i v_i^*(K) \subseteq K$ for all $1 \leq i \leq N$.
\end{proposition}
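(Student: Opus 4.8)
The plan is to reduce to unit vectors and then recognize $v_iv_i^*$ as an averaging operator over the stabilizer subgroup. First I would rescale: replacing $\Phi$ by $\{v_i/\ell\}$ (still a vertex reflexive isometric tight frame, with the same symmetry group, and with $\conv$ equal to $\frac1\ell K$), we may assume $\ell = 1$, so that $P_i := v_i v_i^*$ is exactly the orthogonal projection of $\bR^d$ onto the line $\spn\{v_i\}$. Since $K = \conv\{v_1,\dots,v_N\}$ and $P_i$ is linear, one has $P_i(K) = \conv\{P_i(v_j) : 1 \le j \le N\}$, so it suffices to show that $P_i(v_j) = \langle v_j, v_i\rangle v_i$ lies in $K$ for all $i,j$.

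The heart of the argument is to identify $P_i$ with an averaging operator over $\Stab(v_i)$. Put $G_i = \Stab(v_i) \le \Sym(\Phi)$; this is finite, being a subgroup of $\Sym(\Phi)$, which embeds in the symmetric group on the finite set $\Phi$. Let $A_i = \frac{1}{|G_i|}\sum_{g \in G_i} g$. For any $x \in \bR^d$ the vector $A_i x$ is fixed by every element of $G_i$ (reindex the sum), hence lies in the fixed subspace of $G_i$; by vertex reflexivity this subspace is exactly $\spn\{v_i\}$, so $A_i x \in \spn\{v_i\}$. Because each $g$ is orthogonal and $g^{-1} \in G_i$ fixes $v_i$, we get $\langle A_i x, v_i\rangle = \frac{1}{|G_i|}\sum_g \langle x, g^{-1}v_i\rangle = \langle x, v_i\rangle$, and since $\|v_i\| = 1$ this forces $A_i x = \langle x, v_i\rangle v_i = P_i x$. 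Thus $A_i = P_i$.

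Now I would apply this with $x = v_j$: $P_i(v_j) = A_i(v_j) = \frac{1}{|G_i|}\sum_{g \in G_i} g v_j$. Each $g \in G_i \subseteq \Sym(\Phi)$ permutes $\Phi$, so $g v_j \in \Phi \subseteq K$, and therefore $P_i(v_j)$ is an average of points of the convex set $K$, hence lies in $K$. Combined with the reduction in the first paragraph, this yields $P_i(K) \subseteq K$, that is, $\frac{1}{\ell^2} v_i v_i^*(K) \subseteq K$ for every $i$.

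The one genuinely delicate point is the identification $A_i = P_i$, and the hypothesis of vertex reflexivity is exactly what makes it work: it guarantees that the $G_i$-average of a vector lands on the line $\spn\{v_i\}$, rather than on a possibly larger fixed subspace (which would produce a projection of rank $>1$ and break the argument). Everything else — linearity, convexity, the finiteness of $G_i$, and the elementary computation of the coefficient — is routine.
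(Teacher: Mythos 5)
Your proof is correct, and it takes a genuinely different route from the paper. The paper's argument is geometric: fix $i$, let $-\alpha v_i$ be the boundary point of $K$ in the direction $-v_i$, take a minimal-dimensional face $F$ containing $-\alpha v_i$, argue that $F$ is $\Stab(v_i)$-invariant by the minimality of $F$, and then use vertex reflexivity to identify $v_i$ as a normal direction to $F$; from this, together with the maximality of $\alpha$, one shows that every $\langle v_i,v_k\rangle v_i$ lies on the segment from $-\alpha v_i$ to $v_i$, hence in $K$. Your approach instead recognizes $v_i v_i^*$ as the Reynolds (averaging) operator $A_i = \frac{1}{|G_i|}\sum_{g\in G_i} g$ over $G_i=\Stab(v_i)$: the average is automatically the orthogonal projection onto the $G_i$-fixed subspace, and vertex reflexivity says precisely that this fixed subspace is $\spn\{v_i\}$, so $A_i = P_i$. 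The inclusion $P_i(K)\subseteq K$ then drops out immediately because $A_i(v_j)$ is a convex combination of $\{g v_j: g\in G_i\}\subseteq \Phi$. Your version is shorter and makes the role of the hypothesis transparent (vertex reflexivity is exactly what forces the group average to have rank one), and it avoids the somewhat delicate argument the paper needs to get $\langle v_i,v_k\rangle\ge -\alpha$ from the maximality of $\alpha$. What the paper's proof buys in exchange is explicit geometric information about the face of $K$ "opposite" $v_i$: it identifies $\alpha = -\langle v_i, v_{i_j}\rangle$ as the common cosine over the vertices of that face. Both are valid; yours is arguably the cleaner proof.
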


\begin{proof}
Up to normalization, we may assume $\ell=1$. Fix $1\leq i \leq N$. Let $\alpha > 0 $ be maximal such that $-\alpha v_i \in K$, and let $F$ be a face of $K$ of minimal dimension $m$ such that $-\alpha v_i \in F$.

We first claim that every element $g\in \Stab(v_i)$ must leave $F$ invariant.
Indeed, if not, $g(F)$ must be an $m$-dimensional face with $-\alpha v_i \in g(F)$ which is different from $F$.
Since $g(F) \cap F$ must be a face of dimension strictly less than $m$, we arrive at a contradiction to the definition of $F$.

Since $F = \conv \{v_{i_1},...,v_{i_p}\}$ is left invariant under $\Stab(v_i)$, we may restrict each element $g\in \Stab(v_i)$ to the subspace $W = \spn( \{v_i\} \cup F)$.
Within $W$, since every $g\in \Stab(v_i)$ maps $F$ to itself, it must then map the affine subspace $A$ generated by $F$ inside $W$, to itself, and hence must map the normal of $A$ (again inside $W$) to itself.
But even within the subspace $W$, we still have that $\Stab(v_i)$ fixes a subspace of dimension exactly one, and hence the normal of $A$ in $W$ can be chosen to be $v_i$. In other words, $v_i$ is perpendicular to $-\alpha v_i - v_{i_j}$ for any $1 \leq j \leq p$.
This means that
\[
v_iv_i^*(v_{i_j}) = \langle v_i, v_{i_j} \rangle v_i = \langle v_i, -\alpha v_i \rangle v_i = -\alpha v_i \in K
\]
and $- \alpha = \langle v_i , v_{i_j} \rangle$ is the cosine of the angle between $v_i$ and $v_{i_j}$ for all $1 \leq j \leq p$. We note that by maximality of $\alpha$, we have for each $1 \leq k \leq N$ that the angle between $v_i$ and $v_k$ is at most $\arccos(- \alpha)$.

Hence,  for all $1 \leq k \leq N$ we have that $v_iv_i^*(v_k) = \langle v_i, v_k \rangle v_i$ is a convex combination of $-\alpha v_i$ and $v_i$ and is hence in $K$. As $v_iv_i^*$ is linear, we see that $v_iv_i^*(K) \subseteq K$ as required.
\end{proof}

Combining Propositions \ref{P:ITF-inv} and \ref{prop:VRITF}, we obtain the following.

%%%%%%%%%%%%%%%%%%%%%%
\begin{theorem} \label{T:HSITF}
Let $\Phi$ be a vertex reflexive isometric tight frame, and let $K = \conv(\Phi)$ be the convex polytope it generates.
Then
\[ \Wmax{}(K) \subseteq d \Wmin{}(K) . \]
\end{theorem}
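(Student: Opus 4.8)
The plan is to simply combine the two immediately preceding propositions. First I would recall that $\Phi = \{v_1,\ldots,v_N\}$ is by hypothesis a \emph{vertex reflexive} isometric tight frame of vectors of common length $\ell$, and that $K = \conv(\Phi)$ is the $d$-dimensional convex polytope it generates. Proposition~\ref{prop:VRITF} applies verbatim to this situation and yields that $\frac{1}{\ell^2} v_i v_i^*(K) \subseteq K$ for every $1 \le i \le N$, i.e.\ $K$ is invariant under each of the (normalized) rank-one projections onto the frame directions.

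Next I would invoke Proposition~\ref{P:ITF-inv}, whose hypotheses are precisely that $K$ is a closed convex set and $\{v_1,\ldots,v_N\}$ is an isometric tight frame of vectors of length $\ell$ satisfying $\frac{1}{\ell^2} v_i v_i^*(K) \subseteq K$ — exactly what we just established. Its conclusion is $\Wmax{}(K) \subseteq d\,\Wmin{}(K)$, which is the desired statement. So the proof is essentially a two-line chaining of results already in hand.

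Since all the real work has been discharged into Propositions~\ref{P:ITF-inv} and~\ref{prop:VRITF}, there is no genuine obstacle at this stage; the only thing to be careful about is matching hypotheses, namely confirming that vertex reflexivity (together with the isometric tight frame property) is what feeds Proposition~\ref{prop:VRITF}, and that the invariance condition produced there is literally the invariance condition demanded by Proposition~\ref{P:ITF-inv}. A concrete rendering would read:

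\begin{proof}
By Proposition~\ref{prop:VRITF}, since $\Phi = \{v_1,\ldots,v_N\}$ is a vertex reflexive isometric tight frame of vectors of length $\ell$ and $K = \conv(\Phi)$, we have $\frac{1}{\ell^2} v_i v_i^*(K) \subseteq K$ for all $1 \le i \le N$. Proposition~\ref{P:ITF-inv} then applies to the closed convex set $K$ and the isometric tight frame $\{v_1,\ldots,v_N\}$, giving $\Wmax{}(K) \subseteq d\,\Wmin{}(K)$.
\end{proof}
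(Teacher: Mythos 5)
Your proof is correct and matches the paper exactly: the paper itself states Theorem \ref{T:HSITF} as an immediate consequence of combining Propositions \ref{P:ITF-inv} and \ref{prop:VRITF}, which is precisely the two-step chaining you carry out.
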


As a consequence of Theorem \ref{T:HSITF} and \cite[Theorem 5.4]{BW13}, we obtain our results for any convex regular polytope (See \cite[Definition 5.1]{BW13}).

%%%%%%%%%%%%%%%%%%%%%%
\begin{corollary} \label{C:CRP-dil}
Let $K = \conv\{v_1,...,v_N \}$ be a convex regular \emph{real} polytope according to \cite[Definition 5.3]{BW13}.
Then $\Wmax{}(K) \subseteq d \Wmin{}(K)$.
\end{corollary}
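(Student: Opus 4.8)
The plan is to reduce the statement to Theorem \ref{T:HSITF}, whose hypothesis is that $K$ is the convex hull of a \emph{vertex reflexive isometric tight frame}. So the entire task is to check that a convex regular real polytope $K = \conv\{v_1,\dots,v_N\}$, normalized so that its vertices are unit vectors (which we may do, since scaling does not affect the inclusion $\Wmax{}(K) \subseteq d\,\Wmin{}(K)$), is exactly such a frame. First I would recall from \cite[Theorem 5.4]{BW13} that the vertex set $\Phi = \{v_1,\dots,v_N\}$ of a convex regular real polytope forms an isometric tight frame; this is the quoted input. It remains only to verify \emph{vertex reflexivity}, i.e.\ that for each $i$, the subgroup $\Stab(v_i) = \{U \in \Sym(\Phi) : Uv_i = v_i\}$ fixes no vector outside $\spn\{v_i\}$.

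The key step is therefore the following: by the definition of a regular polytope (\cite[Definition 5.1 / 5.3]{BW13}), its symmetry group $\Sym(\Phi)$ acts transitively on flags, and in particular acts \emph{transitively on the vertices}, and the stabilizer of a vertex acts transitively on the flags of the vertex figure. I would argue that this flag-transitivity of $\Stab(v_i)$ on the vertex figure forces it to act irreducibly on the orthogonal complement $v_i^\perp$ (a standard consequence: a flag-transitive symmetry group of a regular polytope of dimension $d-1$, here the vertex figure, acts irreducibly on $\bR^{d-1}$, since any invariant proper subspace would be preserved by enough reflections to be all of $\bR^{d-1}$). Consequently the fixed subspace of $\Stab(v_i)$ inside $v_i^\perp$ is $\{0\}$, and the fixed subspace of $\Stab(v_i)$ in all of $\bR^d$ is exactly $\spn\{v_i\}$ — which is precisely vertex reflexivity. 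Having established this, Theorem \ref{T:HSITF} applies directly and yields $\Wmax{}(K) \subseteq d\,\Wmin{}(K)$.

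I expect the main obstacle to be the clean citation-level argument that $\Stab(v_i)$ fixes \emph{exactly} a one-dimensional subspace rather than something larger: one must be careful that the "regular polytope" in \cite{BW13} is defined via flag-transitivity (or an equivalent Coxeter-group condition), so that the vertex stabilizer is itself the symmetry group of the regular vertex figure and hence acts with no nontrivial fixed vectors on $v_i^\perp$. If the definition in \cite{BW13} is stated combinatorially rather than group-theoretically, I would instead invoke their Theorem 5.4 in the stronger form that $\Sym(\Phi)$ is irreducible and transitive (as the paper already notes just before Proposition \ref{prop:VRITF}) and supplement with the observation that for a regular polytope the vertex stabilizer realizes the full symmetry group of the vertex figure, which is again irreducible on its ambient space. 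Once that is pinned down, everything else is an immediate appeal to Theorem \ref{T:HSITF}.

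\begin{proof}
Since the inclusion $\Wmax{}(K) \subseteq d\,\Wmin{}(K)$ is invariant under scaling of $K$, we may assume that the vertices $v_1,\dots,v_N$ are unit vectors. By \cite[Theorem 5.4]{BW13}, $\Phi = \{v_1,\dots,v_N\}$ is an isometric tight frame. It remains to check that $\Phi$ is vertex reflexive. Fix $i$. Because $K$ is regular (\cite[Definition 5.3]{BW13}), its symmetry group $\Sym(\Phi)$ is flag-transitive; consequently $\Stab(v_i)$ acts as the full symmetry group of the vertex figure of $K$ at $v_i$, which is a regular $(d-1)$-polytope lying in the hyperplane $v_i^\perp$. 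A flag-transitive symmetry group of a $(d-1)$-polytope acts irreducibly on $\bR^{d-1}$, so $\Stab(v_i)$ has no nonzero fixed vector in $v_i^\perp$. Since every element of $\Stab(v_i)$ fixes $v_i$, the fixed subspace of $\Stab(v_i)$ in $\bR^d$ is exactly $\spn\{v_i\}$. Thus $\Phi$ is a vertex reflexive isometric tight frame, and Theorem \ref{T:HSITF} gives $\Wmax{}(K) \subseteq d\,\Wmin{}(K)$.
\end{proof}
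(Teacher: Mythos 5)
Your proof is correct and follows essentially the same route as the paper: reduce to Theorem \ref{T:HSITF} by showing that the vertex set of a convex regular polytope is a vertex reflexive isometric tight frame, citing \cite[Theorem 5.4]{BW13} for the tight-frame input. The paper's one-line proof in fact invokes \cite[Theorem 5.4]{BW13} to get the full highly-symmetric (hence vertex-reflexive) property directly, whereas you take only the isometric-tight-frame conclusion from that reference and supply a self-contained geometric verification of vertex reflexivity via flag-transitivity and the vertex figure; this is a perfectly reasonable supplement, though the parenthetical justification ``any invariant proper subspace would be preserved by enough reflections to be all of $\bR^{d-1}$'' is not quite an argument --- the clean reason is that the symmetry group of a full-dimensional regular polytope is an irreducible finite Coxeter group (connected Coxeter diagram), and an orthogonal representation of such a group has no nonzero fixed vector.
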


%%%%%%%%%%%%%%%%%%%%%%
\begin{example}
In \cite{BW13}, a class of frames called {\em highly symmetric} frames was studied.
A {\em highly symmetric tight frame} is a vertex reflexive tight frame for which $\Sym(\Phi)$ is also transitive and irreducible (and is then automatically isometric).
The class of highly symmetric frames was shown to be rich, yet tractable.

We will now construct an example of a vertex reflexive isometric tight frame $\Theta$ for which $\Sym(\Theta)$ is not irreducible, not transitive, and for which \emph{no} vector $u\in \Theta$ satisfies $-u\in \Theta$.
Thus Theorem \ref{T:HSITF} applies, while Corollaries \ref{C:SITF} and \ref{C:CRP-dil} do not.

Let $G = S_5$ act on $(e_1+...+e_5)^{\perp}$ inside $\mathbb{R}^5$, where $\{e_1,...,e_5\}$ is the standard orthonormal basis, and $S_5$ acts by permutation matrices. Take the vector $\phi:= 3w_2 = (3,3,-2,-2,-2)$. Then by \cite[Example 4]{BW13} the frame $\Phi_2:= (g \phi)_{g \in S_5}$ is a vertex-reflexive isometric tight frame comprised of $10$ distinct vectors, and by construction we see that for all $v\in \Phi_2$, we have $-v \notin \Phi_2$.

Hence, let $\Phi = \{v_1,...,v_{10}\}$ be a unit-norm vertex-reflexive tight frame in $\mathbb{R}^4$, when we identify $\Phi$ inside $\mathbb{R}^4$ with the normalization of $\Phi_2$ inside $(e_1+...+e_5)^{\perp} \subseteq \mathbb{R}^5$. So we still have $-v\notin \Phi$ for all $v\in \Phi$.

We then take the vertex-reflexive unit-norm tight frame of the pentagon inside $\mathbb{R}^2$,
$$
\Psi: = \{ (\cos(2\pi k / 5),\sin(2 \pi k / 5)) \}_{k=1}^5
$$
Which has $5$ distinct elements, and satisfies $-w \notin \Psi$ for all $w \in \Psi$. Then define
$$
\Theta = (\Phi \oplus 0_2) \cup (0_4 \oplus \Psi)
$$
Which satisfies $u\notin \Theta$ for all $u\in \Theta$. We know by \cite[Lemma 2.1]{RW02} that
$$
\sum_{v \in \Phi} vv^* = \frac{10}{4} P \ \ \text{and} \ \ \sum_{w \in \Psi} ww^* = \frac{5}{2} Q
$$
Where $P$ and $Q$ are the orthogonal projections onto $\mathbb{R}^4$ and $\mathbb{R}^2$ respectively, that sum to the identity $I$ on $\mathbb{R}^6 = \mathbb{R}^4 \oplus \mathbb{R}^2$. Therefore,
$$
\sum_{u\in \Theta} uu^* = \frac{5}{2} (P+Q) = \frac{5}{2} I .
$$
Thus $\Theta$ is a unit-norm tight frame in $\mathbb{R}^6$. Since $\Sym(\Phi)$ and $\Sym(\Psi)$ fix $\mathbb{R}^4$ and $\mathbb{R}^2$ in the decomposition of $\mathbb{R}^6$ above, and $\Phi$ and $\Psi$ are vertex-reflexive, in their respective spaces, we have that their union $\Theta$ is also a vertex-reflexive unit-norm tight frame.

Since elements of $\Phi$ and $\Psi$ are always perpendicular when identified as elements of $\mathbb{R}^6$, no symmetry of $\Theta$ can map an element of $\Phi$ to an element of $\Psi$. Indeed, from the construction of $\Phi_2$, an element $v_i$ from $\Phi$ has no element perpendicular to it from $\Phi$, so that in $\Theta$, there are only $5$ elements perpendicular to $v_i$: those of $\Psi$. On the other hand, an element $w_j$ of $\Psi$ has exactly $10$ elements of $\Theta$ perpendicular to it: those of $\Phi$. Thus, no $v_i \in \Phi$ can be mapped to any $w_j \in \Psi$ via by a symmetry of $\Theta$, and $\Sym(\Theta)$ is not transitive. Hence, elements of $\Sym(\Theta)$ can only permute elements of $\Phi$ among themselves, and elements of $\Psi$ among themselves. Thus
\[ \Sym(\Theta) = \Sym(\Phi) \oplus \Sym(\Psi) ,\]
so that $\Sym(\Theta)$ is reducible.
\end{example}

%%%%%%%%%%%%%%%%%%%%%%%%%%%%%%%%%%%%%%%%%%%%%%%
\section{The matrix ball}\label{sec:ball}

Let $\ol{\bB} = \ol{\bB}_d$ denote the closed unit ball in $\bR^d$.
Recall that the ($d$-dimensional) {\em matrix ball} is defined to be
\[
\fB = \fB^{(d)} = \{X \in  M_n(\bC)^d_{sa}: \sum_{j=1}^d X_j^2 \leq I\}.
\]
We also introduce another ``ball'' which will turn out to conform more naturally to our duality.
Recall that the transpose is a linear map from $A\in\cB(H)$ to $\cB(H^*)$ is given by $A^t f = f \circ A$.
Define the \textit{conjugate} of $A \in \cB(H)$ to be $\ol A := (A^*)^t$.
It is called the conjugate because if $A = [a_{ij}]$ belongs to $M_n$, then $\bar A = [\ol{a_{ij}}]$.
Haagerup \cite[Lemma 2.4]{Haag83} established an important identity for the spatial tensor product by observing that
if $H$ and $K$ are Hilbert spaces, then a spatial tensor product $A \otimes B$ on $H \otimes K$ of two operators $A\in B(H)$ and $B \in B(K)$ can be represented as an
operator on the Hilbert-Schmidt operators $\cS_2(K,H)$ from $K$ into $H$,
which is canonically isomorphic to the Hilbert space $H \otimes K^*$.
Indeed, the operator $A \otimes \ol{B}$ is unitarily equivalent to the operator $u \to AuB^*$ in $\cB(\cS_2(K,H))$.
Haagerup shows that (in the spatial tensor norm)
\[
 \big\| \sum A_i \otimes \ol{B_i} \big\| \le
 \big\| \sum A_i \otimes \ol{A_i} \big\|^{1/2}  \big\| \sum B_i \otimes \ol{B_i} \big\|^{1/2} .
\]

Thus we may define
\[
\fD = \fD^{(d)} = \{ X \in  M_n(\bC)^d_{sa} : \big\| \sum_{j=1}^d X_j \otimes \ol{X_j} \big\| \leq 1 \}.
\]

%%%%%%%%%%%%%%%%%%%%%%%%%
\begin{lemma}
$\fD$ is a closed matrix convex set.
\end{lemma}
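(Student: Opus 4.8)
The plan is to verify the three defining properties of a matrix convex set for $\fD$: it is an nc set (closed under direct sums and unitary conjugation), it is closed under UCP maps, and it is closed (topologically). Since $\fD$ is manifestly a free set of self-adjoint tuples, the substantive content is (a) the nc/convexity part, and (b) closedness.

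First I would establish closedness under simultaneous unitary conjugation. If $X=(X_1,\dots,X_d)\in \fD_n$ and $U\in M_n$ is unitary, then $\overline{U X_j U^*}=\bar U\,\overline{X_j}\,\bar U^*$, and $\bar U$ is again unitary; hence $\sum_j (UX_jU^*)\otimes \overline{UX_jU^*}=(U\otimes \bar U)\big(\sum_j X_j\otimes\overline{X_j}\big)(U\otimes\bar U)^*$, which has the same norm. For direct sums, if $X\in\fD_n$ and $Y\in\fD_m$, then $\sum_j (X_j\oplus Y_j)\otimes\overline{(X_j\oplus Y_j)}$ decomposes, after a unitary permutation of tensor legs, as a $2\times 2$ operator matrix whose diagonal blocks are $\sum_j X_j\otimes\overline{X_j}$ and $\sum_j Y_j\otimes\overline{Y_j}$ and whose off-diagonal blocks are $\sum_j X_j\otimes\overline{Y_j}$ and its adjoint; by Haagerup's inequality the off-diagonal block has norm at most $\|\sum X_j\otimes\overline{X_j}\|^{1/2}\|\sum Y_j\otimes\overline{Y_j}\|^{1/2}\le 1$, and then a standard $2\times 2$ operator-matrix norm estimate (e.g.\ the bound $\|[\begin{smallmatrix} A & B\\ B^* & C\end{smallmatrix}]\|\le \max(\|A\|,\|C\|)$ when $\|B\|\le \|A\|^{1/2}\|C\|^{1/2}$, or more simply dominating the matrix by a positive multiple of the identity) shows the full operator has norm $\le 1$, so $X\oplus Y\in\fD_{n+m}$.

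Next, matrix convexity: it suffices to show that if $X\in\fD_n$ and $V:\bC^k\to\bC^n$ is an isometry then $(V^*X_1V,\dots,V^*X_dV)\in\fD_k$, together with the direct-sum closure already proved, since every UCP map $M_n\to M_k$ is a compression of an ampliation (via $\phi(T)=W^*(T\otimes I)W$ for an isometry $W$), and ampliation $X_j\mapsto X_j\otimes I_\ell$ visibly preserves membership. For the compression step, note $\overline{V^*X_jV}=\bar V^*\,\overline{X_j}\,\bar V$ with $\bar V$ again an isometry, so $\sum_j (V^*X_jV)\otimes\overline{(V^*X_jV)}=(V\otimes\bar V)^*\big(\sum_j X_j\otimes\overline{X_j}\big)(V\otimes\bar V)$, and compressing by an isometry does not increase the norm. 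I would wrap these two observations into the statement that for any contraction — in particular any isometry — conjugation preserves $\fD$, which gives both unitary invariance and the compression half of matrix convexity in one stroke.

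Finally, closedness: fix $n$ and suppose $X^{(m)}\to X$ in $M_n^d$. Then $X_j^{(m)}\to X_j$ and $\overline{X_j^{(m)}}\to\overline{X_j}$ in norm, so $\sum_j X_j^{(m)}\otimes\overline{X_j^{(m)}}\to\sum_j X_j\otimes\overline{X_j}$ in norm (this is a finite sum of products of norm-convergent sequences on the finite-dimensional space $\bC^n\otimes\bC^n$), and since the norm is continuous, $\|\sum_j X_j\otimes\overline{X_j}\|\le 1$; thus $X\in\fD_n$. I do not expect any genuine obstacle here — the only point requiring care is the bookkeeping of tensor legs and the invocation of Haagerup's inequality in the direct-sum step, where one must check that the off-diagonal mixed terms are controlled; everything else is routine functoriality of the conjugation map and continuity on finite-dimensional spaces.
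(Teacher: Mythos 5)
Your overall approach matches the paper's: invoke Haagerup's inequality to control the mixed terms in the direct-sum step, observe that conjugation by isometries (together with ampliation) preserves $\fD$ and thus yields closure under UCP maps, and note that closedness on each level is automatic. However, there is a concrete error in your direct-sum argument. The operator $\sum_j (X_j\oplus Y_j)\otimes\overline{(X_j\oplus Y_j)}$ does \emph{not} decompose as a $2\times 2$ operator matrix with off-diagonal blocks. Since both tensor factors $X_j\oplus Y_j$ and $\overline{X_j\oplus Y_j}$ are block-diagonal relative to $\bC^{n+m}=\bC^n\oplus\bC^m$, the tensor product is block-diagonal relative to the four-fold decomposition
\[
\bC^{n+m}\otimes\bC^{n+m} = (\bC^n\otimes\bC^n)\oplus(\bC^n\otimes\bC^m)\oplus(\bC^m\otimes\bC^n)\oplus(\bC^m\otimes\bC^m),
\]
with diagonal blocks $\sum_j X_j\otimes\overline{X_j}$, $\sum_j X_j\otimes\overline{Y_j}$, $\sum_j Y_j\otimes\overline{X_j}$, and $\sum_j Y_j\otimes\overline{Y_j}$, and \emph{zero} off-diagonal blocks. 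The norm of the direct sum is therefore the maximum of these four block norms, and Haagerup's inequality bounds the two mixed blocks directly; no $2\times 2$ operator-matrix estimate is needed.

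This is fortunate, because the estimate you cite, namely that $\big\|\begin{sbmatrix} A & B\\ B^* & C\end{sbmatrix}\big\|\le\max(\|A\|,\|C\|)$ whenever $\|B\|\le\|A\|^{1/2}\|C\|^{1/2}$, is false: take $A=B=C=1$ (scalars) to get norm $2 > 1$. (The correct bound with that hypothesis is $2\max(\|A\|,\|C\|)$, which would only give membership in $2\fD$, not $\fD$.) Once you replace the incorrect $2\times 2$ decomposition with the correct block-diagonal one, your argument is sound and coincides with the paper's.
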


\begin{proof}
Clearly $\fD$ is closed.
Observe that $\fD = -\fD$ and is invariant under the conjugation map sending $A$ to $\ol{A}$.
In particular, the norm condition is equivalent to the two inequalities
\[ \pm \sum_{j=1}^d X_j \otimes \ol{X_j}  \leq  I .\]
Haagerup's inequality immediately shows that if $X\in \fD(m)$ and $Y\in\fD(n)$, then $X \oplus Y \in \fD(m+n)$.
It is also routine to show that if $A \in M_{mn}$ is a contraction, and $X \in \fD(n)$, then $AXA^* \in \fD(m)$.
So $\fD$ is matrix convex.
\end{proof}

Observe that $\fB = - \fB$ and is also closed under conjugation, and conjugation is isometric.
A self-adjoint $d$-tuple $X$ belongs to $\fB$ exactly when
\[ \|X\| = \| [ X_1\ X_2\ \dots\ X_d ] \| = \big\| \sum_{j=1}^d X_j^2\, \big\|^{1/2} \le 1 .\]
Therefore
\begin{align*}
 \big\| \textstyle\sum_{j=1}^d X_j \otimes \ol{X_j} \big\| &\le
 \| X \otimes I \| \, \| I \otimes \ol{X} \|  \\ &=
 \big\| \textstyle\sum_{j=1}^d X_j^2 \otimes I \big\|^{1/2} \ \big\| \textstyle\sum_{j=1}^d I \otimes \ol{X_j}^2 \big\|^{1/2} \\ &=
 \| X\|\, \|\ol{X}\| \le 1.
\end{align*}
It follows that $\fB\subseteq \fD$.

Clearly $\fB(1) = \fD(1) = \ol{\bB}$.
Thus we have
\[
\Wmin{}(\ol{\bB}) \subseteq \fB \subseteq \fD \subseteq \Wmax{}(\ol{\bB}) .
\]
It is natural to ask about the precise place these matrix balls take in this inequality.

%%%%%%%%%%%%%%%%%%%%%%%%%
\begin{lemma}\label{L:fD}
$\fD$ is self-dual, i.e., $\fD^\bullet = \fD$.
\end{lemma}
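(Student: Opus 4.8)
The goal is to show $\fD^\bullet = \fD$, i.e., that the polar dual of the matrix ball $\fD$ (in the self-adjoint sense) equals $\fD$ itself. The plan is to establish the two inclusions $\fD \subseteq \fD^\bullet$ and $\fD^\bullet \subseteq \fD$ separately, and the workhorse in both directions will be Haagerup's inequality
\[
\big\| \sum_j A_j \otimes \ol{B_j} \big\| \le \big\| \sum_j A_j \otimes \ol{A_j} \big\|^{1/2} \big\| \sum_j B_j \otimes \ol{B_j} \big\|^{1/2},
\]
combined with the fact that the conjugation map $A \mapsto \ol A$ is isometric, $\fD = -\fD$, and $\fD$ is invariant under conjugation.

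\textbf{Inclusion $\fD \subseteq \fD^\bullet$.} Let $X \in \fD(n)$; I must check that for every $Y \in \fD(m)$ we have $\sum_j X_j \otimes Y_j \le I$. Since both $X$ and $Y$ are self-adjoint, so is $\sum_j X_j \otimes Y_j$; since $\fD$ is conjugation-invariant, $\ol Y \in \fD(m)$ as well (its defining norm is unchanged under conjugation). Now apply Haagerup's inequality with $A_j = X_j$ and $B_j = \ol{Y_j}$ — noting $\ol{\ol{Y_j}} = Y_j$ — to get $\big\| \sum_j X_j \otimes Y_j \big\| \le \big\| \sum_j X_j \otimes \ol{X_j} \big\|^{1/2} \big\| \sum_j \ol{Y_j} \otimes Y_j \big\|^{1/2}$. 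The first factor is $\le 1$ because $X \in \fD$; the second factor equals $\big\| \sum_j Y_j \otimes \ol{Y_j} \big\|^{1/2}$ (conjugate the whole operator, which is isometric, or flip the tensor legs by the unitary that swaps them) which is also $\le 1$ because $Y \in \fD$. Hence $\big\| \sum_j X_j \otimes Y_j \big\| \le 1$, so in particular $\sum_j X_j \otimes Y_j \le I$, giving $X \in \fD^\bullet$.

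\textbf{Inclusion $\fD^\bullet \subseteq \fD$.} Let $X \in (\fD^\bullet)_n$. The defining condition gives $\sum_j X_j \otimes Y_j \le I$ for all $Y \in \fD$. The natural choice is to test against $Y = \ol X$ (which lies in $\fD$ iff $X$ does — but I don't yet know $X \in \fD$, so instead I argue as follows): for any scalar $t$ with $|t|$ small enough that $t X \in \fD$... no — cleaner is to exploit $\fD = -\fD^\bullet$ too. Actually the clean route: since $\fD^\bullet = -\fD^\bullet$ and is conjugation-invariant (inherited from $\fD$ via the symmetries $\fD = -\fD$ and $\fD$ conjugation-invariant, so the dual inherits both), testing the pair $(X, \ol X)$ requires knowing $\ol X \in \fD$. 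To avoid circularity, I instead use that $X \in \fD^\bullet$ means $\pm \sum_j X_j \otimes Y_j \le I$, i.e. $\| \sum_j X_j \otimes Y_j\| \le 1$, for all $Y \in \fD$, and then take a \emph{weak-}$*$ or norm-limit approximation: pick $Y$ approximating the direction of $\ol X$. Concretely, for $0 < s < 1$ scale so that $s\,\ol X \in \fD$ — this holds for $s$ small since $\fD$ is a bounded matrix convex set containing $0$ in its interior at level $1$ (indeed $\fD(1) = \ol\bB_d$), and boundedness of $\fD$ from Corollary~\ref{cor:bounded}-type reasoning — wait, more carefully: $s\,\ol X \in \fD$ for $s \le 1/\|\sum_j X_j\otimes\ol{X_j}\|^{1/2}$ when that quantity is finite; if $X$ has $\|\sum X_j \otimes \ol{X_j}\| = \infty$... that cannot happen for matrices. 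So for all sufficiently small $s>0$, $s\,\ol X \in \fD$, and then $\| \sum_j X_j \otimes s\,\ol{X_j} \| \le 1$, i.e. $\| \sum_j X_j \otimes \ol{X_j}\| \le 1/s$. Taking $s \uparrow$ to its maximal admissible value, which is exactly $\|\sum X_j \otimes \ol{X_j}\|^{-1/2}$, yields $\|\sum_j X_j \otimes \ol{X_j}\| \le \|\sum_j X_j \otimes \ol{X_j}\|^{1/2}$, hence $\|\sum_j X_j \otimes \ol{X_j}\| \le 1$, i.e. $X \in \fD$.

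\textbf{Main obstacle.} The forward inclusion $\fD \subseteq \fD^\bullet$ is essentially immediate from Haagerup's inequality. The delicate point is the reverse inclusion: making the self-referential scaling argument rigorous (justifying that the supremum of admissible $s$ with $s\,\ol X \in \fD$ is precisely $\|\sum X_j\otimes\ol{X_j}\|^{-1/2}$, using that $\fD$ is closed so the maximal $s$ is attained). Alternatively, and perhaps more cleanly, one can bypass scaling: observe that $X \in \fD^\bullet$ with the conjugation symmetry lets one test against $Y_j = \ol{X_j}/\|\sum_k X_k \otimes \ol{X_k}\|^{1/2}$ directly once one checks this $Y$ lies in $\fD$ (its defining norm is $\|\sum_j \ol{X_j} \otimes X_j\| / \|\sum_k X_k\otimes\ol{X_k}\| = 1 \le 1$), and then $1 \ge \|\sum_j X_j \otimes Y_j\| = \|\sum_j X_j\otimes\ol{X_j}\| / \|\sum_k X_k\otimes\ol{X_k}\|^{1/2} = \|\sum_j X_j\otimes\ol{X_j}\|^{1/2}$, giving $X \in \fD$ with no limiting argument at all. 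This second approach is the one I would write up, with the only subtlety being the harmless degenerate case $X = 0$.
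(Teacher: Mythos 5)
Your proof is correct and takes essentially the same approach as the paper: the forward inclusion $\fD\subseteq\fD^\bullet$ via Haagerup's inequality is identical, and the reverse inclusion is the paper's argument with the supremum/contradiction step (the paper sets $r_0 = \sup\{r : rY\in\fD\}$ and shows $r_0<1$ is impossible) replaced by your cleaner direct computation testing $X$ against $\ol X/\|\sum_k X_k\otimes\ol{X_k}\|^{1/2}$, which is the same underlying idea. The only point worth keeping explicit in the write-up, which you already noted, is handling $X=0$ and using $\pm Y\in\fD$ to upgrade $\sum_j X_j\otimes Y_j\le I$ to a norm bound.
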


\begin{proof}
Note that $\fD^\bullet = -\fD^\bullet$ and is also closed under conjugation.
Haagerup's inequality shows that if $X$ and $Y$ belong to $\fD$, then
\[  \big\| \sum X_i \otimes \ol{Y_i} \big\| \le 1. \]
Since $\ol{Y}$ also belongs to $\fD$, we deduce that $L_X(\pm Y) \ge 0$.
It follows that $\fD^\bullet \supseteq \fD$.

Conversely, suppose that $Y$ belongs to $\fD^\bullet$.
Since $\fB$ has $0$ in its interior, so does $\fD$.
Let
\[ r_0 = \sup \{ r : rY \in \fD \} .\]
If $r_0 \ge 1$, then $Y \in \fD$.
But if $r_0<1$, we have $\pm\ol{Y} \in \fD^\bullet$, so that  $L_{r_0Y}(\pm\ol{Y}) \ge 0$.
This implies that
\[  \big\| \sum r_0 Y_i \otimes \ol{Y_i} \big\| \le 1 .\]
However this clearly means that $\sqrt{r_0}Y$ belongs to $\fD$.
Hence $\sqrt{r_0} \le r_0$ and so $r_0\ge 1$ as desired.
\end{proof}

%%%%%%%%%%%%%%%%%%%%%%%%%%%%%%%
\begin{remark}\label{R:unique self-dual}
If $\cS$ is a self-dual matrix convex set in $(M_n)^d_{sa}$, then $\cS_1 = \ol{\bB_d}$.
Indeed, for every $x \in \cS_1$, we have that $\langle x, x \rangle \leq 1$, thus $\cS_1 \subseteq \ol{\bB_d}$.
It follows that $\cS \subseteq \Wmax{}(\ol{\bB_d})$, so $\cS = \cS^\bullet \supseteq \Wmin{}(\ol{\bB_d})$ (Theorem \ref{T:polar of min}), and in particular we obtain the reverse inclusion $\cS_1 \supseteq \ol{\bB_d}$.

If $\cS$ is also closed under multiplication by $\pm1$ and conjugation, then $\|\sum X_j \otimes \ol{X}_j \| \leq 1$ for $X \in \cS$, thus $\cS \subseteq \fD$.
Applying the polar dual we find $\cS \supseteq \fD$.
That is, $\fD$ is the unique self-dual matrix convex set closed under  multiplication by $\pm1$ and conjugation.
\end{remark}

For this reason, we will call $\fD$ the \emph{self-dual matrix ball}.
We obtain the following immediate consequence.

%%%%%%%%%%%%%%%%%%%%%%%%%%%%%%%
\begin{corollary}
\[
\Wmin{}(\ol{\bB}) \subset \fB \subset \fD=\fD^\bullet  \subset \fB^\bullet\subset \Wmax{}(\ol{\bB})
\]
and these containments are all proper for $d>1$.
\end{corollary}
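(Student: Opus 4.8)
The plan is to notice that every \emph{non-strict} containment in the displayed chain is already at hand, and then to establish properness by exhibiting two explicit separating examples and dualizing the rest.

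\emph{Reduction.} We established above that $\Wmin{}(\ol{\bB})\subseteq\fB\subseteq\fD$, and $\fD=\fD^\bullet$ is Lemma~\ref{L:fD}. Since $\cS\mapsto\cS^\bullet$ reverses inclusions, applying it to $\fB\subseteq\fD$ gives $\fD=\fD^\bullet\subseteq\fB^\bullet$, and applying it to $\Wmin{}(\ol{\bB})\subseteq\fB$ gives $\fB^\bullet\subseteq\Wmin{}(\ol{\bB})^\bullet$. Because $(\ol{\bB}_d)'=\ol{\bB}_d$, Theorem~\ref{T:polar of min} identifies $\Wmin{}(\ol{\bB})^\bullet=\Wmax{}(\ol{\bB})$, so the full chain holds. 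Moreover $\fB$, $\fD$ and $\Wmin{}(\ol{\bB})$ are matrix convex sets containing $0$, so the bipolar theorem (Lemma~\ref{lem:bipolar}) gives $\cS^{\bullet\bullet}=\cS$ for each of them. Hence it suffices to show that the two inclusions $\Wmin{}(\ol{\bB}_d)\subseteq\fB^{(d)}$ and $\fB^{(d)}\subseteq\fD^{(d)}$ are proper for $d>1$; applying $\bullet$ then upgrades $\fD^\bullet\subseteq\fB^\bullet$ and $\fB^\bullet\subseteq\Wmax{}(\ol{\bB})$ to proper inclusions as well, since an equality would, after one more $\bullet$, contradict the strictness we start from.

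\emph{Properness of $\Wmin{}(\ol{\bB}_d)\subseteq\fB^{(d)}$.} Put $F=\begin{spmatrix}0&1\\1&0\end{spmatrix}$, $G=\begin{spmatrix}0&-i\\i&0\end{spmatrix}$, and $X=\big(\tfrac1{\sqrt2}F,\tfrac1{\sqrt2}G,0,\dots,0\big)\in(M_2)^d_{sa}$. Since $F^2=G^2=I_2$ we get $\sum_jX_j^2=I_2$, so $X\in\fB^{(d)}$. If $X\in\Wmin{}(\ol{\bB}_d)$, write $X_i=V^*N_iV$ for an isometry $V\colon\bC^2\to H$ and a commuting self-adjoint tuple $N$ with $\sigma(N)\subseteq\ol{\bB}_d$, so that $\sum_jN_j^2\le I_H$. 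For every unit vector $\xi\in\bC^2$,
\[
1=\langle(X_1^2+X_2^2)\xi,\xi\rangle=\sum_{i=1}^2\|V^*N_iV\xi\|^2\le\sum_{i=1}^2\|N_iV\xi\|^2\le\Big\langle\Big(\sum_jN_j^2\Big)V\xi,V\xi\Big\rangle\le1,
\]
so all inequalities are equalities; the first forces $\|V^*(N_iV\xi)\|=\|N_iV\xi\|$, i.e. $N_iV\xi\in\operatorname{ran}V$, for $i=1,2$ and all $\xi$. Hence $\operatorname{ran}V$ is reducing for $N_1$ and $N_2$, so $X_1X_2=V^*N_1N_2V=V^*N_2N_1V=X_2X_1$, contradicting $FG\neq GF$. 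Thus $X\notin\Wmin{}(\ol{\bB}_d)$.

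\emph{Properness of $\fB^{(d)}\subseteq\fD^{(d)}$, and conclusion.} Here the point is a \emph{non-normal} separating pair. Put $E=\begin{spmatrix}1&0\\0&0\end{spmatrix}$ and $X=\big(\tfrac{\sqrt3}{2}E,\tfrac1{\sqrt2}F,0,\dots,0\big)\in(M_2)^d_{sa}$, with $F$ as above. Then $\sum_jX_j^2=\tfrac34E+\tfrac12I_2=\diag(\tfrac54,\tfrac12)\not\le I_2$, so $X\notin\fB^{(d)}$ (the zero entries change neither $\sum_jX_j^2$ nor $\sum_jX_j\otimes\ol{X_j}$). The matrices are real, so $\sum_jX_j\otimes\ol{X_j}=\tfrac34\,E\otimes E+\tfrac12\,F\otimes F$ on $\bC^2\otimes\bC^2$; this operator leaves $\operatorname{span}\{e_1\otimes e_1,e_2\otimes e_2\}$ and $\operatorname{span}\{e_1\otimes e_2,e_2\otimes e_1\}$ invariant, acting on them as $\begin{spmatrix}3/4&1/2\\1/2&0\end{spmatrix}$ (eigenvalues $1,-\tfrac14$) and $\begin{spmatrix}0&1/2\\1/2&0\end{spmatrix}$ (eigenvalues $\pm\tfrac12$), whence $\big\|\sum_jX_j\otimes\ol{X_j}\big\|=1$ and $X\in\fD^{(d)}$. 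Combining this with the previous step and the reduction yields all four proper containments for $d>1$; for $d=1$ all of these sets coincide with $\{X:-I\le X\le I\}$, which is why the hypothesis $d>1$ is needed. The only genuinely nonroutine ingredient is locating the separating pair for $\fB\subsetneq\fD$ — one needs a non-normal tuple whose Haagerup-type norm $\big\|\sum_jX_j\otimes\ol{X_j}\big\|$ is computable exactly and strictly below $\big\|\sum_jX_j^2\big\|$; the rest is duality bookkeeping over results already proved in this section.
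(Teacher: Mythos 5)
Your proof is correct, and the overall skeleton matches the paper's: two strict inclusions established at the bottom of the chain, then duality plus the bipolar theorem propagates strictness upward. But the two separating examples are different from the paper's, and the argument at $\fB\subsetneq\fD$ takes a genuinely different route. For $\Wmin{}(\ol{\bB})\subsetneq\fB$ the paper uses the tuple $X_1=\begin{spmatrix}1/2&0\\0&0\end{spmatrix}$, $X_2=\begin{spmatrix}0&3/4\\3/4&0\end{spmatrix}$ and simply \emph{cites} \cite[Example 3.1]{HKM13} to get that $X\notin\Wmin{}(\ol{\bB})$; your Pauli-matrix example $\big(\tfrac1{\sqrt2}F,\tfrac1{\sqrt2}G,0,\dots\big)$ carries a self-contained rigidity argument (equality in the dilation inequalities forces $\operatorname{ran}V$ to be reducing for $N_1,N_2$, contradicting noncommutativity), which is a nice alternative that avoids the external reference. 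For the middle strict inclusion, the paper never exhibits a point of $\fD\setminus\fB$ directly: it shows $\fB\neq\fB^\bullet$ by producing the tuple $B$ from the defining pencil of $\fB$ with $B\in\fB^\bullet\setminus\fB$, and then concludes $\fB\subsetneq\fD$ indirectly from self-duality of $\fD$ (equality would force $\fB=\fB^\bullet$). You instead produce a concrete non-normal pair $\big(\tfrac{\sqrt3}{2}E,\tfrac1{\sqrt2}F\big)$ with $\sum X_j^2\not\le I$ but $\big\|\sum X_j\otimes\ol{X_j}\big\|=1$, so the separation $\fB\subsetneq\fD$ is seen explicitly; the paper's route buys the same conclusion with less computation but is less illuminating about how the Haagerup tensor norm can drop strictly below $\|\sum X_j^2\|$. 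Both are valid; your version is more self-contained. The duality bookkeeping (reversal of inclusions, $\Wmin{}(\ol{\bB})^\bullet=\Wmax{}(\ol{\bB})$ via $\ol{\bB}'=\ol{\bB}$, bipolar to upgrade to strictness) is identical to the paper's.
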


\begin{proof}
 For the first containment,  let
\[
X_1 = \begin{pmatrix}  \frac{1}{2} & 0 \\ 0 & 0 \end{pmatrix} \,\, , \,\, X_2 = \begin{pmatrix}  0 & \frac{3}{4} \\ \frac{3}{4} & 0 \end{pmatrix}.
\]
One verifies $X \in \fB$, but by \cite[Example 3.1]{HKM13}, $X$ is not contained in some other spectrahedron $\cD_\Gamma$ with $\cD_\Gamma(1) = \ol{\bB}$.
In particular, $X$ is not in $\Wmin{}(\ol{\bB})$.
(Actually, $\Gamma = L_E$, where $E = (E_1, E_2)$ is given in Lemma \ref{lem:example}.)

If $d>1$, $\fB = \fB^{(d)}$ is not equal to its own polar dual.
Indeed, for $i=1, \ldots, d$, let $B_i$ be the matrix on $\bC^{d+1}$ that switches between $e_1$ and $e_{i+1}$ and sends all other basis vectors to $0$.
Then $\fB = \cD_B^{sa}$ (recall Example \ref{ex:def_ball}), thus $B \in \fB^\bullet$.
On the other hand,
\[ \sum_j B_j^2 = I + (d-1) e_1 e_1^* , \]
where $e_1 e_1^*$ denotes the orthogonal projection onto the first basis vector.
Therefore, $B$ is not in $\fB$. So $\fB^\bullet \ne \fB$.

As $\fB \subseteq \fD$, which is self-dual by the Lemma~\ref{L:fD}, we have $\fB \subsetneq \fD$.
By duality, we obtain $\fD \subsetneq \fB^\bullet \subsetneq \Wmax{}(\ol{\bB})$.
\end{proof}

\begin{remark}
It is not possible to use $\|\sum A_i \otimes A_i \|$ instead of $\|\sum A_i \otimes \ol{A_i} \|$ in the definition of $\fD$, as numerical computations on $3\times 3$ matrix tuples show that these quantities are different in general.
\end{remark}

By analogy to the matrix cube problem, we ask for which constant $C$ does the following implication hold:
\be\label{eq:MBP}
\ol{\bB} \subseteq \cD_A(1) \Longrightarrow \fD \subseteq C \cD_A .
\ee
Note that this is not in perfect analogy with the matrix cube problem, because $\fC = \Wmax{}([-1,1]^d)$, whereas $\fD$ is somewhere near the `center' of the range of matrix convex sets with first level equal to $\ol{\bB}$.
However, we already completely solved the problem for $\Wmax{}(\ol{\bB})$ above in Corollary \ref{cor:mainrelax}.
We know by Corollary \ref{cor:mainrelax} that $C=d$ works in \eqref{eq:MBP}, but we will do better in this case.

Since $\fD \neq \Wmax{}(\ol{\bB})$, we also ask for a constant $c$ such that
\be\label{eq:MBPreverse}
\cD_A(1) \subseteq \ol{\bB} \Longrightarrow  c \cD_A  \subseteq \fD.
\ee

%%%%%%%%%%%%%%%%%%%%%%%%%
\begin{remark}
In a recent revision of the paper \cite{HKMS15} (that appeared after we obtained the results of this section), results similar to those in this section were obtained using different methods.
It is worth noting that \cite{HKMS15} treats four sets which they call matrix balls: $\fB^{\max}$, which is what we denote by $\Wmin{}(\ol{\bB})$; $\fB^{\min}$, which is what we denote by $\Wmax{}(\ol{\bB})$; $\fB^{\operatorname{oh}}$, which is what we denote by $\fB$; and finally, $\fB^{\operatorname{spin}}$, which is a certain free spectrahedra with $\fB^{\operatorname{spin}}(1) = \ol{\bB}$ which we do not discuss. (However, the spin matrices have arisen in Example \ref{ex:dsharp}.)
\end{remark}

%%%%%%%%%%%%%%%%%%%%%%%%%%%%%%%
\begin{theorem}\label{thm:MBP}
Let $\cS \subseteq \cup_{n} (M_n)^d_{sa}$ be a matrix convex set.
Then
\[
 \cS_1 \subseteq \ol{\bB} \Longrightarrow   \cS \subseteq \sqrt{d} \, \fB \subseteq \sqrt{d} \, \fD
\]
and
\[
\ol{\bB} \subseteq \cS_1 \Longrightarrow \fD \subset \fB^\bullet \subseteq \sqrt{d} \, \cS .
\]
Moreover, the constant $\sqrt{d}$ is the optimal constant in both implications.
\end{theorem}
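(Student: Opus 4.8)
The plan is to establish the two implications by exploiting the self-duality of $\fD$ (Lemma~\ref{L:fD}), the inclusion $\fB \subseteq \fD$, and the fact that $\fB$ is the free spectrahedron $\cD_{L_1}^{sa}$ of Example~\ref{ex:def_ball}; the optimality of $\sqrt d$ will come from an explicit tuple. For the first implication, assume $\cS_1 \subseteq \ol\bB$. Since $\ol\bB \subseteq \sqrt d\,\fB(1) = \sqrt d\,\ol\bB$ is automatic, the real content is that $\cS \subseteq \sqrt d\,\fB$, i.e.\ for $X \in \cS_n$ we want $\|\sum_j X_j^2\| \le d$. The key observation is that $\fB = \Wmax{}(\ol\bB)$ is \emph{false} but $\sqrt d\,\fB \supseteq \Wmax{}(\ol\bB)$ should hold: indeed if $X \in \Wmax{}(\ol\bB)$ then $\sum v_j X_j \le I$ for every unit vector $v$, and applying this to $\pm v$ gives $\|\sum v_j X_j\| \le 1$ for all unit $v$; a standard argument (choosing $v$ a unit vector of scalars and comparing $\|[X_1\ \cdots\ X_d]\|$ with the maximum over such $v$, using that $\|[X_1\ \cdots\ X_d]\|^2 = \|\sum X_j^2\|$ and that averaging $\sum v_j X_j$ against a spherical measure, or just Cauchy--Schwarz coordinate-wise, loses a factor $\sqrt d$) yields $\|\sum_j X_j^2\|^{1/2} \le \sqrt d$. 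Since $\cS_1 \subseteq \ol\bB$ forces $\cS \subseteq \Wmax{}(\ol\bB)$ by Proposition~\ref{min and max}, we conclude $\cS \subseteq \sqrt d\,\fB \subseteq \sqrt d\,\fD$ using $\fB \subseteq \fD$.

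For the second implication, assume $\ol\bB \subseteq \cS_1$. Dualizing, $\cS^\bullet \subseteq \Wmax{}(\ol\bB)^\bullet$ — but more usefully, $\cS_1 \supseteq \ol\bB$ gives $\cS \supseteq \Wmin{}(\ol\bB)$ by Proposition~\ref{min and max}, hence $\cS^\bullet \subseteq \Wmin{}(\ol\bB)^\bullet = \Wmax{}(\ol\bB')=\Wmax{}(\ol\bB)$ by Theorem~\ref{T:polar of min} (using that the Euclidean ball is self-polar). By the first implication applied to the matrix convex set $\cS^\bullet$ (which has $(\cS^\bullet)_1 \subseteq \ol\bB$), we get $\cS^\bullet \subseteq \sqrt d\,\fB$. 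Taking polar duals and using Lemma~\ref{lem:bipolar} (so $\cS^{\bullet\bullet} = \cS$, valid since $0 \in \cS$) together with $(\sqrt d\,\fB)^\bullet = \tfrac{1}{\sqrt d}\fB^\bullet$ and $\fD = \fD^\bullet \subseteq \fB^\bullet$, we obtain $\cS \supseteq \tfrac{1}{\sqrt d}\fB^\bullet \supseteq \tfrac{1}{\sqrt d}\fD$, i.e.\ $\fD \subseteq \fD^\bullet\subseteq\fB^\bullet \subseteq \sqrt d\,\cS$, which is the claim (with the chain $\fD \subset \fB^\bullet \subseteq \sqrt d\,\cS$).

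For optimality, it suffices to exhibit a single matrix convex set realizing equality, and the natural candidate uses the spin matrices of Lemma~\ref{lem:example}: take $B = (B_1,\ldots,B_d)$ the self-adjoint $2^{d-1}\times 2^{d-1}$ matrices there, so $\sum v_i B_i \le I$ for all unit $v$ (hence $B \in \fB^\bullet$, in fact $\cD_B^{sa}(1) = \ol\bB$ essentially), yet $\sum_i B_i^2 = \sum_i B_i\otimes B_i$ restricted appropriately has norm $\ge d$, giving $\|\sum_i B_i^2\| = d$. This single tuple shows $\sqrt d$ cannot be replaced by anything smaller in the first implication (with $\cS = \cD_B^{sa}$ or $\cS = \Wmax{}(\ol\bB)$, since $B \in \Wmax{}(\ol\bB)$ but $B \notin C\fB$ for $C<\sqrt d$), and then the duality argument of the previous paragraph transports this to optimality in the second implication. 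The main obstacle I anticipate is pinning down the exact constant in the elementary inequality $\Wmax{}(\ol\bB) \subseteq \sqrt d\,\fB$ — one must verify that the factor is precisely $\sqrt d$ and not something worse, which is where the spin-matrix example does double duty as both the sharpness witness and a consistency check; a clean way to see $\subseteq\sqrt d\,\fB$ is: if $\|\sum v_jX_j\|\le 1$ for all unit $v$, then for any unit vector $\xi$, $\sum_j\langle X_j\xi,X_j\xi\rangle = \sum_j\|X_j\xi\|^2$, and writing $w_j = X_j\xi$, $\sum_j\|w_j\|^2 = \sum_j\langle X_j\xi, w_j\rangle = \langle \sum_j (\|w_j\|^{-1}w_j\text{-weighted})\ldots\rangle$ — more carefully, set $v_j = \|w_j\|/(\sum_k\|w_k\|^2)^{1/2}$ so $\|v\|=1$ and $\sum_j v_j\|w_j\| = (\sum_j\|w_j\|^2)^{1/2} \le \|\sum_j v_j X_j\|\cdot\|\xi\| \le 1$ only after a Cauchy--Schwarz step introducing $\sqrt d$; the bookkeeping here is the one routine-but-delicate point.
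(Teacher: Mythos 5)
Your overall architecture matches the paper's: you reduce the first implication to the containment $\Wmax{}(\ol{\bB}) \subseteq \sqrt{d}\,\fB$, derive the second implication by polar duality via Lemma~\ref{lem:bipolar}, and use the spin tuple of Lemma~\ref{lem:example} for sharpness. The problem is the one step you yourself flag as ``delicate'': your attempt to prove $\Wmax{}(\ol{\bB}) \subseteq \sqrt{d}\,\fB$ does not go through as written.

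Concretely, after setting $w_j = X_j\xi$ and $v_j = \|w_j\|/(\sum_k\|w_k\|^2)^{1/2}$, the quantity $\sum_j v_j\|w_j\|$ is \emph{not} dominated by $\|\sum_j v_j X_j\|\,\|\xi\|$: what the operator-norm bound gives you is $\|\sum_j v_j w_j\| \le \|\sum_j v_j X_j\|\,\|\xi\| \le 1$, and $\|\sum_j v_j w_j\|$ can be strictly smaller than $\sum_j v_j\|w_j\|$ (e.g.\ when the $w_j$ are mutually orthogonal). So the chain of inequalities you propose collapses, and the vague ``Cauchy--Schwarz step introducing $\sqrt d$'' does not repair it. You are also working harder than necessary: you invoke the full condition $\|\sum_j v_j X_j\| \le 1$ for all unit $v$, whereas the paper only needs the finitely many constraints coming from $\pm e_j \in \ol{\bB}$. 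Those give $\pm X_j \le I$, hence $X_j^2 \le I$, and summing the $d$ inequalities yields $\sum_j X_j^2 \le dI$ directly, i.e.\ $\tfrac{1}{\sqrt d}X \in \fB$. That one line replaces the whole problematic passage. A small further imprecision: in the sharpness discussion you conflate $\sum_i B_i^2$ and $\sum_i B_i\otimes B_i$ (with the spin matrices one has $B_i^2 = I$, so $\sum_i B_i^2 = dI$, while separately $\|\sum_i B_i\otimes \ol{B_i}\| = \|\sum_i B_i\otimes B_i\| = d$ since the $B_i$ are real); both facts are needed — the first for $\fB$-optimality, the second for $\fD$-optimality — and they should be stated as the two distinct computations they are. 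The duality step, the use of $\fD = \fD^\bullet \subseteq \fB^\bullet$, and the transport of sharpness to the second implication are all fine and coincide with the paper.
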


\begin{proof}
Suppose that $\cS_1 \subseteq \ol{\bB}$ and that $X \in \cS$.
Then $X \in \Wmax{}(\ol{\bB})$ (see Remark \ref{rem:monotone}), so $\sum_j a_j X_j \leq I$ for all $a \in \ol{\bB}$.
In particular $\pm X_j \leq I$, equivalently $X_j^2 \leq I$ for all $j$.
Thus $\sum_j X_j^2 \leq d I$, meaning that $\frac{1}{\sqrt d}X \in \fB$, as required.

To obtain the second implication we use polar duality.
If $\ol{\bB} \subseteq \cS_1$, then $(\cS^\bullet)_1 \subseteq \ol{\bB}$, so by the first implication
\[
\cS^\bullet  \subseteq \sqrt{d} \, \fB.
\]
Applying the polar dual again, we obtain
\[
(\sqrt{d} \fB)^\bullet \subseteq \cS^{\bullet \bullet} = \cS .
\]
For $\cS^{\bullet\bullet} = \cS$, note that $\ol{\bB} \subseteq \cS_1$ implies that $0 \in \cS$ so one can invoke Lemma \ref{lem:bipolar}.
The result therefore follows from
\[
(\sqrt{d} \fB)^\bullet = \frac{1}{\sqrt{d}} \fB^\bullet \supseteq \frac{1}{\sqrt{d}} \fD .
\]

Now by Lemma~\ref{lem:example}, there is a $d$-tuple of real Hermitian matrices $B$ in $\Wmax{}(\ol{\bB})$
such that $\| \sum B_i \otimes B_i \| = d$.
Since $B$ is real, we have $B = \ol{B}$.
Therefore it is clear that $c = 1/\sqrt d$ is the largest constant so that $cB \in \fD$.
The other inequality is also sharp by duality.
\end{proof}

%%%%%%%%%%%%%%%%%%%%%%%%%%%%%%%
\begin{remark}
We can summarize the previous theorem as
\be\label{eq:ball containment}
\Wmax{}(\ol{\bB}) \subseteq \sqrt{d} \, \fD \subseteq {d} \, \Wmin{}(\ol{\bB}).
\ee
From this we obtain Corollary \ref{cor:mainrelax2} for the case where $\cS_1 = \ol{\bB}$.
\end{remark}

By Theorem \ref{thm:MBP}, $\fB \subset \fD \subseteq \sqrt{d} \Wmin{}(\ol{\bB})$.
We therefore have the following corresponding dilation result.

\begin{corollary}
For $X=(X_1,...,X_d) \in (M_n)_{sa}^d$, if $\|\sum_{j=1}^d X_j \otimes \ol{X_j} \|\leq 1$ (in particular, if $\sum_{j=1}^d X_j^2 \leq I$), then there exists $T=(T_1,...,T_d)$ commuting self-adjoint matrices such that $\sigma(T) \subseteq \ol{\bB}$ and $\sqrt{d}T$ dilates $X$.
\end{corollary}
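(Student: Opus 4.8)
The plan is to unpack the hypothesis as the single membership statement $X \in \fD$, transport this through the inclusion $\fD \subseteq \sqrt{d}\,\Wmin{}(\ol{\bB})$ already recorded in Theorem~\ref{thm:MBP}, and then upgrade the resulting (a priori infinite-dimensional) commuting normal dilation to a finite-dimensional one by means of Theorem~\ref{theorem:fin-dim-dil}.

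In more detail: first I would note that $\|\sum_j X_j \otimes \ol{X_j}\| \le 1$ is precisely the defining condition for $X \in \fD(n)$, and that the parenthetical special case is subsumed because $\fB \subseteq \fD$ was established just before Lemma~\ref{L:fD}. Since $\fD_1 = \ol{\bB}$, Theorem~\ref{thm:MBP} (equivalently the containment \eqref{eq:ball containment}) gives $\fD \subseteq \sqrt{d}\,\Wmin{}(\ol{\bB})$, hence $\tfrac{1}{\sqrt d}X \in \Wmin{}(\ol{\bB})$. By the very definition of $\Wmin{}$, there is a commuting normal tuple $N$ on some Hilbert space with $\sigma(N) \subseteq \ol{\bB}$ and an isometry $V$ such that $\tfrac{1}{\sqrt d}X_i = V^* N_i V$ for all $i$.

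Next I would feed $\tfrac{1}{\sqrt d}X \in M_n^d$ together with $N$ and $V$ into Theorem~\ref{theorem:fin-dim-dil}, which produces a commuting normal tuple $Y$ on a finite-dimensional space with $\sigma(Y) \subseteq \sigma(N) \subseteq \ol{\bB}$ and an isometry $W$ such that $\tfrac{1}{\sqrt d}X_i = W^* Y_i W$. Setting $T := Y$ then gives $X_i = W^*(\sqrt d\, T_i)W$, that is, $\sqrt d\, T$ dilates $X$ with $\sigma(T) \subseteq \ol{\bB}$; as $\ol{\bB} \subseteq \bR^d$, the joint spectrum of $T$ is real, so $T$ is a tuple of commuting self-adjoint matrices, as required. (Alternatively one may simply invoke the implication (2)$\Rightarrow$(1) of Theorem~\ref{thm:scaled-inclusion-min} with $\cS = \fD$ and $c = \sqrt d$, whose proof already carries out the finite-dimensional reduction and whose conclusion $T \in \fD$ forces $\sigma(T) \subseteq \fD_1 = \ol{\bB}$ via Corollary~\ref{cor:normalcontained}.)

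I expect no serious obstacle: all the ingredients are in place, and the only point requiring a moment's care is that the $\Wmin{}$-dilation is a priori on an infinite-dimensional space — which is exactly what Theorem~\ref{theorem:fin-dim-dil} exists to remedy — together with the trivial remark that a commuting normal tuple with real joint spectrum is self-adjoint.
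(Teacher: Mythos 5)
Your argument is correct and follows the same route as the paper's (which simply remarks, right before the corollary, that $\fB \subset \fD \subseteq \sqrt{d}\,\Wmin{}(\ol{\bB})$ by Theorem~\ref{thm:MBP}, and treats the dilation statement as immediate). The one thing you add that the paper leaves implicit is the reduction to a \emph{finite-dimensional} commuting normal dilation via Theorem~\ref{theorem:fin-dim-dil} (or, equivalently, via Theorem~\ref{thm:scaled-inclusion-min}(2)$\Rightarrow$(1)), which is indeed needed if one reads ``matrices'' in the statement literally, and your observation that real joint spectrum forces self-adjointness is the right way to close that loop. One tiny imprecision: the relevant hypothesis for invoking the second implication of Theorem~\ref{thm:MBP} is $\ol{\bB} \subseteq (\Wmin{}(\ol{\bB}))_1$, not $\fD_1 = \ol{\bB}$; but since $(\Wmin{}(\ol{\bB}))_1 = \ol{\bB}$ this is harmless, and \eqref{eq:ball containment} gives the inclusion $\fD \subseteq \sqrt{d}\,\Wmin{}(\ol{\bB})$ directly anyway.
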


%%%%%%%%%%%%%%%%%%%%%%%%%%%%%%%%%%%%%%%
\section{Relaxations and inclusions}\label{sec:relaxations}

%%%%%%%%%%%%%%%%%%%%%%%%%%%%%%%%%%%%%%%
\subsection{Commutability index and inclusion scale}
For a matrix convex set $\cS$ containing $0$, we denote by $\cF_\cS$ the collection of tuples $T=(T_1,...,T_d)$ of commuting normal operators on some Hilbert space $H$, such that $\sigma(T) \subseteq \cS_1$.
Following \cite[Section 8.2]{HKMS15} (where the case of interest was a free spectrahedron $\cS = \cD_A$), we define
\[
\Gamma_\cS(n) = \{ \ t \geq 0 \ | \ \text{if } X \in \cS_n \text{ then } tX \text{ dilates into } \cF_\cS  \} ,
\]
and
\[
\Omega_\cS(n) = \{ t \geq 0 \ | \ \text{if } \cS_1 \subseteq \cT_1 \text{ then } t\cS_n \subseteq \cT_n  \} ,
\]
(where $\cT$ ranges over all matrix convex sets).
Note that we require that $0 \in \cS$ to make sure that these sets are not empty.
We define the {\em commutability indices of $\cS$} to be $\tau_\cS(n) =  \sup\Gamma_\cS(n)$,
and we define the {\em inclusion scales of $\cS$} to be $\rho_\cS(n) =  \sup\Omega_\cS(n)$ for each $n\ge1$.

Here we are interested in rank independent bounds, so we define the {\em rank independent commutability index} and {\em inclusion scales}, respectively, to be
\[ \tau_\cS = \inf_n \tau_\cS(n) \qand  \rho_\cS = \inf_n \rho_\cS(n) . \]
In \cite[Theorem 8.4]{HKMS15}, it was proved that $\tau_{\cD^{sa}_A}(n) = \rho_{\cD^{sa}_A}(n)$ for all $n$, where $A$ is a tuple of matrices (under the assumption that $\cD^{sa}_A(1)$ is bounded).
It follows then that $\tau_{\cD_A} = \rho_{\cD_A}$ in that case.
We provide here a quick proof of the latter in the general case, and also observe that these numbers are positive in the case that $\cS = \cD_A$ for $A$ a tuple of bounded operators.
Recall that a closed matrix convex set $\cS$ in $\cup_n M_n^d$ or $\cup_n (M_n)^d_{sa}$ has the form $\cS = \cD_A$ or $\cD_A^{sa}$, if and only $0 \in \operatorname{int}(\cS_1)$ (Proposition \ref{prop:LMIrange}).

%%%%%%%%%%%%%%%%%%%%%%%%%%
\begin{theorem}\label{thm:ciis}
Let $\cS$ be a closed matrix convex set in $\cup_n (M_n)^d_{sa}$ containing $0$.
Then $\tau_\cS = \rho_\cS$.
If $\cS$ is bounded and $0 \in \operatorname{int}\cS_1$, then $\tau_\cS > 0$.
If $\cS_1$ satisfies the conditions of Theorem $\ref{thm:mainrelax}$, then $\tau_\cS \geq \frac{1}{C}$.
In particular, if $\cS_1 \subseteq \bR^d$ is invariant under projections onto an isometric tight frame, then $\tau_\cS \geq \frac{1}{d}$.
\end{theorem}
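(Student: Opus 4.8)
The plan is to prove the four assertions in the stated order, exploiting the bipolar machinery of Section \ref{sec:polar} and the relaxation results of Section \ref{sec:dilations}. First, for the equality $\tau_\cS = \rho_\cS$: I would argue levelwise that $\Gamma_\cS(n) = \Omega_\cS(n)$, so that $\tau_\cS(n) = \rho_\cS(n)$ and hence $\tau_\cS = \rho_\cS$ after taking infima. The key point is exactly the content of Theorem \ref{thm:scaled-inclusion-min}, adapted to a single level: if $tX$ dilates into $\cF_\cS$ for every $X \in \cS_n$, then (using Theorem \ref{T:W(normal)} to locate the spectrum of the dilating normal tuple inside $\cS_1$, hence inside $\cT_1$ whenever $\cS_1 \subseteq \cT_1$, and then Corollary \ref{cor:normalcontained} to place the normal tuple in $\cT$) we get $tX \in \cT_n$; conversely, taking $\cT = \Wmin{}(\cS_1)$ shows that $t \in \Omega_\cS(n)$ forces $t\cS_n \subseteq \Wmin{n}(\cS_1)$, and every element of $\Wmin{n}(\cS_1)$ is by definition a compression of a normal tuple with spectrum in $\cS_1$, i.e. dilates into $\cF_\cS$. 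One should note that $0\in\cS$ guarantees $\cF_\cS \neq \emptyset$ (it contains the zero tuple) so the sets $\Gamma_\cS(n), \Omega_\cS(n)$ are nonempty and the suprema are meaningful; I also need that these sets are intervals containing $0$ (immediate from matrix convexity: if $t$ works then so does every $s\in[0,t]$ by taking convex combinations with $0$).

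Second, for positivity of $\tau_\cS$ when $\cS$ is bounded with $0 \in \operatorname{int}\cS_1$: here I would invoke Proposition \ref{prop:LMIrange} to write $\cS = \cD_A^{sa}$ for some $A \in \cB(H)_{sa}^d$, and then boundedness of $\cD_A^{sa}$ together with Lemma \ref{lemma:boundedness-dual-interior} gives $0 \in \operatorname{int}(\cW(A))$, so some cube $\ep[-1,1]^d \subseteq \cW_1(A)$. Applying the dilation of Theorem \ref{thm:1dilation} (self-adjoint case, constant $d$) to the tuple $X/\!\max_i\|X_i\|$ and rescaling, any $X \in \cS_n$ satisfies: a suitable multiple $tX$ with $t$ depending only on $\ep$ and $d$ — not on $n$ — dilates to a commuting self-adjoint tuple $T$ with $\sigma(T) \subseteq \ep[-1,1]^d \subseteq \cW_1(A) = \cS_1$; alternatively one can bypass $\cW(A)$ and argue directly from Corollary \ref{cor:bounded} that $\cS$ is uniformly bounded, say by $r$, and $\cS_1 \supseteq \ep[-1,1]^d$, so $t = \ep/(dr)$ lies in $\Gamma_\cS(n)$ for all $n$. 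Either way $\tau_\cS \geq$ a positive constant independent of $n$, hence $\tau_\cS > 0$.

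Third and fourth, under the hypotheses of Theorem \ref{thm:mainrelax}: if $\cS_1$ is $\tfrac1C\lambda$-symmetric with $I_d \in \conv\{\lambda^{(1)},\dots,\lambda^{(k)}\}$, then Theorem \ref{thm:mainrelax} gives $\cS \subseteq C\cT$ whenever $\cS_1 \subseteq \cT_1$, which says precisely that $\tfrac1C \in \Omega_\cS(n)$ for every $n$, hence $\rho_\cS \geq \tfrac1C$, and by the first assertion $\tau_\cS = \rho_\cS \geq \tfrac1C$. The final sentence is the special case recorded in Remark \ref{rem:getHKMS}: invariance of $\cS_1$ under the rank-one projections $e_ie_i^*$ coming from an isometric tight frame lets one take $\lambda^{(i)} = d\,v_iv_i^*$ (as in Proposition \ref{P:ITF-inv}), which satisfies $I_d \in \conv\{\lambda^{(i)}\}$ by the tight frame identity $\sum_i v_iv_i^* = \tfrac{N}{d}I_d$ and gives $C = d$, so $\tau_\cS \geq \tfrac1d$. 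The main obstacle I anticipate is purely bookkeeping in the first part: making sure the levelwise identification $\Gamma_\cS(n) = \Omega_\cS(n)$ genuinely goes through without a finite-dimensionality hypothesis on the dilation space — this is where one must be careful to cite Theorem \ref{theorem:fin-dim-dil} (or simply Corollary \ref{cor:normalcontained}, which does not need finite dimensions) so that a normal dilation with spectrum in $\cS_1$ really does certify membership in an arbitrary matrix convex $\cT$ with $\cS_1 \subseteq \cT_1$.
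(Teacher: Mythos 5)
Your overall approach is essentially the same as the paper's: the paper proves $\tau_\cS = \rho_\cS$ by citing Theorem \ref{thm:scaled-inclusion-min} directly (your levelwise version $\Gamma_\cS(n)=\Omega_\cS(n)$ is the same argument spelled out per level), gets positivity from the cube sandwich $[-r,r]^d \subseteq \cS_1$ and $\cS \subseteq R\fC^{(d)}$ plus Theorem \ref{thm:1dilation}, and derives the last two assertions from Theorem \ref{thm:mainrelax} together with $\tau_\cS = \rho_\cS$.

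One concrete correction to the second paragraph of your plan: writing $\cS = \cD_A^{sa}$ and then deducing $0 \in \operatorname{int}\cW_1(A)$ via Lemma \ref{lemma:boundedness-dual-interior} does \emph{not} give a cube inside $\cS_1$, because $\cW_1(A) \neq \cS_1 = \cD_A^{sa}(1)$; the matrix range and the free spectrahedron of $A$ are (essentially) polar duals, not equal. Your parenthetical ``$\cW_1(A) = \cS_1$'' is false. Your alternative route (use $0 \in \operatorname{int}\cS_1$ directly to place a small cube inside $\cS_1$, and boundedness of $\cS$ to place $\cS$ inside a big cube, then dilate via Theorem \ref{thm:1dilation}) is the correct one and is exactly what the paper does, so there is no gap in the proof as a whole, only in the dispensable first route. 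Also, as a small point, Corollary \ref{cor:normalcontained} is stated for normal tuples of \emph{matrices}, so by itself it does not handle an infinite-dimensional normal dilation; what makes that step go through without reducing to finite dimensions is the explicit description of $\cW_n(N)$ in Theorem \ref{T:W(normal)} as finite matrix convex combinations of points of $\sigma(N)$.
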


\begin{proof}
By Theorem~\ref{thm:scaled-inclusion-min}, $\tau_\cS = \rho_\cS$.
We will establish this in the self-adjoint setting. The nonself-adjoint setting is handled similarly.
If $\cS$ is bounded and $0 \in \operatorname{int}\cS_1$, then $\cS$ is contained in a large cube $R\fC$ and $\cS_1$ contains a small cube $[-r,r]^d$.
By Theorem \ref{thm:1dilation}, for every $X \in \cS$, the scaled tuple $\frac{r}{dR}X$ has a normal dilation with spectrum contained in $[-r,r]^d$.
Thus,
\[ \tau_\cS = \inf_n \tau_\cS(n)  \geq \frac{r}{dR} > 0 . \]

The final assertion follows from Theorem \ref{thm:mainrelax} together with the equality $\tau_\cS = \rho_\cS$ and our previous observations.
\end{proof}

%%%%%%%%%%%%%%%%%%%%%%%%%%
\begin{remark}
Example \ref{ex:non-scalable} shows that the assumptions on boundedness and $0 \in \operatorname{int}(\cS)$ are necessary.
\end{remark}

Using duality, we obtain the following inclusion scale in the converse direction.

%%%%%%%%%%%%%%%%%%%%%%%%%%
\begin{theorem}\label{thm:ciis_inverted}
Let $\cS$ be a closed matrix convex set in $\cup_n (M_n)^d_{sa}$.
If $\cS$ is bounded and $0 \in \operatorname{int}\cS_1$, then there exists a positive number $\sigma_\cS$ such that for every matrix convex set $\cT$,
\[
\cT_1 \subseteq \cS_1 \Rightarrow \sigma_\cS \cT \subseteq \cS.
\]
In fact, one can take $\sigma_\cS = \tau_{\cS^\bullet} = \rho_{\cS^\bullet}$.
If in addition $\cS_1$ satisfies the conditions of Theorem $\ref{thm:mainrelax}$, then $\sigma_\cS \geq \frac{1}{C}$.
In particular, if $\cS_1 \subseteq \bR^d$ is invariant under projections onto an isometric tight frame,
then $\sigma_\cS \geq \frac{1}{d}$.
\end{theorem}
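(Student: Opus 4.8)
The plan is to reduce the statement to the preceding subsection by passing to the matrix polar dual $\cS^\bullet$, so that the ``converse'' inclusion scale of $\cS$ becomes the ordinary inclusion scale $\rho_{\cS^\bullet}$, using the bipolar theorem to translate back. First I would assemble the facts about $\cS^\bullet$. Since $0 \in \operatorname{int}\cS_1 \subseteq \cS$, Lemma~\ref{lem:bipolar} gives $\cS^{\bullet\bullet} = \cS$, while $\cT \subseteq \cT^{\bullet\bullet}$ holds for every matrix convex $\cT$. The set $\cS^\bullet$ is closed and matrix convex; it is bounded because $0 \in \operatorname{int}\cS_1$ (Lemma~\ref{lemma:boundedness-dual-interior} together with the second half of the proof of Proposition~\ref{prop:LMIrange}), and $0$ lies in the interior of its first level because $\cS$ is bounded (if $\|A_i\| \le R$ on $\cS$, then $\|\sum_j A_j \otimes Y_j\| < 1$ whenever $\max_j\|Y_j\| < (dR)^{-1}$, so such $Y$ lie in $\cS^\bullet$). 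Hence Theorem~\ref{thm:ciis} applies to $\cS^\bullet$, giving $\tau_{\cS^\bullet} = \rho_{\cS^\bullet} > 0$, and we set $\sigma_\cS := \tau_{\cS^\bullet} = \rho_{\cS^\bullet}$. I would also record the first-level identity $(\cS^\bullet)_1 = (\cS_1)'$: applying $\bullet$ to the sandwich $\Wmin{}(\cS_1) \subseteq \cS \subseteq \Wmax{}(\cS_1)$ of Proposition~\ref{min and max} and invoking Theorem~\ref{T:polar of min} gives $(\cS_1)' \subseteq (\cS^\bullet)_1 \subseteq (\cS_1)'$; equivalently, for a scalar tuple $x$ the inequality $\sum_j A_j x_j \le I_n$ for all $A \in \cS$ is, via matrix convexity applied to vector states, the same as $\sum_j x_j y_j \le 1$ for all $y \in \cS_1$.

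For the main implication, suppose $\cT_1 \subseteq \cS_1$. Scalar polar duality reverses inclusion, so $(\cS^\bullet)_1 = (\cS_1)' \subseteq (\cT_1)' = (\cT^\bullet)_1$. By the definition of the inclusion scale $\rho_{\cS^\bullet}$ (the relevant supremum is attained, the worst case $\cT^\bullet = \Wmin{}((\cS^\bullet)_1)$ giving a closed condition; cf.\ Theorems~\ref{thm:scaled-inclusion-min} and \ref{thm:ciis}), this yields $\sigma_\cS\,\cS^\bullet = \rho_{\cS^\bullet}\cS^\bullet \subseteq \cT^\bullet$. Taking matrix polar duals once more and using $(\sigma_\cS\cS^\bullet)^\bullet = \tfrac{1}{\sigma_\cS}(\cS^\bullet)^\bullet = \tfrac{1}{\sigma_\cS}\cS$ together with $\cT \subseteq \cT^{\bullet\bullet}$, we obtain $\cT \subseteq \cT^{\bullet\bullet} \subseteq \tfrac{1}{\sigma_\cS}\cS$, i.e.\ $\sigma_\cS\cT \subseteq \cS$, as required.

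For the quantitative addendum I would show that $\tfrac1C\lambda$-symmetry of $\cS_1$ transfers to $(\cS^\bullet)_1 = (\cS_1)'$ via the transpose. If $I_d \in \conv\{\lambda^{(1)},\dots,\lambda^{(k)}\}$ with each $\lambda^{(m)}$ rank one and $\lambda^{(m)}\cS_1 \subseteq C\cS_1$, put $\mu^{(m)} := (\lambda^{(m)})^t$; then each $\mu^{(m)}$ is rank one, $I_d \in \conv\{\mu^{(m)}\}$ by transposing the convex combination, and for $x \in (\cS_1)'$ and $y \in \cS_1$ one has $\langle \mu^{(m)}x, y\rangle = \langle x, \lambda^{(m)}y\rangle \le C$ since $\tfrac1C\lambda^{(m)}y \in \cS_1$, whence $\tfrac1C\mu^{(m)}x \in (\cS_1)'$, that is $\mu^{(m)}(\cS_1)' \subseteq C(\cS_1)'$. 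So $(\cS^\bullet)_1$ satisfies the hypotheses of Theorem~\ref{thm:mainrelax} with the same constant, and Theorem~\ref{thm:ciis} gives $\sigma_\cS = \tau_{\cS^\bullet} \ge \tfrac1C$. The isometric tight frame case is the instance $\lambda^{(m)} = d\,v_m v_m^*$ and $C = d$ (here $\mu^{(m)} = \lambda^{(m)}$, the projections being symmetric, so $(\cS^\bullet)_1$ is invariant under the very same projections), which gives $\sigma_\cS \ge \tfrac1d$.

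The step I expect to be most delicate is the polar-duality bookkeeping: confirming $(\cS^\bullet)_1 = (\cS_1)'$ with no loss of constraints in passing to the operator level, and that the rank-one symmetry condition dualizes to the transpose frame with the constant $C$ unchanged --- in general the $\lambda^{(m)}$ are not symmetric, so the transpose genuinely enters. Everything else is a formal chain of bipolar identities feeding into the already-established equality $\tau_{\cS^\bullet} = \rho_{\cS^\bullet}$ of Theorem~\ref{thm:ciis}.
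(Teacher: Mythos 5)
Your proof is correct and takes essentially the same approach as the paper: pass to the matrix polar dual $\cS^\bullet$, verify that boundedness and interiority swap under $\bullet$, apply Theorem~\ref{thm:ciis} to $\cS^\bullet$, and pull back through the bipolar theorem $\cS^{\bullet\bullet} = \cS$ together with $\cT \subseteq \cT^{\bullet\bullet}$, with the rank-one symmetry hypothesis transferring to $(\cS^\bullet)_1$ via transpose. The paper's proof is terser; you supply welcome detail (the identity $(\cS^\bullet)_1 = (\cS_1)'$ via Theorem~\ref{T:polar of min}, the explicit $\langle \mu^{(m)}x,y\rangle = \langle x,\lambda^{(m)}y\rangle$ computation, and the remark that the inclusion-scale supremum is attained because $\Wmin{}(\cS_1)$ is closed), but nothing in the structure differs.
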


\begin{proof}
Consider $\cS^\bullet$.
Since $\cS$ is bounded and has $0$ in the interior, $\cS^\bullet$ has these properties too.
Moreover, if $\cS_1$ is $\frac{1}{C}\lambda$-invariant, then $\cS_1^\bullet$ is $\frac{1}{C}\lambda^t$-invariant, and vice versa.

Now suppose that $\cT_1 \subseteq \cS_1$.
Then $\cS_1^\bullet \subseteq \cT_1^\bullet$, and by Theorem \ref{thm:ciis} we have $\tau_{\cS^\bullet} \cS^\bullet \subseteq \cT^\bullet$.
Applying the polar dual once more, and letting $\sigma_{\cS} = \tau_{\cS^\bullet}$, we have
\[
\sigma_\cS \cT \subseteq \sigma_\cS \cT^{\bullet \bullet} \subseteq \cS^{\bullet \bullet} = \cS. \qedhere
\]
\end{proof}

%%%%%%%%%%%%%%%%%%%%%%%%%%%%%%%%%%%%%%%%%%%%%%%
\subsection{Consequences for CP maps}\label{sec:CPMaps}

As observed in \cite[Section 1.4]{HKMS15}, the inclusion scales $\rho_{\cS}(n)$ can be interpreted as the maximal constants by which all unital positive maps into $M_n^{sa}$ can be ``scaled'' to become completely positive.
We present here results of a similar nature.

%%%%%%%%%%%%%%%%%%%%%%%%%
\begin{theorem}\label{thm:PtoUCP}
Let $A \in \cB(H)_{sa}^d$, let $\cS = \cW(A)$, and suppose that $0 \in \cS$.
If $B\in \cB(K)^d_{sa}$ and the map $S_A \to S_B$ given by
\[
I \mapsto I \quad , \quad A_i \mapsto B_i \,\, , \,\, i=1, \ldots, d ,
\]
is positive, then the map given by
\[
I \mapsto I \quad , \quad A_i \mapsto \sigma_{\cS} B_i \,\, , \,\, i=1, \ldots, d ,
\]
is completely positive. Moreover $\sigma_{\cS}$ is the optimal constant that works for all $B$.

Likewise, if $B\in \cB(K)^d_{sa}$ and the map $S_B \to S_A$ given by
\[
I \mapsto I \quad , \quad B_i \mapsto A_i \,\, , \,\, i=1, \ldots, d ,
\]
is positive, then the map given by
\[
I \mapsto I \quad , \quad B_i \mapsto \rho_{\cS} A_i \,\, , \,\, i=1, \ldots, d ,
\]
is completely positive.  Moreover $\rho_{\cS}$ is the optimal constant that works for all $B$.

If $0 \in \operatorname{int}\cW_1(A)$, then $\rho_{\cS}>0$ and $\sigma_{\cS} > 0$.
If $\cW_1(A)$ satisfies the conditions of Theorem $\ref{thm:mainrelax}$, then $\rho_{\cS}\geq \frac{1}{C}$ and $\sigma_{\cS} \geq \frac{1}{C}$.
In particular, if $\cW_1(A)$ is invariant under projections onto an isometric tight frame,
then both $\rho_\cS$ and $\sigma_\cS$ are bounded below by $\frac{1}{d}$.
\end{theorem}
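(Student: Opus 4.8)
The plan is to reformulate the statement entirely in terms of matrix ranges and then read it off from the scaled inclusion results of Theorems~\ref{thm:ciis} and \ref{thm:ciis_inverted}. Theorem~\ref{thm:ExistUCP_matRan}(2) already supplies one half of the dictionary — a unital completely positive map $S_A\to S_B$ with $A_i\mapsto B_i$ exists if and only if $\cW(B)\subseteq\cW(A)$ — so the genuinely new ingredient is the first-level analogue for merely \emph{positive} unital maps, which I would establish first: for self-adjoint tuples $A\in\cB(H)^d_{sa}$, $B\in\cB(K)^d_{sa}$, a unital positive map $\phi:S_A\to S_B$ with $\phi(A_i)=B_i$ exists if and only if $\cW_1(B)\subseteq\cW_1(A)$. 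For well-definedness, note that since the $A_i$ are self-adjoint the linear relations among $I,A_1,\dots,A_d$ are spanned by real ones, and a real relation $c_0I=\sum_i c_iA_i$ forces $\cW_1(A)\subseteq\{x\in\bR^d:\sum_i c_ix_i=c_0\}$, so $\cW_1(B)\subseteq\cW_1(A)$ guarantees the same relations for $B$. For positivity, recall that a self-adjoint element $c_0I+\sum_i c_iA_i$ of $S_A$ is positive exactly when $c_0+\sum_i c_ix_i\ge0$ for all $x\in\cW_1(A)$ (states separate the positive cone of an operator system, and states on $S_A$ extend to $C^*(S_A)$), and likewise for $S_B$; thus $\phi$ is positive iff every affine functional nonnegative on $\cW_1(A)$ is nonnegative on $\cW_1(B)$, which by Hahn--Banach separation of a point from the closed convex set $\cW_1(A)$ is exactly $\cW_1(B)\subseteq\cW_1(A)$.

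With both equivalences in hand the two implications are immediate. Suppose $I\mapsto I$, $A_i\mapsto B_i$ is positive; then $\cW_1(B)\subseteq\cW_1(A)=\cS_1$, so $\cT:=\cW(B)$ is a closed matrix convex set with $\cT_1\subseteq\cS_1$, and Theorem~\ref{thm:ciis_inverted} (in its formulation $\sigma_\cS=\tau_{\cS^\bullet}=\rho_{\cS^\bullet}$; the inclusion $\sigma_\cS\cT\subseteq\cS$ being vacuous when $\sigma_\cS=0$) gives $\cW(\sigma_\cS B)=\sigma_\cS\cW(B)\subseteq\cW(A)$. Since $0\in\cW(A)$ there is no relation $I=\sum_i c_iA_i$, so all linear relations of $S_A$ involve the $A_i$ alone and are preserved under scaling; hence $I\mapsto I$, $A_i\mapsto\sigma_\cS B_i$ is a well-defined map, completely positive by Theorem~\ref{thm:ExistUCP_matRan}(2) (when $\sigma_\cS=0$ this is the map $A_i\mapsto0$, which is completely positive because $0\in\cW_1(A)$ furnishes a state $\psi$ with $\psi(A)=0$, whence the UCP map $X\mapsto\psi(X)I$). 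The reverse implication mirrors this: if $I\mapsto I$, $B_i\mapsto A_i$ is positive then $\cS_1=\cW_1(A)\subseteq\cW_1(B)=(\cW(B))_1$, so Theorem~\ref{thm:ciis} (giving $\rho_\cS=\tau_\cS$ and $\rho_\cS\cS\subseteq\cT$ whenever $\cS_1\subseteq\cT_1$), applied with $\cT=\cW(B)$, yields $\cW(\rho_\cS A)\subseteq\cW(B)$ and hence a completely positive map $S_B\to S_A$ carrying $B_i$ to $\rho_\cS A_i$ (again well defined since $0\in\cW_1(A)\subseteq\cW_1(B)$ rules out a relation $I=\sum_i c_iB_i$).

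Optimality of $\sigma_\cS$ and $\rho_\cS$ follows because the translations above are equivalences: a constant strictly larger than the relevant inclusion scale that worked for every $B$ would beat the defining supremum of that scale, and one realizes the extremal test set $\cT$ as an honest matrix range $\cW(B)$ of a bounded tuple via Proposition~\ref{prop:LMIrange} (legitimate in the interior case, where $\cS=\cW(A)$ and $\cS^\bullet=\cD^{sa}_A$ are bounded). The remaining quantitative claims are direct transcriptions: $0\in\operatorname{int}\cW_1(A)$ is, by Lemma~\ref{lemma:boundedness-dual-interior}, equivalent to the boundedness/interior hypotheses of Theorems~\ref{thm:ciis} and \ref{thm:ciis_inverted} holding for $\cS=\cW(A)$, whence $\rho_\cS,\sigma_\cS>0$; and if $\cW_1(A)=\cS_1$ is $\tfrac1C\lambda$-symmetric as in Theorem~\ref{thm:mainrelax} — so that $\cS_1^\bullet$ is $\tfrac1C\lambda^t$-symmetric — the bounds $\rho_\cS\ge\tfrac1C$, $\sigma_\cS\ge\tfrac1C$ are the ones recorded there, specializing to $\tfrac1d$ when $\cW_1(A)$ is invariant under the projections onto an isometric tight frame. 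I expect the only step needing genuine care to be the positive-map characterization of the first paragraph — well-definedness of the interpolant and the fact that first-level matrix-range inclusion captures positivity \emph{exactly} — everything afterward being bookkeeping with Theorems~\ref{thm:ciis}, \ref{thm:ciis_inverted}, \ref{thm:ExistUCP_matRan} and the polar duality already set up.
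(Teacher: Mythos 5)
Your proof is correct and follows the paper's own route: translate the hypothesis into a first-level matrix-range inclusion via Theorem~\ref{thm:ExistUCP_matRan}, apply Theorems~\ref{thm:ciis} and~\ref{thm:ciis_inverted} to get the scaled inclusion $\sigma_\cS\cW(B)\subseteq\cW(A)$ (resp.\ $\rho_\cS\cW(A)\subseteq\cW(B)$), and convert back into a UCP map via Theorem~\ref{thm:ExistUCP_matRan}(2); the lower bounds on $\sigma_\cS,\rho_\cS$ are then direct transcriptions from those theorems. The one substantive thing you do that the paper's short proof elides is upgrading Theorem~\ref{thm:ExistUCP_matRan}(1) (positivity $\Rightarrow$ first-level inclusion) to a full equivalence: a unital positive interpolant $A_i\mapsto B_i$ exists \emph{iff} $\cW_1(B)\subseteq\cW_1(A)$, handling well-definedness via the observation that real linear relations force hyperplane constraints. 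This converse is exactly what is needed to justify the optimality assertions — given any $\cT$ with $\cT_1\subseteq\cS_1$ witnessing that a larger constant fails, one must realize $\cT=\cW(B)$ (Proposition~\ref{prop:LMIrange}, using Corollary~\ref{cor:bounded} to see $\cT$ is bounded) \emph{and} then know that this $B$ actually admits the required positive map. The paper states the optimality without argument; you supply the missing steps, and you also flag the degenerate $\sigma_\cS=0$ case, which the paper's invocation of Theorem~\ref{thm:ciis_inverted} (whose hypotheses include $0\in\operatorname{int}\cS_1$) does not literally cover when the theorem's standing assumption is only $0\in\cS$. Both of these are genuine, if small, improvements in rigor over the paper's terse proof; the overall strategy is the same.
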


%%%%%%%%%%%%%%%%%%%%%%%%%
\begin{remark}
Equivalently, this theorem could have been phrased in terms of $\cD_A$ instead of $\cW(A)$, where the condition $0 \in \operatorname{int}\cW_1(A)$ would be replaced by the equivalent condition that $\cD_A$ is bounded.
\end{remark}

\begin{proof}
Having a tuple $B \in \cB(K)^d_{sa}$ at hand, consider the matrix convex set $\cT = \cW(B)$.
If there is a unital positive map $S_A \to S_B$ determined by $A_i \mapsto B_i$, then $\cT_1 \subseteq \cS_1$ according to Theorem \ref{thm:ExistUCP_matRan}.
By Theorem \ref{thm:ciis_inverted}, $\sigma$ is strictly positive, has the stated lower bound under the symmetry assumption, and satisfies $\sigma_\cS \cT \subseteq \cS$.
But $\sigma_\cS \cT = \cW(\sigma_\cS B)$.
By Theorem \ref{thm:ExistUCP_matRan}, there is a UCP map $S_A \to S_B$ sending $A$ to $\sigma_\cS B$ as required.
The rest of the theorem is proved in a similar manner, using Theorem \ref{thm:ciis}.
\end{proof}

%%%%%%%%%%%%%%%%%%%%%%%%
\subsection{The matrix cube and the matrix diamond}\label{subsec:cubediamond}

Let
\[
D_d = \{x \in \bR^d : \sum|x_j| \leq 1\} .
\]
This is the unit ball of $\ell^1_d$, and is the polar dual (in the usual, Banach space sense) of the unit ball of $\ell^\infty_d$, which is $[-1,1]^d$.
We may call $\Wmax{}(D_d)$ the {\em matrix diamond}.
By Corollary \ref{cor:bounded}, we have $(\Wmin{}([-1,1]^d))^\bullet = \Wmax{}(D_d)$ and $(\Wmax{}([-1,1]^d))^\bullet = \Wmin{}(D_d)$.

%%%%%%%%%%%%%%%%%%%%%%%%%
\begin{theorem}\label{thm:dilationD}
For every $d$-duple $A \in \cB(H)_{sa}^d$ for which $\sum_j \epsilon_j A_j \leq I$ for all $\epsilon \in \{-1,1\}^d$, there exists a dilation consisting of commuting self-adjoint contractions.
\end{theorem}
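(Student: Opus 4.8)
The plan is to read the hypothesis as the statement that $A$ lies in the matrix diamond $\Wmax{}(D_d)$, and then to produce the dilation by manufacturing a \emph{commuting} spectral measure supported on the $2^d$ vertices of the cube $[-1,1]^d$ directly out of the $2^d$ positivity conditions. The upshot is not merely a dilation but the inclusion $\Wmax{}(D_d)\subseteq \Wmin{}([-1,1]^d)$, which sharpens the constant $d$ that the symmetry machinery of Section~\ref{sec:dilations} would give (the diamond is small precisely because it is the polar dual of the cube, so here the scale factor is $1$).

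First I would set, for each sign pattern $\epsilon\in\{-1,1\}^d$, the positive operator $Q_\epsilon := I-\sum_{j=1}^d \epsilon_j A_j\ge 0$. Using $\sum_\epsilon \epsilon_j=0$ and $\sum_\epsilon \epsilon_j\epsilon_k = 2^d\delta_{jk}$ one gets the two identities $\tfrac1{2^d}\sum_\epsilon Q_\epsilon = I$ and $\tfrac1{2^d}\sum_\epsilon \epsilon_k Q_\epsilon = -A_k$ for $1\le k\le d$. Hence $E_\epsilon := 2^{-d}Q_\epsilon$ defines a positive operator-valued measure on the finite set $\{-1,1\}^d$ whose first moments recover $-A_1,\dots,-A_d$. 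Equivalently, the unital map $\Psi\colon \cC(\{-1,1\}^d)\to\cB(H)$ sending the characteristic function of $\{\epsilon\}$ to $E_\epsilon$ is positive, and being defined on a commutative C*-algebra it is automatically completely positive.

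Next I would dilate. By Naimark's dilation theorem (equivalently, by Stinespring applied to $\Psi$, exactly as in the proof of Theorem~\ref{theorem:fin-dim-dil}) there are a Hilbert space $K$, an isometry $V\colon H\to K$, and pairwise orthogonal projections $\{\widetilde E_\epsilon\}_\epsilon$ on $K$ with $\sum_\epsilon \widetilde E_\epsilon = I_K$ and $E_\epsilon = V^*\widetilde E_\epsilon V$; in fact one may take $\dim K\le 2^d\dim H$. Define $T_k := -\sum_\epsilon \epsilon_k \widetilde E_\epsilon$ for $k=1,\dots,d$. Since the $\widetilde E_\epsilon$ are commuting projections, the $T_k$ are commuting self-adjoint operators, their joint spectrum is contained in $\{-1,1\}^d$, so $\|T_k\|\le 1$; and $V^*T_kV = -\sum_\epsilon \epsilon_k E_\epsilon = A_k$. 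Thus $T=(T_1,\dots,T_d)$ is the desired dilation of $A$ by commuting self-adjoint contractions.

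I do not expect a genuine obstacle here: the only things requiring any care are the two elementary sign identities and the routine bookkeeping in invoking Naimark's theorem for a finite POVM. The conceptual content is simply that the extreme points at the first level of $\Wmax{}(D_d)$ — equivalently the vertices of the polar-dual cube — are precisely the $2^d$ points of $\{-1,1\}^d$, so every element of the matrix diamond is a ``matrix average'' of the commuting contractions sitting over those vertices.
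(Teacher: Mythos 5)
Your proof is correct, but it takes a genuinely different route from the paper's. The paper invokes the explicit tensor-product dilation $T_j = \sum_i A_i \otimes W_iW_j$ constructed in Theorem~\ref{thm:1dilation}, then simultaneously diagonalizes the commuting self-adjoint unitaries $W_iW_j$ to write each $T_j$ as a direct sum of operators $\sum_i \epsilon_i A_i$ with $\epsilon \in \{-1,1\}^d$, and reads off $\|T_j\| \le 1$ from the hypothesis. You instead build a POVM $\{E_\epsilon = 2^{-d}(I - \sum_j \epsilon_j A_j)\}_{\epsilon\in\{-1,1\}^d}$ directly from the $2^d$ positivity conditions, verify via the sign-orthogonality identities that its first moments reproduce $-A_k$, and apply Naimark's dilation theorem to obtain a PVM $\{\widetilde E_\epsilon\}$ whose associated commuting tuple $T_k = -\sum_\epsilon \epsilon_k\widetilde E_\epsilon$ has joint spectrum inside $\{-1,1\}^d$ and compresses to $A$. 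The two arguments are closely related---the paper's dilation, after diagonalizing the $W_iW_j$, is in effect a PVM over sign patterns on a space of dimension $2^{d-1}\dim H$, while Naimark applied to your POVM gives $\dim K \le 2^d\dim H$---but your argument is more self-contained (it does not rely on the machinery of Section~\ref{sec:dilations}) and it makes conceptually explicit why the scale constant here is $1$ rather than $d$: the extreme points at level one of $\Wmax{}(D_d)^\bullet$, i.e.\ the vertices of $[-1,1]^d$, are exactly the points $\{-1,1\}^d$ over which your PVM sits. Both proofs are correct; yours buys transparency of the polar-duality picture at the cost of a slightly larger dilation space.
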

\begin{proof}
Construct the dilation $T$ as in the proof of Theorem \ref{thm:1dilation}.
Recall that $T_j$ has the form $\sum_i A_i \otimes S_{ij}$, where $S_{1j}, \ldots, S_{dj}$ is a family of commuting self-adjoint unitaries.
Thus $T_j$ is unitarily equivalent to a direct sum of operators of the form $\sum_i \epsilon_i A_i$, where $\epsilon \in \{-1,1\}^d$.
It follows from the assumption that $\|T\|\leq 1$.
\end{proof}

We obtain the following variant of Corollary \ref{cor:mainrelax}, which gives conditions for including the matrix diamond.

%%%%%%%%%%%%%%%%%%%%%%%%%
\begin{corollary}\label{cor:relaxationD}
For every $d \in \bN$,
\[
\Wmax{}(D_d) \subseteq \Wmin{}([-1,1]^d) .
\]
Hence for every matrix convex set $\cS$ we have
\[
[-1,1]^d \subseteq \cS_1 \quad\Longrightarrow\quad \Wmax{}(D_d) \subseteq \cS.
\]
\end{corollary}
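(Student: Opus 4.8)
The plan is to combine the dilation theorem just proved, Theorem~\ref{thm:dilationD}, with the half-space description of $\Wmax{}$ from Definition~\ref{def:min and max}, and then to pass to an arbitrary matrix convex set $\cS$ by monotonicity of $\Wmin{}$.

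First I would make the matrix diamond explicit. For $\alpha\in\bR^d$ and $a\in\bR$ one has $D_d\subseteq H(\alpha,a)$ if and only if $\max_j|\alpha_j|\le a$, since the support function of the $\ell^1$-ball $D_d$ at $\alpha$ equals $\|\alpha\|_\infty$ (in particular such an $a$ is automatically $\ge 0$). Because $[-1,1]^d=\conv\{\epsilon:\epsilon\in\{-1,1\}^d\}$, for $a>0$ we may write $\alpha/a$ as a convex combination of sign vectors, and hence the system of inequalities $\sum_j\alpha_j X_j\le aI_n$, taken over all $(\alpha,a)$ with $D_d\subseteq H(\alpha,a)$, reduces to the finitely many inequalities coming from the sign vectors. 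Therefore
\[
\Wmax{n}(D_d)=\Big\{X\in(M_n)_{sa}^d:\ \sum_{j=1}^d\epsilon_j X_j\le I_n\ \text{for all}\ \epsilon\in\{-1,1\}^d\Big\}.
\]

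With this in hand the first inclusion is immediate. If $X\in\Wmax{n}(D_d)$, then $X$ satisfies exactly the hypothesis of Theorem~\ref{thm:dilationD}, so it admits a dilation $T=(T_1,\dots,T_d)$ consisting of commuting self-adjoint contractions. Then $T$ is a normal tuple with $\sigma(T)\subseteq[-1,1]^d$ and $X\prec T$, so $X\in\Wmin{n}([-1,1]^d)$ by Definition~\ref{def:min and max}; this proves $\Wmax{}(D_d)\subseteq\Wmin{}([-1,1]^d)$. For the final assertion, suppose $[-1,1]^d\subseteq\cS_1$ for a matrix convex set $\cS$, and let $X\in\Wmax{n}(D_d)\subseteq\Wmin{n}([-1,1]^d)$. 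Choose a normal $N$ with $\sigma(N)\subseteq[-1,1]^d$ and $X\prec N$; then $X\in\cW_n(N)$, and since $\sigma(N)\subseteq[-1,1]^d\subseteq\cS_1\subseteq\cS$ while $\cW(N)$ is the smallest matrix convex set containing $\sigma(N)$ (Corollary~\ref{C:W(normal)}), we get $X\in\cW(N)\subseteq\cS$. (When $\cS$ is closed one may instead quote Remark~\ref{rem:monotone} and Proposition~\ref{min and max}.) Chaining the two inclusions gives $\Wmax{}(D_d)\subseteq\cS$.

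I do not expect a genuine obstacle here: the only step carrying any content is the identification of $\Wmax{}(D_d)$ via the sign vectors, which is a routine $\ell^1$--$\ell^\infty$ duality computation, and the rest is exactly the dilation supplied by Theorem~\ref{thm:dilationD} together with the bookkeeping of Section~\ref{sec:maxmin}.
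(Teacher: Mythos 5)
Your proof is correct and follows essentially the same route as the paper: identify $\Wmax{}(D_d)$ via the sign-vector inequalities, dilate using Theorem~\ref{thm:dilationD} to a commuting self-adjoint contractive tuple $T$ with $\sigma(T)\subseteq[-1,1]^d$, conclude $X\in\Wmin{}([-1,1]^d)$, and then pass to a general $\cS$ by minimality of $\Wmin{}$. You merely spell out the $\ell^1$--$\ell^\infty$ duality and the final monotonicity step, which the paper leaves to the reader.
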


%%%%%%%%%%%%%%%%%%%%%%%%%
\begin{remark}
Note that Corollary \ref{cor:mainrelax} gives
\[[-1,1]^d \subseteq \cS_1 \quad\Longrightarrow\quad \frac{1}{d}\Wmax{}([-1,1]^d) \subseteq \cS .
\]
However $\frac{1}{d}\Wmax{}([-1,1]^d)  \subsetneq \Wmax{}(D_d)$ because of a strict inclusion in the first level $\frac{1}{d} [-1,1]^d\subsetneq D_d$.
\end{remark}

\begin{proof}
For the first inclusion, note that $X \in \Wmax{}(D_d)$ if and only if $\sum_j \epsilon_j X_j \leq I$ for all $\epsilon \in \{-1,1\}^d$.
If $X \in \Wmax{}(D_d)$, then by the theorem above, $X$ has a commuting self-adjoint dilation $T$ consisting of contractions.
Thus $\sigma(T) \subseteq [-1,1]^d$.
We conclude that $X \in  \Wmin{}([-1,1]^d)$ (recall Definition \ref{def:min and max} and Proposition \ref{min and max}).
\end{proof}

%%%%%%%%%%%%%%%%%%%%%%%%%
\begin{remark}\label{rem:relaxationD}
It is worth noting that the diamond $D_d$ is a convex polytope and it is therefore determined by finitely many linear inequalities.
The set $\Wmax{}(D_d)$ therefore has a representation as a free spectrahedron $\Wmax{}(D_d) = \cD^{sa}_A$ for an appropriate $d$-tuple of self-adjoint matrices $A$.
Therefore, when $\cS = \cD^{sa}_B$ is also a free spectrahedron (determined by a tuple of matrices $B$), the problem of determining whether or not $\Wmax{}(D_d) \subseteq \cS = \cD_B^{sa}$ is amenable to the algorithms described in \cite{HKM13} (see also \cite{AG15a,AG15b}).
Thus we obtain a relaxation for the ``matrix cube problem'' of Ben-Tal and Nemirovski \cite{BTN02} which is of widespread interest: to rule out the containment of the cube $[-1,1]^d$ inside a spectrahedron $\cD_B^{sa}$ it suffices to rule out the containment of $\Wmax{}(D_d)$ inside $\cD^{sa}_B$.
This relaxation is more precise than the rank independent relaxation (checking whether $\fC^{(d)} \subseteq d\cD_B^{sa}$) given by Corollary \ref{cor:mainrelax}, and perhaps has some advantage over the rank dependent relaxation given by \cite[Theorem 1.6]{HKMS15} (checking whether $\fC^{(d)} \subseteq \vartheta(n) \cD_B^{sa}$).
\end{remark}

%%%%%%%%%%%%%%%%%%%%%%%%%
\begin{theorem}\label{thm:dilationD2}
For every $d$-duple $A \in \cB(H)_{sa}^d$ of contractions, there exists a dilation $T$ consisting of commuting self-adjoint operators such that $\sum_j \epsilon_j T_j \leq d I$ for all $\epsilon \in \{-1,1\}^d$.
\end{theorem}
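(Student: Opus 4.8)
The plan is to apply the dilation of Corollary~\ref{cor:dilations_operators} to the $d$-tuple $A$ with the family of \emph{scaled coordinate projections}. Set $\lambda^{(m)} = d\, e_m e_m^t$ for $m = 1, \ldots, d$, where $e_1, \ldots, e_d$ is the standard orthonormal basis of $\bR^d$ (this is the same family that drives the ``invariance under coordinate projections'' version of the dilation, cf. Remark~\ref{rem:getHKMS}). Each $\lambda^{(m)}$ is a rank one real $d \times d$ matrix, and since $\sum_{m=1}^d \tfrac1d \lambda^{(m)} = \sum_{m=1}^d e_m e_m^t = I_d$, we have $I_d \in \conv\{\lambda^{(1)}, \ldots, \lambda^{(d)}\}$ --- the only hypothesis needed to invoke Corollary~\ref{cor:dilations_operators}.

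First I would record that since each $A_j$ is a self-adjoint contraction, $-I \le A_j \le I$, hence every state $\phi$ on $C^*(S_A)$ satisfies $-1 \le \phi(A_j) \le 1$, so $\cW_1(A) \subseteq [-1,1]^d$. Corollary~\ref{cor:dilations_operators} then yields a commuting $d$-tuple $T = (T_1, \ldots, T_d)$ of self-adjoint operators dilating $A$ with $\sigma(T) \subseteq \bigcup_{m=1}^d \lambda^{(m)}\cW_1(A)$. The key elementary computation is
\[
\lambda^{(m)}\cW_1(A) \subseteq \lambda^{(m)}[-1,1]^d = \{\, s\, e_m : |s| \le d \,\},
\]
so $\sigma(T)$ lies in the union of the $d$ coordinate segments of half-length $d$; in particular every $\mu = (\mu_1, \ldots, \mu_d) \in \sigma(T)$ satisfies $\sum_j |\mu_j| \le d$, i.e. $\sigma(T) \subseteq d\, D_d$.

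To conclude I would fix $\epsilon \in \{-1,1\}^d$ and use the spectral theorem for the commuting self-adjoint tuple $T$: the continuous function $z \mapsto \sum_j \epsilon_j z_j$ maps $\sigma(T)$ into $(-\infty, d]$, since $\sum_j \epsilon_j \mu_j \le \sum_j |\mu_j| \le d$ on $\sigma(T)$; hence $\sigma\big(\sum_j \epsilon_j T_j\big) \subseteq (-\infty, d]$ and therefore $\sum_j \epsilon_j T_j \le dI$. Equivalently, $\sigma(T) \subseteq d\, D_d$ says exactly that $\tfrac1d T \in \Wmax{}(D_d)$ (by Corollary~\ref{cor:normalcontained}), which is the assertion.

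No step here is a genuine obstacle once the correct matrices $\lambda^{(m)}$ are chosen; the only point to get right is the geometric fact that the scaled coordinate segments $d[-e_m, e_m]$ lie inside $d\,D_d$, which is precisely what forces the constant to be $d$ rather than the $d^2$ that the flip-operator dilation of Theorem~\ref{thm:1dilation} would produce (there the relevant $\lambda^{(p)} = w^{(p)}(w^{(p)})^t$ with $w^{(p)} \in \{-1,1\}^d$, and $\lambda^{(p)}[-1,1]^d$ reaches the $\ell^1$-far points $\pm d\, w^{(p)}$). It is worth contrasting this statement with Theorem~\ref{thm:dilationD}, where the roles of $[-1,1]^d$ and $D_d$ are interchanged.
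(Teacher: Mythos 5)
Your proof is correct and takes essentially the same approach as the paper: both apply the dilation construction of Theorem~\ref{general_dilation} with the scaled coordinate projections $\lambda^{(m)} = d\,e_m e_m^*$. The only cosmetic difference is that you route the conclusion through the spectral inclusion $\sigma(T)\subseteq \bigcup_m \lambda^{(m)}\cW_1(A)\subseteq dD_d$ of Corollary~\ref{cor:dilations_operators}, whereas the paper reads off the block form $T_j = 0\oplus\cdots\oplus dA_j\oplus\cdots\oplus 0$ directly and concludes.
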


\begin{proof}
Construct the dilation $T$ as constructed in the proof of Theorem \ref{general_dilation}, where we choose for the family $\{\lambda^{(1)}, \ldots, \lambda^{(d)}\}$ the operators $\lambda^{(k)} = d e_k e_k^*$ (this particular choice of $\lambda$s gives rise to the dilation discovered in \cite[Section 14]{HKMS15}).
Note that in that Theorem we constructed a dilation for a tuple of matrices $X$, and now we are working with operators, but the proof works the same.

Thus, $T_j$ has the form $0 \oplus \cdots 0 \oplus d A_j \oplus 0 \cdots \oplus 0$.
It follows that $\sum_j \epsilon_j T_j \leq d I$ for all $\epsilon \in \{-1,1\}^d$, as required.
\end{proof}

%%%%%%%%%%%%%%%%%%%%%%%%%
\begin{corollary}\label{cor:relaxationD2}
For every $d \in \bN$,
\[
\fC^{(d)} = \Wmax{}([-1,1]^d) \subseteq d \Wmin{}(D_d) .
\]
In particular, for every matrix convex set $\cS$ we have
\[
D_d \subseteq \cS_1 \Longrightarrow \fC^{(d)} \subseteq d\cS.
\]
\end{corollary}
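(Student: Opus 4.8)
The plan is to obtain this corollary as a direct consequence of the dilation theorem~\ref{thm:dilationD2}, in exact parallel with the proof of Corollary~\ref{cor:relaxationD}. Recall first that $\fC^{(d)} = \Wmax{}([-1,1]^d)$ is precisely the set of $d$-tuples $X = (X_1,\dots,X_d)$ of self-adjoint contractions, and that (as noted in the proof of Corollary~\ref{cor:relaxationD}) a self-adjoint tuple $Y$ belongs to $\Wmax{}(D_d)$ exactly when $\sum_j \epsilon_j Y_j \le I$ for every $\epsilon \in \{-1,1\}^d$; by scaling, $Y \in d\,\Wmax{}(D_d)$ iff $\sum_j \epsilon_j Y_j \le dI$ for all such $\epsilon$.

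First I would take an arbitrary $X \in \fC^{(d)}$ and invoke Theorem~\ref{thm:dilationD2} to produce a dilation $T = (T_1,\dots,T_d)$ of $X$ consisting of commuting self-adjoint (hence normal, and bounded) operators with $\sum_j \epsilon_j T_j \le dI$ for every $\epsilon \in \{-1,1\}^d$. The next step is to convert these operator inequalities into a containment of the joint spectrum: by the spectral theorem, $\sum_j \epsilon_j \lambda_j \le d$ for every $\lambda = (\lambda_1,\dots,\lambda_d) \in \sigma(T)$ and every sign vector $\epsilon$, and choosing the signs to match those of the $\lambda_j$ gives $\sum_j |\lambda_j| \le d$; that is, $\sigma\bigl(\tfrac1d T\bigr) \subseteq D_d$. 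Since $X \prec T$ we also have $\tfrac1d X \prec \tfrac1d T$, where $\tfrac1d T$ is a commuting normal tuple with joint spectrum inside $D_d$, so Definition~\ref{def:min and max} yields $\tfrac1d X \in \Wmin{}(D_d)$, i.e.\ $X \in d\,\Wmin{}(D_d)$. This establishes $\fC^{(d)} = \Wmax{}([-1,1]^d) \subseteq d\,\Wmin{}(D_d)$.

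For the ``in particular'' statement, let $\cS$ be a matrix convex set with $D_d \subseteq \cS_1$; then monotonicity of $\Wmin{}$ (Remark~\ref{rem:monotone}) together with Proposition~\ref{min and max} gives $\Wmin{}(D_d) \subseteq \Wmin{}(\cS_1) \subseteq \cS$, and combining this with the inclusion already proved yields $\fC^{(d)} \subseteq d\,\Wmin{}(D_d) \subseteq d\,\cS$. I do not expect a genuine obstacle here: all the substance is contained in Theorem~\ref{thm:dilationD2}, and the remaining points needing only minor care are the spectral translation $\sum_j \epsilon_j T_j \le dI \Rightarrow \sigma(\tfrac1d T) \subseteq D_d$ and the observation that the normal dilation allowed in Definition~\ref{def:min and max} may live on an arbitrary Hilbert space, so that no finite-dimensional reduction (Theorem~\ref{theorem:fin-dim-dil}) is required.
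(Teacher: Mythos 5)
Your proof is correct and follows essentially the same route as the paper: apply Theorem~\ref{thm:dilationD2} to get a commuting self-adjoint dilation $T$ with $\sum_j \epsilon_j T_j \le dI$, pass to the joint spectrum to see $\sigma(T) \subseteq dD_d$, and conclude $X \in d\,\Wmin{}(D_d)$. The extra detail you supply (the sign-matching argument giving $\sum_j|\lambda_j|\le d$, and the reduction of the ``in particular'' clause to monotonicity of $\Wmin{}$ via Remark~\ref{rem:monotone} and Proposition~\ref{min and max}) is exactly the implicit content of the paper's terser phrasing.
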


%%%%%%%%%%%%%%%%%%%%%%%%%
\begin{remark}
Note that Corollary \ref{cor:mainrelax2} gives
\[
[-1,1]^d \subseteq \cS_1 \Rightarrow \fC^{(d)} \subseteq d \cS .
\]
Here we get the same conclusion from the weaker assumption $D_d \subseteq \cS_1$.
\end{remark}
\begin{proof}
Let $X \in \fC^{(d)}$.
By the above theorem, $X$ has a commuting self-adjoint dilation $T$ satisfying $\sum_j \epsilon_j T_j \leq d I$ for all $\epsilon \in \{-1,1\}^d$.
It follows that $\sigma(T) \subseteq d D_d$,
and we conclude that $X \in  d\Wmin{}(D_d)$.
\end{proof}

%%%%%%%%%%%%%%%%%%%%%%
\subsection{Sharpness of the inclusion scale}
It is clear that the constant $d$ appearing in Corollary \ref{cor:relaxationD2} (and therefore also in Theorem \ref{thm:dilationD2}) is sharp, because $d$ is the smallest constant $c$ such that $[-1,1] \subseteq c D_d$.
The diameter of $[-1,1]^d$ is $2 \sqrt{d}$, and the diameter of $d D_d$ is $2d$.
The ratio between the diameter of $D_d$ and $[-1,1]^d$ is also $\sqrt{d}$.
Can one obtain the same results when $dD_d$ is replaced by some set of diameter less that $2d =\sqrt{d} \times \operatorname{diam} \fC$?
The following example gives a negative answer.

%%%%%%%%%%%%%%%%%%%%%%%%%
\begin{example}
Let $B$ be as in Lemma \ref{lem:example}.
Now $B \in \fC^{(d)}$, but we saw in Example \ref{ex:dsharp} that $B \notin \rho \Wmin{}(\ol{\bB})$ for any $\rho < d$, so $B \notin  \Wmin{}(C)$ for any symmetric convex set $C$ of diameter less than $2d =\sqrt{d} \times \operatorname{diam} \fC$.
\end{example}

%%%%%%%%%%%%%%%%%%%%%%%%%%%%%%%%%%%%%%%%%%%%%%%
\subsection{Generalization to balanced convex polytopes generated by tight frames}\label{subsec:gen}
Here we generalize the results of this section to the case where the diamond and the cube are replaced by a convex polytope $K$ generated by a tight frame.
As a consequence, we obtain that the conditions required by Theorem \ref{relaxation and symmetry} hold for such convex polytopes, but for such polytopes we get a stronger conclusion.

For a convex polytope $K$ denote by $K'$ its scalar polar dual in $\bR^d$:
\[
K' = \{x \in \bR^d : {\textstyle\sum_j} x_j y_j  \leq 1 \FORAL y \in K  \}.
\]
Recall that a set $\{v^{(1)}, \ldots, v^{(N)}\} \subseteq \bR^d$ is called a {\em tight frame} if there is a constant $\sigma$ (called the constant of the frame) for all $x \in \bR^d$,
\[
\sum_i |\langle v^{(i)}, x \rangle|^2 = \sigma \|x\|^2.
\]
This is equivalent to the condition that $\sum_i v^{(i)} v^{(i)*} = \sigma I$.
By \cite[Lemma 2.1]{RW02}, we must have that $\sigma = \frac{\sum_i \|v^i \|^2}{d}$.

%%%%%%%%%%%%%%%%%%%%%%
\begin{theorem}
Let $\{v^{(1)}, \ldots, v^{(N)}\} \subseteq \bR^d$ be a tight frame with constant $\sigma = \frac1d  \sum_i \|v^{(i)}\|^2$.
For $c_1,...,c_N > 0$,  set $K = \conv \{\pm c_i v^{(i)} : i=1, \ldots, N\}$.
Define
\[
\kappa = \frac{\sigma \min_i c_i^3}{\sum_i c_i} = \frac{ \sum_i \|v^{(i)}\| }{\sum_i c_i}\cdot \frac{\min_i c_i^3}{d}.
\]
Then
\[
\kappa \Wmax{}(K') \subseteq \Wmin{}(K).
\]
\end{theorem}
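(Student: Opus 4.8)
The plan is to reduce the statement to a $\frac{1}{C}\lambda$-invariance fact about $K$ and then invoke the machinery of Theorem~\ref{thm:mainrelax} (or directly Theorem~\ref{relaxation and symmetry}). Concretely, I would set $\lambda^{(i)} = \kappa^{-1}\, c_i^2\, v^{(i)} v^{(i)*}$ for $i=1,\dots,N$, which are rank-one real $d\times d$ matrices. The first task is to check that $I_d$ lies in the convex hull of the $\lambda^{(i)}$: since $\sum_i v^{(i)} v^{(i)*} = \sigma I_d$ is the tight-frame identity, we do not quite have equal weights unless the $c_i$ are all equal, so instead I would observe that $\sum_i c_i^2 v^{(i)} v^{(i)*}$ need not be a scalar multiple of $I_d$. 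Hence the cleaner route is to use the \emph{unweighted} frame matrices $\lambda^{(i)} = d\, v^{(i)}v^{(i)*}/\big(\sum_j \|v^{(j)}\|^2\big)\cdot(\text{something})$; more precisely set $\mu^{(i)} = v^{(i)} v^{(i)*}$ so that $\sum_i \mu^{(i)} = \sigma I_d$, giving $I_d = \frac1\sigma \sum_i \mu^{(i)} \in \operatorname{conv}\{N\sigma^{-1}\mu^{(i)}\}$; thus with $\lambda^{(i)} := N\sigma^{-1}\mu^{(i)}$ we have $I_d \in \operatorname{conv}\{\lambda^{(i)}\}$, as needed for both Theorem~\ref{relaxation and symmetry} and Theorem~\ref{thm:mainrelax}.

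The second and main step is the invariance estimate $\lambda^{(i)} \Wmax{}(K') \subseteq C\,\Wmax{}(K')$ with the right constant. By Theorem~\ref{thm:mainrelax} it suffices to establish the scalar statement $\lambda^{(i)} (K')^{\bullet}$-type inclusion, i.e.\ that $K'$ (the first level of $\Wmax{}(K')$) is $\frac1C\lambda$-invariant: $\lambda^{(i)} K' \subseteq C K'$. Here I would dualize: $\lambda^{(i)} K' \subseteq CK'$ is equivalent (taking scalar polar duals, using $(\lambda^{(i)})^t = \lambda^{(i)}$ since these matrices are symmetric, and $(cK')' = \frac1c K$) to $\lambda^{(i)} K \subseteq C K$ as well — but actually the cleanest formulation is to work directly with $K$: compute $\lambda^{(i)} = N\sigma^{-1} v^{(i)} v^{(i)*}$ applied to a vertex $c_j v^{(j)}$ of $K$, obtaining $N\sigma^{-1} c_j \langle v^{(i)}, v^{(j)}\rangle\, v^{(i)}$, a scalar multiple of $v^{(i)}$. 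Since $\pm c_i v^{(i)} \in K$, this lands in $K$ provided the scalar coefficient has absolute value at most $c_i$; Cauchy–Schwarz gives $|\langle v^{(i)}, v^{(j)}\rangle| \le \|v^{(i)}\|\,\|v^{(j)}\|$, so the coefficient is bounded by $N\sigma^{-1} c_j \|v^{(i)}\|\|v^{(j)}\| $, and after bookkeeping with $\max_i c_i$, $\min_i c_i$, and $\sigma = \frac1d\sum\|v^{(i)}\|^2$ one extracts exactly the constant $C$ for which $\kappa = 1/C$. Tracking the $c_i^3$ and the $\sum_i c_i$ in the denominator of $\kappa$ is precisely the combinatorial heart of the argument, and I expect that to be where care is required.

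Once the scalar $\frac1C\lambda$-invariance of $K'$ is in hand, Theorem~\ref{thm:mainrelax} applied with $\cS = \Wmax{}(K')$ (whose first level is $K'$, which is $\frac1C\lambda$-symmetric) and with the choice $\cT = \Wmin{}(K'{}') = \Wmin{}(K)$ — using that $K'{}' = K$ for a balanced polytope with $0$ in the interior — yields $\Wmax{}(K') \subseteq C\,\Wmin{}(K)$, i.e.\ $\kappa\,\Wmax{}(K') \subseteq \Wmin{}(K)$. Alternatively, one can invoke Theorem~\ref{relaxation and symmetry} with $\cS = \Wmax{}(K')$ and $\cT = C\,\Wmin{}(K)$ after checking $\lambda^{(i)}\cS \subseteq \cT$, which reduces to the same scalar inclusion since $\Wmax{}$ and $\Wmin{}$ of polar-dual convex sets are themselves polar dual by Theorem~\ref{T:polar of min}.

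The main obstacle, as flagged, is the precise constant: one must verify that the Cauchy–Schwarz bound together with the normalization $\sum_i v^{(i)}v^{(i)*} = \sigma I$ produces the stated $\kappa = \sigma (\min_i c_i^3)/(\sum_i c_i)$ rather than something weaker, and in particular that no slack is lost — this requires choosing the right representative vertices of $K$ and $K'$ and being careful that the frame constant enters as $\sigma = \frac1d\sum_i\|v^{(i)}\|^2$ exactly as in the second displayed form of $\kappa$. A secondary subtlety is confirming $K'{}' = K$, which needs $0 \in \operatorname{int} K$; this holds because $K = \conv\{\pm c_i v^{(i)}\}$ is balanced and the $v^{(i)}$ span $\bR^d$ (a tight frame is spanning), so $0$ is interior.
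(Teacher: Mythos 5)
There is a genuine gap, and it is concentrated in the step you yourself flag as the "main obstacle": the invariance estimate is set up for the wrong pair of sets, and the subsequent invocation of the relaxation machinery is therefore invalid.

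Concretely, you first say the goal is to show $\lambda^{(i)} K' \subseteq C K'$, then "dualize" and actually verify $\lambda^{(i)} K \subseteq CK$ by applying $\lambda^{(i)}$ to vertices $c_j v^{(j)}$ of $K$ and invoking Cauchy--Schwarz. Neither inclusion is the one the theorem needs. The statement to prove is $\kappa\, \Wmax{}(K') \subseteq \Wmin{}(K)$, and if you want to derive this from Theorem~\ref{relaxation and symmetry} (with $\cS = \Wmax{}(K')$), the hypothesis you must verify is $\lambda^{(m)} \Wmax{}(K') \subseteq \cT$ for some $\cT$ with $\cT_1 \subseteq CK$ — whose first-level content is the \emph{mixed} inclusion $\lambda^{(m)} K' \subseteq CK$, which involves applying $\lambda^{(m)}$ to points of $K'$, not of $K$, and landing in a multiple of $K$, not of $K'$. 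Your proposed application of Theorem~\ref{thm:mainrelax} with $\cS_1 = K'$ and $\cT_1 = K$ also cannot work: that theorem requires $\cS_1 \subseteq \cT_1$, i.e.\ $K' \subseteq K$, which fails for a general $K$. Showing $K$-invariance would only give $\Wmax{}(K)\subseteq C\Wmin{}(K)$, and showing $K'$-invariance would only give $\Wmax{}(K') \subseteq C\Wmin{}(K')$; neither is the claimed inclusion relating $\Wmax{}(K')$ to $\Wmin{}(K)$.

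Because of this confusion, Cauchy--Schwarz is also the wrong estimate. The correct pairing is the polar one: for $u\in K'$ and $\pm c_m v^{(m)} \in K$ one has $|\langle u, v^{(m)}\rangle| \le 1/c_m$, which is exactly what makes the claimed $\kappa$ come out. The paper does not go through the relaxation theorems at all but re-runs the dilation construction of Theorem~\ref{general_dilation} with the $c_m$-dependent choice $\lambda^{(m)} = b_m v^{(m)} v^{(m)*}$, $b_m = \sigma^{-1}\tfrac{\sum_i c_i}{c_m}$, and weights $a_m = c_m/\sum_i c_i$ (so the convex-hull condition holds), and then verifies $\sigma(\kappa T) \subseteq K$ by checking, for each $u\in K'$, that $|\langle \kappa b_m v^{(m)}, u\rangle|\cdot \big\|\sum_j v^{(m)}_j X_j\big\| \le \tfrac{\kappa b_m}{c_m}\cdot\tfrac{1}{c_m} = \tfrac{\min_i c_i^3}{c_m^3}\le 1$. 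Your choice $\lambda^{(i)} = N\sigma^{-1} v^{(i)} v^{(i)*}$ with uniform weights does satisfy the convex-hull condition, and if you had carried out the correct inclusion $\lambda^{(i)} K' \subseteq CK$ with the polar pairing you would have obtained a valid (in fact sometimes sharper) constant; but as written the verification targets the wrong inclusion, uses a non-tight estimate, and is paired with an inapplicable invocation of Theorem~\ref{thm:mainrelax}.
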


%%%%%%%%%%%%%%%%%%%%%%
\begin{remark}
If $\{v^{(i)}, \ldots, v^{(N)}\}$ is taken to be an orthonormal basis for $\bR^d$ (so $N=d$) and $c_1 = \cdots = c_d = 1$,
then $K$ is the diamond and $K'$ is the cube.
Moreover $\sigma = 1$ and $\kappa = \frac{1}{d}$. For $c_i=1$, and we recover Corollary \ref{cor:relaxationD2}.

On the other hand, if $\{v^{(1)}, \ldots, v^{(N)}\}$ is taken to be the corners of the cube (so $N=2^d$),
then for $c_i=1$, $K$ is the cube, $K'$ is the diamond, $\sigma = 2^d$ and $\kappa = 1$.
We recover Corollary \ref{cor:relaxationD}.
\end{remark}

\begin{proof}
Let $X \in \Wmax{}(K')$.
By definition of $\Wmin{}(K)$, we need to construct a commuting self-adjoint dilation $T$ for $X$ such that $\sigma(\kappa T) \subseteq K$.
We wish to use the construction from Theorem \ref{general_dilation}.

For $m=1, \ldots, N$, define the rank one matrices
\[
\lambda^{(m)} = b_m v^{(m)} v^{(m)*},
\]
where $b_m= \sigma^{-1} \frac{\sum_i c_i}{c_m}$.
Now define diagonal matrices
\[
 S_{ij} = \diag(\lambda^{(1)}_{ij}, \ldots, \lambda^{(k)}_{ij}) \qfor i,j=1, \ldots, d,
\]
and put
\[
T_i = \sum_j X_j \otimes S_{ij}.
\]
As in the proof of Theorem \ref{general_dilation}, $T$ is a commuting self-adjoint tuple.
Recall from that proof that if we show that $I \in \conv\{\lambda^{(1)}, \ldots, \lambda^{(k)}\}$, it would follow that $T$ is a dilation for $X$.
Now if we put
\[
a_m = (\sigma b_m)^{-1} = \frac{c_m}{\sum_i c_i} ,
\]
then $\sum a_m = 1$ and
\[
\sum_m a_m \lambda^{(m)} = \sum_m \sigma^{-1} v^{(m)} v^{(m)*} = I.
\]

To show that $\sigma(\kappa T) \subseteq K$, it suffices to show that $\kappa T$ satisfies the inequalities determining $K$.
Let $u \in K'$.
Then
\begin{align*}
\sum_i u_i \kappa T_i
&=  \kappa\sum_i u_i \sum_j X_j \otimes S_{ij} \\
&= \kappa \sum_i u_i \sum_j \oplus_{m=1}^k  \lambda_{ij}^{(m)} X_j  \\
&= \kappa \oplus_{m=1}^k \sum_j  \sum_i u_i b_m v_i^{(m)} v_j^{(m)} X_j \\
&= \oplus_{m=1}^k \langle \kappa b_m v^{(m)}, u \rangle \cdot \sum_j v_j^{(m)} X_j .
\end{align*}
Thus we need to show that for all $m$,
\[
|\langle \kappa b_m v^{(m)}, u \rangle|\|\sum_j  v_j^{(m)} X_j\| \leq 1 .
\]
Since $X \in \Wmax{}(K')$, we have that
\[
 \|\sum_j v_j^{(m)} X_j\| = \frac{1}{c_m} \|\sum_j c_m v_j^{(m)} X_j\| \leq \frac{1}{c_m} .
\]
But we also have that
\[
|\langle \kappa b_m v^{(m)}, u \rangle| = \kappa b_m \cdot |\langle v^{(m)}, u \rangle| = \frac{\kappa b_m}{c_m} \cdot |\langle c_m v^{(m)}, u \rangle| \leq \frac{\kappa b_m}{c_m} \leq c_m .
\]
So we are done.
\end{proof}

%%%%%%%%%%%%%%%%%%%%%%%%%%%%%%%%%%%%%%%%%%
\bibliographystyle{amsplain}

\end{document}